\newtheorem{theorem}{Theorem}[section]
\newtheorem{lem}[theorem]{Lemma}
\newtheorem{prop}[theorem]{Proposition}
\newtheorem{defin}[theorem]{Definition}
\newtheorem{fait}[theorem]{Fact}
\theoremstyle{remark}
\newtheorem{rem}[theorem]{Remark}
\renewcommand{\rm}[1]{\mathrm{#1}}
\renewcommand{\cal}[1]{\mathcal{#1}}
\newcommand{\bb}[1]{\mathbb{#1}}
\renewcommand{\frak}[1]{\mathfrak{#1}}
\newcommand{\dd}{\mathrm{d}}
\newcommand{\R}{\bb R}
\newcommand{\Q}{\bb Q}
\newcommand{\Z}{\bb Z}
\newcommand{\C}{\bb C}
\newcommand{\N}{\bb N}
\renewcommand{\P}{\bb P}
\newcommand{\calM}{\cal M}
\newcommand{\calF}{\cal F}
\newcommand{\calG}{\cal G}
\newcommand{\calO}{\cal O}
\newcommand{\calW}{\cal W}
\newcommand{\calL}{\cal L}
\newcommand{\sld}{\rm{SL}_d(\Z)}
\newcommand{\slr}{\rm{SL}_d(\R)}
\newcommand{\pslr}{\rm{PSL}_d(\R)}
\newcommand{\inj}{{inj} }
\newcommand{\supp}{\rm{supp}}
\DeclareMathOperator{\vol}{vol}
\DeclareMathOperator{\leb}{Leb_{\mathfrak{a}}}
\renewcommand{\c}{c}
\newcommand{\hypG}{Let $G$ be a connected, real linear, semisimple Lie group of non-compact type. }
\title{Equidistribution and counting of periodic flat tori }
\author{Nguyen-Thi Dang and Jialun Li}
\date{}
\begin{document}

\maketitle
\begin{abstract}
Let $G$ be a semisimple Lie group without compact factor and $\Gamma < G$ a torsion-free, cocompact, irreducible lattice.
According to Selberg, periodic orbits of regular Weyl chamber flows live on maximal flat periodic tori of the space of Weyl chambers. 
We prove that these flat periodic tori equidistribute exponentially fast towards the quotient of the Haar measure. From the equidistribution formula, we deduce a higher rank prime geodesic theorem.
These counting and equidistribution results also hold in the non cocompact, finite covolume case for $G=\mathrm{SL}(d,\mathbb{R})$ and $\Gamma<\mathrm{SL}(d,\mathbb{Z})$ a finite index subgroup.
\end{abstract}

\section{Introduction}

Let $G$ be a semisimple, connected, real linear Lie group without compact factor. 
Let $K$ be a maximal compact subgroup, $A$ be a maximal $\mathbb{R}$-split torus, $A^{+}\subset A$ a closed positive chamber such that the Cartan decomposition $G=KA^+K$ holds.
Denote by $M:=Z_K(A)$ the centralizer of $A$ in $K$, by $\frak a:= \mathrm{Lie} \; A$ the Cartan subspace, by $ \frak{a}^+$ the closed positive chamber in the Lie algebra and by $\frak{a}^{++}$ its interior.  

Let $\Gamma < G$ be a torsion-free, cocompact lattice. 
The double coset space $\Gamma \backslash G/M$ is called the \emph{space of Weyl chambers} of the symmetric space $\Gamma \backslash G/K$.
We count and study the equidistribution of the compact right $A$-orbits in the space of Weyl chambers.

\subsection{Pioneering works on hyperbolic surfaces}
In this case, $G= \mathrm{PSL}(2,\mathbb{R})$ is the isometry group of the Poincaré half-plane $\mathbb{H}^2$, the space of Weyl chamber is the unit tangent bundle of the hyperbolic surface $\Gamma \backslash \mathbb{H}^2$ and the right action of $A$ on $\Gamma \backslash G/M$ corresponds to the geodesic flow.
Periodic orbits of the geodesic flow projects in the surface to primitive closed geodesics.

\paragraph{Prime geodesic theorems} 
In 1959, Huber \cite{huberZurAnalytischenTheorie1959} proved a prime geodesic theorem for compact hyperbolic surfaces.
He obtained an estimate of the number of primitive closed geodesics as their length grows to infinity. 
More precisely, let $N(T)$ be the number of primitive closed geodesics of length less than $T$ on a hyperbolic surface. 
He proved that as $T$ tends to infinity, 
$$N(T) \sim e^T/T.$$
This term is similar to the asymptotic $x/\log x$ given by the prime number 
theorem\footnote{See Pollicott's research statement §1.2 \cite{pollicott}}
for the number of primes less than $x$. 
In 1969, using dynamical methods, Margulis \cite{margulisCertainApplicationsErgodic1969} extended the prime geodesic theorem to negatively curved compact manifolds. 
He proved that the exponential growth rate of $N(T)$ is equal to the topological entropy of the geodesic flow. 
Later on, relying on Selberg's Trace formula, Hejhal 
\cite{hejhal_selberg_1976} and Randol \cite{randol_asymptotic_1977} obtained a complete asymptotic development of the counting function in terms of the spectrum of the Laplace-Beltrami operator. 
In 1980, Sarnak \cite{sarnak1980} extended their complete asymptotic development to finite area surfaces.

Let us state one of the various equivalent formulations of the prime geodesic theorem.
For a closed geodesic $\c$ on $\Gamma\backslash \mathbb{H}^2$, denote by $\ell(\c)$ the length of this geodesic. Let $c_0$ be the primitive closed geodesic underlying $\c$.
Then as $T\rightarrow +\infty$
\begin{equation}\label{equ-prime}
\sum_{\c_0}\left\lfloor \frac{T}{\ell(\c_0)} \right\rfloor\ell(c_0)=\sum_{\c,\ell(c)\leq T}\ell(c_0)\sim C_G \vol(D_t),
\end{equation}
where the first sum is over all primitive closed geodesics, the second sum is over all closed geodesics, $C_G$ is some positive constant, $\vol$ is the Haar measure on $G$ induced by the Riemannian metric on $\mathbb{H}^2$ and $D_t$ is the preimage in $G$ of a ball in $\mathbb{H}^2$ of radius $T$ centered in $i$. 
This sum is similar to the second Chebyshev function: the weighted sum of the logarithms of primes less than a given number, where the weight is the highest power of the prime that does not exceed the given number. 
The second Chebyshev function is essentially equivalent to the prime counting function and their asymptotic behavior is the same. 

\paragraph{Equidistribution of closed geodesics}
Margulis in his 1970 thesis\footnote{See Parry's review \cite{parry}} and Bowen \cite{bowenPeriodicOrbitsHyperbolic1972}, \cite{bowen_equidistribution_1972} independently studied the spatial distribution of the closed orbits of the geodesic flow. They proved that they uniformly equidistribute towards a measure of maximal entropy as their period tends to infinity. 
In the second 1972 paper, Bowen proved the uniqueness of the measure of maximal entropy for the geodesic flow.
As a consequence, the measure of maximal entropy of the geodesic flow is equal to the quotient of the Haar measure.
Later, Zelditch \cite{zelditchSelbergTraceFormulae1992} generalized Bowen's equidistribution theorem to finite area hyperbolic surfaces. 

Let us recall Bowen and Margulis' result for a compact hyperbolic surface. 
For every (primitive) periodic orbit $\c \subset  \Gamma \backslash \mathrm{PSL}(2,\mathbb{R})$, denote by $\cal{P}_{\c}$ the unique probability measure invariant under the geodesic flow supported on $c$.
For every $T >0$, we denote by $\calG_p(T)$ the set of (primitive) periodic orbits of minimal period less that $T$.
Bowen and Margulis proved that for every bounded smooth function $f$,
$$ \frac{T}{e^{ T}} \sum_{\c \in \calG_p(T) } \int f \; \dd\cal{P}_{\c} \xrightarrow[T \rightarrow \infty]{} \int f \; \dd m_{\Gamma},$$
where $m_{\Gamma}$ is the measure of maximal entropy, which also corresponds in our case to the quotient measure of the Haar measure on $ \Gamma \backslash \mathrm{PSL}(2,\R)$.

The following non exhaustive list \cite{degeorge_length_1977}, \cite{gangolli_zeta_1980}, \cite{parryAnaloguePrimeNumber1983}, 
\cite{roblin}, \cite{naud2005expanding}, \cite{margulis_closed_2014} provides some of the many subsequent works tackling the counting and equidistribution problem in several different rank one generalisations.

\subsection{Main results}

In this article, we focus on the higher rank case\footnote{more precisely, we do not have restrictions on the rank of $G$} for $G$,
meaning that $\dim_{\mathbb{R}} A \geq 2$.  

\begin{defin}[Maximal flat periodic tori]\label{defin-flat-per-tori}
For any right $A$-orbit $F$ in $\Gamma\backslash G/M$, we define the set of \emph{periods} of $F$ as
$$ \Lambda (F):= \lbrace Y \in \frak{a} \; \vert \; ze^Y=z ,\;   \forall z \in F \rbrace. $$
A period $Y$ in $\Lambda(F)$ is called \emph{regular} if $Y\in \frak a^{++}$.
When $\Lambda(F)$ is a lattice of $\frak{a}$, we say $F$ a \emph{maximal flat periodic torus} or a \emph{compact periodic $A$-orbit}. 
\end{defin}
Denote by $C(A)$ the set of maximal flat periodic tori in $\Gamma\backslash G/M$. 
For every $F \in C(A)$, we denote by $L_F$ the quotient measure on $F$ of $\leb$, the Lebesgue measure on $\frak{a}$. 
Note that $L_F$ is not a probability measure. Its total mass, denoted by $\vol_{\frak{a}} (F)$, is the Lebesgue measure of any fundamental domain in $\mathfrak{a}$ of the lattice $\Lambda (F)$.


\paragraph{Main counting result}

Adopting the same notation as above, $\vol$ denotes the Haar measure on $G$ whose quotient on the symmetric space $X:=G/K$ equals the measure induced by the Riemannian metric. 
Denote by $\Vert \; \Vert$ the Euclidean norm on $\frak{a}$ coming from the Killing form on $\mathfrak{g}$ and by $B_\frak{a}$ the balls for this norm.
For every $T>0$, set $B_{\frak a}^{++}(0,T):=B_{\frak a}(0,T)\cap\frak a^{++}$ and
$D_T:= K \exp\big( B_{\frak{a}}(0,T) \big) K$, which is the preimage by the quotient map $G\rightarrow X$ of the ball of radius $T$ centered at $eK$ in the symmetric space $X$.

\begin{theorem}\label{corol-counting}
Let $G$ be a semisimple, connected, real linear Lie group without compact factor and $\Gamma < G$ be a torsion-free, cocompact irreducible lattice or $G=\mathrm{SL}(d,\mathbb{R})$ with $d \geq 2$ and $\Gamma < \mathrm{SL}(d,\mathbb{Z})$ a finite index subgroup. 
Then there exist constants $C_G>0$ and $u>0$ such that for $T>0$
\begin{equation}
	\sum_{F\in C(A)}|\Lambda(F)\cap B_{\frak a}^{++}(0,T)| \;
	\vol_{\frak{a}}(F)
	=\vol(D_T)(C_G+O(e^{-u T})).
\end{equation}
\end{theorem}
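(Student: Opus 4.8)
\emph{Approach.} I would obtain Theorem~\ref{corol-counting} by feeding exponential mixing of the regular Weyl chamber flow into a Margulis-style thickening of the periodic tori; the counting estimate is then the constant--test-function instance of the resulting equidistribution statement. Write $\Sigma$ for the restricted roots of $\frak a$ in $\frak g$, $\Sigma^{+}$ for the positive ones, $m_\alpha:=\dim\frak g_\alpha$ and $2\rho:=\sum_{\alpha\in\Sigma^{+}}m_\alpha\alpha$. Two preliminary reductions. First, the Cartan integration formula gives $\vol(D_T)=c_1\int_{B_{\frak a}(0,T)\cap\frak a^{+}}\prod_{\alpha\in\Sigma^{+}}\sinh(\alpha(H))^{m_\alpha}\,\dd H$, and since $2\rho$ is a dominant \emph{regular} weight the linear form $Y\mapsto 2\rho(Y)$ attains its maximum over the sphere of radius $T$ at an interior point $Y^\ast_T\in\frak a^{++}$; hence $\vol(D_T)=c_1\int_{B_{\frak a}^{++}(0,T)}e^{2\rho(Y)}\,\dd Y\cdot\big(1+O(e^{-cT})\big)$, the near-wall region being exponentially negligible. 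Second, a pair $(F,Y)$ with $F\in C(A)$ and $Y\in\Lambda(F)\cap\frak a^{++}$ determines a loxodromic element $\gamma_{F,Y}:=g\exp(Y)g^{-1}\in\Gamma$ (for any $\Gamma gM\in F$, well defined since $A$ is abelian) with Jordan projection $\lambda(\gamma_{F,Y})=Y$, and $[\gamma_{F,Y}]\mapsto(F,Y)$ is a bijection between such pairs and $\Gamma$-conjugacy classes of loxodromic $\gamma$ whose centraliser in $G$ meets $\Gamma$ in a subgroup projecting to a lattice of the ambient split torus; injectivity uses that $\Gamma$ is torsion-free, and in the cocompact case surjectivity is automatic (the centraliser is compact-by-$\R$-anisotropic, so Borel--Harish-Chandra applies), whereas for $\Gamma<\mathrm{SL}(d,\Z)$ the last condition is a genuine constraint. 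Thus the left-hand side of the theorem equals $\sum_{[\gamma]:\,\lambda(\gamma)\in B_{\frak a}^{++}(0,T)}\vol_{\frak a}(F_{[\gamma]})=\int_{B_{\frak a}^{++}(0,T)}e^{2\rho(Y)}\,\dd\mu^{\mathrm{per}}(Y)$, where $\mu^{\mathrm{per}}:=\sum_{[\gamma]}e^{-2\rho(\lambda(\gamma))}\,\vol_{\frak a}(F_{[\gamma]})\,\delta_{\lambda(\gamma)}$.

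\emph{Mixing input.} The analytic engine is exponential mixing of the right $A$-action on $\Gamma\backslash G$ (which descends to $\Gamma\backslash G/M$): for $\varphi_1,\varphi_2\in C_c^\infty$ and $Y\in\frak a^{++}$,
$$\int_{\Gamma\backslash G}\varphi_1(x\exp Y)\,\varphi_2(x)\,\dd m_\Gamma(x)=\Big(\int\varphi_1\,\dd m_\Gamma\Big)\Big(\int\varphi_2\,\dd m_\Gamma\Big)+O\!\big(e^{-\kappa\,d(Y,\partial\frak a^{+})}\,\mathcal S(\varphi_1)\,\mathcal S(\varphi_2)\big),$$
for some $\kappa>0$ and a fixed Sobolev norm $\mathcal S$, together with a crude polynomial-in-$\|Y\|$ bound valid uniformly up to the walls. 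For $\Gamma$ cocompact this follows from the spectral gap of $L^2_0(\Gamma\backslash G)$ (property (T), or its congruence analogue) via quantitative decay of matrix coefficients; for $G=\mathrm{SL}(d,\R)$ and $\Gamma<\mathrm{SL}(d,\Z)$ from the spectral gap of $L^2_0(\Gamma\backslash G)$ (property (T) if $d\geq3$, Selberg-type bounds if $d=2$) together with effective Howe--Moore.

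\emph{Thickening and conclusion.} Fix a small flow box $U\cong U_0\times Q\subset\Gamma\backslash G/M$, with $U_0$ an embedded transversal to the $A$-orbits, $Q\subset\frak a$ a small symmetric convex box, and the parametrisation $(y,X)\mapsto y\exp X$ of nearly constant Jacobian. For $Y\in\frak a^{++}$ at distance $\geq\eta\|Y\|$ from the walls the set $\{x\in U:\ x\exp(Y)\in U\}$ is, away from a negligible set of configurations, a disjoint union of thin tubes, one around each periodic $A$-orbit $\mathcal O$ meeting $U_0$ whose period lattice contains a point $Y'$ within $Q$ of $Y$; since right translation by $\exp(Y')$ scales $\frak g_\alpha$ by $e^{\alpha(Y')}$, such a tube has $m_\Gamma$-measure comparable with $m_\Gamma(U)\prod_{\alpha\in\Sigma}|e^{\alpha(Y')}-1|^{-m_\alpha}\asymp m_\Gamma(U)\,e^{-2\rho(Y')}$. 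Integrating this geometric identity in $Y$ against a bump $\psi$ on $\frak a^{++}$, computing the left-hand side by the mixing formula, letting $U$ shrink, and using a Santal\'o-type kinematic average to turn $\#(\mathcal O\cap U_0)$ into $\vol_{\frak a}(\mathcal O)$ times a universal factor, one gets the equidistribution estimate $\int\psi\,\dd\mu^{\mathrm{per}}=C_1\int_{\frak a^{++}}\psi(Y)\,\dd Y+(\text{error})$ with $C_1>0$ depending only on $G$ (the $\vol(\Gamma\backslash G)$-factors cancelling as in the rank-one case); the contribution of $Y$ near the walls is separately exponentially small by interiority of $Y^\ast_T$. Applying this to a mollification $\psi_\delta$ of $e^{2\rho}\mathbf 1_{B_{\frak a}^{++}(0,T)}$ at scale $\delta$, the mixing error is $\lesssim e^{2\rho(Y^\ast_T)}\delta^{-s}e^{-\kappa' T}$, while the mollification error is bounded by $\mu^{\mathrm{per}}$ of an $O(\delta)$-shell at radius $T$, which by the already-established smooth estimate is only $O(\delta\,T^{\dim\frak a-1})$ times $e^{2\rho(Y^\ast_T)}$; balancing $\delta\asymp e^{-\kappa' T/(s+1)}$ gives $\int_{B_{\frak a}^{++}(0,T)}e^{2\rho}\,\dd\mu^{\mathrm{per}}=C_1\int_{B_{\frak a}^{++}(0,T)}e^{2\rho(Y)}\,\dd Y\,(1+O(e^{-uT}))$, and comparison with the Cartan formula rewrites the right-hand side as $\vol(D_T)(C_G+O(e^{-uT}))$ with $C_G=C_1/c_1>0$.

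\emph{Main obstacle.} In the cocompact case this is essentially complete, the only delicate but routine points being the uniformity of $\kappa$ on compact subsets of $\frak a^{++}$, the bounded multiplicity and disjointness of the tubes for small $U_0$, and the negligibility of the walls, all controlled by the crude mixing bound and the interiority of $Y^\ast_T$; moreover passing from plain mixing to \emph{exponential} mixing is exactly what upgrades the error from $o(1)$ to $O(e^{-uT})$. The genuine difficulty is the cusp in the case $\Gamma<\mathrm{SL}(d,\Z)$: here $\mathbf 1$ is not compactly supported, flow boxes cannot be chosen uniformly, the injectivity radius degenerates, and periodic tori may make arbitrarily deep excursions. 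One must (i) use reduction theory to bound, uniformly in $T$, the total $\vol_{\frak a}$-weighted mass of those classes $[\gamma]$ with $\lambda(\gamma)\in B_{\frak a}^{++}(0,T)$ whose torus spends a definite proportion of its ``time'' above height $R$, showing it is $\ll e^{-cR}\vol(D_T)$; and (ii) use an effective equidistribution/mixing statement with explicit polynomial dependence on the height of the supports, so that after truncating the cusp at $R\asymp cT$ all errors remain exponentially small. Carrying out (i)--(ii) is where the essential work of the non-cocompact case lies.
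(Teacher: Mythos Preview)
Your route is genuinely different from the paper's. The paper obtains Theorem~\ref{corol-counting} as the constant-function case of the spatial equidistribution Theorem~\ref{thm-introequid}, and it proves the latter not from raw mixing correlations but by following Roblin: lift to $G/M$, use Hopf coordinates $\calF^{(2)}\times\frak a$, disintegrate Haar as $\nu\otimes\leb$ (Proposition~\ref{prop-disintegration}), and compare two measures on $\calF^{(2)}$ --- the one built from the Cartan angular directions $(\gamma_x^+,\gamma_x^-)$ and the one built from the dynamical fixed points $(\gamma^+,\gamma^-)$. The analytic engine is Gorodnik--Nevo's angular lattice point distribution (Theorem~\ref{theo-gorodnik-nevo}), and the geometric bridge is Proposition~\ref{prop-lemroblin}, which says that when $\underline a_x(\gamma)$ is far from the walls one has $\gamma^\pm\in B(\gamma_x^\pm,\varepsilon)$. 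A partition of unity into corridors $\widetilde{\calF^{(2)}}(x,r)$ then globalises. Your proposal instead aims straight at a Jordan-projection equidistribution $\int\psi\,\dd\mu^{\mathrm{per}}\approx C_1\int\psi$ via mixing and a Margulis flow-box thickening. Both strategies are legitimate in principle; the Gorodnik--Nevo route buys you a clean product structure in Hopf coordinates and avoids having to carry out the tube-volume and Crofton computations by hand.

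Two points where your outline is thinner than it looks. First, the step you call ``Santal\'o-type kinematic average'' is exactly where the work of the spatial equidistribution hides: the tubes in your flow box are indexed by \emph{elements} $\gamma$ with $(\gamma^+,\gamma^-)$ near the box, not by conjugacy classes, and converting $\#(\mathcal O\cap U_0)$ into $\vol_{\frak a}(\mathcal O)$ requires averaging the box over $\Gamma\backslash G/M$, which is essentially a partition-of-unity form of Theorem~\ref{thm-introequid}. The tube-measure estimate $\asymp m_\Gamma(U)e^{-2\rho(Y')}$ also needs the conformality of $\mu_o$ under $\gamma$ and control of the Gromov-product density $f_x^{-1}$, not just the Anosov splitting; the paper packages this via \eqref{equ.nu}. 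Second, in the $\mathrm{SL}(d,\Z)$ case your flow-box count sees \emph{all} loxodromic returns, but only those $\gamma$ with irreducible characteristic polynomial yield compact tori (Lemma~\ref{lem_cpt}); the paper disposes of the rest by showing they are power-saving sparse (Lemma~\ref{lem_noncpc} via Proposition~\ref{prop_subvar}). You flag the constraint in your bijection but do not say how to excise these elements from the mixing side. Your cusp plan (i)--(ii) is morally the paper's Section~\ref{sec-nonescape}: the key device there is Theorem~\ref{lem_key}, which bounds the mass of each contributing torus outside $\Omega(e^{\vartheta t})$ by its mass in a nearby shell, and then feeds that shell back into the already-established compactly supported equidistribution.
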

We deduce this counting result from the subsequent equidistribution statement.
\paragraph{Main equidistribution result}
Denote by
$\pi : G \rightarrow \Gamma \backslash G/M$ the projection and by $\widetilde m_{\Gamma}$ the quotient measure of the Haar measure $vol$.
We normalise $\widetilde m_{\Gamma}$ to obtain a probability measure that we denote by $m_{\Gamma}$. 
We obtain a higher rank version of the Bowen-Margulis equidistribution formula with an exponential rate of convergence. 

\begin{theorem}\label{thm-introequid}
Under the same hypothesis and for the same constants $C_G>0$ and $u>0$ as in the previous Theorem \ref{corol-counting}, for all $T>0$ and every Lipschitz function $f$ on $\Gamma\backslash G/M$ we have
\begin{equation}
	\frac{1}{\vol(D_T)}\sum_{F\in C(A)}|\Lambda(F)\cap B_{\frak a}^{++}(0,T)|\int_F f \; \dd L_F= C_G \int_{\Gamma\backslash G/M} f \; \dd m_{\Gamma}+O(e^{-u T}|f|_{Lip}).
\end{equation}
\end{theorem}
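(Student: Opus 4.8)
\emph{Strategy.} The plan is to deduce Theorem~\ref{thm-introequid} from exponential mixing of the right $A$-action on $\Gamma\backslash G$ together with a quantitative closing lemma for the Weyl chamber flow, the bridge between the two being a thickening argument in which one also integrates over the period vector; Theorem~\ref{corol-counting} is then the case $f\equiv1$, which is Lipschitz, so it suffices to prove Theorem~\ref{thm-introequid}.

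\emph{Exponential mixing.} The first ingredient is exponential decay of matrix coefficients for right translation by $A$ on the orthocomplement $L^2_0$ of the constants in $L^2(\Gamma\backslash G,m_\Gamma)$: there are $C,u_0>0$ and $\ell\in\N$ with
$$\Big|\int_{\Gamma\backslash G}\phi(ge^Y)\,\psi(g)\,dm_\Gamma(g)-m_\Gamma(\phi)\,m_\Gamma(\psi)\Big|\le C\,e^{-u_0\,\underline{\alpha}(Y)}\,\mathcal S_\ell(\phi)\,\mathcal S_\ell(\psi)\qquad(Y\in\frak a^+),$$
where $\underline{\alpha}(Y)$ is the smallest value at $Y$ of the positive restricted roots and $\mathcal S_\ell$ a fixed $L^2$-Sobolev norm. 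For an irreducible cocompact lattice in a higher-rank $G$ this is Kazhdan's property~(T) together with the standard exponential bounds on $K$-finite matrix coefficients, irreducibility being what forces decay along every direction of $\frak a^+$; for $G=\mathrm{SL}(d,\R)$ and $\Gamma<\mathrm{SL}(d,\Z)$ of finite index it follows from the spectral gap of $\Gamma\backslash G$ and Howe--Moore, the non-compactness at the cusp being absorbed by standard escape-of-mass and Sobolev estimates. Passing to $M$-invariant functions transfers the bound to $\Gamma\backslash G/M$.

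\emph{Local geometry near a periodic flat and its converse.} For $F\in C(A)$ and a regular period $Y\in\Lambda(F)$ the time-$Y$ map $T_Y\colon x\mapsto xe^Y$ fixes $F$ pointwise and is uniformly hyperbolic transverse to $F$: in exponential coordinates $\exp_a(v^-+v^+)$ with $a\in F$ and $v^\pm$ in the horospherical subspaces $\frak n^\pm=\bigoplus_{\pm\alpha>0}\frak g_\alpha$, it contracts $\frak n^+$ with eigenvalues $e^{-\alpha(Y)}$ and expands $\frak n^-$ with eigenvalues $e^{\alpha(Y)}$, so its Jacobian along the expanding directions is $e^{2\rho(Y)}$, where $2\rho=\sum_{\alpha>0}(\dim\frak g_\alpha)\alpha$. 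Hence for a smooth $\delta$-bump $\psi$ one computes, as $\|Y\|\to\infty$, $\int(f\psi)(x)\,\psi(xe^Y)\,dm_\Gamma(x)=e^{-2\rho(Y)}\big(\int_F f\,\psi^{(+)}\psi^{(-)}\,dL_F+O(\delta|f|_{Lip})\big)$ with $\psi^{(\pm)}(a):=\int_{\frak n^\pm}\psi(\exp_a v)\,dv$. The converse is a quantitative closing lemma for the $\frak a$-action: there are $\delta_0,\eta_0>0$ so that any $x$ and any regular $Y$ with $\underline{\alpha}(Y)\ge\eta_0$ and $d(x,xe^Y)\le\delta\le\delta_0$ lie within $O(\delta e^{-\underline{\alpha}(Y)})$ of a point $x'$ satisfying $x'e^{Y'}=x'$ for some $Y'$ with $\|Y'-Y\|=O(\delta)$; such an $x'$ automatically lies on a periodic flat (a compact orbit of a regular one-parameter subgroup is always contained in a compact $A$-orbit), and distinct pairs $(F,Y')$ contribute through disjoint sets once $\delta$ is below the systole of the period lattices — bounded below in the cocompact case, and handled together with the cusp via reduction theory in the $\mathrm{SL}(d,\Z)$ case. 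I expect this closing lemma, and the near-wall analysis it forces, to be the main obstacle: off $\frak a^{++}$ the flow is only partially hyperbolic, so the whole argument must be confined to a subcone $\frak a^+_\varepsilon=\{\underline{\alpha}(Y)\ge\varepsilon\|Y\|\}$ and the complementary contribution shown negligible — which holds because the Cartan density $\xi(Y)\asymp\prod_{\alpha>0}\sinh(\alpha(Y))^{\dim\frak g_\alpha}$ satisfies $\int_{(\frak a^+\setminus\frak a^+_\varepsilon)\cap B_{\frak a}(0,T)}\xi\,dY=O(e^{-c(\varepsilon)T})\vol(D_T)$, the linear form $2\rho$ attaining its maximum on the unit sphere of $\frak a^+$ at the strictly interior direction dual to $\rho$.

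\emph{Assembling.} Fix a smooth partition of unity $\{\psi_i\}$ by $\delta$-bumps on $\Gamma\backslash G/M$, mollify $f$ at scale $\delta$ (at cost $O(\delta|f|_{Lip})$ in sup-norm), and set
$$\Sigma_T(f):=\sum_i\int_{\frak a^+_\varepsilon\cap B_{\frak a}(0,T)}\xi(Y)\Big(\int_{\Gamma\backslash G/M}(f\psi_i)(x)\,\psi_i(xe^Y)\,dm_\Gamma(x)\Big)dY.$$
Evaluating the inner integrals by the mixing bound gives $\Sigma_T(f)=\kappa_\delta\,m_\Gamma(f)\,\vol(D_T)+O\big(\kappa_\delta\,\vol(D_T)(\delta+\delta^{-2\ell}e^{-u_0\varepsilon T})|f|_{Lip}\big)$, where $\kappa_\delta$ equals $\delta^{\dim\frak a+2\dim\frak n^+}$ up to a fixed constant, and where replacing $\int_{\frak a^+_\varepsilon\cap B_{\frak a}(0,T)}\xi$ by $\vol(D_T)$ costs only a factor $1+O(e^{-uT})$ by the Cartan integration formula and the near-wall estimate. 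Evaluating the same inner integrals via the local picture and then integrating in $Y$ — which replaces each genuine period $Y'\in\Lambda(F)$ by an $O(\delta^{\dim\frak a})$-ball about itself and, after summing over $i$, reassembles $\int_F f\,dL_F$ — gives $\Sigma_T(f)=\tilde\kappa_\delta\sum_{F\in C(A)}|\Lambda(F)\cap\frak a^+_\varepsilon\cap B_{\frak a}(0,T)|\int_F f\,dL_F+(\text{comparable errors})$, where $\tilde\kappa_\delta$ is again $\delta^{\dim\frak a+2\dim\frak n^+}$ up to a fixed factor; that $\kappa_\delta$ and $\tilde\kappa_\delta$ must agree up to a constant independent of $f,T,\delta$ is checked on the case $f\equiv1$. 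Equating the two evaluations, dividing by $\kappa_\delta$, restoring $B_{\frak a}^{++}(0,T)$ in place of its $\varepsilon$-truncation at cost $O(e^{-c(\varepsilon)T})\vol(D_T)$, and finally choosing $\delta=e^{-cT}$ with $c>0$ small enough that $\delta$ and $\delta^{-2\ell}e^{-u_0\varepsilon T}$ are both $O(e^{-uT})$, yields Theorem~\ref{thm-introequid}; the resulting $C_G>0$ is independent of $f$ and $T$, and specialising $f\equiv1$ recovers Theorem~\ref{corol-counting} with the same $C_G$ and $u$.
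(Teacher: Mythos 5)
Your route (exponential mixing of the $A$-action plus a quantitative closing lemma and a $\delta$-thickening, in the style of Margulis) is genuinely different from the paper, which instead follows Roblin: Hopf coordinates, Patterson--Sullivan/Haar disintegration, the Gorodnik--Nevo angular equidistribution of lattice points, and the configuration criterion of Proposition \ref{prop-lemroblin} identifying lattice elements with loxodromic elements and hence with periodic tori. The mixing approach could in principle work in the cocompact case, but as written it has real gaps even there: the higher-rank closing lemma deep inside the chamber, the uniqueness/disjointness of the shadowing periodic flats (so that each torus is counted with multiplicity exactly $|\Lambda(F)\cap B_{\frak a}^{++}(0,T)|$ and thickened neighbourhoods of distinct tori do not overlap), and the restoration of $B_{\frak a}^{++}(0,T)$ from the truncated cone $\frak a^+_\varepsilon$ are all asserted rather than proved. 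For the last point your near-wall estimate only controls the Haar volume of $D_T^{\varepsilon T}$; what is needed is a bound on the \emph{number of periods} of periodic tori lying near the walls, i.e.\ a lattice-point count in the spirit of Lemma \ref{count-reste}, which does not follow from the volume bound alone.

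The more serious gap is in the non-cocompact case $\Gamma<\mathrm{SL}(d,\Z)$. Your parenthetical claim that a compact orbit of a regular one-parameter subgroup is always contained in a compact $A$-orbit is exactly Selberg's lemma (Lemma \ref{lem-selberg}) and is \emph{false} here: by Lemma \ref{lem_cpt}, the $A$-orbit $F_{[\gamma]}$ attached to a loxodromic $\gamma\in\mathrm{SL}(d,\Z)$ is compact if and only if the characteristic polynomial of $\gamma$ is irreducible over $\Q$, and loxodromic integer matrices with reducible characteristic polynomial abound; each such $\gamma$ produces a closed $\exp(\R\lambda(\gamma))$-orbit lying on a \emph{non-compact} flat. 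Your closing-lemma evaluation of $\Sigma_T(f)$ therefore picks up these spurious contributions, and removing them requires showing they are negligible, which is precisely the content of Lemma \ref{lem_noncpc} via the power-saving count of integer points on subvarieties (Proposition \ref{prop_subvar}) --- an ingredient absent from your argument. In addition, your assembly uses a partition of unity and a mollification on all of $\Gamma\backslash G/M$, but in the finite-volume case the injectivity radius degenerates in the cusp and, more importantly, one must prove non-escape of mass for the measures averaged over periodic tori themselves (Lemma \ref{lem_nonescape}, resting on the key comparison of Theorem \ref{lem_key} between $F\cap\Omega(e^{\vartheta t})^c$ and $F\cap\Omega(e^{\vartheta t/8},e^{\vartheta t})$); a spectral gap statement with ``standard escape-of-mass and Sobolev estimates'' does not address this, since the escaping mass in question belongs to the periodic-tori measures, not to the Haar measure.
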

The asymptotic behavior of the main term as $T$ tends to infinity is
$\vol(D_T)\sim C_0 T^{\frac{ \dim A -1}{2} } e^{\delta_0 T} $, where $\delta_0>0$ is determined by the root system of $\mathfrak{g}$, the Lie algebra of $G$ and $C_0>0$ is given by the Borel--Harish-Chandra formula.
\begin{figure}[h!]
    \centering
    \includegraphics[width=10cm]{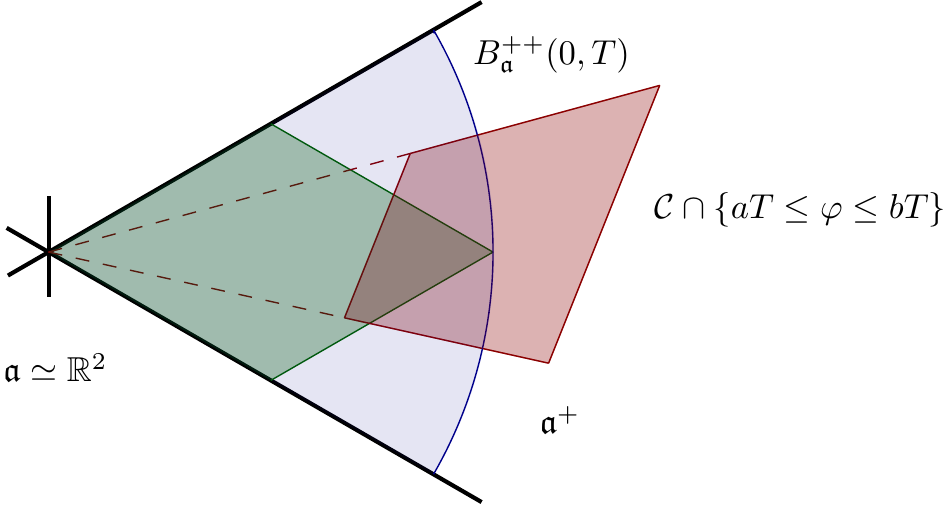}
    \caption{This is a positive Weyl chamber for $\mathrm{SL}(3,\mathbb{R})$ and $T>0$ is large. 
    In blue, our counting region $B_{\frak a}^{++}(0,T)$.
    In green, Deitmar's \cite{deitmar} counting region.
    In red, Guedes Bonthonneau--Guillarmou--Weich's \cite{bonthonneau_srb_2021} counting region where $\mathcal{C}$ is a convex cone strictly inside $\mathfrak{a}^{++}$ delimitted by the red dashed lines, $0<a<b$ are real numbers and $\varphi$ is a linear form strictly positive in $\mathfrak{a}^+$.
    }
    \label{fig:domains}
\end{figure}

\begin{rem}
Note that in the rank one case, any flat periodic torus $F$ corresponds to a primitive closed geodesic.
Furthermore, both $\vol_{\frak a}(F)$ and its smallest regular period correspond to the length of the geodesic. 
Therefore Theorem \ref{corol-counting} is a higher rank version of the prime geodesic theorem \eqref{equ-prime}. 

1. In the compact case, Spatzier in his thesis \cite{spatzier83} computed using the root spaces of the Lie algebra of $G$, the topological entropy of every regular Weyl chamber flows: right action of $\exp(\R Y)$ on $\Gamma \backslash G/M$, where $Y\in \mathfrak{a}^{++}$ is non zero. 
Furthermore, $\delta_0$, the exponential growth rate of $\vol(D_T)$, is a sharp upper bound of the topological entropy of regular Weyl chamber flow.
He also proved that $\delta_0$ is equal to the exponential growth rate of 
the sum over maximal flat periodic tori of smallest regular period less than $t$ of $\vol_{\frak a}(F)$, as $t$ goes to infinity. 
Knieper \cite{knieper2005uniqueness} studied the equidistribution of periodic orbits of regular Weyl chamber flows in the same setting. 
He obtained an equidistribution formula towards the measure of maximal entropy of the most chaotic regular Weyl chamber flow, the one whose topological entropy is $\delta_0$. 

1.1 In the finite volume case, Oh \cite{oh} proved that the number of maximal flat periodic tori of bounded volume is always finite. 

2. In the compact case, Deitmar \cite{deitmar} used a Selberg trace formula and methods from analytical number theory to prove a similar version of Theorem \ref{corol-counting} with a different summation region in the Weyl chamber, the one in green in Figure \ref{fig:domains}. 
He later on generalized this counting result to the non compact finite volume case $\mathrm{SL}(3,\Z) \backslash \mathrm{SL}(3,\R)$, in a joint work with Gon and Spilioti in \cite{deitmar_prime_2019}. 

3. Recently and for the compact case, Guillarmou--Guedes Bonthonneau--Weich \cite[Theorem 2, equation (0.3)]{bonthonneau_srb_2021} obtained an equidistribution formula. 
The region where they count the multiplicity of periodic tori is defined using any convex non degenerate closed cone $\mathcal{C}$ stricly inside $\mathfrak{a}^{++}$, any choice of positive numbers $0<a<b$ and any linear form $\varphi$ that takes positive values in $\mathfrak{a}^+$ as shown in red in Figure \ref{fig:domains}.
They take a different approach, relying on the spectral properties of the $A$-action via their previous study of Ruelle-Taylor resonances with Hilgert \cite{bonthonneau_taylor_2020}.

4. Because our counting region is different (shown in blue in Figure \ref{fig:domains}), our first asymptotic term is new in higher rank. 
It would be interesting to see whether we could generalize our methods to the regions in red and green in order to recover the results of \cite{deitmar}, \cite{bonthonneau_srb_2021}.
None of the above works provides estimates on the speed of convergence.

5. For the non-compact, finite volume case $\mathrm{SL}(3,\mathbb{Z}) \backslash \mathrm{SL}(3,\R)$, Einsiedler--Lindenstrauss--Michel--Venkatesh in \cite{einsiedler_distribution_2011} use the classification of diagonal invariant measures and subconvexity estimates to deduce an equidistribution result for the following collection of tori. 
They take sets of flat periodic tori of the same discriminant and prove that the sum of Lebesgue measures on those tori, normalized by the total mass, equidistributes towards the quotient measure of the Haar measure as the discriminant goes to infinity.

6. 
Using a dictionary between closed geodesic orbits of hyperbolic surfaces and objects coming from number theory, \cite{sarnak} could deduce counting results on class numbers of totally real orders. Later, \cite{deitmar_prime_2019} did the same for $\mathrm{SL}(3,\mathbb{Z})$.
It would be interesting to use a dictionary between compact periodic $A$-orbits and number theory to deduce a number theoretic version of Theorem \ref{corol-counting}.

7. Since the multiplicity term $ \vert \Lambda(F) \cap B_{\frak{a}}^{++}(0,T) \vert$ is at most polynomial and the asymptotic behavior of $\vol(D_T)$ is dominated by the exponentiel term, we expect that the equidistribution formula holds if we replace this multiplicity term by $1$ and sum over the flat periodic tori whose smallest regular period is of length less than $T$. 
\end{rem}

\subsection{On the proof of the main theorem}

The proof of the equidistribution result in the cocompact case follows Roblin's proof \cite{roblin} closely, where he proved counting and equidistribution results for some infinite covolume hyperbolic manifolds. We replace all the ingredients from hyperbolic geometry with their higher rank counterparts, such as Hopf coordinates, Patterson-Sullivan measures, the angular distribution of lattice points. One significant difference in higher rank cases is that we need to carefully treat the boundary of the Weyl chamber, while in the hyperbolic case, it is just a point.

For the non-cocompact case of finite index subgroups $\Gamma <\mathrm{SL}(d,\Z)$, we first prove the equidistribution (Theorem \ref{thm-introequid}) on compact sets of $\Gamma \backslash G/M$.
Then we prove the non-escape of mass for compact periodic $A$-orbits. 
The critical observation is that there exist two large compact sets $\Omega_T,\Omega_T'\subset \Gamma\backslash G/M$ depending on the parameter $T$ such that for any compact periodic $A$-orbit $F$ with a regular period of length less than $T$, the measure of $F$ outside the compact set, $F\cap \Omega_T^c$, is bounded by its measure inside the compact set, $F\cap (\Omega_T \setminus \Omega_T')$. 
Equidistribution is known for functions supported on $\Omega_T$. 
Therefore we bound the mass outside the compact set $\Omega_T$ of the measure in Theorem \ref{thm-introequid} (an average of measures on compact periodic $A$-orbits with a regular period of length less than $T$) by the Haar measure of $\Omega_T \setminus \Omega_T'\subset (\Omega_T')^c$, which decays exponentially fast as $T$ goes to infinity due to the choice of $\Omega_T'$.

\subsection*{Organization of the paper}

In Section 2, we gather the basic facts and preliminaries about semisimple real Lie groups, the Furstenberg boundary, Hopf coordinates, higher rank Patterson-Sullivan measure, volume estimates and the angular distribution of lattice points.

In Section 3, we prove a lemma about comparing the angular part of an element in $G$ with its contracting and repelling fixed points in the Furstenberg boundary. In Section 4, we relate loxodromic elements and periodic tori.

In Section 5 and 6, we prove Theorem \ref{thm-introequid} for cocompact lattices and for $\Gamma<\sld$ acting on $G/M$ freely, respectively.

In Appendix A, we introduce the language of orbifolds to treat the general case of finite index subgroups of $\sld$.

In Appendix B, we follow the works of Gorodnik-Nevo \cite{gorodnikCountingLatticePoints2012} \cite{gorodnikLiftingRestrictingSifting2012} and explain why their results work in our setting.

\paragraph{Notation.} In the paper, given two real functions $f$ and $g$, we write $f \ll g$ or $f=O(g)$ if there exists a
constant $C > 0$ only depending on $G, \Gamma$ such that $f \leq Cg$. We write $f \asymp g$ if $f \ll g$ and $g\ll f$ .

\section*{Acknowledgement}
We would like to thank Alex Gorodnik for explaining his results with Amos Nevo, Mark Pollicott for telling the first author to look at the counting problem in Ralf Spatzier's thesis and Viet Dang because his inspirational habilitation defense sparked the collaboration.
Part of this work was done while two authors were at the \textit{Hyperbolic dynamical systems and resonances} conference in Porquerolles and the \textit{Anosov}$^3$ mini-workshop in Oberwolfach; we would like to thank the organizers Colin Guillarmou, Benjamin Delarue (formerly K\"{u}ster), Maria Beatrice Pozzetti, Tobias Weich, and the hospitality of the centres. We would also like to thank the hospitality of Institut für Mathematik
Universität Zürich and Fakultät für Mathematik und Informatik Universität Heidelberg for each time the authors visit each other.
The first author acknowledges funding by the Deutsche Forschungsgemeinschaft (DFG, German Research Foundation) – 281869850 (RTG 2229). The second author acknowledges the funding by Alex Gorodnik's SNF grant 200021--182089.

\section{Background}

\begin{center}
\fbox{
\begin{minipage}{.7 \textwidth}
In the whole article, $G$ is a semisimple, connected, real linear Lie group, without compact factor.
\end{minipage}
}
\end{center}

Classical references for this section are \cite{thirion}, \cite{guivarc2012compactifications}, \cite{helgason1978differential}. One also may refer to the exposition in \cite{dg20}.

Let $K$ be a maximal compact subgroup of $G$. Then $X= G/K$ is a globally symmetric space of non-compact type and $G= \mathrm{Isom}_0(X)$. 
We fix a base point $o \in X$ such that $K= \mathrm{Stab}_G(o)$. 
For every $x\in X$, we denote by $K_x:= \mathrm{Stab}_G(x)$. Note that for any $h_x \in G$ such that $h_x o=x$, then $K_x=h_xK h_x^{-1}$, independently of the choice of $h_x$.

\paragraph{Geometric Weyl chambers}
Denote by $\frak{g}$ (resp. $\frak{k}$) the Lie algebra of $G$ (resp. $K$) and consider the Cartan decomposition in the Lie algebra $\frak{g}= \frak{k} \oplus \frak{p}$.
Let $\frak{a} \subset \frak{p}$ be a \emph{Cartan subspace} of $\frak{g}$.
Then $A:= \exp(\frak{a})$ is a maximal $\mathbb{R}$-split torus of $G$.
Denote by $M:= Z_K(A)$ the centralizer of $A$ in $K$.
The \emph{real rank} of $G$, denoted by $r_G$, is equal to $\dim_{\mathbb{R}} \frak{a}$.
We say that $G$ is \emph{higher rank} when $r_G \geq 2$.

For any linear form $\alpha$ on $\frak{a}$, set $\frak{g}_\alpha := \lbrace v \in \frak{g} \; \vert \; \forall u \in \frak{a}, \; [u,v]=\alpha(u) v \rbrace.$
The \emph{set of restricted roots} is denoted by $\Sigma:= \lbrace \alpha \in \frak{a}^*\setminus \lbrace 0\rbrace \; \vert \; \frak{g}_{\alpha} \neq \lbrace 0\rbrace \rbrace.$
The kernel of each restricted root is a hyperplane of $\frak{a}$. The \emph{Weyl chambers} of $\frak{a}$ are the connected components of $\frak{a} \setminus \cup_{\alpha\in \Sigma} \ker(\alpha)$. 
We choose a \emph{positive Weyl chamber} by fixing such a connected component and denote it (resp. its closure) by $\frak{a}^{++}$ (resp. $\frak{a}^+$).
In the Lie group, we denote by $A^{++}:=\exp(\frak{a}^{++})$ (resp. $A^+:= \exp(\frak{a}^+)$). Denote by $N_K(A)$ the normalizer of $A$ in $K$. The group $N_K(A)/M$ is the \emph{Weyl group}, denoted by $\calW$. The Weyl group also acts on the Lie algebra $\frak a$ by the adjoint action, which acts transitively on the set of connected components of $\frak{a} \setminus \cup_{\alpha\in \Sigma} \ker(\alpha)$.

A \emph{geometric Weyl chamber} is a subset of $X$ of the form $g. (A^+ o)$, where $g \in G$.
The \emph{base point} of the geometric Weyl chamber $g A^+o$ is the point $go \in X$.
In \cite[§2]{dg20}, we obtained the following identifications between the space of Weyl chambers and the set of geometric Weyl chambers of $X$,
\begin{equation}\label{eq-geomWC}
 G/M \simeq G. (A^+ o).
\end{equation}

\paragraph{Cartan projection}
\begin{defin}\label{defin-cartan-proj}
For any $g \in G$, we define, by Cartan decomposition, a unique element $\underline{a}(g) \in \frak{a}^+$ such that $g \in K \exp(\underline{a}(g)) K$.
The map $\underline{a}:G \rightarrow \frak{a}^+$ is called the \emph{Cartan projection}.
\end{defin}
The Cartan projection allows to define an $\frak{a}^+$-valued function on $X \times X$, denoted by $d_{\underline{a}}$. 
For every $x,y\in X$, any choice $h_x,h_y \in G$ such that $h_x o =x$ and $h_yo=y$, we set
$$d_{\underline{a}} (x,y) := \underline{a}(h_x^{-1} h_y).$$
This function does not depend on the choice of $h_x$ and $h_y$ up to right multiplication by $K$.
By \cite[Chapter V, Lemma 5.4]{helgason1978differential}, we endow $\frak{a}$ with a scalar product coming from the Killing form on $\frak{g}$.
We denote by $\Vert . \Vert$ the associated norm on $\frak{a}$ and define the $G$-invariant riemannian distance on $X$
$$ \dd_X(x,y):= \Vert d_{\underline{a}}(x,y) \Vert .$$
The following fact is standard for symmetric spaces of non-compact type.
\begin{fait}\label{fait-a-dist}
For every $x,y\in X$, there is a geometric Weyl chamber based on $x$ containing $y$.
If furthermore, $d_{\underline{a}}(x,y)\in \frak{a}^{++}$, such a geometric Weyl chamber is defined by a unique element $h_{xy}M \in G/M$ such that $h_{xy}o=x$ and $h_{xy}  e^{ d_{\underline{a}}(x,y) } o = y$.
\end{fait}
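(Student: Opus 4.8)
The plan is to deduce everything from the Cartan ($KAK$) decomposition of the single group element $h_x^{-1}h_y$, where $h_x,h_y\in G$ are fixed with $h_xo=x$ and $h_yo=y$. First, for existence, I would set $a:=d_{\underline a}(x,y)=\underline a(h_x^{-1}h_y)\in\frak a^+$, choose $k_1,k_2\in K$ with $h_x^{-1}h_y=k_1\exp(a)k_2$, and put $h_{xy}:=h_xk_1$. Since $k_1,k_2\in K=\mathrm{Stab}_G(o)$, this gives $h_{xy}o=x$ and $h_{xy}\exp(a)o=h_xk_1\exp(a)k_2o=h_yo=y$, so $y=h_{xy}\exp(a)o\in h_{xy}A^+o$, a geometric Weyl chamber based at $x$. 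This already settles the first sentence of the Fact (no regularity assumption needed) and produces the candidate $h_{xy}M$ for the second.

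For uniqueness I would assume $a\in\frak a^{++}$ and take any $h,h'\in G$ with $ho=h'o=x$ and $h\exp(a)o=h'\exp(a)o=y$. Then $h_x^{-1}h\in K$ (it fixes $o$), while $h\exp(a)o=h_yo$ forces $h_y^{-1}h\exp(a)\in K$ and hence $h^{-1}h_y\in\exp(a)K$; thus $h_x^{-1}h_y=\ell_1\exp(a)\ell_2$ with $\ell_1:=h_x^{-1}h\in K$ and $\ell_2:=(h_y^{-1}h\exp(a))^{-1}\in K$, and similarly $h_x^{-1}h_y=\ell_1'\exp(a)\ell_2'$. So the matter reduces to the refined uniqueness of the Cartan decomposition in the regular case: if $\ell_1\exp(a)\ell_2=\ell_1'\exp(a)\ell_2'$ with $a\in\frak a^{++}$, then $\ell_1^{-1}\ell_1'\in M$. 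I would prove this by setting $u:=(\ell_1')^{-1}\ell_1\in K$ and $v:=\ell_2'\ell_2^{-1}\in K$, so that $u\exp(a)=\exp(a)v$; applying the Cartan involution $\theta$ (the identity on $K$, inverting $\exp(\frak a)$) gives also $u\exp(-a)=\exp(-a)v$, and multiplying the two relations yields $\exp(-2a)\,u\,\exp(2a)=u$, so $\mathrm{Ad}(u)(2a)=2a$ by injectivity of $\exp$ on $\frak p$, hence $\mathrm{Ad}(u)a=a$. Since $u\in K$, $\mathrm{Ad}(u)$ preserves $\frak p$ and, commuting with $\mathrm{ad}(a)$, preserves $\ker(\mathrm{ad}\,a)=\frak g_0=\frak m\oplus\frak a$ (the equality using regularity of $a$); hence $\mathrm{Ad}(u)$ preserves $\frak g_0\cap\frak p=\frak a$, so $u\in N_K(A)$. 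Then $uM\in\calW$ fixes $a\in\frak a^{++}$, and since $\calW$ acts simply transitively on Weyl chambers (in particular freely on $\frak a^{++}$) we get $u\in M$, i.e. $\ell_1^{-1}\ell_1'\in M$, so $h^{-1}h'\in M$ and $hM=h'M$. Taking $h=h_{xy}$ gives uniqueness of $h_{xy}M$; via the identification $G/M\simeq G.(A^+o)$ of \eqref{eq-geomWC} this is exactly the uniqueness of the geometric Weyl chamber based at $x$ through $y$.

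The only step carrying real content is the refined $KAK$-uniqueness, equivalently the observation that $u\in K$ with $\mathrm{Ad}(u)$ fixing a regular vector of $\frak a$ must lie in $N_K(A)$ and hence in $M$; this is the point I would expect to be the main (though still routine and classical, cf.\ \cite[Ch.~IX]{helgason1978differential}) obstacle. Everything else is bookkeeping with the Cartan decomposition and the transitive action of $G$ on $X=G/K$ with stabilizer $K$.
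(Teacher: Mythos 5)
Your proof is correct and takes essentially the same route as the paper: existence via a Cartan decomposition of $h_x^{-1}h_y$ with $h_{xy}:=h_xk$, and uniqueness in the regular case reduced to the refined $KAK$ uniqueness. The only difference is that the paper simply invokes the standard fact that for a regular Cartan projection the $K$-factors are determined up to right multiplication by $M$, whereas you prove that fact from scratch (Cartan involution, $\mathrm{Ad}(u)$ fixing a regular element of $\frak{a}$, free action of $\calW$ on $\frak{a}^{++}$), which is a correct, self-contained substitute for the citation.
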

\begin{proof}
Fix $x,y \in X$ and choose $h_x,h_y \in G$ such that $h_x o =x$ and $h_y o =y$.
By Cartan decomposition of $h_x^{-1}h_y$ and Definition \ref{defin-cartan-proj}, there exists $k,l \in K$ such that 
$$ h_x^{-1}h_y = k e^{d_{\underline{a}}(x,y) }l^{-1} .$$
Set $h_{xy}:= h_x k$. Since $K$ fixes $o$, we deduce that $h_{xy}o=x$ and $h_{xy}  e^{ d_{\underline{a}}(x,y) } o = y$.

For all $k',l' \in K$, the elements $h_x k'$ and $h_y l'$ also respectively send $o$ to $x$ and $y$.
Note that we have another Cartan decomposition of $(h_x k')^{-1}h_y l'$ given by $(k')^{-1}  k \;  e^{d_{\underline{a}}(x,y) } \; l^{-1}l' $. 
Applying the same construction, we still recognize that $h_x k' (k')^{-1} k=h_{xy}$.
Hence $h_{xy}$ does not depend on the choice of representatives $h_x$ and $h_y$, and it depends on the choice of $k,l \in K$ in the Cartan decomposition. 

It remains to show that $h_{xy}$ is unique up to right multiplication by elements of $M$ when $d_{\underline{a}}(x,y)\in \frak{a}^{++}$.
In this case, the elements $k,l \in K$ given by Cartan decomposition are defined up to right multiplication by elements in $M$. 
Hence the fact.
\end{proof}

\paragraph{Jordan projection}
Denote by $\Sigma^+$ the subset of roots which take positive values in the positive Weyl chamber. 
It allows to define the following nilpotent subalgebras $\frak{n}:= \oplus_{\alpha \in \Sigma^+} \frak{g}_{\alpha}$ and $\frak{n}^-=\oplus_{\alpha \in \Sigma^+} \frak{g}_{-\alpha}$.
Denote by $N:= \exp (\frak{n})$ and $N^-:=\exp(\frak{n}^-)$ two maximal unipotent subgroups of $G$.

By Jordan decomposition, every element $g \in G$ admits a unique decomposition $g=g_e g_h g_u$ where $g_e, g_h$ and $g_u$ commute and such that $g_e$ (resp. $g_h, g_u$) is conjugated to an element in $K$ (resp. $A^+$, $N$). 
The element $g_e$ (resp. $g_h$, $g_u$) is called the \emph{elliptic part} (resp. \emph{hyperbolic part}, \emph{unipotent part}) of $g$.

\begin{defin}\label{def-Jordan-proj}
For any element $g \in G$, there is a unique element $\lambda(g) \in \frak{a}^+$ such that the hyperbolic part $g_h$ is conjugated to $\exp (\lambda(g)) \in A^+$.
The map $\lambda : G \rightarrow \frak{a}^+$ is called the \emph{Jordan projection}.

\noindent Any element $g \in G$ such that $\lambda(g) \in \frak{a}^{++}$ is called \emph{loxodromic.}

\noindent Denote by $G^{lox}$ the set of loxodromic elements of $G$ and for any subset $S\subset G$, denote by $S^{lox}:= S \cap G^{lox}$.
\end{defin}
Equivalently (Cf. §4 \cite{dang20}), loxodromic elements are conjugated in $G$ to elements in $A^{++}M$.

\paragraph{Asymptotic Weyl chambers}
Denote by $P:=MAN$ and by $\mathcal{F}:=G/P$ the \emph{Furstenberg boundary}.
We recall the interpretation of $\mathcal{F}$ in terms of asymptotic Weyl chambers.

Following the exposition in \cite{dg20}, we introduce the following equivalence relation between geometric Weyl chambers:
$$ g_1 A^+ o \sim g_2 A^+ o \Longleftrightarrow \sup_{a \in A^{++}} \dd_X( g_1 a o, g_2 a o ) < + \infty.$$
Equivalence classes for this relation are called \emph{asymptotic Weyl chambers}.
Denote by $\eta_0$ (resp. $\zeta_0$) the asymptotic Weyl chamber of $A^+o$ (resp. $(A^+)^{-1} o$).
The set of asymptotic Weyl chambers identifies with the Furstenberg boundary (see for instance \cite[Fact 2.5]{dg20} for a proof),  
\begin{equation} \label{eq-asymptotic-furstenberg}
 \mathcal{F} \simeq \big( G. (A^+o)/\sim  \big) \simeq K/M \simeq K. \eta_0 .
\end{equation}
Remark that $\zeta_0= k_\iota \eta_0$ where $k_\iota \in N_K(A)$ satisfies $k_\iota A^+ k_\iota^{-1} = (A^+)^{-1}$.
Furthermore, $\mathrm{Stab}_G(\eta_0)=P$ and $\mathrm{Stab}_G(\zeta_0) = MAN^-$.

In the remainder of the article, we identify $G. (A^+ o ) / \sim$ with $\mathcal{F}$ and $G . (A^+ o)$ with $G/M$.
We prove that a geometric Weyl chamber is uniquely determined by its base point in $X$ and the asymptotic Weyl chamber it represents.
\begin{fait}\label{fait-asymptotic-point}
The following $G$-equivariant map is a diffeomorphism:
\begin{align*}
    G/M & \overset{\sim}{\longrightarrow} X \times \mathcal{F} \\
    gM & \longmapsto (go, g \eta_0).
\end{align*}
\end{fait}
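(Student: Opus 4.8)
The plan is to exhibit an explicit inverse to the map $gM \mapsto (go, g\eta_0)$ and to check smoothness in both directions. First I would show the map is well defined: if $g_1 M = g_2 M$ then $g_2 = g_1 m$ for some $m \in M$, and since $M \subset K$ fixes $o$ we get $g_2 o = g_1 o$, while $m \in P = MAN$ fixes $\eta_0$, so $g_2 \eta_0 = g_1 \eta_0$. The map is manifestly $G$-equivariant for the diagonal action on $X \times \calF$. For surjectivity and injectivity I would use transitivity: $G$ acts transitively on $X$, and the stabilizer $K = \mathrm{Stab}_G(o)$ acts transitively on $\calF \simeq K/M$ by \eqref{eq-asymptotic-furstenberg}; so given $(x,\eta) \in X \times \calF$, pick $h \in G$ with $ho = x$, then $h^{-1}\eta \in \calF = K.\eta_0$, so $h^{-1}\eta = k\eta_0$ for some $k \in K$, and $g := hk$ satisfies $go = hko = ho = x$ and $g\eta_0 = hk\eta_0 = \eta$. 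This proves surjectivity.

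For injectivity, suppose $g_1 o = g_2 o$ and $g_1 \eta_0 = g_2 \eta_0$. Set $g := g_1^{-1} g_2$; then $g o = o$, so $g \in K$, and $g \eta_0 = \eta_0$, so $g \in \mathrm{Stab}_G(\eta_0) = P = MAN$. Hence $g \in K \cap MAN$. Since the Iwasawa decomposition $G = KAN$ is a diffeomorphism onto the product, and $M \subset K$, one has $K \cap MAN = M$: indeed writing $g = m a n$ with $m \in M, a \in A, n \in N$, this is also the Iwasawa decomposition of $g \in K$, namely $g = g \cdot e \cdot e$, so by uniqueness $a = e$, $n = e$, $g = m \in M$. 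Therefore $g_1^{-1} g_2 \in M$, i.e. $g_1 M = g_2 M$, proving injectivity.

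It remains to check that both the map and its inverse are smooth, so that the continuous bijection is in fact a diffeomorphism. The map $G/M \to X \times \calF$ is induced by the smooth $G$-equivariant map $G \to X \times \calF$, $g \mapsto (go, g\eta_0)$, which descends to the smooth homogeneous space $G/M$ since $M$ is a closed subgroup; so it is smooth. For the inverse, one uses the construction above: the Iwasawa decomposition $G = KAN$ provides a smooth section, and combined with the smooth local sections of $G \to X = G/K$ and of $K \to K/M \simeq \calF$ one builds a smooth local section $s$ of $g \mapsto (go, g\eta_0)$ near any point, whence the inverse $(x,\eta) \mapsto s(x,\eta) M$ is smooth. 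A clean alternative is to invoke that a smooth $G$-equivariant bijection between homogeneous $G$-spaces of the same dimension whose differential at one point is invertible is a diffeomorphism; since everything is homogeneous it suffices to check that the differential at $eM$ is an isomorphism, and a dimension count ($\dim G/M = \dim X + \dim \calF$, i.e. $\dim \frak g - \dim \frak m = (\dim \frak g - \dim \frak k) + (\dim \frak k - \dim \frak m)$) together with injectivity of the differential — which follows from injectivity of the map near $eM$ — does it.

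The only genuinely delicate point is the identification $\mathrm{Stab}_G(\eta_0) = P$ and $K \cap P = M$; both are standard consequences of the Iwasawa decomposition and are quoted in the excerpt around \eqref{eq-asymptotic-furstenberg}, so I would simply cite them. Everything else is bookkeeping with transitive actions and smooth structures on homogeneous spaces.
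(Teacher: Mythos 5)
Your argument is correct, and for the set-theoretic part it is the same as the paper's: well-definedness and injectivity via $\mathrm{Stab}_G(o)=K$ together with $\mathrm{Stab}_K(\eta_0)=M$ (you re-derive the latter as $K\cap MAN=M$ from uniqueness in the Iwasawa decomposition, which the paper simply quotes through the identification $\calF\simeq K/M$), and surjectivity by exactly the same construction $g=h_x k_{x,\xi}$ using transitivity of $G$ on $X$ and of $K$ on $\calF$. Where you diverge is the diffeomorphism part: the paper reduces by $G$-equivariance to the point $M$ and computes tangent spaces explicitly, using the Bruhat decomposition $\frak g=\frak n\oplus\frak m\oplus\frak a\oplus\frak n^-$ with $T_{\eta_0}\calF=\frak n^-$ and the Iwasawa decomposition giving $T_oX\simeq\frak n\oplus\frak a$, so that $T_M(G/M)=\frak n\oplus\frak a\oplus\frak n^-\simeq T_oX\times T_{\eta_0}\calF$; you instead appeal to general homogeneous-space machinery (smooth local sections built from Iwasawa and the fibrations $G\to G/K$, $K\to K/M$, or the criterion for an equivariant smooth bijection). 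Both routes work, and yours avoids the explicit Lie-algebra bookkeeping at the cost of invoking more abstract theory. One small caution on your ``clean alternative'': for a general smooth map, injectivity near a point does \emph{not} imply injectivity of the differential there (think of $x\mapsto x^3$), so that step as stated is a gap; it is rescued here only because a $G$-equivariant map between homogeneous $G$-spaces has constant rank, and a constant-rank injective map is an immersion — or, more directly, by checking the differential at $eM$ exactly as the paper does with the Bruhat/Iwasawa decompositions.
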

\noindent For every $(x,\xi) \in X \times \mathcal{F}$, we denote by $g_{x,\xi} M \in G/M$ the geometric Weyl chamber of base point $x$ asymptotic to $\xi$.
\begin{proof}
Note that $M \in G/M$ corresponds to the geometric Weyl chamber $A^+o$, of base point $o$ and asymptotic Weyl chamber $\eta_0$. 
Since $\mathrm{Stab}_G(o)=K$ and $\mathrm{Stab}_K(\eta_0)=M$, we deduce that the map is injective.

Let us prove that the map is surjective.
Fix $x \in X$ and $\xi \in \mathcal{F}$. Choose a representative $h_x \in G$ such that $h_x o=x$. 
By the identification $\mathcal{F} \simeq K/M \simeq K. \eta_0 $ in \eqref{eq-asymptotic-furstenberg}
there exists $k_{x,\xi}M \in K/M$, such that
$ k_{x,\xi} \eta_0= h_x^{-1} \xi.$
Hence, by $G$-equivariance and since $k_{x,\xi} o=o$ we deduce that $h_x k_{x,\xi}M \in G/M$ maps to $(x,\xi)$.

By $G$-equivariance, we only need to prove that the map is a diffeomorphism at $M$.
By Bruhat decomposition (Cf. \cite[Chapter IX, Cor. 1.8, Cor. 1.9 ]{helgason1978differential}), $ g = \frak{n} \oplus \frak{m} \oplus \frak{a} \oplus \frak{n}^-$ and $T_{\eta_0} \mathcal{F} = \frak{n}^-$.
By Isawasa decomposition (Cf. \cite[Chapter IX, Thm 1.3]{helgason1978differential}) $G= NAK$, we deduce that $T_o X \simeq \frak{n} \oplus \frak{a}$.
Hence $T_M G/M = \frak{n} \oplus \frak{a} \oplus \frak{n}^- \simeq T_o X \times T_{\eta_0} \mathcal{F}$. 
\end{proof}

\paragraph{Busemann and Iwasawa cocycle}
For every $\xi \in \mathcal{F}$ and $g \in G$, consider, by Iwasawa decomposition $KAN$, the unique element $\sigma(g,\xi) \in \frak{a}$, called the \emph{Iwasawa cocycle}, such that if $k_\xi \in K$ satisfies $k_\xi \eta_0 = \xi$, then
\begin{equation}\label{eq-iwasawa-cocycle}
    g k_\xi \in K \exp( \sigma (g, \xi) ) N .
\end{equation}
The \emph{cocycle relation} holds (Cf. \cite[Lemma 5.29]{benoist-quint}) i.e. for all $g_1, g_2 \in G$ and $\xi \in \mathcal{F}$, then
\begin{equation}\label{eq-relation-Icocycle}
\sigma(g_1 g_2,\xi) = \sigma(g_1, g_2 \xi) + \sigma(g_2 , \xi).    
\end{equation}
Note that restricted to $K \times \mathcal{F}$, the Iwasawa cocycle is the zero function, i.e. for every $k \in K$ and $\xi \in \mathcal{F}$, then $\sigma(k,\xi)=0$.
This motivates the following Definition of the Busemann cocycle for two points of $X$ and an asymptotic Weyl chamber. 
\begin{defin}\label{defin-buseman}
For every $x,y \in X$ and $\xi \in \mathcal{F}$, we define the \emph{Busemann cocycle} by 
$$ \beta_{\xi} (x,y):= \sigma (h_x^{-1} h_y, h_y^{-1} \xi ) $$
independently of the choice of $h_x, h_y \in G$ such that $h_x o = x$ and $h_y o = y$.
\end{defin}
Remark that for every $x,y \in X$ and $\xi \in \mathcal{F}$, for all $g \in G$ and all $z \in X$,
\begin{align}\label{eq-relationBcocycle}
    \beta_{ \xi}(x, y )  &= \beta_{g\xi}(gx,gy) \\
    \beta_\xi (x,y) &= \beta_\xi(x,z) + \beta_\xi(z,y).
\end{align}
The first equation is the $G$-invariance of the formula, whereas the second is due to the cocycle relation of the Iwasawa cocycle.

\paragraph{Transverse points in $\cal{F}$}
The subset of \emph{ordered transverse pairs} of $\cal{F} \times \cal{F}$ is defined by the $G$-orbit
\begin{equation} \label{eq-defin-transverse-pts}
    \cal{F}^{(2)}:= \lbrace (g \eta_0,g\zeta_0) \; \vert \; g \in G \rbrace. 
\end{equation}
We say that $\xi, \eta \in \cal{F}$ are \emph{opposite} or \emph{transverse} if $(\xi,\eta) \in \cal{F}^{(2)}$.

In terms of asymptotic Weyl chambers, $\xi, \eta \in \cal{F}$ are opposite when there exists a geometric Weyl chamber $g. (A^+ o)$ asymptotic to $\xi$ such that $g. ( (A^+)^{-1}o )$ is asymptotic to $\eta$.
Note that (Cf. §3.2 \cite{thirion}) we have the following identifications
$$ \cal{F}^{(2)} \simeq G/AM .$$
\begin{defin}\label{defin-max-flat}
For every $(\xi,\eta)\in \cal{F}^{(2)}$, for any choice $g_{\xi,\eta} \in G$ such that $g_{\xi,\eta} (\eta_0,\zeta_0)= (\xi,\eta)$, we denote by 
$$(\xi \eta)_X:= g_{\xi,\eta} . (Ao)$$
the associated maximal flat in the symmetric space $X$.

For every $(x,\xi) \in X \times \cal{F}$,  we denote by 
$\xi_x^\perp \in \cal{F}$ the unique opposite point to $\xi$ such that $x \in (\xi \xi_x^\perp )_X$.
Equivalently, $\xi_x^\perp:= g_{x,\xi}\zeta_0$, where $g_{x,\xi}M\in G/M$ corresponds (Cf. Fact \ref{fait-asymptotic-point}) to the geometric Weyl chamber of base point $x$ asymptotic to $\xi$.
\end{defin}
\begin{rem}\label{rem-zeta-eta}
Note that $(\zeta_0)_o^\perp = \eta_0$ and vice-versa.
\end{rem}

\paragraph{Hopf coordinates}
Let $\cal{H}$ be the Hopf coordinate map of $G/M$ (Cf. \cite[Chapter 8, §8.G.2]{thirion} or \cite{dg20})
\begin{align*}
\cal{H}: G/M &\rightarrow \calF^{(2)}\times \frak a\\
gM &\mapsto (g\eta_0,g\zeta_0,\sigma(g,\eta_0)).
\end{align*}
Hopf coordinates are left $G$-equivariant and right $A$-equivariant in the following sense: 
\begin{itemize}
    \item[(i)] the left action of $G$ on $G/M$ reads in those coordinates equivariantly on $\cal{F}^{(2)}$ and using the Iwasawa cocycle as follows. For all $h\in G$ and $(\xi,\eta,Y)\in\cal{F}^{(2)}\times \frak a$,
\begin{equation}\label{equ-hxi}
h(\xi,\eta,Y)=(h\xi,h\eta, Y+\sigma(h,\xi)).
\end{equation}
    \item[(ii)] the right action of $A$ on $G/M$ reads for all $(\xi,\eta,Y) \in \cal{F}^{(2)} \times \frak{a}$ and $a \in A$ by keeping the first two coordinates constant and translating the last one by $\log(a)$ 
    $$\cal{H}( \cal{H}^{-1}(\xi,\eta,Y) a  ) = (\xi,\eta, Y + \log(a)) .$$
\end{itemize}

Using the geometric Weyl chamber interpretation and the Busemann cocycle notations,
the Hopf map reads the same as in Roblin's work \cite{roblin}:
\begin{equation}\label{equ_hopfmap}
\begin{split}
    X \times \calF & \longrightarrow \calF^{(2)} \times \frak{a} \\
    (x,\xi) & \longmapsto (\xi, \xi_x^\perp , \beta_{\xi}(o,x) ).
\end{split}
\end{equation}
This translated map is also left $G$-equivariant in the sense that for every $g\in G$ and every $(x,\xi) \in X \times \calF$, using the cocycle relation \eqref{eq-relationBcocycle}, the element $(gx,g\xi)$ has Hopf coordinates
$$(g \xi, g \xi_x^\perp , \beta_{g \xi}(o,go) + \beta_{\xi}(o,x) ).$$
Note that $\beta_{g \xi} (o,go) = \sigma(g,\xi)$, therefore the notations are consistent.

\subsection{The Furstenberg boundary}
\paragraph{Representations of a semisimple Lie group}
Let us first recall a few facts about representations of a semisimple Lie group.
Let $(V,\rho)$ be a representation of $G$ into a real vector space of finite dimension.
For every real character $\chi : \frak a \rightarrow \mathbb{R}$, we denote by
$$ V_\chi := \lbrace v \in V \; \vert \; \rho(u)v=\chi(u)v, \; \forall u \in \frak a \rbrace $$
the associated vector space. 
The set of \emph{restricted weights} is the subset 
$$ \Sigma(\rho):= \lbrace \chi \; \vert \; V_\chi \neq \{ 0\} \rbrace. $$
They are partially ordered using the positive Weyl chamber as follows.
$$ (\chi_1 \leq \chi_2) \Leftrightarrow ( \chi_1 (u) \leq \chi_2(u), \; \forall u \in \frak a^+  ). $$
When the representation $\rho$ is irreducible, the set of restricted weights admits a maximum, called the \emph{maximal restricted weight}.
The irreducible representation $\rho$ is \emph{proximal} when the subspace of the maximal restricted weight is a line.

\paragraph{Restricted weights of the fundamental representations}
For the adjoint representation, the set of restricted weights coincides with the set of restricted roots $\Sigma$. Denote by $\Sigma^+$ the set of positive restricted roots and by $\Pi \subset \Sigma^+$ the set of simple roots. 
Tits (\cite[Lemma 6.32]{benoist-quint}) proved that for every $\alpha\in \Pi$, there exists an irreducible and proximal representation $(\rho_\alpha,V^\alpha)$ of $G$ such that the restricted weights are in 
\begin{equation}\label{eq-restricted-weights}
\bigg\lbrace \chi^\alpha,\; \chi^\alpha - \alpha ,\; \chi^\alpha -\alpha - \sum_{\beta \in \Pi } n_\beta \beta \; \bigg\vert \; n_\beta \in \mathbb{Z}_+ \bigg\rbrace.
\end{equation}
Furthermore, the maximal weights $(\chi^\alpha)_{\alpha\in \Pi}$ of these representations form a basis of $\frak{a}^*$.

\paragraph{Distances in the projective space}
For every $\alpha\in \Pi$, we choose a Euclidean norm $\Vert . \Vert$ on $V^\alpha$ such that the elements in $\rho_\alpha(A)$ (resp. $\rho_\alpha(K)$) are symmetric (resp. unitary).
Note that $\Vert \rho_\alpha(a) \Vert=\exp( \chi^\alpha(\log a))$  for all $a\in A^+$.
Abusing notation, we denote by $\Vert . \Vert$ the induced  Euclidean norm on $V^\alpha \wedge V^\alpha$.
Remark that for all $a\in A^+$,
\begin{equation}\label{norm-wedge}
     \Vert \wedge_2 \rho_\alpha(a) \Vert =\exp( (2\chi^\alpha- \alpha)\log a) .
\end{equation}
We define the distance in the projective space for all $x,y\in\mathbb{P}(V^\alpha)$ as follows,
\begin{equation}\label{defin-dist-proj}
\dd (x,y):= \frac{\Vert v_x \wedge v_y \Vert}{\Vert v_x \Vert. \Vert v_y\Vert}
\end{equation}
independently of the choice of $v_x, v_y \in V$ such that $x=\mathbb{R}v_x$ and $y=\mathbb{R}v_y$.
For all $x \in \mathbb{P}(V^\alpha)$ and $\varepsilon \in (0,1]$, denote by $B(x,\varepsilon)$ the ball centered at $x$ of radius $\varepsilon$ for this distance.

Denote by $x_+^\alpha \in \mathbb{P}(V^\alpha)$ the projective point corresponding to the eigenspace for the maximal restricted weight $\chi^\alpha$. 
Since $\rho_\alpha(A)$ are symmetric endomorphisms for the Euclidean norm on $V^\alpha$, the orthogonal hyperplane to $x_+^\alpha$ is $\rho_\alpha(A)$-invariant and abusing notations we write 
$$(x_+^\alpha)^\perp = \oplus_{\chi \in \Sigma(\rho_\alpha) \setminus \lbrace \chi^\alpha \rbrace } V^\alpha_{\chi}.$$
For all projective point $y\in \mathbb{P}(V^\alpha)$, we denote by $y^\perp \subset V^\alpha$ the orthogonal hyperplane and by $\varphi_y \in (V^\alpha)^*$ a linear form such that $\ker \varphi_y =y^\perp$. 
For all $x,y \in \mathbb{P}(V^\alpha)$, we define (independently of the choice of non-zero $v_x \in x$)
\begin{equation}\label{defin-delta-proj}
\delta(y,x):= \frac{ \vert \varphi_y(v_x) \vert }{ \Vert \varphi_y \Vert. \Vert v_x \Vert}.
\end{equation}
By properties of the norms and distances on the projective space, the previous function is symmetric and for all $x,y\in \mathbb{P}(V^\alpha)$,
\begin{equation}\label{defin-delta-dist}
\delta(y,x)= \delta(x,y)= \dd (y^\perp, x) = \dd (y, x^\perp).
\end{equation}
Hence $\dd (x_+^\alpha, (x_+^\alpha)^\perp)=1.$
For all $\varepsilon>0$, denote by 
$\cal{V}_\varepsilon ( (x_+^\alpha)^\perp  )^\complement := \lbrace y^\alpha \in \mathbb{P}(V^\alpha) \;\vert \; \delta(y^\alpha, x_+^\alpha) \geq \varepsilon \rbrace.$ 
We prove the following dynamical lemma.
\begin{lem}\label{lem_rhoa-action}
Let $\varepsilon >0$ and $a \in A^{+}$. Assume there exists $\alpha\in \Pi$ such that $\alpha (\log a) \geq -2\log (\varepsilon)$.
Then $\rho_\alpha (a) \cal{V}_\varepsilon ((x_+^\alpha)^\perp )^\complement  \subset B(x_+^\alpha,\varepsilon)$.  
\end{lem}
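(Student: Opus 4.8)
The plan is to decompose an arbitrary vector $v_{y^\alpha}$ representing a point $y^\alpha \in \cal{V}_\varepsilon((x_+^\alpha)^\perp)^\complement$ along the $\rho_\alpha(A)$-invariant orthogonal splitting $V^\alpha = \R v_+ \oplus (x_+^\alpha)^\perp$, where $v_+$ spans the $\chi^\alpha$-eigenline, and track how $\rho_\alpha(a)$ stretches the two pieces. Writing $v_{y^\alpha} = c\, v_+ + w$ with $w \in (x_+^\alpha)^\perp$ and normalising $\Vert v_+\Vert = 1$, the hypothesis $\delta(y^\alpha, x_+^\alpha)\ge \varepsilon$ together with \eqref{defin-delta-dist} says exactly that the "$x_+^\alpha$-component'' is not too small: $|c|/\Vert v_{y^\alpha}\Vert \ge \varepsilon$, equivalently $\Vert w\Vert \le \sqrt{\varepsilon^{-2}-1}\,|c| \le \varepsilon^{-1}|c|$.

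Next I would estimate the two pieces after applying $\rho_\alpha(a)$. Since $\rho_\alpha(a)$ is symmetric and $v_+$ is the top eigenvector, $\Vert \rho_\alpha(a) v_+\Vert = e^{\chi^\alpha(\log a)}$. On the invariant complement $(x_+^\alpha)^\perp = \oplus_{\chi \neq \chi^\alpha} V^\alpha_\chi$, the operator norm of $\rho_\alpha(a)$ is $\max_{\chi \neq \chi^\alpha} e^{\chi(\log a)}$; by the structure of the restricted weights in \eqref{eq-restricted-weights}, every such $\chi$ satisfies $\chi \le \chi^\alpha - \alpha$ on $\frak a^+$, so $\Vert \rho_\alpha(a) w\Vert \le e^{(\chi^\alpha-\alpha)(\log a)}\Vert w\Vert = e^{\chi^\alpha(\log a)} e^{-\alpha(\log a)}\Vert w\Vert$. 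Therefore
\[
\frac{\Vert \rho_\alpha(a) w\Vert}{\Vert \rho_\alpha(a) (c\,v_+)\Vert} \le e^{-\alpha(\log a)} \frac{\Vert w\Vert}{|c|} \le e^{-\alpha(\log a)} \varepsilon^{-1} \le \varepsilon^{2}\varepsilon^{-1} = \varepsilon,
\]
using the hypothesis $\alpha(\log a)\ge -2\log\varepsilon$, i.e. $e^{-\alpha(\log a)}\le \varepsilon^{2}$. This shows that $\rho_\alpha(a) v_{y^\alpha}$, viewed in the orthogonal frame, has its component off $x_+^\alpha$ of relative size at most $\varepsilon$.

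Finally I would convert this relative-size bound into the claimed projective distance bound $\dd(\rho_\alpha(a) y^\alpha, x_+^\alpha) < \varepsilon$ (or $\le \varepsilon$, matching the convention for $B(x_+^\alpha,\varepsilon)$). Since $x_+^\alpha$ is represented by $v_+$ and the splitting $\R v_+ \oplus (x_+^\alpha)^\perp$ is orthogonal, the formula \eqref{defin-dist-proj} for $\dd(\cdot,x_+^\alpha)$ applied to $\rho_\alpha(a) v_{y^\alpha} = c\, e^{\chi^\alpha(\log a)} v_+ + \rho_\alpha(a) w$ gives $\dd(\rho_\alpha(a) y^\alpha, x_+^\alpha) = \Vert \rho_\alpha(a) w\Vert \big/ \Vert \rho_\alpha(a) v_{y^\alpha}\Vert \le \Vert \rho_\alpha(a) w\Vert / \Vert c\, e^{\chi^\alpha(\log a)} v_+\Vert$, which is precisely the ratio bounded above by $\varepsilon$. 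Hence $\rho_\alpha(a) y^\alpha \in B(x_+^\alpha,\varepsilon)$, as desired. The only mildly delicate point is bookkeeping the two equivalent descriptions of $\delta$ and $\dd$ from \eqref{defin-dist-proj}--\eqref{defin-delta-dist} and making sure the weight inequality $\chi \le \chi^\alpha - \alpha$ is used on $\frak a^+$ (so that it applies to $\log a$ for $a \in A^+$); everything else is the elementary two-term estimate above.
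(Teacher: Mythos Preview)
Your argument is correct and is essentially the content of the paper's proof, just unpacked: the paper computes the eigenvalue gap $\gamma_{1,2}(\rho_\alpha(a)) = e^{-\alpha(\log a)} \le \varepsilon^2$ and then invokes \cite[Lemma~14.2(iii)]{benoist-quint} as a black box to conclude $\dd(\rho_\alpha(a)y,x_+^\alpha)\,\delta(x_+^\alpha,y) < \gamma_{1,2}(\rho_\alpha(a))$, whereas you prove that inequality directly via the orthogonal splitting $V^\alpha = \R v_+ \oplus (x_+^\alpha)^\perp$ and the weight structure \eqref{eq-restricted-weights}. Your route is more self-contained; the paper's is shorter but relies on an external reference. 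One cosmetic point: to land in the \emph{open} ball $B(x_+^\alpha,\varepsilon)$ you should note that $\sqrt{\varepsilon^{-2}-1} < \varepsilon^{-1}$ strictly, which gives $\dd(\rho_\alpha(a)y^\alpha,x_+^\alpha) < \varepsilon$ rather than $\le \varepsilon$.
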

\begin{proof}
We use the notations in §14.1 \cite{benoist-quint}. 
Let $\alpha \in \Pi$ such that $\alpha(\log a) \geq -2\log(\varepsilon)$.
Recall \eqref{norm-wedge} that $\Vert \wedge_2 \rho_\alpha(a) \Vert =\exp( (2\chi^\alpha- \alpha)\log a)$ and $\Vert \rho_\alpha(a) \Vert=\exp(\chi^\alpha(\log a))$.
We compute the gap between the first and second eigenvalues of $\rho_\alpha(a)$,
\[\gamma_{1,2}(\rho_\alpha(a)):=\frac{\|\wedge_2 \rho_\alpha(a)\|}{\|\rho_\alpha(a) \|^2}=e^{-\alpha(\log a)}. \]
By assumption, $e^{-\alpha(\log a)} < \varepsilon^2$, hence $\gamma_{1,2}(\rho_\alpha(a)) < \varepsilon^2$.
Then we apply Lemma 14.2 (iii) in \cite{benoist-quint}, for every $y \in \cal{V}_\varepsilon ( (x_+^\alpha)^\perp )^\complement$,
\[ \dd( \rho_\alpha(a) y ,x_+^\alpha)\delta(x_+^\alpha, y)<\gamma_{1,2}(\rho_\alpha(a)). \]
By definition $\delta (y,x_+^\alpha) \geq \varepsilon$, hence $\dd(\rho_\alpha(a)y,x_+^\alpha) <\varepsilon$ and we deduce that $\rho_\alpha(a) \cal{V}_\varepsilon ( (x_+^\alpha)^\perp)^\complement \subset B(x_+^\alpha,\varepsilon)$.
\end{proof}

\paragraph{Distances and balls in $\cal{F}$}
Using the fundamental representations $(\rho_\alpha)_{\alpha\in \Pi}$, Tits (Cf. \cite[Lemma 6.32]{benoist-quint}) also proved that the following map is an embedding:
\begin{align*}
\cal{F} &\longrightarrow \prod_{\alpha\in \Pi} \mathbb{P}(V^\alpha) \\
\xi=k \eta_0 & \longmapsto (x^\alpha(\xi))_{\alpha\in \Pi}:= (\rho_\alpha(k) x_+^\alpha)_{\alpha \in \Pi}.
\end{align*}
We thus define the following distance on $\cal{F}$ for all $\xi,\eta \in \cal{F}$
\begin{equation}\label{defin-dist}
\dd (\xi,\eta):= \sup_{\alpha \in \Pi} \dd (x^\alpha(\xi),x^\alpha(\eta)).
\end{equation}
For all $\xi \in \calF$ and $\varepsilon\in (0,1)$, we denote the balls for this distance by
\begin{equation}\label{defin-ballFd}
    B(\xi,\varepsilon):= \lbrace \eta\in \calF \; \vert \; \dd(\xi,\eta)<\varepsilon \rbrace.
\end{equation}
\begin{fait}
The distance $\dd$ is equivalent to the Riemanian distance on $\calF$ induced from the embedding on the product space $\Pi_{\alpha\in\Pi}\P(V^\alpha)$. 
\end{fait}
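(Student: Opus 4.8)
The statement to prove is that the distance $\dd$ on $\calF$ defined in \eqref{defin-dist} is bi-Lipschitz equivalent to the Riemannian distance $\dd_{\mathrm{Riem}}$ induced on $\calF$ by the embedding $\xi\mapsto (x^\alpha(\xi))_{\alpha\in\Pi}$ into $\prod_{\alpha\in\Pi}\P(V^\alpha)$, where each projective space carries its Fubini–Study-type Riemannian metric. The plan is to work one factor at a time and then take a supremum.

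First I would record that $\calF=K/M$ is compact, and that $K$ acts on each $\P(V^\alpha)$ by isometries for both the chordal distance $\dd$ of \eqref{defin-dist-proj} and the Fubini–Study Riemannian distance, since $\rho_\alpha(K)$ is unitary. Hence it suffices to compare the two metrics locally near a single base point, say near $x_+^\alpha$, and transport the comparison by $K$-equivariance; compactness then upgrades a local bi-Lipschitz bound to a global one (no metric on a compact manifold can degenerate, and the embedding is a smooth immersion by Fact~\ref{fait-asymptotic-point}-type arguments, in fact an embedding by Tits' result already cited). Concretely, in an affine chart of $\P(V^\alpha)$ around $x_+^\alpha$, the chordal distance $\dd(\P(v),\P(w)) = \|v\wedge w\|/(\|v\|\,\|w\|)$ satisfies $\dd = \sin\theta$ where $\theta$ is the angle between the lines, while the Fubini–Study distance is $\theta$ itself (up to the normalisation constant of the metric); since $\theta\in[0,\pi/2]$ on $\P(V^\alpha)$ and $\tfrac{2}{\pi}\theta\le\sin\theta\le\theta$ there, the two are bi-Lipschitz with explicit constants. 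This handles each factor.

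Next I would pass from the factors to the product and then to $\calF$. On the product $\prod_\alpha\P(V^\alpha)$ the sup-distance $\sup_\alpha \dd(x^\alpha(\xi),x^\alpha(\eta))$ (which is exactly $\dd(\xi,\eta)$) is bi-Lipschitz equivalent to the $\ell^2$-sum of the factor distances, which in turn is the product Riemannian distance; this is the standard equivalence of norms on $\R^{|\Pi|}$ applied pointwise to the length functional. Restricting to the submanifold $\calF\hookrightarrow\prod_\alpha\P(V^\alpha)$: the Riemannian distance on $\calF$ induced from the ambient metric is, by definition, the infimum of ambient lengths of paths that stay in $\calF$, so $\dd_{\mathrm{Riem}}^{\calF}\ge$ (ambient distance restricted to $\calF$) $\asymp \dd$ trivially, and for the reverse inequality one uses that $\calF$ is a compact embedded submanifold, hence quasi-convex: there is a constant $C$ so that any two nearby points of $\calF$ are joined by a path in $\calF$ of length at most $C$ times their ambient distance, and by compactness this extends to all pairs. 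Combining the two directions gives $\dd \asymp \dd_{\mathrm{Riem}}^{\calF}$.

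The one genuinely non-formal point — the main obstacle — is the quasi-convexity of the embedded submanifold $\calF$, i.e. controlling the intrinsic distance by the extrinsic one. For a general embedded submanifold this can fail globally, but here it holds because $\calF$ is \emph{compact} and $C^\infty$-embedded: cover $\calF$ by finitely many charts in which it looks like a linear subspace up to a bi-Lipschitz diffeomorphism, use convexity in each chart, and patch with a Lebesgue-number argument. All the remaining ingredients — $K$-invariance of both metrics, the $\sin\theta$ versus $\theta$ comparison on projective space, and the equivalence of the sup- and $\ell^2$-aggregations over the finite index set $\Pi$ — are routine. So the proof is: reduce to one factor by $K$-equivariance and compactness; compare chordal and Fubini–Study metrics on $\P(V^\alpha)$ via the angle; aggregate over $\alpha\in\Pi$; and invoke compact-embedded quasi-convexity to identify the induced intrinsic metric on $\calF$ with the ambient restriction up to constants.
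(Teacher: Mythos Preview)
Your proposal is correct and follows essentially the same route as the paper: compare the chordal distance $\sin\theta$ with the Fubini--Study angle $\theta$ on each $\P(V^\alpha)$, aggregate over the finite index set $\Pi$ via equivalence of norms, and then pass from the ambient product metric to the intrinsic metric on the compact embedded submanifold $\calF$. The only notable difference is that you are more explicit about the quasi-convexity step (intrinsic versus extrinsic distance on $\calF$), which the paper handles in one sentence by observing that the induced Riemannian metric is non-degenerate and then invoking compactness implicitly.
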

\begin{proof}
Recall that two distances $d,d'$ on a space $X$ are equivalent if $d \asymp d'$ i.e. there exists $C>1$ such that for all $x,y$ in $X$, we have
\[ \frac{1}{C}d'(x,y)\leq d(x,y)\leq Cd'(x,y). \]

On the projective space $\P(V^\alpha)$, for each point $\R v$, its tangent space is given by $v^\perp$, the hyperplane orthogonal to $v$ with respect to the Euclidean norm and we obtain a Riemannian metric by restricting the Euclidean norm to $v^\perp$. 
Denote by $d_\alpha$ the induced Riemannian distance on $\P(V^\alpha)$. 
The distance $d_\alpha$ between two lines is given by their angle in $[0,\pi/2]$. 
Since the distance $\dd$ between two lines defined in \eqref{defin-dist-proj} is the sine of the angle given by $d_\alpha$, we deduce that $\dd \asymp d_{\alpha}$.

Let us now construct a Riemanian distance $d_P$ on the product space $\Pi_{\alpha\in\Pi}\P(V^\alpha)$ using the Riemanian metric of the product space. 
Recall that on any product space $(X\times Y,g)$ where $(X,g_1)$ and $(Y,g_2)$ are endowed with Riemanian metrics $g_1$ and $g_2$, the product Riemanian metric is given for all $(x,y ; v,w) \in T_{(x,y)} X \times Y$
where $(x,v)\in T_x X$ and $(y,w) \in T_y Y$, by
\[g(x,y;v,w)=g_1(x,v)+g_2(y,w). \]
The Riemanian distance $d$ associated to this product Riemannian metric $g$ satisfies
\[ \max\{d_1,d_2\}\leq d \leq d_1+d_2. \]
Since for every $\alpha \in \Pi$, the distances $d_\alpha$ and $\dd$ are equivalent, we deduce that the Riemanian product distance $d_P$ is equivalent to the maximal metric i.e. $d_P \asymp \dd:=\sup_{\alpha \in \Pi} \dd $.
Using Tits' embedding of $\calF$ in to the product space $\Pi_{\alpha\in\Pi}\P(V^\alpha)$, we deduce that the induced metric is non-degenerate on $\calF$. 
Hence, the Riemannian distance on $\calF$ induced by $d_P$ is equivalent to the maximal distance $\dd$. 
\end{proof}
\noindent Similarly, noting that $(\zeta_0)_o^\perp = \eta_0$, we set
\begin{equation}\label{defin-delta}
\delta (\xi,\eta):= \inf_{\alpha \in \Pi} \delta (x^\alpha(\xi),x^\alpha(\eta_o^\perp)) = \inf_{\alpha \in \Pi} \dd(x^\alpha(\xi), x^\alpha(\eta_o^\perp)^\perp ).
\end{equation}
For all $\xi \in \mathcal{F}$ and $\varepsilon\in (0,1)$, we denote the balls for $\delta$ by
\begin{equation}\label{defin-ball-Fdel}
    \cal{V}_\varepsilon (\xi) := \lbrace \eta \in \calF \; \vert \; \delta(\eta,\xi) <\varepsilon \rbrace.
\end{equation}
Using the above notations given for the balls in $\calF$ for $\delta$ and $\dd$ and their $K$-invariance, we upgrade the dynamical Lemma \ref{lem_rhoa-action} to elements in $G$ whose Cartan projection is far from the walls of the Weyl chambers.
\begin{lem}\label{lem-action-aplus}
For all $g \in G$, choose $k,l\in K$ by Cartan decomposition such that $g=k \exp(\underline{a}(g)) l^{-1}$.
Let $\varepsilon>0$ and assume that $\dd ( \underline{a}(g) , \partial \frak{a}^+) \gg -2 \log \varepsilon$,
then $g \cal{V}_\varepsilon(l \zeta_0 )^\complement  \subset B( k \eta_0,\varepsilon)$. 
\end{lem}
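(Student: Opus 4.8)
The plan is to reduce the statement about $G$ acting on $\calF$ to the projective-space statement of Lemma \ref{lem_rhoa-action}, applied simultaneously in each fundamental representation $(\rho_\alpha,V^\alpha)$ for $\alpha\in\Pi$. First I would unwind the definitions: writing $g = k\exp(\underline a(g))l^{-1}$ and setting $a := \exp(\underline a(g))\in A^+$, one has for $\xi = l\eta'$ with $\eta'\in\calF$ that $g\xi = k\,a\,\eta'$; applying $\rho_\alpha$ and using $K$-invariance of the norm and the distances $\dd,\delta$ on $\P(V^\alpha)$, it suffices to show that $\rho_\alpha(a)\,x^\alpha(\eta')$ lies in $B(x_+^\alpha,\varepsilon)$ for every $\alpha\in\Pi$, under the hypothesis that $\eta' = l^{-1}\xi$ is not in $\cal V_\varepsilon(l\zeta_0)^\complement$ — that is, $\delta(\eta',\zeta_0) < \varepsilon$ in the $\delta$-metric, hence by the very definition \eqref{defin-delta} we get $\delta(x^\alpha(\eta'),x^\alpha((\zeta_0)_o^\perp)) < \varepsilon$ for the infimum-defining $\alpha$; but since $(\zeta_0)_o^\perp = \eta_0$ (Remark \ref{rem-zeta-eta}) and $x^\alpha(\eta_0) = x_+^\alpha$, this says $\delta(x^\alpha(\eta'), x_+^\alpha) < \varepsilon$, i.e. $x^\alpha(\eta') \notin \cal V_\varepsilon((x_+^\alpha)^\perp)^\complement$ for that $\alpha$. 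This is not quite what Lemma \ref{lem_rhoa-action} wants, so care is needed here — see below.

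The second step is to translate the hypothesis $\dd(\underline a(g),\partial\frak a^+)\gg -2\log\varepsilon$ into the root-by-root input needed by Lemma \ref{lem_rhoa-action}. The distance from $\underline a(g)$ to the wall $\partial\frak a^+$ controls $\alpha(\underline a(g))$ from below for every $\alpha\in\Pi$: since the walls of $\frak a^+$ are exactly the hyperplanes $\ker\alpha$, $\alpha\in\Pi$, and the $\alpha$ are (up to the fixed normalization from the Killing form) uniformly comparable to the corresponding coordinate functionals, we get $\alpha(\underline a(g)) \geq c\,\dd(\underline a(g),\partial\frak a^+)$ for a constant $c>0$ depending only on $G$; absorbing $c$ into the implied constant of $\gg$, the hypothesis gives $\alpha(\log a) \geq -2\log\varepsilon$ for all $\alpha\in\Pi$ simultaneously. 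Thus the hypothesis of Lemma \ref{lem_rhoa-action} is met for every $\alpha\in\Pi$ at once.

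The main obstacle is the mismatch between the $\inf$-over-$\alpha$ in the definition \eqref{defin-delta} of $\delta$ on $\calF$ and the per-$\alpha$ hypothesis of Lemma \ref{lem_rhoa-action}: knowing $\eta'\notin\cal V_\varepsilon(l\zeta_0)^\complement$ in $\calF$ only gives that $x^\alpha(\eta')\notin\cal V_\varepsilon((x_+^\alpha)^\perp)^\complement$ for the single minimizing $\alpha$, whereas to conclude $g\xi\in B(k\eta_0,\varepsilon)$ we need, by the $\sup$-over-$\alpha$ definition \eqref{defin-dist} of $\dd$ on $\calF$, that $\rho_\alpha(a)x^\alpha(\eta')\in B(x_+^\alpha,\varepsilon)$ for \emph{every} $\alpha$. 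I expect this is resolved by the structure of the weights \eqref{eq-restricted-weights}: the relevant quantity $\delta(x^\alpha(\eta'),x_+^\alpha) = |\varphi_{x^\alpha(\eta')}(v)|/(\|\varphi\|\|v\|)$ where $v$ spans $x_+^\alpha$ is, up to bounded multiplicative constants, independent of $\alpha$ because the $x^\alpha(\eta')$ are the coordinates of a single point $\eta'\in\calF$ and the $\calF$-embedding is bi-Lipschitz onto its image; hence $\delta(\eta',\zeta_0)<\varepsilon$ forces $\delta(x^\alpha(\eta'),x_+^\alpha) \ll \varepsilon$ for all $\alpha$, not merely the minimizing one — at the cost of adjusting $\varepsilon$ by a constant, again absorbed into $\gg$. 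Once that uniformity is in hand, one applies Lemma \ref{lem_rhoa-action} in each $V^\alpha$ to get $\rho_\alpha(a)x^\alpha(\eta')\in B(x_+^\alpha,\varepsilon')$ for all $\alpha$, then $K$-invariance of $\dd$ on each $\P(V^\alpha)$ upgrades this to $\dd(g\xi,k\eta_0)<\varepsilon'$ in $\calF$, and a final constant adjustment in the hypothesis yields the stated inclusion $g\cal V_\varepsilon(l\zeta_0)^\complement\subset B(k\eta_0,\varepsilon)$.
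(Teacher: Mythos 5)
Your overall skeleton is the paper's: translate $\dd(\underline{a}(g),\partial\frak{a}^+)\gg -2\log\varepsilon$ into $\alpha(\underline{a}(g))\geq -2\log\varepsilon$ for every $\alpha\in\Pi$ (your second paragraph is exactly the paper's step $\dd(v,\partial\frak{a}^+)\asymp\inf_{\alpha\in\Pi}\alpha(v)$), apply Lemma \ref{lem_rhoa-action} simultaneously in all the fundamental representations, and finish by left $K$-invariance of $\dd$ and $\delta$. However, there is a genuine error in how you read the hypothesis, and the ``main obstacle'' you then try to repair does not exist, while the repair you propose is itself false.

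The claim is $g\,\cal{V}_\varepsilon(l\zeta_0)^\complement\subset B(k\eta_0,\varepsilon)$, so the point $\xi$ you must treat satisfies $\xi\in\cal{V}_\varepsilon(l\zeta_0)^\complement$, i.e.\ $\delta(l^{-1}\xi,\zeta_0)\geq\varepsilon$ — not ``$\xi$ is not in $\cal{V}_\varepsilon(l\zeta_0)^\complement$, that is $\delta(\eta',\zeta_0)<\varepsilon$'' as you wrote. With the correct sign the quantifiers match automatically: since $\delta$ on $\calF$ is defined in \eqref{defin-delta} as an \emph{infimum} over $\alpha\in\Pi$, the inequality $\delta(\eta',\zeta_0)\geq\varepsilon$ forces $\delta(x^\alpha(\eta'),x_+^\alpha)\geq\varepsilon$ for \emph{every} $\alpha$, which is precisely the hypothesis $x^\alpha(\eta')\in\cal{V}_\varepsilon((x_+^\alpha)^\perp)^\complement$ of Lemma \ref{lem_rhoa-action} in each $V^\alpha$; and since $\dd$ on $\calF$ is a \emph{supremum}, the per-$\alpha$ conclusions $\dd\big(\rho_\alpha(e^{\underline{a}(g)})x^\alpha(\eta'),x_+^\alpha\big)<\varepsilon$ assemble into $\dd\big(e^{\underline{a}(g)}\eta',\eta_0\big)<\varepsilon$, and $K$-invariance gives the statement. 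Your attempted fix — that the numbers $\delta(x^\alpha(\eta'),x_+^\alpha)$ are comparable across $\alpha$ up to bounded constants because the embedding of $\calF$ is bi-Lipschitz — is not true: these quantities measure transversality of different components of the flag $\eta'$ against the opposite flag, and already in $\mathrm{SL}(3,\R)$ the line of a flag can be nearly contained in $(x_+^{\alpha_1})^\perp$ while its plane is uniformly transverse for $\alpha_2$, so no constant adjustment of $\varepsilon$ rescues the inverted hypothesis. Indeed the set $\{\delta(\eta',\zeta_0)<\varepsilon\}$ consists exactly of the points for which attraction toward $\eta_0$ may fail, so the inclusion you set out to prove is the wrong one. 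Once the sign is corrected, your outline reduces to the paper's proof with no additional ingredient needed.
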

\begin{proof}
Note that $\alpha (v) \asymp \dd(v, \ker \alpha)$ for all $v \in \frak{a}^+$.
Hence by taking the infimum over $\alpha \in \Pi$, then using that $\inf_{\alpha \in \Pi} \dd (v, \ker \alpha) = \dd (v, \cup_{\alpha \in \Pi} \ker \alpha )$ and finally, because $\frak a^+$ is a salient cone, $\partial \frak{a}^+ = \frak{a}^+ \cap \big( \cup_{\alpha \in \Pi} \ker \alpha \big)$, we deduce that for all $v \in \frak{a}^+$,
$$ \dd(v, \partial \frak{a}^+) \asymp \inf_{\alpha \in \Pi} \alpha (v).$$
Now using the underlying constant in $\asymp$, we may assume that, $\inf_{\alpha \in \Pi} \alpha(\underline{a}(g)) \geq - 2 \log \varepsilon$.
Apply the dynamical Lemma \ref{lem_rhoa-action} simultaneously for all $\alpha \in \Pi$, using Remark \ref{rem-zeta-eta} that $(\zeta_0)_o^\perp= \eta_0$, we deduce that $e^{\underline{a}(g)} \cal{V}_\varepsilon (\zeta_0)^\complement \subset B(\eta_0,\varepsilon)$.
Finally, we deduce the Lemma by invariance of left $K$-action on both $\dd$ and $\delta$.
\end{proof}

\paragraph{Action of $G$ on $\cal{F}$}
We want to understand how the left action of $G$ on $\calF$ distorts the balls for $\delta$ and $\dd$.
Let $C_\frak{a}>1$ be a positive constant such that for all $v \in \frak{a}$,
 $$\frac{1}{\sqrt{C_{\frak{a}} }} \Vert v \Vert \leq  \sup_{\alpha \in \Pi}  \vert \chi^\alpha ( v ) \vert \leq  \sqrt{C_\frak{a}} \Vert v \Vert  .$$
This constant gives the comparison of the sup-norm induced by the dual basis $(\chi^\alpha)_{\alpha \in \Pi}$ with the Euclidean norm $\Vert \; \Vert$ on $\mathfrak{a}$.

\begin{lem}\label{lem-actiong}
There exist $C_0, C_1>0$ such that for all $g$ in $G$ and $\xi,\eta$ in $\calF$, we have the following inequalities:
\begin{itemize}
    \item[(i)] $\dd (g \xi, g \eta) \leq C_1 e^{ C_0 \dd_X(o,go)   } \dd(\xi,\eta),$
    \item[(ii)] $\delta (g \xi, g \eta) \leq C_1 e^{ C_0 \dd_X(o,go)   } \delta (\xi,\eta),$
    \item[(iii)] $ \Vert \sigma (g, \xi) - \sigma(g,\eta) \Vert  \leq C_1 e^{ C_0 \dd_X(o,go)   } \dd(\xi,\eta),$
    \item[(iv)] $ \Vert \sigma(g,\xi) \Vert \leq C_\frak{a} \dd_X(o,go) .$
\end{itemize}
Furthermore, for every $x,y \in X$ and $\xi \in \calF$, (iv) is the same as
\begin{itemize}
        \item[(iv')]$ \Vert \beta_{\xi}(x,y) \Vert \leq C_{\frak{a}} \dd_X(x,y) .$
\end{itemize}
\end{lem}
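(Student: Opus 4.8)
The plan is to reduce everything to the projective spaces $\mathbb{P}(V^\alpha)$ attached to the fundamental representations $(\rho_\alpha, V^\alpha)_{\alpha \in \Pi}$, where the analogous Lipschitz-type estimates are standard (they appear in \cite[\S14.1]{benoist-quint}), and then take the supremum (resp.\ infimum) over $\alpha \in \Pi$ as in the definitions \eqref{defin-dist} and \eqref{defin-delta}. First I would record the elementary fact that for a Euclidean-normed representation in which $\rho_\alpha(K)$ acts by isometries and $\rho_\alpha(A)$ by symmetric endomorphisms, one has $\|\rho_\alpha(g)\| = e^{\chi^\alpha(\underline a(g))}$ and $\|\rho_\alpha(g)^{-1}\|^{-1} = e^{w_0\chi^\alpha(\underline a(g))}$ (lowest weight), so that $\log\|\rho_\alpha(g)\| \leq \sqrt{C_{\frak a}}\,\|\underline a(g)\| = \sqrt{C_{\frak a}}\, \dd_X(o,go)$, using the constant $C_{\frak a}$ introduced just before the lemma. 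The Lipschitz constant of the projective action of $g$ for the distance $\dd$ on $\mathbb{P}(V^\alpha)$ is controlled by $\|\rho_\alpha(g)\|\,\|\rho_\alpha(g)^{-1}\| = \|\wedge_2\rho_\alpha(g)\|\cdot\gamma_{1,\dim}(\rho_\alpha(g))^{-1}$ type quantities; concretely, from the definition \eqref{defin-dist-proj} and the cocycle-free bound $\dd(\rho_\alpha(g)x,\rho_\alpha(g)y) \leq \|\rho_\alpha(g)\|\,\|\rho_\alpha(g)^{-1}\|\, \dd(x,y)$ one gets a factor $e^{(\chi^\alpha - w_0\chi^\alpha)(\underline a(g))} \leq e^{2\sqrt{C_{\frak a}}\,\dd_X(o,go)}$. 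Taking $C_0 = 2\sqrt{C_{\frak a}}$ and $C_1$ absorbing the finitely many comparison constants between $\dd$ on $\calF$ and the product metric yields (i); estimate (ii) for $\delta$ follows identically from \eqref{defin-delta} and \eqref{defin-delta-dist}, since $\delta$ is measured by the same projective distance evaluated at a perpendicular point, and the perpendicular point transforms under $\rho_\alpha(g)^{-T}$, whose operator norm obeys the same bound.

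For (iii), I would use the Iwasawa-cocycle description \eqref{eq-iwasawa-cocycle}: writing $\xi = k_\xi\eta_0$, the component $\chi^\alpha(\sigma(g,\xi))$ is exactly $\log\big(\|\rho_\alpha(g)v_\xi\|/\|v_\xi\|\big)$ for $v_\xi$ a unit highest-weight vector of $\rho_\alpha(k_\xi)$, because the $N$-part does not change the highest-weight line and $K$ acts unitarily. Hence $\chi^\alpha(\sigma(g,\xi)) - \chi^\alpha(\sigma(g,\eta)) = \log\|\rho_\alpha(g)v_\xi\| - \log\|\rho_\alpha(g)v_\eta\|$, and since $\log\|\cdot\|$ is $\big(\inf_{\|w\|=1}\|\rho_\alpha(g)w\|\big)^{-1}$-Lipschitz on the unit sphere, this difference is bounded by $\|\rho_\alpha(g)^{-1}\|\,\|\rho_\alpha(g)\|\cdot\|v_\xi - v_\eta\|$, and $\|v_\xi - v_\eta\|$ is comparable to the projective distance $\dd(x^\alpha(\xi),x^\alpha(\eta)) \leq \dd(\xi,\eta)$. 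Summing the $\sup_\alpha$ of these over the basis $(\chi^\alpha)$ and using $C_{\frak a}$ to pass back to $\|\cdot\|$ gives (iii) with the same exponential factor.

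Estimate (iv) is the cleanest: from the identity $\chi^\alpha(\sigma(g,\xi)) = \log\big(\|\rho_\alpha(g)v_\xi\|/\|v_\xi\|\big)$ we get $|\chi^\alpha(\sigma(g,\xi))| \leq \max\big(\log\|\rho_\alpha(g)\|, \log\|\rho_\alpha(g)^{-1}\|\big) \leq \sqrt{C_{\frak a}}\,\dd_X(o,go)$ for every $\alpha$, whence $\|\sigma(g,\xi)\| \leq \sqrt{C_{\frak a}}\cdot\sup_\alpha|\chi^\alpha(\sigma(g,\xi))| \leq C_{\frak a}\,\dd_X(o,go)$ by the two-sided comparison defining $C_{\frak a}$. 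Finally (iv') is just (iv) transported by Definition \ref{defin-buseman}: choosing $h_x,h_y$ with $h_xo=x$, $h_yo=y$, we have $\beta_\xi(x,y) = \sigma(h_x^{-1}h_y, h_y^{-1}\xi)$ and $\dd_X(o, h_x^{-1}h_y\, o) = \dd_X(x,y)$ by $G$-invariance of $\dd_X$, so the bound is immediate. The main obstacle, such as it is, is bookkeeping: making sure the passage between the intrinsic metrics $\dd,\delta$ on $\calF$ and the coordinatewise projective metrics only costs a uniform multiplicative constant (handled by the equivalence Fact proved just above and by $C_{\frak a}$), and correctly identifying which operator norm — $\|\rho_\alpha(g)\|$, $\|\rho_\alpha(g)^{-1}\|$, or their product — governs each of the four inequalities; none of these is deep, but the exponents must be tracked consistently so that a single pair $(C_0,C_1)$ works for all four parts.
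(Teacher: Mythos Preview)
Your treatment of (i), (iii), (iv), and (iv') is correct and follows essentially the same route as the paper (in fact your Lipschitz bound $\|\rho_\alpha(g)\|\,\|\rho_\alpha(g)^{-1}\|$ for the projective action is sharper than the squared bound the paper quotes from \cite[(13.1)]{benoist-quint}, though either suffices for the statement).

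The gap is in (ii). The quantity $\delta(g\xi,g\eta)$ involves $(g\eta)_o^\perp$, and the map $\eta\mapsto\eta_o^\perp$ is only $K$-equivariant, not $G$-equivariant, since it depends on the basepoint $o$; so asserting that ``the perpendicular point transforms under $\rho_\alpha(g)^{-T}$'' begs precisely the question. What must be checked is the identity
\[
x^\alpha\bigl((g\eta)_o^\perp\bigr)^\perp=\rho_\alpha(g)\,x^\alpha(\eta_o^\perp)^\perp,
\]
i.e.\ that the hyperplane attached to $\eta$ really does move by $\rho_\alpha(g)$ even though $o$ does not. The paper establishes this via the Iwasawa decomposition: writing $\eta=k_1\eta_0$ and $gk_1=kan\in KAN$, one has $(g\eta)_o^\perp=(k\eta_0)_o^\perp=k\zeta_0$ (using that $k$ fixes $o$ and is orthogonal), hence $x^\alpha((g\eta)_o^\perp)^\perp=\rho_\alpha(k)\,x^\alpha(\zeta_0)^\perp$; the crucial observation is then that $AN$ preserves the hyperplane $x^\alpha(\zeta_0)^\perp$ (the sum of the non-lowest weight spaces), so $\rho_\alpha(k)\,x^\alpha(\zeta_0)^\perp=\rho_\alpha(kan)\,x^\alpha(\zeta_0)^\perp=\rho_\alpha(gk_1)\,x^\alpha(\zeta_0)^\perp=\rho_\alpha(g)\,x^\alpha(\eta_o^\perp)^\perp$. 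Once this equivariance is in hand, the rest of (ii) does follow from the same Lipschitz estimate as (i), exactly as you indicated; without it your bound controls the distance to the wrong hyperplane.
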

\noindent In particular, for all $x\in X$ we set $C_x:= C_1 e^{C_0 \dd_X(o,x)}$. 
Then for all $h_x \in G$ such that $h_xo=x$ and all $\xi \in \calF$ and every $r \in (0, C_x^{-1}) $, the inequalities given by (i) and (ii) imply 
\begin{itemize}
    \item[(i')]  $B(h_x \xi, C_x^{-1} r) \subset h_x B(\xi,r) \subset B(h_x \xi, C_x r)$,
    \item[(ii')] $ \cal{V}_{C_x^{-1}r}(h_x \xi) \subset h_x \cal{V}_r(\xi) \subset  \cal{V}_{C_x r}(h_x\xi)$.
\end{itemize}

\begin{proof}
For each $V^\alpha$, by (13.1) in \cite{benoist-quint}, we have
\[\dd(x^\alpha(g\xi),x^\alpha(g\eta))\leq \|\rho_\alpha(g)\|^2\|\rho_\alpha(g^{-1})\|^2 \dd (x^\alpha(\xi),x^\alpha(\eta)). \]
By \eqref{defin-dist} and $\|\rho_\alpha(g)\|=\|\rho_\alpha \exp(\underline{a}(g)) \|=\exp(\chi^\alpha(\underline{a}(g)))$, we obtain the first inequality for $C_0=4 C_{\frak{a}}$.

For (ii), we first prove that $(x^\alpha((g\eta)_o^\perp))^\perp=\rho_\alpha(g)x^\alpha(\eta_o^\perp)^\perp$. There exist $k_1,k\in K$ such that $\eta=k_1\eta_0$ and $gk_1=kan\in KAN$. Then due to $k$ preserving $o$ and the Euclidean metric on $V^\alpha$, we obtain
\[(x^\alpha((g\eta)_o^\perp))^\perp=(x^\alpha((k\eta_0)_o^\perp))^\perp=\rho_\alpha(k)(x^\alpha((\eta_0)_o^\perp))^\perp. \]
Due to $AN$ preserving $(x^\alpha((\eta_0)_o^\perp))^\perp=(x^\alpha(\zeta_0))^\perp$, we deduce that $\rho_\alpha(k)(x^\alpha((\eta_0)_o^\perp))^\perp=\rho_\alpha(gk_1)(x^\alpha((\eta_0)_o^\perp))^\perp$. Therefore, we obtain $(x^\alpha((g\eta)_o^\perp))^\perp=\rho_\alpha(g)(x^\alpha(\eta_o^\perp))^\perp$.
Then for all $\xi,\eta \in \calF,$
\begin{align*}
    \delta(x^\alpha(g\xi),x^\alpha((g\eta)^\perp_o)) &=\dd(x^\alpha(g\xi),x^\alpha((g\eta)_o^\perp)^\perp)=\dd(\rho_\alpha(g)x^\alpha(\xi)^\perp,\rho_\alpha(g)x^\alpha(\eta_o^\perp)^\perp) \\
    &\leq \Vert \rho_\alpha(g) \Vert^2 \|\rho_\alpha(g)^{-1} \|^2  \;  \dd(x^\alpha(\xi),x^\alpha(\eta_o^\perp)^\perp) .
\end{align*}
Therefore, since $\Vert \rho_\alpha(g) \Vert \|\rho_\alpha(g)^{-1} \| \leq \exp ( 2 \sup ( 
 \chi^\alpha ( \underline{a}(g)), \chi^\alpha( \iota \underline{a}(g)) ) )$ and $C_0=4 C_{\frak{a}}$, we deduce that
\[ \delta(g\xi,g\eta)=\inf_{\alpha\in\Pi}\delta(x^\alpha(g\xi),x^\alpha((g\eta)_o^\perp))\leq C_1e^{C_0\|\underline{a}(g)\|} \delta(\xi,\eta). \]

(iii) is given in \cite[Lemma 13.1]{benoist-quint}.

(iv), see \cite[Lemma 3.12]{dg20} for a similar statement, and it is also a direct consequence of \cite[Lemma 6.33 (ii), Corollary 8.20]{benoist-quint}.

Finally (iv') is a consequence of the formulas $\beta_\xi(x,y)=\sigma( h_x^{-1} h_y, h_y^{-1} \xi)$ and $\dd_X(x,y)=\Vert \underline{a}(h_x^{-1} h_y) \Vert$ independently of the choice of $h_x, h_y \in G$ such that $h_xo=x$ and $h_yo=y$.
\end{proof}

\subsection{Disintegration of the Haar measure}
Patterson--Sullivan measures were generalized to the higher rank setting in \cite{albuquerque99}, \cite{quintMesuresPattersonSullivanRang2002}. 
We follow Thirion's \cite[Chapter 9 §9.e]{thirion} construction of higher rank Patterson--Sullivan measures on the space of Weyl chambers for $\mathrm{SL}(d,\mathbb{R})$, which also works in our more general setting. 

We start by his so-called Patterson densities.
For $x\in X$, let $K_x$ be the stabilizer group of $x$ in $G$. Let $\mu_x$ be the unique $K_x$ invariant probability measure on the Furstenberg boundary $\calF$. Then we have for $g\in G$ and $x\in X$
\begin{equation}\label{equ.gmux}
    g_*\mu_x=\mu_{gx}, 
\end{equation}
where $g_*\mu_x$ is the pushforward of $\mu_x$ under the $g$ action. This relation holds because the stabilizer of $g_*\mu_x$ is given by $gK_xg^{-1}=K_{gx}$.
Let $\rho=\frac{1}{2}\sum_{\alpha\in\Sigma}\alpha$ be the half of the sum of positive roots with multiplicities. By \cite[Lemma 6.3]{quintMesuresPattersonSullivanRang2002} or \cite[I 5.1]{helgasonGroupsGeometricAnalysis2000}, we have for $g$ in $G$
\begin{equation}\label{equ.gmu0}
    \frac{ \dd g_*\mu_o}{\dd\mu_o}(\xi)=e^{-\rho \sigma(g^{-1},\xi)},
\end{equation}
which is a $G$ quasi-invariant measure. Then we will introduce the Gromov product to obtain a $G$-invariant measure on $\calF^{(2)}$.
\begin{defin}\label{defin_gromov}
For a pair $(\xi,\eta)\in\calF^{(2)}$, we associate it with the unique element in the Lie algebra $\frak a$ such that for all weights $\chi^\alpha$
\[\chi^\alpha(\xi|\eta)_o:= -\log \delta (x^\alpha(\xi), x^\alpha(\eta_o^\perp)) =  -\log\frac{|\varphi(v)|}{\|\varphi\|\|v\|}, \]
where $v\in V^\alpha-\{0\}$ is a representative of $x^\alpha(\xi)$ and $\varphi$ is a non zero linear form such that $\ker \varphi = x^{\alpha}(\eta_o^\perp)^\perp$.
\end{defin}
Since the $\delta$ function \eqref{defin-delta-dist} takes value in $(0,1]$, then $\chi^\alpha (\xi | \eta)_o \in [0, + \infty)$.
This definition already appears in \cite[Section 8.10]{bochiAnosovRepresentationsDominated2019}, \cite[Section 4]{sambarinoOrbitalCountingProblem2015} for semisimple Lie groups and \cite{thirion} for $\slr$. 
Our definition of $\delta$ seems different from the one in \cite{bochiAnosovRepresentationsDominated2019}, \cite{sambarinoOrbitalCountingProblem2015}. By using the correspondence between linear forms and hyperplanes for Euclidean spaces, we can verify that they are the same.
An important property is that \cite[Lemma 4.12]{sambarinoOrbitalCountingProblem2015}: for all $g\in G$ and $(\xi,\eta)\in\calF^{(2)}$, we have 
\begin{equation}\label{equ.gromov}
    (g\xi|g\eta)_o-(\xi|\eta)_o=\iota\sigma(g,\xi)+\sigma(g,\eta),
\end{equation}
where $\iota$ is the inverse involution on $\frak a$.
We also define the Gromov product at other points $x$ in $X$ by $G$-invariance, by setting
\[(\xi|\eta)_x=(h_x^{-1}\xi|h_x^{-1}\eta)_o, \]
where $h_x$ is some element such that $h_xo=x$.
Since by \eqref{equ.gromov}, the Gromov product at $o$ is left $K$-invariant, this definition is independent of the choice of $h_x$. 
For all $x\in X$ and $(\xi,\eta)\in\calF^{(2)}$, we define the $(0,1]$-valued function
\[f_x(\xi,\eta)=\exp(-\rho(\xi|\eta)_x). \]
We define measures $\nu_x$ on $\calF^{(2)}$ by
\begin{equation}\label{equ.nu}
    \dd\nu_x(\xi,\eta)=\frac{\dd\mu_x(\xi)\dd\mu_x(\eta)}{f_x(\xi,\eta)}.
\end{equation}

\begin{prop}\label{prop-PS-BMS}
For all $x\in X$, the measure $\nu_x$ is $G$-invariant and equal to $\nu_o$. We denote it by $\nu$.
\end{prop}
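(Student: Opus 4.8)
The plan is to verify the two assertions—$G$-invariance of $\nu_x$ and the equality $\nu_x = \nu_o$—by reducing everything to the two transformation rules already established: the quasi-invariance of the Patterson density \eqref{equ.gmu0} and the cocycle formula \eqref{equ.gromov} for the Gromov product. First I would prove the equality $\nu_x = \nu_o$. Pick $h_x \in G$ with $h_x o = x$. By \eqref{equ.gmux} we have $\mu_x = (h_x)_*\mu_o$, and by definition $(\xi|\eta)_x = (h_x^{-1}\xi|h_x^{-1}\eta)_o$, so $f_x(\xi,\eta) = f_o(h_x^{-1}\xi, h_x^{-1}\eta)$. Substituting into \eqref{equ.nu} and performing the change of variables $(\xi,\eta)\mapsto (h_x^{-1}\xi, h_x^{-1}\eta)$ shows that $\nu_x$ is exactly the pushforward $(h_x)_*\nu_o$; so the equality $\nu_x = \nu_o$ will follow \emph{once} we know $\nu_o$ is $G$-invariant. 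Thus the whole proposition reduces to showing $(g)_*\nu_o = \nu_o$ for all $g \in G$, and then $\nu_x = (h_x)_*\nu_o = \nu_o$, with the common value independent of $x$.

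To prove $G$-invariance of $\nu_o$, I would compute the Radon–Nikodym derivative of $(g^{-1})_*\nu_o$ with respect to $\nu_o$ and show it is identically $1$. Write $\dd\nu_o(\xi,\eta) = f_o(\xi,\eta)^{-1}\dd\mu_o(\xi)\,\dd\mu_o(\eta)$. Under the change of variables $\xi \mapsto g\xi$, $\eta\mapsto g\eta$, formula \eqref{equ.gmu0} gives
\[
\frac{\dd (g)_*\mu_o}{\dd\mu_o}(\xi) = e^{-\rho\,\sigma(g^{-1},\xi)},
\qquad
\frac{\dd (g)_*\mu_o}{\dd\mu_o}(\eta) = e^{-\rho\,\sigma(g^{-1},\eta)},
\]
so the product of densities contributes a factor $e^{-\rho(\sigma(g^{-1},\xi)+\sigma(g^{-1},\eta))}$. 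Meanwhile, from the Gromov cocycle identity \eqref{equ.gromov} applied with $g^{-1}$ in place of $g$, namely $(g^{-1}\xi|g^{-1}\eta)_o - (\xi|\eta)_o = \iota\sigma(g^{-1},\xi) + \sigma(g^{-1},\eta)$, together with the fact that $\rho$ is $\iota$-invariant as a linear form on $\frak a$ (since $\iota$ permutes the positive roots up to the longest Weyl element, $\rho\circ\iota = \rho$), the factor coming from $f_o$ cancels the density factor exactly. Assembling these, the Radon–Nikodym derivative of $(g)_*\nu_o$ relative to $\nu_o$ is $e^{0} = 1$, giving $G$-invariance.

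The main obstacle—really the only nontrivial point—is the bookkeeping of the involution $\iota$: one must correctly match the $\iota\sigma(g,\xi)$ term appearing in \eqref{equ.gromov} against the $\sigma(g^{-1},\xi)$ terms in \eqref{equ.gmu0}, and confirm that $\rho$ is invariant under $\iota$ so that $\rho(\iota\sigma(g^{-1},\xi)) = \rho(\sigma(g^{-1},\xi))$. I would record the identity $\rho\circ\iota = \rho$ explicitly (it holds because $-\iota$ is the action of the longest element $w_0$ of $\calW$, which permutes $\Sigma^+$, hence fixes the sum $\sum_{\alpha\in\Sigma}\alpha$ and therefore $\rho$). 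Everything else is a routine change-of-variables computation; once the cancellation is verified for $\nu_o$, the reduction in the first paragraph upgrades it to $\nu_x = \nu_o$ for all $x \in X$, and we set $\nu := \nu_o$.
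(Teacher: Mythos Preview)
Your proposal is correct and uses the same ingredients as the paper—the quasi-invariance formula \eqref{equ.gmu0}, the Gromov cocycle identity \eqref{equ.gromov}, and the key fact $\rho\circ\iota=\rho$. The only difference is the order: the paper first proves $\nu_x=\nu_o$ directly (by comparing $f_x$ with $f_o$ and $\mu_x$ with $\mu_o$) and then deduces $G$-invariance via $\dd\nu_x(g\xi,g\eta)=\dd\nu_{g^{-1}x}(\xi,\eta)=\dd\nu_x(\xi,\eta)$, whereas you prove $G$-invariance of $\nu_o$ first and then obtain $\nu_x=(h_x)_*\nu_o=\nu_o$; the underlying computation is identical.
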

\noindent In the hyperbolic case, the measures $\mu_x$ are called Patterson-Sullivan and $\nu \otimes Leb_{\mathbb{R}}$ is the Bowen-Margulis-Sullivan measure.
In the $\slr$ case, Thirion \cite{thirion} gave a construction of this measure and proved those properties. 
We include a proof for completeness.
\begin{proof}
By \eqref{equ.gromov}, for all $x\in X$, all $(\xi,\eta)\in \calF^{(2)}$ and every $h_x \in G$ such that $h_x o =x$
\[f_x(\xi,\eta)=f_o(h_x^{-1}\xi,h_x^{-1}\eta)=f_o(\xi,\eta)\exp(-\rho(\iota\sigma(h_x^{-1},\xi)+\sigma(h_x^{-1},\eta)) \]
On the other hand,
\[\frac{\dd\mu_x}{\dd\mu_o}(\xi)=\frac{\dd(h_x)_*\mu_o}{\dd\mu_o}(\xi)=e^{-\rho\sigma(h_x^{-1},\xi)}. \]
We obtain the same formula for $\eta$.
Combing the above two equations and using that $\rho\iota\sigma(h_x^{-1},\xi)=\rho\sigma(h_x^{-1},\xi)$, we obtain that 
\[\nu_x=\nu_o. \]
By definition of the Gromov product, we have for all $g \in G$
\[f_x(g\xi,g\eta)=f_{g^{-1}x}(\xi,\eta). \]
By equation \eqref{equ.gmux} and using that $\nu_{g^{-1}x}=\nu_x$,
\begin{align*}
    \dd\nu_x(g\xi,g\eta)=\frac{\dd\mu_x(g\xi)\dd\mu_x(g\eta)}{f_x(g\xi,g\eta)}=\frac{\dd\mu_{g^{-1}x}(\xi)\dd\mu_{g^{-1}x}(\eta)}{f_{g^{-1}x}(\xi,\eta)}=\dd\nu_{g^{-1}x}(\xi,\eta)=\dd\nu_x(\xi,\eta).
\end{align*}
Hence $\nu_x$ is $G$-invariant.
\end{proof}
Bochi, Potrie and Sambarino proved that the Gromov product $(\xi|\eta)_o$ in norm is almost the same as the distance between $o$ and the maximal flat $(\xi\eta)_X \subset X$.
\begin{lem}\label{lem-xietao}
\cite[Proposition 8.12]{bochiAnosovRepresentationsDominated2019}
There exist $C_3>1,C'>0$ such that for any $(\xi,\eta)\in \calF^{(2)}$, we have
\begin{equation*}
    \frac{1}{C_3}\|(\xi|\eta)_o\|\leq \dd_X(o,(\xi\eta)_X)\leq C_3\|(\xi|\eta)_o\|+C'.
\end{equation*}
\end{lem}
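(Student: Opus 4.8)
The statement I want to establish compares $\|(\xi|\eta)_o\|$, the Euclidean norm of the $\frak a$-valued Gromov product, with the distance $\dd_X(o,(\xi\eta)_X)$ from the base point $o$ to the maximal flat spanned by the transverse pair $(\xi,\eta)$. My strategy is to reduce everything to a single normal form: pick $g\in G$ with $g(\eta_0,\zeta_0)=(\xi,\eta)$, so that $(\xi\eta)_X = g\cdot(Ao)$ and the distance $\dd_X(o,(\xi\eta)_X)$ equals $\inf_{v\in\frak a}\|\underline a(g e^v)\|$ (up to the usual $K$-ambiguity, which does not affect norms). On the other side, by the cocycle formula \eqref{equ.gromov} applied with this $g$, since $(\eta_0|\zeta_0)_o=0$ (the base configuration is orthonormal, i.e.\ $\dd(x^\alpha(\eta_0),x^\alpha((\zeta_0)_o^\perp))=\dd(x^\alpha_+,(x^\alpha_+)^\perp)=1$ for every $\alpha$), we get $(\xi|\eta)_o = \iota\sigma(g,\eta_0)+\sigma(g,\zeta_0)$. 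So the whole lemma becomes a comparison, uniform in $g$, between $\|\iota\sigma(g,\eta_0)+\sigma(g,\zeta_0)\|$ and $\inf_{v\in\frak a}\|\underline a(g e^v)\|$.

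To carry this out I would use the fundamental representations $(\rho_\alpha,V^\alpha)_{\alpha\in\Pi}$, since the weights $\chi^\alpha$ form a basis of $\frak a^*$ and hence it suffices to compare $\chi^\alpha$-components. On the representation side: for a fixed $\alpha$, write $g = k\exp(\underline a(g))l^{-1}$ in Cartan form. Then $\|\rho_\alpha(g)\| = e^{\chi^\alpha(\underline a(g))}$, the attracting line of $\rho_\alpha(g)$ (when the gap is positive) is near $\rho_\alpha(k)x^\alpha_+$, and one reads off from \cite[Lemma 6.33]{benoist-quint} (the same input used for Lemma \ref{lem-actiong}(iv)) that $\chi^\alpha(\sigma(g,\eta_0))$ and $\chi^\alpha(\sigma(g,\zeta_0))$ are, up to bounded error, $\chi^\alpha(\underline a(g))$ and $-\chi^\alpha(\iota\underline a(g)) = -w_0\chi^\alpha(\underline a(g))$ respectively — more precisely one should track that $\sigma(g,\zeta_0)$ picks out the \emph{lowest} weight direction. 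Summing $\iota\sigma+\sigma$ in the $\chi^\alpha$-basis then yields $\chi^\alpha((\xi|\eta)_o) = 2\,(\text{distance of }\underline a(g)\text{ to the relevant wall, measured by }\alpha) + O(1)$. On the flat side, $\inf_{v\in\frak a}\|\underline a(g e^v)\|$: translating by $A$ moves the Cartan projection, and the infimum is attained (up to bounded error) when the flat is "centered", at which point $\underline a(ge^{v_0})$ measures precisely how far the flat passes from $o$, and this is again governed by $\inf_\alpha \alpha(\underline a(g))$-type quantities. Matching the two computations gives the two-sided bound with multiplicative constant $C_3$ and an additive constant $C'$ absorbing all the $O(1)$ errors; the additive constant is genuinely needed only on the upper side because the small-gap / near-the-wall regime is exactly where the representation estimates degrade.

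The main obstacle I anticipate is bookkeeping the behavior near the walls of $\frak a^+$: when $\underline a(g)$ is close to $\partial\frak a^+$ (equivalently the pair $(\xi,\eta)$ is "almost non-transverse" relative to $o$), the attracting/repelling lines in the $\rho_\alpha$ need not be well separated, Lemma 14.2 of \cite{benoist-quint} gives weak control, and both $\sigma(g,\eta_0)$ and $(\xi|\eta)_o$ can be small while the precise relation between them is only coarse — this is the source of the additive $C'$ and the reason one cannot hope for a purely multiplicative comparison. The cleanest way around it is probably to quote Bochi--Potrie--Sambarino's Proposition 8.12 directly as is done in the statement, but if I were to reprove it I would split into the regime $\inf_\alpha\alpha(\underline a(g))\geq$ const (where the representation estimates are uniform and give a clean multiplicative comparison) and its complement (where both sides are $O(1)$, so any additive constant works), and glue. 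A secondary, more cosmetic point is checking that the $K$-indeterminacy in "$(\xi\eta)_X = g\cdot(Ao)$" and in Cartan decompositions does not affect any of the norms — it does not, since $K$ acts by isometries on $X$ and fixes $o$.
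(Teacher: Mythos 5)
First, a remark on what the paper actually does: it gives no proof of this lemma at all — the statement is imported verbatim from \cite[Proposition 8.12]{bochiAnosovRepresentationsDominated2019} — so the "cleanest way" you mention at the end is precisely the paper's route. Your preliminary reduction is fine: choosing $g$ with $g(\eta_0,\zeta_0)=(\xi,\eta)$, noting $(\eta_0|\zeta_0)_o=0$, using \eqref{equ.gromov} to write $(\xi|\eta)_o=\iota\sigma(g,\eta_0)+\sigma(g,\zeta_0)$, and identifying $\dd_X(o,(\xi\eta)_X)=\inf_{v\in\frak a}\Vert\underline{a}(ge^v)\Vert$ is a correct reformulation of the lemma.

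The quantitative core of your sketch, however, has a genuine gap. The claims $\chi^\alpha(\sigma(g,\eta_0))=\chi^\alpha(\underline{a}(g))+O(1)$ and $\chi^\alpha(\sigma(g,\zeta_0))=-\chi^\alpha(\iota\underline{a}(g))+O(1)$ are not valid uniformly in $g$: they require quantitative transversality between $\eta_0,\zeta_0$ and the $K$-frames of the Cartan decomposition of $g$, and supplying that control is exactly the content of the proposition, not an input one gets from \cite[Lemma 6.33]{benoist-quint}. (For $g=e^{-a}$ with $a\in\frak a^{++}$ deep, $\chi^\alpha(\sigma(g,\eta_0))=-\chi^\alpha(a)$ while $\chi^\alpha(\underline a(g))=\chi^\alpha(\iota a)$, an unbounded discrepancy even though $\underline a(g)$ is regular; for $g=ne^a$ with $n\in N$ large the same failure occurs with the flat far from $o$.) Moreover, if those approximations did hold, summing would give $(\xi|\eta)_o\approx\iota\underline a(g)-\iota\underline a(g)=0$ for every pair, contradicting both your asserted formula "$\chi^\alpha((\xi|\eta)_o)=2(\text{dist of }\underline a(g)\text{ to the wall})+O(1)$" and the lemma itself; and that formula is independently false (for $g=e^a$ regular and deep the flat is $Ao$, so the left side is $0$ while the right side is arbitrarily large — note that $\underline a(g)$ and its distance to the walls depend on the representative in the coset $gAM$, whereas both sides of the lemma depend only on $(\xi,\eta)$). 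Finally, the proposed dichotomy does not glue: in the complementary regime $\inf_\alpha\alpha(\underline a(g))=O(1)$ the two sides need \emph{not} be $O(1)$, since a flat can be arbitrarily far from $o$ while admitting a representative with singular $\underline a(g)$ (e.g.\ in a product of two copies of $\mathrm{SL}(2,\R)$, a pair whose first-factor geodesic is at distance $D$ from $o$ and whose second factor is the standard one), and in the regular regime the representation estimates are not uniform for the reason above. The correct smallness parameter is the position of the flat relative to $o$ (equivalently $\dd(\rho_\alpha(g)x_+^\alpha,\rho_\alpha(g)(x_+^\alpha)^\perp)$), which is what Bochi--Potrie--Sambarino actually estimate; as written, your splitting by regularity of $\underline a(g)$ does not yield the two-sided bound.
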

\noindent By $G$-invariance, we deduce that for every $x \in X$ and $(\xi,\eta) \in \calF^{(2)}$
$$ \frac{1}{C_3} \Vert (\xi | \eta)_x \Vert \leq \dd_X(x, (\xi \eta)_X) \leq 
C_3 \Vert (\xi | \eta)_x \Vert + C' .$$
With this $G$-invariant measure $\nu$ on $\calF^{(2)}$, now we can disintegrate the Haar measure on $G/M$ along Hopf coordinates.

\begin{prop}\label{prop-disintegration}
The product measure $\nu\otimes Leb$ on $\calF^{(2)}\times\frak a$ is a disintegration in Hopf coordinates of a Haar measure on $G/M$.
\end{prop}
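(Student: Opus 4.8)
The plan is to produce an explicit $G$-invariant measure on $G/M$ by writing down the candidate $\nu \otimes \leb$ pulled back through the Hopf map $\cal H$, and then to verify $G$-invariance directly in Hopf coordinates, using the equivariance formula \eqref{equ-hxi} together with the transformation rule \eqref{equ.gromov} for the Gromov product and \eqref{equ.gmu0} for the Patterson density. Since a Haar measure on $G/M$ is unique up to scaling, any $G$-invariant (Radon, fully supported) measure on $G/M$ \emph{is} a Haar measure, so it suffices to check invariance of the pushforward. I would first recall that $\cal H$ is a diffeomorphism onto $\cal F^{(2)} \times \mathfrak a$ (stated earlier), so $\cal H^{-1}_*(\nu \otimes \leb)$ is a well-defined Radon measure on $G/M$, and it is visibly fully supported because $\nu$ has full support on $\cal F^{(2)}$ (the $\mu_o$ are fully supported on $\cal F$ and $f_o$ is bounded below on compacta).

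For the invariance computation, fix $h \in G$ and $(\xi,\eta,Y) \in \cal F^{(2)} \times \mathfrak a$. By \eqref{equ-hxi} the left $h$-action reads $(\xi,\eta,Y) \mapsto (h\xi, h\eta, Y + \sigma(h,\xi))$. The Lebesgue factor $\leb$ is translation-invariant on $\mathfrak a$, so the shift by $\sigma(h,\xi)$ contributes no Jacobian. It remains to see that $\nu$ is $h$-invariant on $\cal F^{(2)}$, which is exactly Proposition \ref{prop-PS-BMS}. Thus $\cal H^{-1}_*(\nu \otimes \leb)$ is $G$-invariant, hence a Haar measure on $G/M$; concretely, writing $\dd(\nu\otimes\leb)(\xi,\eta,Y) = \dd\nu(\xi,\eta)\,\dd\leb(Y)$ and transporting by $\cal H^{-1}$ gives the asserted disintegration, with the $\mathfrak a$-fibers of $\cal H$ being precisely the right $A$-orbits by Hopf property (ii), along which the conditional measure is $\leb$.

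The only genuinely substantive point — and the step I would be most careful about — is making sure the disintegration is stated with respect to the right normalization of Lebesgue measure and the right choice of Haar measure on $G/M$: \emph{a priori} the argument only shows the pushforward is \emph{some} Haar measure, i.e. a positive multiple of $\widetilde m_\Gamma$'s lift, and pinning the constant requires either fixing the normalization of $\leb$ on $\mathfrak a$ (via the Killing-form scalar product already fixed) or simply absorbing it, as the paper does, into the overall constant $C_G$ appearing later. I would therefore phrase the proposition as an identification up to the normalization of $\leb$, or equivalently fix $\leb$ once and for all and note that this fixes the Haar measure on $G$ (equivalently on $G/M$) compatibly, via the Iwasawa decomposition $G = NAK$ together with the chosen Haar measures on $N$ and $K$. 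Apart from that bookkeeping, the proof is a direct substitution using \eqref{equ-hxi}, \eqref{equ.gromov}, and Proposition \ref{prop-PS-BMS}, with no analytic obstacle.
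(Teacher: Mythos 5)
Your proposal is correct and follows essentially the same route as the paper: invariance of $\nu\otimes\leb$ under the $G$-action in Hopf coordinates, via the equivariance formula \eqref{equ-hxi}, the $G$-invariance of $\nu$ from Proposition \ref{prop-PS-BMS}, and translation invariance of Lebesgue measure on $\frak a$, followed by uniqueness of the Haar measure up to scaling. The paper's proof is just a terser version of this argument, with the normalization constant absorbed later exactly as you suggest.
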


\begin{proof}
The product measure $\nu\otimes Leb$ is $G$-invariant by Proposition \ref{prop-PS-BMS} and the Hopf coordinates. So it is a Haar measure on $G/M$.
\end{proof}

\subsection{Cartan regular isometries}
Recall that by Cartan decomposition, for every element $g \in G$ there exist $k,l\in K$ and a unique element $\underline{a}(g) \in \frak{a}^+$ such that $g=k \exp (\underline{a}(g)) l^{-1}$. 
Note that $k$ and $l$ are defined up to right multiplication by elements in $Z_K( \exp(\underline{a}(g)))$. 



\begin{defin}\label{defin-cartan-regular}
For all $x \in X$, we denote by $\underline{a}_x: G \rightarrow \mathfrak{a}^+$
the map that assigns to every $g \in G$ the $\frak{a}^+$-distance between $x$ and $gx$, i.e. $\underline{a}_x(g):= d_{\underline{a}}(x,gx).$ We say that $g$ is \emph{$x$-cartan regular} if $\underline{a}_x(g) \in \frak{a}^{++}$.

Let $g$ be an $x$-cartan regular element, consider $h,h'\in G$ such that $ho=h'o=x$ with $he^{\underline{a}_x(g)}o=gx$ and $h'e^{ \underline{a}_x(g^{-1})}o=g^{-1}x$.
We set $g_x^+:=h \eta_0$ and $g_x^-:=h'\eta_0$. In particular, when $x=o$, we can take $h=k$ and $h'=lk_\iota$.
\end{defin}
Note that every $g\in G$ we have $\underline{a}_x(g)= \underline{a}( h_x^{-1} g h_x)$, independently of the choice of $h_x \in G$ such that $h_xo=x$.
Furthermore, provided that $g$ is $x$-cartan regular, 
\begin{equation}\label{equ.gammax}
    g_x^{\pm}=h_x(h_x^{-1} g h_x)_o^{\pm}.
\end{equation}
Remark that $(x,g_x^+) \in X \times \cal{F}$ (resp. $(x,g_x^-)$) is the unique geometric Weyl chamber based on $x$ containing $gx$ (resp. $g^{-1}x$).
In the $\mathrm{PSL}(2,\mathbb{R})$ case, an element $g$ is $x$-cartan regular when $gx\neq x$, then $g_x^+ \in \partial \mathbb{H}^2$ (resp. $g_x^-$) is the asymptotic endpoint of the half geodesic based on $x$ going through $gx$ (resp. $g^{-1}x$).

\begin{lem}\label{lem-cartan-points}
For all $g\in G$, every $x,y\in X$, the following bound holds:
$$ \Vert \underline{a}_x(g)-\underline{a}_y(g) \Vert \leq 2 \mathrm{d}_X(x,y) .$$ 
\end{lem}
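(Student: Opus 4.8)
The key fact is that $\underline{a}_x(g) = d_{\underline a}(x,gx) = \Vert\cdot\Vert$-distance realized by $\underline a(h_x^{-1}gh_x)$. The plan is to reduce everything to the triangle inequality for the $\mathfrak a^+$-valued distance $d_{\underline a}$, combined with the standard fact (a consequence of Lemma \ref{lem-actiong}(iv'), or of subadditivity of the Cartan projection under the Euclidean norm) that the $\mathfrak a^+$-valued distance is ``$\Vert\cdot\Vert$-subadditive'' in the following sense: for any three points $p,q,r\in X$, one has $\Vert d_{\underline a}(p,r)-d_{\underml a}(p,q)\Vert\le \mathrm d_X(q,r)$ and $\Vert d_{\underline a}(p,r)-d_{\underline a}(q,r)\Vert\le \mathrm d_X(p,q)$. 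Here I am using that $\mathrm d_X=\Vert d_{\underline a}(\cdot,\cdot)\Vert$ is a genuine $G$-invariant metric on $X$, so $|\mathrm d_X(p,r)-\mathrm d_X(p,q)|\le \mathrm d_X(q,r)$, and the finer vector-valued version follows because the difference of the two Cartan projections has norm bounded by the Cartan projection of the ``correction'' group element — this is exactly the content of the contraction estimates in Lemma \ref{lem-actiong}.

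First I would apply this to the four points $x, gx, y, gy$. Insert $gy$ as an intermediate point: $\Vert \underline a_x(g) - \underline a_y(g)\Vert = \Vert d_{\underline a}(x,gx) - d_{\underline a}(y,gy)\Vert \le \Vert d_{\underline a}(x,gx) - d_{\underline a}(x,gy)\Vert + \Vert d_{\underline a}(x,gy) - d_{\underline a}(y,gy)\Vert$. The first term is bounded by $\mathrm d_X(gx,gy) = \mathrm d_X(x,y)$ by $G$-invariance of $\mathrm d_X$ (moving the second argument from $gx$ to $gy$). The second term is bounded by $\mathrm d_X(x,y)$ by moving the first argument from $x$ to $y$. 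Adding the two bounds gives $\Vert \underline a_x(g) - \underline a_y(g)\Vert \le 2\,\mathrm d_X(x,y)$, which is exactly the claim.

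The one point that needs care — and the main (minor) obstacle — is justifying the vector-valued ``one-sided'' inequality $\Vert d_{\underline a}(p,r) - d_{\underline a}(p,q)\Vert \le \mathrm d_X(q,r)$, as opposed to only its scalar consequence $|\mathrm d_X(p,r)-\mathrm d_X(p,q)|\le \mathrm d_X(q,r)$. This is a standard property of the Cartan projection: writing $h_p^{-1}h_r = (h_p^{-1}h_q)(h_q^{-1}h_r)$ and using that $\underline a$ is the Cartan projection, one has $\Vert \underline a(ab) - \underline a(a)\Vert \le \Vert \underline a(b)\Vert$ for all $a,b\in G$ (sub-additivity of $\underline a$ with respect to $\Vert\cdot\Vert$, see e.g. \cite[Lemma 6.33, Cor. 8.20]{benoist-quint} or the KAK-estimate used to prove Lemma \ref{lem-actiong}(iv)). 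Applying this with $a = h_p^{-1}h_q$ and $b = h_q^{-1}h_r$ gives the needed bound since $\Vert\underline a(h_q^{-1}h_r)\Vert = \mathrm d_X(q,r)$. The second inequality (moving the first argument) follows symmetrically, either by the involution $\iota$ and $\underline a(g^{-1}) = -\iota\,\underline a(g)$, or by applying the same sub-additivity on the left. With this in hand, the proof is just the two-step insertion of $gy$ described above; I expect it to be three or four lines.
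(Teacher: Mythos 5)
Your proof is correct and follows essentially the same route as the paper: both arguments rest on the subadditivity of the Cartan projection, $\Vert\underline{a}(hh')-\underline{a}(h)\Vert\le\Vert\underline{a}(h')\Vert$ and $\Vert\underline{a}(h'h)-\underline{a}(h)\Vert\le\Vert\underline{a}(h')\Vert$, applied twice with $h'=h_y^{-1}h_x$ --- the paper phrases this as comparing $\underline{a}(h_y^{-1}gh_y)$ with its conjugate by $h_y^{-1}h_x$, which amounts to the same two-step estimate as your insertion of the intermediate term $d_{\underline{a}}(x,gy)$. One small correction: the vector-valued subadditivity is not a consequence of Lemma \ref{lem-actiong} (which concerns the Iwasawa cocycle and distances on $\cal{F}$); the paper quotes it from \cite[Lemma 2.3]{kasselCorank08}, which is exactly the statement you need.
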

\begin{proof}
By \cite[Lemma 2.3]{kasselCorank08} in our choice of notations: for all $h,h'\in G$ we have the following inequalities,
$ \Vert \underline{a}( h h') - \underline{a}(h) \Vert \leq  \Vert \underline{a}(h') \Vert $
and $\Vert \underline{a}( h' h) - \underline{a}(h) \Vert \leq  \Vert \underline{a}(h') \Vert $.
Let $x,y\in X$ and choose $h_x,h_y \in G$ such that $h_x o =x$ and $h_y o =y$.
We compare the Cartan projection of $h= h_y^{-1} g h_y$ to the Cartan projection of its conjugate by $h'=h_y^{-1} h_x$, using that $\Vert \underline{a}(h') \Vert =\Vert \underline{a}(h'^{-1}) \Vert $ we get
$$ \Vert \underline{a}_x(g) - \underline{a}_y(g)  \Vert \leq 2 \Vert \underline{a}(h_y^{-1} h_x) \Vert .$$
Since $\Vert \underline{a}(h_y^{-1} h_x) \Vert= \dd_X(x,y)$, we deduce the Lemma.
\end{proof}

\subsection{Volume growths and decay}
We introduce here some subsets on $G$. They will be used to obtain the main term and the exponentially decaying error term in our main Theorems \ref{corol-counting}, \ref{thm-introequid}.

For $t>1$, let
$$D_t:= K \exp(B_{\frak{a} }(0,t))K,$$
and its subset of Cartan-regular elements
$$D_t^{reg}:= K \exp(B_{\frak{a} }(0,t)\cap \frak a^{++} )K.$$
For $0<s<t$, let 
$$D_t^{s} := \lbrace g \in D_t \; \vert \; \underline{a}(g) \in \overline{B_{\frak{a}}( \partial \frak{a}^+ ,s )  } \rbrace $$
be the set of elements in $D_t$ whose Cartan projection have distance at most $s$ to the boundary of the Weyl chamber.

For all $x\in X$, we define similar sets
$$D_t(x):= h_x D_t h_x^{-1},$$
$$D_t^{reg}(x):= h_x D_t^{reg} h_x^{-1},$$
$$D_t^{s}(x):= h_x D_t^{s} h_x^{-1}. $$
These sets are independent of the choice of $h_x$.

For a subset $S$ of $G$, its volume is defined as its Haar measure $m_G(S)$. 
Recall volume estimates from \cite{knieper_asymptotic_1997}, \cite[Thm 5.8]{helgasonGroupsGeometricAnalysis2000}, \cite[Thm 6.1]{gorodnikIntegralPointsSymmetric2009}. There exist $C_0>0$ and $\delta_0>0$ such that as $t\rightarrow \infty$, we have
\begin{equation}\label{volume_D_t}
\vol(D_t) \sim C_0 t^{\frac{\dim A -1}{2}}e^{\delta_0 t},
\end{equation}
where $\delta_0:=2\max_{Y\in B_{\frak a}(0,1)}\rho(Y)$ and $\rho$ is equal to the half of the sum of positive roots with multiplicities.

\begin{lem}[Prop. 7.1 \cite{gorodnikErgodicTheoryLattice2010}]\label{lem-vollip}
The function $t\mapsto \log \vol(D_t)$ is uniformly locally Lipschitz for $t>1$. 
\end{lem}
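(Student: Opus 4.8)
\emph{Plan.} The idea is to use the explicit Cartan (polar) integration formula to show that $t\mapsto\vol(D_t)$ is $C^1$ and convex on $(0,\infty)$, and that its logarithmic derivative is bounded on $[1,\infty)$; this yields a global Lipschitz bound for $t\mapsto\log\vol(D_t)$ on $(1,\infty)$, which is stronger than the claimed uniform local Lipschitz property. First, since the Weyl group acts on $\frak a$ by isometries, $D_t=K\exp\big(B_{\frak a}(0,t)\cap\frak a^+\big)K$, and Helgason's integration formula (the one underlying \eqref{volume_D_t}, see \cite{helgasonGroupsGeometricAnalysis2000}) provides a constant $c>0$ with $\vol(D_t)=c\int_{B_{\frak a}(0,t)\cap\frak a^+}J(H)\,\dd H$, where $J(H):=\prod_{\alpha\in\Sigma^+}\sinh(\alpha(H))^{\dim\frak g_\alpha}$. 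Extending $J$ by $0$ off $\frak a^+$ and writing $H=r\omega$ in polar coordinates on $\frak a$ (with $\omega$ in the unit sphere $S\subset\frak a$), one gets $\vol(D_t)=c\int_0^t\Phi(r)\,\dd r$ with $\Phi(r):=r^{\dim\frak a-1}\int_S J(r\omega)\,\dd\omega$. The function $\Phi$ is continuous on $[0,\infty)$, strictly positive for $r>0$, and \emph{nondecreasing}: for $\omega\in\frak a^+$ each factor $r\mapsto\sinh(r\,\alpha(\omega))$ is nondecreasing because $\alpha(\omega)\ge0$, while $J(r\omega)=0$ for $\omega\notin\frak a^+$.

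It follows that $\phi(t):=\vol(D_t)$ is $C^1$ on $(0,\infty)$ with $\phi'(t)=c\,\Phi(t)>0$, and is convex since $\Phi$ is nondecreasing. Hence $\tfrac{\dd}{\dd t}\log\vol(D_t)=\Phi(t)\big/\!\int_0^t\Phi(r)\,\dd r$, and it remains to bound this ratio uniformly for $t\ge1$. The crucial point is the lower bound on the denominator: one cannot use $\Phi(t)$ itself, but monotonicity of $\Phi$ gives, for $t\ge1$, $\int_0^t\Phi\ge\int_{t-1}^t\Phi\ge\Phi(t-1)$, so that $\tfrac{\dd}{\dd t}\log\vol(D_t)\le\Phi(t)/\Phi(t-1)$. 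Now
\[
\frac{\Phi(t)}{\Phi(t-1)}\le\Big(\tfrac{t}{t-1}\Big)^{\dim\frak a-1}\ \sup_{\omega\in S\cap\frak a^+}\ \prod_{\alpha\in\Sigma^+}\Big(\frac{\sinh(t\,\alpha(\omega))}{\sinh((t-1)\alpha(\omega))}\Big)^{\dim\frak g_\alpha},
\]
and an elementary estimate (expand $\sinh(ta)=\sinh((t-1)a)\cosh a+\cosh((t-1)a)\sinh a$ and use $\coth((t-1)a)\le\coth a$ for $t\ge2$) gives $\sinh(ta)/\sinh((t-1)a)\le2\cosh a$ for all $a\ge0$, $t\ge2$. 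Since $\alpha(\omega)\le\|\alpha\|$ on $S$, the supremum above is bounded for $t\ge2$ by a constant depending only on $G$; combined with $(t/(t-1))^{\dim\frak a-1}\le2^{\dim\frak a-1}$ this bounds $\tfrac{\dd}{\dd t}\log\vol(D_t)$ on $[2,\infty)$. On the compact interval $[1,2]$ the same quantity equals $c\,\Phi(t)/\phi(t)$, which is continuous (as $\Phi,\phi$ are continuous and $\phi>0$), hence bounded. Thus $\tfrac{\dd}{\dd t}\log\vol(D_t)$ is bounded by some $L$ on all of $[1,\infty)$, so $t\mapsto\log\vol(D_t)$ is $L$-Lipschitz on $(1,\infty)$, a fortiori uniformly locally Lipschitz.

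An alternative to the $\sinh$ computation: once convexity of $\phi$ is known, the elementary inequalities $\tfrac{\phi(t)-\phi(t-h)}{h}\le\phi'(t)\le\tfrac{\phi(t+h)-\phi(t)}{h}$, together with the asymptotics $\vol(D_t)\sim C_0t^{(\dim A-1)/2}e^{\delta_0 t}$ from \eqref{volume_D_t}, force $\phi'(t)/\phi(t)\to\delta_0$ as $t\to\infty$; since $\phi'/\phi$ is continuous on $(1,\infty)$, it is then bounded there, and one concludes as before. I expect no serious obstacle in either route; the one step that needs care is precisely the lower bound on $\int_0^t\Phi$, which must exploit the monotonicity of the Jacobian $J$ along rays in $\frak a^+$ rather than a pointwise bound on $\Phi(t)$.
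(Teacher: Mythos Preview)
Your proof is correct. The paper does not actually prove this lemma: it simply cites Proposition~7.1 of Gorodnik--Nevo \cite{gorodnikErgodicTheoryLattice2010} and restates the conclusion as the inequality $\vol(D_{t+\epsilon})\le e^{C\epsilon}\vol(D_t)$ for $0<\epsilon<1$.

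Your argument via the Cartan integration formula and the monotonicity of the Jacobian $J(H)=\prod_{\alpha\in\Sigma^+}\sinh(\alpha(H))^{m_\alpha}$ along rays in $\frak a^+$ is essentially the standard direct route to this fact, and it dovetails with how the paper derives the volume asymptotics \eqref{volume_D_t} and the decay estimate in Lemma~\ref{volume_reste} from the same Harish-Chandra formula. One small remark: in your displayed bound for $\Phi(t)/\Phi(t-1)$ the supremum should be read over the \emph{interior} of $S\cap\frak a^+$ (or as a pointwise inequality $J(t\omega)\le M\,J((t-1)\omega)$, which holds trivially on the walls since both sides vanish there); as you note, the ratio extends continuously to the boundary with value $(t/(t-1))^{\sum m_\alpha}$, so no issue arises. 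Your alternative via convexity of $t\mapsto\vol(D_t)$ combined with the asymptotic \eqref{volume_D_t} is also valid and a bit cleaner, since it bypasses the explicit $\sinh$ computation. Either version has the advantage of making the lemma self-contained rather than deferring to the reference.
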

This means that there exists $C>0$ such that for all $0<\epsilon<1$, we have
\[\vol(D_{t+\epsilon})\leq e^{C\epsilon}\vol(D_t). \]

\begin{lem}\label{volume_reste}
There exists $\epsilon_G>0$ such that for every $0<\epsilon<\epsilon_G$, there exists $\kappa(\epsilon)>0$ such that for $t>1$ 
\begin{equation}
\frac{ \vol(D_t^{\epsilon t})}{\vol (D_t) } = O(\vol(D_t)^{-\kappa(\epsilon)} ).
\end{equation}
\end{lem}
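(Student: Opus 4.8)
The plan is to estimate the Haar measure of $D_t^{\epsilon t}$, the set of elements whose Cartan projection lies within distance $\epsilon t$ of the wall $\partial\mathfrak{a}^+$, and compare it with the full volume $\vol(D_t)$. The key is the Weyl integration formula in Cartan coordinates: up to the $K\times K$-factors, $\vol$ restricted to $D_t$ is, in the coordinate $Y\in B_{\mathfrak a}(0,t)\cap\mathfrak a^+$, the measure with density $J(Y)\,\dd Y$ where $J(Y)=\prod_{\alpha\in\Sigma^+}\sinh^{m_\alpha}(\alpha(Y))\asymp e^{2\rho(Y)}$ for $Y$ bounded away from the walls, and more precisely $J(Y)\ll e^{2\rho(Y)}\prod_{\alpha\in\Pi}\min(1,\alpha(Y))^{m_\alpha}$ near the walls. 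So I would write
\begin{equation*}
\vol(D_t^{\epsilon t})\asymp \int_{\{Y\in B_{\mathfrak a}(0,t)\cap\mathfrak a^+\,:\,\dd(Y,\partial\mathfrak a^+)\le\epsilon t\}} J(Y)\,\dd Y,
\end{equation*}
and bound this from above.

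First I would cover the region $\{\dd(Y,\partial\mathfrak a^+)\le \epsilon t\}\cap B_{\mathfrak a}(0,t)$ by the union over $\alpha\in\Pi$ of the slabs $S_\alpha:=\{Y\in\mathfrak a^+\cap B_{\mathfrak a}(0,t): \alpha(Y)\le c\epsilon t\}$ for a suitable constant $c$ depending only on $G$ (using that $\dd(Y,\partial\mathfrak a^+)\asymp\inf_{\alpha\in\Pi}\alpha(Y)$, as in the proof of Lemma~\ref{lem-action-aplus}). It therefore suffices to bound $\int_{S_\alpha}e^{2\rho(Y)}\,\dd Y$ for a fixed simple root $\alpha$. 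On $S_\alpha$ one has the constraint $0\le\alpha(Y)\le c\epsilon t$, so writing $\rho$ in terms of the fundamental weights and integrating, the exponential growth rate of $\int_{S_\alpha}e^{2\rho(Y)}\dd Y$ is strictly smaller than $\delta_0=2\max_{\|Y\|\le 1}\rho(Y)$: indeed the maximiser $Y_0$ of $\rho$ on the unit ball is regular (it lies in $\mathfrak a^{++}$ since $\rho\in\mathrm{int}(\mathfrak a^+)^*$), hence $\alpha(Y_0)>0$, so once $\epsilon$ is small enough that $c\epsilon<\alpha(Y_0)$ the slab $S_\alpha$ excludes a neighbourhood of the ray $\R_+Y_0$ and the constrained maximum $\max\{2\rho(Y):\|Y\|\le t,\ \alpha(Y)\le c\epsilon t,\ Y\in\mathfrak a^+\}$ equals $t\,\delta_\alpha(\epsilon)$ with $\delta_\alpha(\epsilon)<\delta_0$. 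A Laplace-type estimate (the integrand is a product of an exponential and bounded polynomial factors over a region of diameter $O(t)$) then gives $\int_{S_\alpha}e^{2\rho(Y)}\dd Y\ll t^{\dim A}e^{t\,\delta_\alpha(\epsilon)}$.

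Combining over the finitely many $\alpha\in\Pi$ and using \eqref{volume_D_t}, set $\delta(\epsilon):=\max_{\alpha\in\Pi}\delta_\alpha(\epsilon)<\delta_0$; then
\begin{equation*}
\frac{\vol(D_t^{\epsilon t})}{\vol(D_t)}\ll \frac{t^{\dim A}e^{t\,\delta(\epsilon)}}{t^{(\dim A-1)/2}e^{\delta_0 t}}\ll e^{-\frac12(\delta_0-\delta(\epsilon))t}\ll \vol(D_t)^{-\kappa(\epsilon)}
\end{equation*}
for, say, $\kappa(\epsilon):=\frac{\delta_0-\delta(\epsilon)}{4\delta_0}>0$ (absorbing the polynomial factor into the exponential and using $\vol(D_t)\asymp t^{(\dim A-1)/2}e^{\delta_0 t}$), valid for all $\epsilon<\epsilon_G$ where $\epsilon_G$ is chosen so that $c\,\epsilon_G<\min_{\alpha\in\Pi}\alpha(Y_0)$. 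This proves the lemma.

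The main obstacle is getting the strict inequality $\delta(\epsilon)<\delta_0$ with explicit control, i.e. verifying that the maximiser of $\rho$ on the unit sphere is genuinely regular so that cutting off an $\epsilon$-neighbourhood of the walls strictly lowers the exponential rate; this is where the structure of $\rho$ (a strictly dominant weight, being a positive combination of all positive roots) is essential, and one must also make sure the polynomial factors $t^{\dim A}$ coming from integrating over an $O(t)$-diameter region are harmless, which they are since they are swallowed by $e^{-c(\delta_0-\delta(\epsilon))t}$. The Weyl integration formula and the asymptotics of $\sinh$ are standard, so the argument is essentially a constrained-optimisation computation on $\mathfrak a$.
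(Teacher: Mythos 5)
Your proposal is correct and takes essentially the same route as the paper: both reduce via the Harish-Chandra integration formula to bounding $\int e^{2\rho(Y)}\,\dd Y$ over the part of $B_{\mathfrak{a}}(0,t)\cap\mathfrak{a}^+$ within distance $\epsilon t$ of the walls, observe that the constrained maximum of $2\rho$ there is $\leq(\delta_0-\kappa'(\epsilon))t$, and divide by the asymptotic \eqref{volume_D_t}. The only difference is that the paper cites Lemma 9.2 of \cite{gorodnikDistributionLatticeOrbits2007} for this strict drop in the constrained maximum, whereas you prove it directly (slab decomposition over simple roots plus regularity of the maximizer of $\rho$ on the unit ball), which is a valid self-contained substitute.
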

\begin{proof}
The proof is similar to Lemma 9.2 and 9.4 in \cite{gorodnikDistributionLatticeOrbits2007}. Let $\frak a^+(s,t)=\{v\in\frak a^+ \cap B_{\frak a}(0,t),\,d(v,\partial \frak a^+)\leq s \}$. Then by Harish-Chandra's formula (see \cite[Chapter I Theorem 5.8]{helgasonGroupsGeometricAnalysis2000}), we have
\[\vol(D_t^s)=\int _{\frak a^+(s,t)}\xi(v)\dd v, \]
where 
$$\xi(v)=\prod_{\alpha\in\Sigma^+}\sinh(\alpha(v))^{m_\alpha}\ll e^{2\rho(v)},$$
and $2 \rho = \sum_{\alpha \in \Sigma^+} m_\alpha  \alpha$ where $m_\alpha=\dim \frak g_\alpha$.
By Lemma 9.2 in \cite{gorodnikDistributionLatticeOrbits2007}, if $\epsilon$ smaller than some constant $\epsilon_G$, then by the strict convexity of $\frak a_1^+$, there exists $\kappa'(\epsilon)>0$ such that
\[\max_{v\in \frak a^+(\epsilon,1)}2\rho(v)\leq \delta_0-\kappa'(\epsilon). \]
So by Harish-Chandra's formula, we have $\vol(D_t^{\epsilon t})\ll t^{\dim A}e^{(\delta_0-\kappa'(\epsilon))t}$. Due to the asymptotic of $\vol(D_t)$ \eqref{volume_D_t}, the proof is complete.
\end{proof}

\begin{lem}\label{count-reste}
Let $\Gamma$ be a lattice in $G$, then
for all $t>1$,
\begin{equation*}
    \frac{|\Gamma\cap D_t^{\epsilon t}|}{\vol(D_t)}=O(\vol(D_t)^{-\kappa(\epsilon)}).
\end{equation*}
\end{lem}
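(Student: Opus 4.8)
The plan is to reduce the counting estimate to the volume estimate of Lemma \ref{volume_reste} via a standard lattice point counting argument. The key input is a comparison between the number of lattice points $|\Gamma \cap D_t^{\epsilon t}|$ and the Haar volume $\vol(D_t^{\epsilon t})$, thickened by a small amount to account for the ``fattening'' of each lattice point by a fixed neighborhood of the identity.

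First I would fix a relatively compact symmetric neighborhood $\mathcal{O}$ of the identity $e$ in $G$ such that the projection $\mathcal{O} \to \Gamma\backslash G$ is injective (possible since $\Gamma$ is discrete), and such that $\mathcal{O}$ has positive Haar measure $m_G(\mathcal{O}) > 0$. For each $\gamma \in \Gamma \cap D_t^{\epsilon t}$, the translates $\gamma \mathcal{O}$ are pairwise disjoint, and by the (left) Cartan-projection stability estimates — concretely, $\|\underline{a}(\gamma o') - \underline{a}(\gamma)\| \leq \mathrm{diam}_X(\mathcal{O} o)$ for $o' \in \mathcal{O}o$, which follows from Lemma \ref{lem-cartan-points} or directly from \cite[Lemma 2.3]{kasselCorank08} as quoted in that proof — there is a constant $c_{\mathcal O}>0$ so that $\gamma\mathcal{O} \subset D_{t+c_{\mathcal{O}}}^{\epsilon t + c_{\mathcal{O}}}$ for every such $\gamma$. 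Summing the disjoint pieces yields
\[
|\Gamma \cap D_t^{\epsilon t}| \; m_G(\mathcal{O}) \;\leq\; \vol\big(D_{t + c_{\mathcal{O}}}^{\epsilon t + c_{\mathcal{O}}}\big).
\]
Then I would absorb the additive shift: by the uniform local Lipschitz property of $t \mapsto \log\vol(D_t)$ (Lemma \ref{lem-vollip}) we have $\vol(D_{t+c_{\mathcal{O}}}) = O(\vol(D_t))$, and, provided $\epsilon$ is slightly less than the threshold $\epsilon_G$ (replace $\epsilon$ by $\epsilon' = \epsilon + c_{\mathcal{O}}/t < \epsilon_G$ for $t$ large, handling small $t$ trivially since the left side is then bounded), Lemma \ref{volume_reste} gives $\vol(D_{t+c_{\mathcal{O}}}^{\epsilon t + c_{\mathcal{O}}}) = O\big(\vol(D_{t+c_{\mathcal{O}}})^{1-\kappa(\epsilon')}\big) = O\big(\vol(D_t)^{1-\kappa(\epsilon')}\big)$. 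Dividing by $m_G(\mathcal O)\vol(D_t)$ gives the claim with exponent $\kappa(\epsilon')>0$; shrinking the exponent slightly if one wants the same $\kappa(\epsilon)$ as in Lemma \ref{volume_reste} is harmless since the $O$-estimate only weakens.

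The main obstacle — really the only place requiring care — is making sure the additive distortions (both the $+c_{\mathcal{O}}$ in the radius and, more delicately, the $+c_{\mathcal{O}}$ in the ``distance to the wall'' parameter) stay compatible with the hypothesis $\epsilon < \epsilon_G$ of Lemma \ref{volume_reste}: the thickened set $D_{t+c_{\mathcal O}}^{\epsilon t + c_{\mathcal O}}$ must still be of the form $D_{t'}^{s'}$ with $s'/t' < \epsilon_G$. Since $c_{\mathcal{O}}$ is an absolute constant, for $t$ large $\epsilon t + c_{\mathcal{O}} \leq (\epsilon + c_{\mathcal{O}}/t_0)(t+c_{\mathcal O})$ can be made $< \epsilon_G (t + c_{\mathcal O})$ by choosing $\epsilon$ bounded away from $\epsilon_G$, and the finitely many remaining values of $t$ contribute a bounded quantity absorbed into the implied constant. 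Everything else is the routine ``$\Gamma$-packing into a slightly fattened region'' argument, so the proof should be short.
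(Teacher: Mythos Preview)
Your proposal is correct and essentially identical to the paper's proof: the paper also fixes a small ball $B(e,\epsilon')$ with $B(e,\epsilon')^2\cap\Gamma=\{e\}$, uses disjointness of the translates to bound $|\Gamma\cap D_t^{\epsilon t}|$ by $\vol(B(e,\epsilon')D_t^{\epsilon t})/\vol(B(e,\epsilon'))$, applies \cite[Lemma 2.3]{kasselCorank08} to get $B(e,\epsilon')D_t^{\epsilon t}\subset D_{t+\ell\epsilon'}^{\epsilon t+\ell\epsilon'}$, and concludes via Lemmas \ref{lem-vollip} and \ref{volume_reste}. The only cosmetic difference is that the paper multiplies on the left rather than the right, and it is terser about absorbing the additive shift into the exponent.
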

\begin{proof}
Let $\epsilon'>0$ be a small constant such that the ball contered at $e$ with radius $\epsilon'$ satisfies $B(e,\epsilon')^2\cap\Gamma=\{e\}$. Then we have
\begin{equation*}
    |\Gamma\cap D_t^{\epsilon t}|\leq \frac{\vol( B(e,\epsilon')D_t^{\epsilon t})}{\vol(B(e,\epsilon'))}.
\end{equation*}
By \cite[Lemma 2.3]{kasselCorank08}, we have for $h'\in B(e,\epsilon')$ and $h\in D_t^{\epsilon t}$,
\[ \Vert \underline{a}( h' h) - \underline{a}(h) \Vert \leq  \Vert \underline{a}(h') \Vert\leq \ell\epsilon', \]
for some $\ell>0$. Therefore the product set
\[ B(e,\epsilon')D_t^{\epsilon t}\subset D_{t+\ell\epsilon'}^{\epsilon t+\ell\epsilon'}.  \]
Hence we have
\[|\Gamma\cap D_t^{\epsilon t}|\leq  \frac{\vol(D_{t+\ell\epsilon'}^{\epsilon t+\ell\epsilon'})}{\vol(B(e,\epsilon'))}, \]
which is $O(\vol(D_t)^{1-\kappa(\epsilon)})$ due to Lemma \ref{lem-vollip} and \eqref{volume_reste}.
\end{proof}

As a corollary, we have
\begin{lem}\label{lem-dtx}
For $0<\epsilon<\epsilon_G/2$, $t>1$ and $x\in X$ with $d_X(o,x)<\min\{\frac{\epsilon}{2(1-2\epsilon)},\frac{\kappa(2\epsilon)}{4(1-\kappa(2\epsilon))} \} t$, we have
\begin{equation*}
    \frac{|\Gamma\cap D_t^{\epsilon t}(x)|}{\vol(D_t)}=O(\vol(D_t)^{-\kappa(2\epsilon)/2}).
\end{equation*}
\end{lem}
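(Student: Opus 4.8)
The plan is to derive Lemma~\ref{lem-dtx} from Lemma~\ref{count-reste} by relating the shifted set $D_t^{\epsilon t}(x)$ to an unshifted set of the type $D_{t'}^{\epsilon' t'}$ at a comparable scale, at the cost of enlarging the radius slightly. First I would write $D_t^{\epsilon t}(x) = h_x D_t^{\epsilon t} h_x^{-1}$ for some (any) $h_x \in G$ with $h_x o = x$, so that $|\Gamma \cap D_t^{\epsilon t}(x)| = |(h_x^{-1} \Gamma h_x) \cap D_t^{\epsilon t}|$. Since $h_x^{-1} \Gamma h_x$ is again a lattice in $G$ but not the same one, this direct substitution is not quite enough; instead I would count the original lattice $\Gamma$ against the conjugated set. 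The key geometric input is that conjugation by $h_x$ moves Cartan projections by a controlled amount: by \cite[Lemma 2.3]{kasselCorank08} (used already in the proof of Lemma~\ref{lem-cartan-points}), for $g \in G$ one has $\Vert \underline{a}(h_x^{-1} g h_x) - \underline{a}(g) \Vert \le 2\Vert \underline{a}(h_x) \Vert = 2\,\dd_X(o,x)$.

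Consequently, if $g \in D_t^{\epsilon t}(x)$, i.e. $h_x^{-1} g h_x \in D_t^{\epsilon t}$, then $\underline{a}(g)$ lies within $2\,\dd_X(o,x)$ of $\underline{a}(h_x^{-1}g h_x)$, so $\Vert \underline{a}(g) \Vert \le t + 2\,\dd_X(o,x)$ and $\dd(\underline{a}(g), \partial \frak{a}^+) \le \epsilon t + 2\,\dd_X(o,x)$. Writing $s := \dd_X(o,x)$ and $t' := t + 2s$, this shows
\[
	D_t^{\epsilon t}(x) \subset D_{t'}^{\epsilon t + 2s}.
\]
Now I use the hypothesis $s < \tfrac{\epsilon}{2(1-2\epsilon)} t$, which rearranges to $2s < \epsilon(t + 2s) = \epsilon t'$, hence $D_{t'}^{\epsilon t + 2s} \subset D_{t'}^{\epsilon t' + 2s} $; more cleanly, $\epsilon t + 2s < \epsilon t + \epsilon t' $ is not quite what I want, so instead I note directly $\epsilon t + 2s \le \epsilon t + \epsilon t' - \epsilon\cdot 2s \le$ \dots\ — the clean bound is $\epsilon t + 2s \le 2\epsilon t'$ since $\epsilon t \le \epsilon t'$ and $2s \le \epsilon t'$. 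Therefore
\[
	D_t^{\epsilon t}(x) \subset D_{t'}^{2\epsilon t'}, \qquad t' = t + 2s.
\]

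With this inclusion in hand, Lemma~\ref{count-reste} applied with the exponent $2\epsilon$ (which is $< \epsilon_G$ by the hypothesis $\epsilon < \epsilon_G/2$) gives
\[
	|\Gamma \cap D_t^{\epsilon t}(x)| \le |\Gamma \cap D_{t'}^{2\epsilon t'}| = O\big(\vol(D_{t'})^{1 - \kappa(2\epsilon)}\big).
\]
Finally I convert $\vol(D_{t'})$ back to $\vol(D_t)$: since $t' - t = 2s < \tfrac{\kappa(2\epsilon)}{2(1-\kappa(2\epsilon))} t$ is bounded by a fixed multiple of $t$, Lemma~\ref{lem-vollip} (log-volume Lipschitz, iterated) gives $\vol(D_{t'}) \le e^{C(t'-t)}\vol(D_t) \le \vol(D_t)^{1 + c}$ for the constant $c$ governed by that multiple; choosing the numerical constraints on $\dd_X(o,x)$ exactly as in the statement makes $(1+c)(1-\kappa(2\epsilon)) \le 1 - \kappa(2\epsilon)/2$, whence $|\Gamma \cap D_t^{\epsilon t}(x)| = O(\vol(D_t)^{1-\kappa(2\epsilon)/2})$, and dividing by $\vol(D_t)$ yields the claim. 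The only delicate point is bookkeeping the two numerical thresholds on $\dd_X(o,x)$: one ($\tfrac{\epsilon}{2(1-2\epsilon)}t$) is needed to absorb the radius shift into the ``distance to the wall'' budget so that Lemma~\ref{count-reste} applies with exponent $2\epsilon$, and the other ($\tfrac{\kappa(2\epsilon)}{4(1-\kappa(2\epsilon))}t$) is needed so that the Lipschitz blow-up of the volume does not eat more than half of the saved exponent $\kappa(2\epsilon)$; I expect matching these constants precisely to the stated bounds to be the main (purely computational) obstacle.
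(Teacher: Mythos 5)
Your route is the paper's route: Lemma \ref{lem-cartan-points} gives $\|\underline{a}_o(\gamma)-\underline{a}_x(\gamma)\|\le 2d_X(o,x)$, hence with $s:=d_X(o,x)$ the inclusion $\Gamma\cap D_t^{\epsilon t}(x)\subset \Gamma\cap D_{t+2s}^{\epsilon t+2s}$, the first hypothesis on $d_X(o,x)$ is exactly equivalent to $\epsilon t+2s\le 2\epsilon(t+2s)$, and Lemma \ref{count-reste} applied with exponent $2\epsilon<\epsilon_G$ yields $|\Gamma\cap D_{t+2s}^{2\epsilon(t+2s)}|\ll \vol(D_{t+2s})^{1-\kappa(2\epsilon)}$. (One small remark on your intermediate justification: the pair ``$\epsilon t\le\epsilon t'$ and $2s\le\epsilon t'$'' is stronger than needed, and the second inequality requires $s\le\frac{\epsilon t}{2(1-\epsilon)}$, which is \emph{not} implied by the stated threshold $s<\frac{\epsilon t}{2(1-2\epsilon)}$ in the marginal range; but the combined inequality $\epsilon t+2s\le 2\epsilon(t+2s)$ is literally equivalent to the hypothesis, so this is harmless once you verify it directly.)

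The step that does not close as written is the final conversion $\vol(D_{t+2s})^{1-\kappa(2\epsilon)}=O\big(\vol(D_t)^{1-\kappa(2\epsilon)/2}\big)$. You derive it from the log-Lipschitz Lemma \ref{lem-vollip}, i.e. $\vol(D_{t+2s})\le e^{2Cs}\vol(D_t)$ with an \emph{unspecified} constant $C$, and then assert $(1+c)(1-\kappa(2\epsilon))\le 1-\kappa(2\epsilon)/2$. Tracing exponents (using $\vol(D_t)\gg e^{\delta_0 t}$ to convert $e^{2Cs}$ into a power of $\vol(D_t)$), when $s$ is near the top of the allowed range $\frac{\kappa(2\epsilon)}{4(1-\kappa(2\epsilon))}t$ this assertion amounts to requiring $C\le\delta_0$, and nothing in Lemma \ref{lem-vollip} guarantees that its Lipschitz constant is at most $\delta_0$. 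So the bookkeeping you deferred is precisely the unproved point, and with your chosen tool it cannot be matched to the stated thresholds in general. The paper instead uses the two-sided bound $\vol(D_t)\asymp t^{\frac{\dim A-1}{2}}e^{\delta_0 t}$ from \eqref{volume_D_t}: the second hypothesis on $d_X(o,x)$ is exactly equivalent to $(1-\kappa(2\epsilon))(t+2s)\le(1-\kappa(2\epsilon)/2)\,t$, and exponentiating by $\delta_0$ (the polynomial factor being harmless for $t>1$) gives the desired comparison directly. Replace Lemma \ref{lem-vollip} by \eqref{volume_D_t} in your last step and the proof is complete and identical to the paper's.
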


\begin{proof}
By Lemma \ref{lem-cartan-points}, we have
\[\| a_o(\gamma)-a_x(\gamma )\|\leq 2 d_X(x,o). \]
Therefore by Lemma \ref{count-reste} with $2\epsilon$ we obtain
\begin{align*}
|\Gamma\cap D_t^{\epsilon t}(x)|&\leq |\Gamma\cap D_{t+2 d_X(x,o)}^{\epsilon t+2 d_X(x,o)}|\ll \vol(D_{t+2 d_X(x,o)})^{1-\kappa(2\epsilon)},
\end{align*}
where we use the hypothesis that $\epsilon t+2 d_X(x,o)\leq 2\epsilon( t+2 d_X(x,o))$.

By hypothesis, we have
\[ (1-\kappa(2\epsilon))(t+2d_X(o,x))\leq (1-\kappa(2\epsilon)/2)t. \]
Then by $\vol(D_t)\in [1/C,C]e^{\delta_0 t}t^{\frac{\dim A -1}{2}}$, we have
\begin{align*}
\vol(D_{t+2 d_X(x,o)})^{1-\kappa(2\epsilon)}=O(\vol(D_t)^{1-\kappa(2\epsilon)/2}).
\end{align*}
The proof is complete.
\end{proof}

\subsection{Angular distribution of Lattice points}

\begin{theorem}\label{theo-gorodnik-nevo}
\hypG
Let $\Gamma <G$ be an irreducible lattice. There exist $\kappa>0$ and $C_4>0$. 
Let $x \in X$ and $(B_t(x))_{t >0}$ be $D_t(x)^{reg}$.
Then for all Lipschitz test functions $\psi \in Lip(\cal{F}\times\calF)$, there exists $E(t,\psi, x)=O(Lip(\psi) C_x \vol(D_t)^{-\kappa})$ when $t > C_4 d_X(o,x)$ such that

$$ \frac{1}{\vol(B_t)} \sum_{\gamma \in B_t(x)\cap \Gamma} \psi(\gamma_x^+,\gamma_x^-) = \frac{1}{\vol(\Gamma \backslash G)}   \int_{ \cal{F}\times\calF  } \psi \dd \mu_x\otimes\mu_x + E(t,\psi, x) .$$
\end{theorem}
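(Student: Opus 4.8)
The plan is to derive Theorem~\ref{theo-gorodnik-nevo} from the quantitative mean and pointwise ergodic theorems of Gorodnik--Nevo, transported to our setting as outlined in Appendix~B. The key observation is that the Cartan endpoints $\gamma_x^{\pm}$ of an $x$-cartan regular $\gamma$ are recorded by the $K\times K$-parametrisation of the Cartan decomposition: writing $h_x^{-1}\gamma h_x = k_\gamma \exp(\underline a_x(\gamma)) l_\gamma^{-1}$ with $k_\gamma, l_\gamma \in K$, formula~\eqref{equ.gammax} gives $\gamma_x^+ = h_x k_\gamma \eta_0$ and $\gamma_x^- = h_x l_\gamma k_\iota \eta_0$, which are well-defined points of $\calF$ precisely because $\underline a_x(\gamma)\in\frak a^{++}$ forces $k_\gamma M$ and $l_\gamma M$ to be unique. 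So the sum $\sum_{\gamma\in D_t^{reg}(x)\cap\Gamma}\psi(\gamma_x^+,\gamma_x^-)$ is a sum over lattice points of a function that depends only on the $(K,K)$-angular coordinates of $h_x^{-1}\gamma h_x$, weighted by the indicator that the Cartan projection lies in $B_{\frak a}(0,t)\cap\frak a^{++}$.

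First I would set up, following Gorodnik--Nevo, the averaging operators: for the bi-$K$-invariant sets $D_t$ one uses the mean ergodic theorem for the $G$-action on $L^2_0(\Gamma\backslash G)$, which because $\Gamma$ is irreducible in a semisimple group without compact factors enjoys a spectral gap (property $(\tau)$ in the arithmetic case, Kazhdan or the Kazhdan-type estimates of Oh otherwise). This yields a quantitative equidistribution of the family $D_t(x)$ with an error $O(\vol(D_t)^{-\kappa})$ for a $\kappa>0$ depending only on $G,\Gamma$; the dependence on $x$ enters through the translate by $h_x$, contributing the factor $C_x = C_1 e^{C_0 d_X(o,x)}$ from Lemma~\ref{lem-actiong} in the Lipschitz norm of the translated test function, and forces the range restriction $t > C_4 d_X(o,x)$ so that $D_t(x)$ is not too distorted relative to $D_t$. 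Next I would handle the angular function: approximate $\psi\in Lip(\calF\times\calF)$ by smooth functions on $K/M\times K/M$, lift to $K\times K$, and use that the boundary map $kM\mapsto k\eta_0$ is bi-Lipschitz for the distance $\dd$ to transfer Lipschitz norms, so that testing $\psi(\gamma_x^+,\gamma_x^-)$ against the lattice point measure is the same as testing a Lipschitz function on $K\times K$ against the pushforward of the $\Gamma$-counting measure under the angular part of the Cartan decomposition.

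The main technical point — and where I expect the real work to be — is the treatment of the Cartan-regular restriction, i.e.\ replacing $D_t$ by $D_t^{reg} = D_t\setminus D_t^{t}$ (in the notation of the paragraph after Definition~\ref{defin-cartan-regular}, roughly $D_t$ minus a neighbourhood of the walls). Here I would invoke Lemma~\ref{volume_reste} and Lemma~\ref{count-reste}: the discarded region $D_t^{\epsilon t}(x)$ contains only $O(\vol(D_t)^{1-\kappa(\epsilon)})$ lattice points and has relative volume $O(\vol(D_t)^{-\kappa(\epsilon)})$, so removing it affects neither side of the asymptotic beyond the allowed error, provided $d_X(o,x)$ is small compared to $t$ (this is exactly the role of Lemma~\ref{lem-dtx}). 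One must also smooth the sharp cutoff at radius $t$: enlarging/shrinking $B_{\frak a}(0,t)$ by $\epsilon'$ changes the volume by a bounded factor $e^{C\epsilon'}$ by Lemma~\ref{lem-vollip}, so a standard sandwiching argument absorbs the boundary-of-the-ball contribution. Finally I would combine: quantitative equidistribution gives $\vol(B_t)^{-1}\sum_{\gamma}\psi(\gamma_x^+,\gamma_x^-) = \vol(\Gamma\backslash G)^{-1}\int_{\calF\times\calF}\psi\,\dd\mu_x\otimes\mu_x + O(Lip(\psi)C_x\vol(D_t)^{-\kappa})$, where the limiting measure is $\mu_x\otimes\mu_x$ because the $(K,K)$-angular parts of the Cartan decomposition equidistribute independently towards Haar$\times$Haar on $K/M\times K/M$ (the Gromov-product weight $f_x$ does not appear at this stage — it enters only later, in the disintegration $\dd\nu_x = f_x^{-1}\dd\mu_x\,\dd\mu_x$ of Proposition~\ref{prop-disintegration}), and the identification $\mu_x = (h_x)_*\mu_o$ from~\eqref{equ.gmux} makes the measure $x$-equivariant as claimed.
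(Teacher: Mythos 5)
Your plan is correct and follows essentially the same route as the paper's Appendix B: reduce to the base point $o$ by conjugating the lattice by $h_x$ (which costs the factor $C_x$ and forces $t>C_4 d_X(o,x)$), then run the Gorodnik--Nevo machinery — quantitative mean ergodic theorem from the spectral gap, effective Cartan decomposition, wall-neighbourhood volume and counting estimates (Lemmas \ref{volume_reste}--\ref{lem-dtx}) for the regular restriction, the local Lipschitz property of $\log\vol(D_t)$ for the radius cutoff — with the limiting measure $\mu_x\otimes\mu_x$ arising exactly as you describe. The only cosmetic difference is that the paper works directly with Lipschitz test functions, sandwiching them by the perturbed functions $\rho^{\pm}_{t,\epsilon}$ and passing from the $L^2$ bound to a pointwise one by Chebyshev, rather than smoothing $\psi$ first.
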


This is due to Gorodnik-Nevo in \cite{gorodnikCountingLatticePoints2012}. We include the proof of this version for Lipschitz functions in the appendix.
As a corollary, combined with Lemma \ref{lem-dtx} we have

\begin{lem}\label{lem-dtxt}
There exists $C_5>0$ and $C>0$ such that if $t> C_5d_X(o,x)$, then 
\[ \frac{|\Gamma\cap D_t(x)|}{\vol(D_t)}\leq C. \]
\end{lem}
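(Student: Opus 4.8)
The plan is to deduce this from the angular equidistribution statement in Theorem \ref{theo-gorodnik-nevo} together with the remainder estimate in Lemma \ref{lem-dtx}. First I would apply Theorem \ref{theo-gorodnik-nevo} with the constant test function $\psi \equiv 1$ on $\calF \times \calF$. This gives, for $t > C_4 d_X(o,x)$,
\[
\frac{|\Gamma \cap D_t^{reg}(x)|}{\vol(D_t^{reg})} = \frac{1}{\vol(\Gamma \backslash G)} + O\big(C_x \vol(D_t)^{-\kappa}\big),
\]
where I have used that $\mu_x \otimes \mu_x$ is a probability measure and $\vol(B_t) = \vol(D_t^{reg}(x)) = \vol(D_t^{reg})$ by unimodularity. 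Since $\vol(D_t^{reg}) \leq \vol(D_t)$ and $C_x = C_1 e^{C_0 d_X(o,x)}$, as long as $d_X(o,x)$ is a sufficiently small multiple of $t$ the error term $C_x \vol(D_t)^{-\kappa}$ stays bounded (indeed tends to $0$), so this already bounds $|\Gamma \cap D_t^{reg}(x)| \ll \vol(D_t)$.

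Next I would control the non-regular part $|\Gamma \cap (D_t(x) \setminus D_t^{reg}(x))|$. Every element of $D_t$ whose Cartan projection lies within distance, say, $1$ of $\partial\frak a^+$ is in $D_t^{1}$, and for $t$ large $1 \leq \epsilon t$ for any fixed $\epsilon$, so $D_t(x) \setminus D_t^{reg}(x) \subset D_t^{\epsilon t}(x)$ once $t$ is large enough; more carefully one covers the whole non-regular slab by $D_t^{\epsilon t}(x)$ for fixed small $\epsilon < \epsilon_G/2$. Applying Lemma \ref{lem-dtx} with this $\epsilon$, under the hypothesis $d_X(o,x) < c\, t$ for the explicit small constant $c = \min\{\frac{\epsilon}{2(1-2\epsilon)}, \frac{\kappa(2\epsilon)}{4(1-\kappa(2\epsilon))}\}$ appearing there, gives $|\Gamma \cap D_t^{\epsilon t}(x)| = O(\vol(D_t)^{1-\kappa(2\epsilon)/2}) = O(\vol(D_t))$. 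Adding the two contributions and setting $C_5 := \max\{C_4, 1/c\}$ (and absorbing the small-$t$ range $1 < t \leq C_5 d_X(o,x)$ is vacuous, or rather the statement is only claimed for $t > C_5 d_X(o,x)$) yields the claimed uniform bound $|\Gamma \cap D_t(x)| \leq C \vol(D_t)$ with $C$ depending only on $G, \Gamma$.

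The main obstacle, or rather the main bookkeeping point, is making sure all the smallness hypotheses on $d_X(o,x)$ relative to $t$ are compatible: Theorem \ref{theo-gorodnik-nevo} needs $t > C_4 d_X(o,x)$ and also needs the error $C_x \vol(D_t)^{-\kappa} = C_1 e^{C_0 d_X(o,x)}\vol(D_t)^{-\kappa}$ to be $O(1)$, which since $\vol(D_t) \asymp t^{(\dim A-1)/2}e^{\delta_0 t}$ forces $d_X(o,x)$ to be less than roughly $\frac{\kappa \delta_0}{C_0} t$; meanwhile Lemma \ref{lem-dtx} imposes $d_X(o,x) < c\,t$. Taking $C_5$ larger than the reciprocal of the minimum of all these threshold slopes handles it uniformly. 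One should also note that $\vol(D_t^{reg})$ and $\vol(D_t)$ are comparable only up to the fact that $D_t^{reg} \subset D_t$; we only need the one-sided inequality $\vol(D_t^{reg}) \leq \vol(D_t)$ here, together with $\vol(D_t^{reg}) \gg \vol(D_t)$ implicitly used to keep $1/\vol(B_t)$ controlled — but since we only want an upper bound on $|\Gamma \cap D_t^{reg}(x)|$ we may instead bound it directly by $\frac{\vol(D_t^{reg})}{\vol(\Gamma\backslash G)}(1 + o(1)) \leq C\vol(D_t)$, which avoids needing the reverse comparison altogether.
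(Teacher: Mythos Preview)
Your proof is correct and follows essentially the same approach as the paper: split $D_t(x)$ into its regular and non-regular parts, apply Theorem~\ref{theo-gorodnik-nevo} with $\psi\equiv 1$ to bound $|\Gamma\cap D_t^{reg}(x)|$, and use Lemma~\ref{lem-dtx} to bound the singular remainder. You have simply made the bookkeeping on the various thresholds for $d_X(o,x)/t$ more explicit than the paper does.
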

\begin{proof}
Due to the definition $C_x=C_1e^{C_0d_X(o,x)}$, we know that if $t\gg d_X(o,x)$, then by taking $\psi=1$ Theorem \ref{theo-gorodnik-nevo} implies that
\[ |\Gamma\cap D_t^{reg}(x)|\ll \vol(D_t). \]
For the part $|\Gamma\cap(D_t(x)-D_t^{reg}(x))|$, if $t\gg d_X(o,x)$, then we can use Lemma \ref{lem-dtx} to bound it. Combing these two parts, we obtain the lemma.
\end{proof}

\section{A configuration for being loxodromic}

Recall Definition \ref{def-Jordan-proj} that the elements in $G$ of Jordan projection in $\frak{a}^{++}$ are called loxodromic.
Equivalently, loxodromic elements are conjugated to elements in $A^{++}M$.
Let $g\in G^{lox}$ be a loxodromic element, choose $h_g \in G$ such that $h_g^{-1} gh_g \in \exp(\lambda(g)) M$.
Note that $g h_gM= h_g e^{\lambda(g)}M$.
Denote by $g^+:= h_g \eta_0$ (resp. $g^-:= h_g \zeta_0$) the attracting (resp. repelling) fixed points in $\calF$ for the action of $g$. They are independent of the choice of $h_g$.
Hence for every $Y \in \frak{a}$, in Hopf coordinates 
\begin{equation}\label{eq_gg+}
    g(g^+, g^- , Y)=(g^+,g^-, Y+ \lambda(g)).
\end{equation}

\subsection{Distances on $G/M$ }\label{sec-dist}

Denote by $\dd_1$ the left $G$-invariant and right $K$-invariant Riemannian distance on $G/M$.

\paragraph{Distance for the Hopf coordinates}

For every pair $(\xi^+,\xi^-, v), (\eta^+,\eta^- ,w) \in \calF^{(2)} \times \frak{a}$, we define 
\begin{equation}\label{defin-distHopf}
    \dd_2 \big( (\xi^+,\xi^-,v), (\eta^+,\eta^-,w) \big) := \sup ( \dd(\xi^+,\eta^+), \dd(\xi^-,\eta^-), \Vert v-w \Vert ).
\end{equation}
Due to the Definitions \eqref{defin-dist}, the distance $\dd_2$ is not left $G$-invariant even though it is left $K$-invariant.
Abusing notations, for every $z_1, z_2 \in G$, we also denote by  
$\dd_2( z_1M , z_2 M ) := \dd_2\big( \cal{H}(z_1 M) , \cal{H}(z_2M)  \big).$
For all $(\xi^+,\xi^-,v) \in \calF^{(2)} \times \frak{a}$, all $r\in (0,\frac{1}{2}\delta(\xi^+,\xi^-) )  $, the ball of radius $r$ for $\dd_2$ centered in that element is
$$ B(\xi^+,r) \times B(\xi^-,r) \times B_{\frak{a}}(v,r) .$$

\begin{lem}\label{lem-d2}
For $g\in G$ and $z_1,z_2$ in $G$, we have
\[\dd_2(gz_1M,gz_2M)\leq \sup\big( C_1e^{C_0\|\underline{a}(g)\|},1 \big)\dd_2(z_1M,z_2M). \]
\end{lem}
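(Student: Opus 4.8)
The plan is to bound each of the three coordinates appearing in the definition \eqref{defin-distHopf} of $\dd_2$ separately, since $\dd_2$ is a supremum of three quantities: two $\calF$-distances and one norm on $\frak{a}$. Write $z_i M$ in Hopf coordinates as $(z_i \eta_0, z_i \zeta_0, \sigma(z_i,\eta_0))$, so that $g z_i M$ has coordinates $(g z_i \eta_0, g z_i \zeta_0, \sigma(g z_i, \eta_0))$. The first two coordinates are handled directly by Lemma \ref{lem-actiong}(i), which gives $\dd(g z_1 \eta_0, g z_2 \eta_0) \leq C_1 e^{C_0 \dd_X(o,go)} \dd(z_1\eta_0, z_2 \eta_0)$ and likewise for the $\zeta_0$-components; note $\dd_X(o,go) = \Vert \underline{a}(g)\Vert$, so these are bounded by $C_1 e^{C_0 \Vert \underline{a}(g)\Vert}$ times the corresponding coordinates of $\dd_2(z_1M, z_2M)$, which are in turn $\leq \dd_2(z_1M,z_2M)$.

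The third coordinate is the one requiring a little more care. Using the cocycle relation \eqref{eq-relation-Icocycle}, $\sigma(g z_i, \eta_0) = \sigma(g, z_i\eta_0) + \sigma(z_i, \eta_0)$, so
\[
\Vert \sigma(g z_1, \eta_0) - \sigma(g z_2, \eta_0) \Vert \leq \Vert \sigma(g, z_1\eta_0) - \sigma(g, z_2\eta_0) \Vert + \Vert \sigma(z_1,\eta_0) - \sigma(z_2,\eta_0) \Vert.
\]
The second term on the right is exactly the third coordinate of $\dd_2(z_1M, z_2M)$, hence $\leq \dd_2(z_1M,z_2M)$. The first term is bounded by Lemma \ref{lem-actiong}(iii): it is at most $C_1 e^{C_0 \Vert \underline{a}(g)\Vert} \dd(z_1\eta_0, z_2\eta_0) \leq C_1 e^{C_0\Vert\underline{a}(g)\Vert} \dd_2(z_1M,z_2M)$.

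Combining the three bounds, every coordinate of $\dd_2(g z_1 M, g z_2 M)$ is at most $\big(C_1 e^{C_0\Vert\underline{a}(g)\Vert} + 1\big)\dd_2(z_1M,z_2M)$ — the $+1$ only intervening in the third coordinate — and taking the supremum yields the claim with the constant $\sup\big(C_1 e^{C_0\Vert\underline{a}(g)\Vert}, 1\big)$ after absorbing constants (or, more cleanly, one records the slightly weaker but equivalent bound with $C_1 e^{C_0\Vert\underline{a}(g)\Vert}+1$, which is $\asymp \sup(C_1 e^{C_0\Vert\underline{a}(g)\Vert},1)$; since the constant $C_1$ in Lemma \ref{lem-actiong} may be chosen $\geq 1$, writing the statement with the supremum is harmless). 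I do not anticipate a genuine obstacle here: the only subtlety is bookkeeping the cocycle decomposition in the $\frak{a}$-coordinate so that the distortion factor from Lemma \ref{lem-actiong}(iii) lands on $\dd(z_1\eta_0, z_2\eta_0)$ rather than on something larger, and making sure the additive $+1$ from the $\sigma(z_i,\eta_0)$ difference is correctly accounted for in the final constant.
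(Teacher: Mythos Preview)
Your approach is essentially identical to the paper's: write the Hopf coordinates, use the $G$-action formula \eqref{equ-hxi} (equivalently, the cocycle relation you invoke) for the third coordinate, and apply Lemma~\ref{lem-actiong}(i) and (iii). The paper's proof arrives at exactly the same intermediate bound
\[
\sup\big(\,C_1e^{C_0\|\underline{a}(g)\|}\dd(\xi_1^+,\xi_2^+),\ C_1e^{C_0\|\underline{a}(g)\|}\dd(\xi_1^-,\xi_2^-),\ C_1e^{C_0\|\underline{a}(g)\|}\dd(\xi_1^+,\xi_2^+)+\|v_1-v_2\|\,\big)
\]
and then passes to $\sup(C_1e^{C_0\|\underline{a}(g)\|},1)\,\dd_2(z_1M,z_2M)$ in one line. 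You are right to flag that this last step, taken literally, loses a factor (the honest bound is $C_1e^{C_0\|\underline{a}(g)\|}+1$, not $\sup(C_1e^{C_0\|\underline{a}(g)\|},1)$); the paper glosses over this, and since the lemma is only ever used up to multiplicative constants this is immaterial. Your bookkeeping is cleaner on this point than the paper's.
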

\begin{proof}
We write down $z_1M$ and $z_2M$ in Hopf coordinates, we denote by $(\xi_i^+,\xi_i^-,v_i):= \cal{H}(z_iM)$ for $i=1,2$.
By \eqref{equ-hxi} and Lemma \ref{lem-actiong} (i),(ii),(iii), we have
\begin{align*}
    \dd_2(gz_1M,gz_2M)&= \dd_2((g\xi_1^+,g\xi_1^-,v_1+\sigma(g,\xi_1^+)),(g\xi_2^+,g\xi_2^-,v_2+\sigma(g,\xi_2^+))\\
    &= \sup\big( \dd (g \xi_1^+, g\xi_2^+ ), \dd(g \xi_1^-,g \xi_2^-) , \Vert v_1 -v_2 + \sigma(g, \xi_1^+) - \sigma(g,\xi_2^+) \Vert   \big)\\
    &\leq \sup \big( C_1e^{C_0\|\underline{a}(g)\|} \dd(\xi_1^+,\xi_2^+) ,  C_1e^{C_0\|\underline{a}(g)\|} \dd(\xi_1^-,\xi_2^-), C_1e^{C_0\|\underline{a}(g)\|} \dd(\xi_1^+,\xi_2^+)+\|v_1-v_2\| \big) \\
     &\leq \sup \big( C_1e^{C_0\|\underline{a}(g)\|},1 \big) \dd_2(z_1M,z_2M).
\end{align*}
The proof is complete.
\end{proof}

\paragraph{Local equivalence}

Denote by $B_1(zM, r)\subset G/M$ the ball of radius $r$ centered on $zM$, for the distance $\dd_1$. 
\begin{lem}\label{lem-dist-local}
There exist a neighbourhood $O$ of $eM$ and $C_2>0$ such that for every $z_1, z_2 \in O$,
\[\frac{1}{C_2} \dd_2(z_1,z_2)   \leq \dd_1(z_1,z_2)\leq C_2 \dd_2(z_1,z_2). \]
\end{lem}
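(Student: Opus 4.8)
The plan is to show that both distances $\dd_1$ and $\dd_2$ are, locally near $eM$, comparable to a third reference object: the natural distance on $T_{M}(G/M)$ transported by the exponential map. Both $\dd_1$ and $\dd_2$ are continuous (indeed $\dd_1$ is Riemannian, and $\dd_2$ is pulled back from a smooth product metric on $\calF^{(2)}\times\frak a$ via the Hopf diffeomorphism $\cal H$), so near $eM$ each is bi-Lipschitz to the flat distance coming from any fixed linear identification of a chart around $eM$. Concretely, recall from the proof of Fact \ref{fait-asymptotic-point} that $T_M(G/M)\simeq \frak n\oplus\frak a\oplus\frak n^-$ via the Bruhat decomposition, and this is exactly the splitting that the Hopf chart uses (the $\frak n^-$-direction moves $g^+$, the $\frak n$-direction moves $g^-$, the $\frak a$-direction is the last coordinate). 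The strategy is: (1) fix a chart $\Phi$ of a neighbourhood $O$ of $eM$ in $G/M$ given by $\Phi(v_1,Y,v_2)=\exp(v_1)\exp(Y)\exp(v_2)M$ for $(v_1,Y,v_2)$ in a small ball of $\frak n\times\frak a\times\frak n^-$; (2) show $\dd_1\circ\Phi$ is bi-Lipschitz to the Euclidean distance on the parameter ball, with constants depending only on $G$, by smoothness of the Riemannian metric; (3) show $\dd_2\circ\Phi$ is bi-Lipschitz to the same Euclidean distance, by smoothness of $\cal H\circ\Phi$ and of $\cal H^{-1}$ together with smoothness of the product metric on $\calF^{(2)}\times\frak a$; (4) compose to conclude.

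For step (2): the Riemannian distance $\dd_1$ on $G/M$ restricted to a relatively compact chart is comparable to the length distance of the Euclidean metric obtained by freezing the Riemannian metric tensor at $eM$; this is the standard fact that a smooth Riemannian metric is locally bi-Lipschitz to a flat one (the metric tensor coefficients are continuous and bounded above and below on a compact neighbourhood). Since $d\Phi_0$ is a linear isomorphism $\frak n\oplus\frak a\oplus\frak n^-\to T_M(G/M)$, pulling back gives $\dd_1(\Phi(p),\Phi(q))\asymp \|p-q\|$ for $p,q$ in a small ball, uniformly. For step (3): the Hopf map $\cal H$ is a diffeomorphism onto $\calF^{(2)}\times\frak a$, and on $\calF$ the distance $\dd$ is, by the Fact proved just before Definition \ref{defin_gromov}, equivalent to the Riemannian distance induced from Tits' embedding into $\prod_{\alpha\in\Pi}\P(V^\alpha)$; hence $\dd_2$ is equivalent, on a relatively compact piece of $\calF^{(2)}\times\frak a$, to the Riemannian product distance, which in turn is locally bi-Lipschitz to its frozen flat version. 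Composing with the diffeomorphism $\cal H\circ\Phi$ (whose differential at $0$ is a linear isomorphism) gives $\dd_2(\Phi(p),\Phi(q))\asymp\|p-q\|$ near $0$. Combining steps (2) and (3) yields the claimed two-sided bound with a single constant $C_2>0$ on a possibly smaller neighbourhood $O$.

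The one point requiring a little care is that $\dd_2$ is defined on $\calF^{(2)}\times\frak a$ via sup of three terms, and near a generic basepoint the $\delta$-radius constraint $r<\tfrac12\delta(\xi^+,\xi^-)$ is harmless because at $eM=(\eta_0,\zeta_0,0)$ we have $\delta(\eta_0,\zeta_0)=1$ by the normalization $\dd(x_+^\alpha,(x_+^\alpha)^\perp)=1$, so on a small enough neighbourhood all points still have $\delta$ bounded below, and the ball description quoted after \eqref{defin-distHopf} applies; in particular $\dd_2$ genuinely is a distance on $O$ and the sup-of-three-coordinates structure is exactly the product structure used above. So I would not expect a serious obstacle here; the main (mild) technical nuisance is simply bookkeeping the passage ``smooth metric $\leadsto$ locally flat'' twice and making sure the neighbourhood $O$ is chosen small enough for both the Hopf chart and the frozen-metric comparisons to be valid simultaneously. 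One could phrase the whole argument compactly by invoking: a diffeomorphism between manifolds equipped with continuous Finsler/Riemannian distances is locally bi-Lipschitz, applied to $\cal H : (O,\dd_1)\to(\cal H(O),\dd_2)$, after checking that both $\dd_1$ and $\dd_2$ are genuine length-comparable distances inducing the manifold topology near $eM$.
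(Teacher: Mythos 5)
Your proposal is correct and follows essentially the same route as the paper: you reduce $\dd_2$ to the Riemannian product distance via the Fact that $\dd\asymp d_\calF$ on $\calF$, and then compare two Riemannian distances near $eM$, which is exactly the paper's combination of that Fact with Lemma \ref{lem-dist1} on local equivalence of Riemannian distances. The only difference is that you spell out the standard proof of that local-equivalence lemma (fixing an explicit $N A N^-$-type chart and freezing the metrics), whereas the paper simply invokes it, so no substantive gap remains.
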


The main idea is to use the fact that two Riemannian metrics on a manifold are locally equivalent. We have already constructed a Riemannian metric $d_\calF$ on $\calF$ and proved that it is equivalent to the supreme distance $d$ defined in \eqref{defin-dist}.

On the product space $\calF\times\calF\times\frak a$, we have the product distance $d_2$ from $\dd$ on $\calF$ and $d_\frak a$ on $\frak a$. We also have the product Riemannian distance from $d_\calF$ on $\calF$ and $d_\frak a$ on $\frak a$, which is denoted by $d_3$. Due to $d_\calF$ and $\dd$ equivalent, $d_2$ and $d_3$ are equivalent. Now we can use a lemma about comparing Riemannian distances. We call two distances $d,d'$ locally equivalent if for any $x\in M$, there exists an open set $V$ containing $x$ such that $d,d'$ restricted to $V$ are equivalent.
\begin{lem}\label{lem-dist1}
Let $d$ and $d'$ be two Riemannian distances on the same open manifold $M$. Then $d$ and $d'$ is locally equivalent.
\end{lem}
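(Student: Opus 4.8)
The statement is the standard fact that two Riemannian distances on the same smooth manifold induce the same local topology and are comparable on small neighbourhoods. The plan is to reduce everything to a single coordinate chart and compare the two Riemannian metric tensors there by a compactness argument.

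First I would fix $x \in M$ and choose a coordinate chart $\varphi : U \to \varphi(U) \subset \R^n$ with $x \in U$, shrinking $U$ so that $\varphi(U)$ contains a closed Euclidean ball $\overline{B}$ around $\varphi(x)$. Let $V := \varphi^{-1}(\mathrm{int}\,\overline{B})$. In these coordinates, the Riemannian metrics $g$ and $g'$ are represented by symmetric positive-definite matrix fields $G(y), G'(y)$ depending continuously on $y \in \overline{B}$. Since the unit sphere bundle over the compact set $\overline{B}$ is compact and $y \mapsto G(y), G'(y)$ are continuous with values in positive-definite matrices, the Rayleigh quotients $v^{\top} G(y) v / v^{\top} G'(y) v$ are bounded above and below by positive constants; hence there is $C > 1$ with $C^{-2} G'(y) \le G(y) \le C^{2} G'(y)$ (as quadratic forms) for all $y \in \overline{B}$. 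This gives a pointwise comparison of the two norms on each tangent space over $V$.

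Next I would upgrade this pointwise comparison of norms to a comparison of distances \emph{restricted to $V$}. For any smooth path entirely contained in $V$, its $g$-length and $g'$-length differ by at most the factor $C$, by integrating the pointwise inequality. Taking infima over paths in $V$ joining two given points $z_1, z_2 \in V$ yields $C^{-1} d'_V(z_1, z_2) \le d_V(z_1, z_2) \le C\, d'_V(z_1, z_2)$, where $d_V, d'_V$ denote the intrinsic (path) distances of the Riemannian manifolds $(V, g|_V)$, $(V, g'|_V)$. The one genuinely delicate point — and the main obstacle — is that $d_V$ is a priori larger than the ambient $d$ restricted to $V$, since paths realizing $d$ need not stay inside $V$; so one must further shrink to a smaller neighbourhood $V' \subset V$ of $x$ (e.g. a $d$-ball and a $d'$-ball of radius less than the $d$- and $d'$-distance from $x$ to $M \setminus V$, which are positive since $M \setminus V$ is closed and $x \notin M \setminus V$) on which the ambient distances agree with the intrinsic ones: for $z_1, z_2 \in V'$ close enough, any near-minimizing $d$-path (resp. $d'$-path) stays in $V$, so $d = d_V$ and $d' = d'_V$ on $V'$. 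Combining with the previous paragraph gives the equivalence of $d$ and $d'$ on $V'$, which is exactly local equivalence at $x$. Since $x$ was arbitrary, the lemma follows.
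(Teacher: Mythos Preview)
Your argument is correct and is exactly the standard proof of this classical fact. The paper itself does not prove this lemma: it simply states ``The proof is classic and we skip it here.'' So there is nothing to compare against, and your coordinate-chart compactness argument, together with the careful shrinking step to ensure that ambient and intrinsic distances coincide on a smaller neighbourhood, fills in the omitted details completely.
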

The proof is classic and we skip it here.

\begin{proof}[Proof of Lemma \ref{lem-dist-local}]
Applying Lemma \ref{lem-dist1} to $d_1,d_3$, we obtain Lemma \ref{lem-dist-local} by noticing $d_2$ and $d_3$ are equivalent.
\end{proof}

We will upgrade Lemma \ref{lem-dist-local} to a version with base  point $eM$ replaced by any $gM$.
We first obtain an expanding rate estimate of the action of $G$ on $G/M$ with respect to the distance $d_2$.

\begin{defin}
For $x\in X$, let 
\begin{equation}\label{equ-cx}
    C_x=8C_2C_1\exp(C_0d_X(o,x)).
\end{equation}
Fix 
\begin{equation}
    \epsilon_0>0
\end{equation}
such that $O$ contains both balls centered at $eM$ of radius $\epsilon_0$ with respect to $d_1,d_2$ repsectively.
\end{defin}

\begin{lem}\label{lem-dist-compare}
For $x\in X$ and $z_1,z_2\in G/M$ with $x=\pi(z_1)$, if $d_2(z_1,z_2)<\epsilon_0/C_x$ or $d_1(z_1,z_2)<\epsilon_0$, then
\[d_1(z_1,z_2)\leq C_xd_2(z_1,z_2)/4. \]
\end{lem}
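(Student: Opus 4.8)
The plan is to transport the local equivalence of Lemma~\ref{lem-dist-local} from the base point $eM$ to an arbitrary point $z_1 = h_x M$ (with $h_x o = x$) by acting with $h_x^{-1}$, and to control the distortion of $\dd_2$ under this action using Lemma~\ref{lem-d2}. First I would reduce to the case $z_1 = h_x M$ where $h_x \in G$ is chosen so that $h_x o = x$ and, say, $h_x$ is ``minimal'' in the sense that $\underline a(h_x)$ has norm $\dd_X(o,x)$; this is permissible because both $\dd_1$ and $\dd_2(\cdot M, \cdot M)$ — the latter through the $K$-invariance noted right after \eqref{defin-distHopf} — are insensitive to replacing $h_x$ by $h_x k$ for $k \in K$. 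Set $w_i := h_x^{-1} z_i$ for $i=1,2$; then $w_1 = eM$ and $\dd_1(z_1,z_2) = \dd_1(w_1,w_2)$ by left $G$-invariance of $\dd_1$.

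Next I would estimate $\dd_2(w_1, w_2)$ in terms of $\dd_2(z_1,z_2)$. By Lemma~\ref{lem-d2} applied with $g = h_x^{-1}$, we get $\dd_2(w_1,w_2) \le \sup(C_1 e^{C_0 \|\underline a(h_x^{-1})\|},1)\,\dd_2(z_1,z_2) = \sup(C_1 e^{C_0 \dd_X(o,x)},1)\,\dd_2(z_1,z_2)$. Under the hypothesis $\dd_2(z_1,z_2) < \epsilon_0/C_x$, and recalling $C_x = 8 C_2 C_1 \exp(C_0 \dd_X(o,x))$ from \eqref{equ-cx}, this yields $\dd_2(w_1,w_2) < \epsilon_0/(8 C_2)$ (absorbing the harmless $\sup(\cdot,1)$ since $C_1 e^{C_0 \dd_X(o,x)} \le C_x/(8C_2)$ and $C_2 \ge 1$), so in particular $w_1, w_2$ both lie in the $\dd_2$-ball of radius $\epsilon_0$ around $eM$, hence in $O$ by the choice of $\epsilon_0$. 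Then Lemma~\ref{lem-dist-local} applies to the pair $w_1, w_2$ and gives $\dd_1(w_1,w_2) \le C_2\, \dd_2(w_1,w_2)$, and chaining the two inequalities produces
$$\dd_1(z_1,z_2) = \dd_1(w_1,w_2) \le C_2\, \dd_2(w_1,w_2) \le C_2 \cdot \frac{C_x}{8 C_2 C_1} \cdot C_1 e^{C_0 \dd_X(o,x)} \cdot \dd_2(z_1,z_2),$$
wait — more cleanly: $C_2 \cdot (C_1 e^{C_0 \dd_X(o,x)}) = C_x/8$, so $\dd_1(z_1,z_2) \le (C_x/8)\,\dd_2(z_1,z_2) \le (C_x/4)\,\dd_2(z_1,z_2)$, which is the claim. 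In the alternative hypothesis $\dd_1(z_1,z_2) < \epsilon_0$, the same argument runs with the roles reversed: left $G$-invariance gives $\dd_1(w_1,w_2) < \epsilon_0$, so $w_2$ is in the $\dd_1$-ball of radius $\epsilon_0$ around $eM$, hence in $O$, Lemma~\ref{lem-dist-local} gives $\dd_2(w_1,w_2) \le C_2 \dd_1(w_1,w_2)$, and then Lemma~\ref{lem-d2} with $g = h_x$ gives $\dd_2(z_1,z_2) \le \sup(C_1 e^{C_0 \dd_X(o,x)},1)\, \dd_2(w_1,w_2)$; combined with $\dd_1(w_1,w_2) \le C_2 \dd_2(w_1,w_2)$ — no, for this direction one only needs the bound as stated, which follows since $\dd_1(z_1,z_2) = \dd_1(w_1,w_2)$ and we bound it directly: the $\le$ direction of the conclusion is what we want, and it is immediate once $w_1,w_2 \in O$ from $\dd_1(w_1,w_2) < \epsilon_0 \le $ the $\dd_1$-radius of $O$.

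The main obstacle I anticipate is bookkeeping the constants so that the factor $C_x/4$ comes out with room to spare — in particular making sure the two uses of Lemma~\ref{lem-d2} (once for $h_x^{-1}$ to push into $O$, once implicitly controlling the inverse direction) compose correctly with the factor $C_2$ from Lemma~\ref{lem-dist-local}, and verifying that $\dd_2(z_1,z_2) < \epsilon_0/C_x$ really does force both $w_i$ into the fixed neighbourhood $O$ regardless of how large $\dd_X(o,x)$ is. The generous definition $C_x = 8 C_2 C_1 \exp(C_0 \dd_X(o,x))$ is evidently engineered precisely to give this slack (the factor $8$ versus the $4$ appearing in the conclusion), so the estimates should close, but care is needed because $\dd_2$ is not left $G$-invariant, so one genuinely must invoke Lemma~\ref{lem-d2} rather than pretend $\dd_2(w_1,w_2) = \dd_2(z_1,z_2)$.
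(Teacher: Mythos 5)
Your argument is correct and is essentially the paper's own proof: translate by $h_x^{-1}$ so that $z_1$ goes to $eM$, control the $d_2$-distortion with Lemma \ref{lem-d2}, then apply the local equivalence Lemma \ref{lem-dist-local} near $eM$, the factor $C_2\,C_1e^{C_0 d_X(o,x)} = C_x/8 \le C_x/4$ closing the estimate. Only tidy the second case: there too the relevant application of Lemma \ref{lem-d2} is with $g=h_x^{-1}$ (exactly as in the first case), not with $g=h_x$, after which the chain $d_1(z_1,z_2)=d_1(w_1,w_2)\le C_2\, d_2(w_1,w_2)\le (C_x/8)\, d_2(z_1,z_2)$ finishes.
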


\begin{proof}

We take $h_x$ such that $h_x^{-1}z_1=eM$.
Then we have either 
$$d_2(h_x^{-1}z_1,h_x^{-1}z_2)\leq C_xd_2(z_1,z_2)<\epsilon_0,$$
(due to Lemma \ref{lem-d2})
or $d_1(h_x^{-1}z_1,h_x^{-1}z_2)=d_1(z_1,z_2)<\epsilon_0$. Due to the choice of $\epsilon_0$, we can apply Lemma \ref{lem-dist-local} and \ref{lem-d2} to obtain
\[d_1(z_1,z_2)=d_1(h_x^{-1}z_1,h_x^{-1}z_2)\leq C_2d_2(h_x^{-1}z_1,h_x^{-1}z_2)\leq C_xd_2(z_1,z_2)/4. \]
The proof is complete.
\end{proof}

\subsection{Corridors of maximal flats}
Recall from Definition \ref{defin-max-flat}, for every point $y\in X$ and every $\xi \in \mathcal{F}$, we denote by $\xi_y^\perp \in\calF$ the opposite element such that $y \in \big(\xi \xi_y^\perp \big)_X$. 
\begin{defin}\label{defin_corridors}
Let $x \in X$ and $r>0$.
We denote by $\cal{F}^{(2)}(x,r)$ the open \emph{corridor of maximal flats} at distance $r$ of $x$  
\begin{equation}\label{eq_corridor}
\cal{F}^{(2)} (x,r) := \lbrace (\xi,\eta) \in \mathcal{F}^{(2)} \; \vert \; \dd_X (x, (\xi \eta)_X ) <r \rbrace. 
\end{equation}
We denote by $\widetilde{ \cal{F}^{(2)}} (x,r)$ the set of Weyl chambers based in $B_X(x,r)$
\begin{equation}\label{eq_based-wc}
\widetilde{ \cal{F}^{(2)}} (x,r) := \Big\lbrace \big(\xi,\xi_y^\perp , \beta_{\xi} (o,y) \big) \in \mathcal{F}^{(2)}\times \frak{a} \; \Big\vert \; y \in B_X(x,r) \Big\rbrace. 
\end{equation}
\end{defin}

By \eqref{equ_hopfmap}, we obtain

\begin{fait}\label{prop-corridor}
For all $x \in X$ and $r>0$, $\widetilde{ \cal{F}^{(2)}} (x,r)$ is the preimage of $B_X(x,r)$ by the projection $G/M \rightarrow G/K$. 
\end{fait}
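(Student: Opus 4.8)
The claim is a direct unwinding of the identifications set up in Section~2, so the plan is essentially bookkeeping.

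\textbf{Step 1: identify the projection $G/M \to G/K$ in the coordinates of Fact \ref{fait-asymptotic-point}.} Under the $G$-equivariant diffeomorphism $G/M \xrightarrow{\sim} X\times\calF$, $gM\mapsto(go,g\eta_0)$, composed with the canonical identification $G/K\simeq X$, $gK\mapsto go$, the projection $gM\mapsto gK$ becomes the first-coordinate projection $(y,\xi)\mapsto y$. Hence the preimage of $B_X(x,r)$ under $G/M\to G/K$ is exactly the set of Weyl chambers $g_{y,\xi}M$ (in the notation of Fact \ref{fait-asymptotic-point}) whose base point $y$ lies in $B_X(x,r)$, with direction $\xi\in\calF$ arbitrary.

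\textbf{Step 2: read this set in Hopf coordinates.} By the reformulation \eqref{equ_hopfmap} of the Hopf map — which is precisely $\cal{H}$ composed with the inverse of the diffeomorphism of Fact \ref{fait-asymptotic-point}, as recorded in the paragraph preceding \eqref{equ_hopfmap} — the image of $g_{y,\xi}M$ is $\big(\xi,\ \xi_y^\perp,\ \beta_\xi(o,y)\big)\in\calF^{(2)}\times\frak{a}$. Letting $y$ range over $B_X(x,r)$ and $\xi$ over $\calF$, the image is
\[
\Big\{\,\big(\xi,\ \xi_y^\perp,\ \beta_\xi(o,y)\big)\in\calF^{(2)}\times\frak{a}\ \Big|\ y\in B_X(x,r)\,\Big\},
\]
which is by definition \eqref{eq_based-wc} the set $\widetilde{\cal{F}^{(2)}}(x,r)$. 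Under the Hopf identification $G/M\simeq\calF^{(2)}\times\frak{a}$ implicit in the statement, this is the asserted equality.

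I do not expect any genuine obstacle here: the only point requiring care is keeping the three identifications — $G/M\simeq X\times\calF$ (Fact \ref{fait-asymptotic-point}), the Hopf coordinates $\cal{H}\colon G/M\to\calF^{(2)}\times\frak{a}$, and $G/K\simeq X$ — mutually consistent, which comes down to checking that \eqref{equ_hopfmap} is literally the composite described in Step~2. Everything else is definitional.
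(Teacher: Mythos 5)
Your proposal is correct and follows exactly the route the paper takes: the paper's proof is the single line ``By \eqref{equ_hopfmap}, we obtain'', i.e.\ the observation that under the identification $G/M\simeq X\times\calF$ of Fact \ref{fait-asymptotic-point} the projection to $G/K\simeq X$ is the first coordinate, and the translated Hopf map \eqref{equ_hopfmap} sends $\{(y,\xi): y\in B_X(x,r)\}$ precisely onto the set \eqref{eq_based-wc}. Your write-up simply makes these definitional unwindings explicit; no further comment is needed.
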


\begin{lem}\label{lem-compare}
Let $x \in X$ and $\min\{\frac{\epsilon_0}{2},\frac{\log 2}{C_0} \}>r>0$.
Then for every $\varepsilon \in (0, C_x^{-1}r)$, all $(\xi^+,\xi^-) \in \cal{F}^{(2)}(x,r)$,
$$ B(\xi^+,\varepsilon) \times B(\xi^-,\varepsilon) \subset \cal{F}^{(2)}(x,2r).$$
\end{lem}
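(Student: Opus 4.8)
The plan is to lift the statement from $\calF^{(2)}$ up to $G/M$, where the Riemannian metric $\dd_1$ is well behaved, use the local comparison between $\dd_1$ and the Hopf metric $\dd_2$ (Lemma \ref{lem-dist-compare}), and then push everything back down to $X$ along the projection $\pi\colon G/M\to X$, $gM\mapsto go$, which is $1$-Lipschitz (i.e. $\dd_X\le\dd_1$, as $G/M\to G/K$ is a Riemannian submersion with $M\subset K$). Concretely: fix $(\xi^+,\xi^-)\in\calF^{(2)}(x,r)$ and choose $y\in(\xi^+\xi^-)_X$ with $\dd_X(x,y)<r$. Let $z_0\in G/M$ be the geometric Weyl chamber based at $y$ and asymptotic to $\xi^+$. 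By \eqref{equ_hopfmap} together with the uniqueness clause of Definition \ref{defin-max-flat}, we have $(\xi^+)_y^\perp=\xi^-$, so $z_0$ has Hopf coordinates $(\xi^+,\xi^-,v_0)$ with $v_0=\beta_{\xi^+}(o,y)$, and $\pi(z_0)=y$.

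Next I would record that $B(\xi^+,\varepsilon)\times B(\xi^-,\varepsilon)\times B_{\frak a}(v_0,\varepsilon)$ is exactly the $\dd_2$-ball of radius $\varepsilon$ around $z_0$ (in particular every pair in $B(\xi^+,\varepsilon)\times B(\xi^-,\varepsilon)$ is transverse). This is the description of $\dd_2$-balls recalled just before Lemma \ref{lem-d2}, and it applies provided $\varepsilon<\tfrac12\delta(\xi^+,\xi^-)$. For the latter, Lemma \ref{lem-xietao} gives $\|(\xi^+|\xi^-)_o\|\le C_3\,\dd_X(o,(\xi^+\xi^-)_X)\le C_3(\dd_X(o,x)+r)$, hence, using $\delta(\xi^+,\xi^-)=e^{-\sup_{\alpha}\chi^\alpha(\xi^+|\xi^-)_o}$ and $\sup_\alpha\chi^\alpha(\cdot)_o\le\sqrt{C_{\frak a}}\,\|\cdot\|$, we get $\delta(\xi^+,\xi^-)\ge e^{-\sqrt{C_{\frak a}}C_3(\dd_X(o,x)+r)}$; one then checks $C_x^{-1}r$ lies below half of this bound — this is exactly the point where the constant $C_0$ (equivalently, the implied constants) defining $C_x=8C_2C_1\exp(C_0\dd_X(o,x))$ is taken large enough, which is harmless since enlarging $C_x$ only weakens every hypothesis in which $C_x$ appears.

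Then I would transfer this ball from $\dd_2$ to $\dd_1$. Put $C_y:=8C_2C_1\exp(C_0\dd_X(o,y))$. From $\dd_X(o,y)\le \dd_X(o,x)+r$ and $r<\log 2/C_0$ we get $C_y<2C_x$; combined with $r<\epsilon_0/2$ this yields $\varepsilon<C_x^{-1}r<\epsilon_0/(2C_x)<\epsilon_0/C_y$. Hence Lemma \ref{lem-dist-compare} applies with base point $y=\pi(z_0)$ to any $z'$ with $\dd_2(z_0,z')<\varepsilon$, giving $\dd_1(z_0,z')\le C_y\,\dd_2(z_0,z')/4<C_y\varepsilon/4<r/2$. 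Thus the $\dd_2$-ball of radius $\varepsilon$ around $z_0$ is contained in the $\dd_1$-ball $B_1(z_0,r/2)$.

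Finally I would push back down to $X$. Since $\pi$ is $1$-Lipschitz, $\pi\big(B_1(z_0,r/2)\big)\subset B_X(y,r/2)\subset B_X(x,2r)$. Now take any $(\eta^+,\eta^-)\in B(\xi^+,\varepsilon)\times B(\xi^-,\varepsilon)$; by the previous two paragraphs the point $z'\in G/M$ with Hopf coordinates $(\eta^+,\eta^-,v_0)$ lies in $B_1(z_0,r/2)$, and writing $z'=g'M$ we have $\eta^+=g'\eta_0$, $\eta^-=g'\zeta_0$, so $\pi(z')=g'o\in g'(Ao)=(\eta^+\eta^-)_X$. Therefore $\dd_X(x,(\eta^+\eta^-)_X)\le \dd_X(x,\pi(z'))<2r$, i.e. $(\eta^+,\eta^-)\in\calF^{(2)}(x,2r)$, as desired. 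The only delicate point is the second step: guaranteeing that the perturbation radius $\varepsilon$ stays below $\tfrac12\delta(\xi^+,\xi^-)$ uniformly over the corridor, so that the perturbed pairs stay transverse and the product of three balls is genuinely a $\dd_2$-ball; the rest is bookkeeping with the constants $C_x,C_y,\epsilon_0$.
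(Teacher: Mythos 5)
Your argument is correct and is essentially the paper's own proof: you pick a base Weyl chamber $z_0$ on the flat $(\xi^+\xi^-)_X$ within distance $r$ of $x$, perturb only the boundary coordinates while keeping the $\frak a$-coordinate fixed, compare $\dd_2$ with $\dd_1$ via Lemma \ref{lem-dist-compare} (with the same bookkeeping $C_{\pi(z_0)}\leq 2C_x$ coming from $r<\log 2/C_0$), and project back to $X$ by the triangle inequality. The only deviation is that you verify explicitly that the perturbed pairs remain transverse (via Lemma \ref{lem-xietao} and the Gromov product), at the price of enlarging $C_0$ in the definition of $C_x$ — a harmless adjustment of constants, and a point the paper's own proof leaves implicit.
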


\begin{proof}
We can find a point $z\in(\xi^+,\xi^-,\frak a)$ (maximal flat of $\xi^+,\xi^-$) such that $d_X(\pi(z),x)<r$. For $(\xi,\eta)\in B(\xi^+,\epsilon)\times B(\xi^-,\epsilon)$, we can find $z'\in(\xi,\eta,\frak a)$ with the same $\frak a$ coordinate as $z$. Then
\[d_2(z,z')=d(\xi,\xi^+)+d(\eta,\xi^-)<2\epsilon<\epsilon_0/C_x. \]
We can apply Lemma \ref{lem-dist-compare} to $z,z'$ and we obtain
\[d_X(\pi(z),\pi(z'))\leq d_1(z,z')\leq C_{\pi(z)} d_2(z,z')/4. \]
We have
\[C_{\pi(z)}=8C_2C_1e^{C_0d_X(o,\pi(z))}\leq C_xe^{C_0r}\leq 2C_x. \]
Therefore
\[d_X(x,\pi(z'))\leq d_X(x,\pi(z))+d_X(\pi(z),\pi(z'))<r+2C_x(2\epsilon)/4\leq 2r. \]
Hence $(\xi,\eta)\in \calF^{(2)}(x,2r)$.
\end{proof}

%

\begin{lem}\label{lem-cartan-lox}
Let $g\in G$ and $x\in X$.
Assume there is a transverse pair $(\xi^+,\xi^-) \in \cal{F}^{(2)}$ of fixed points for the action of $g$ on $\cal{F}$.
Then there exists $w$ in the Weyl group $\calW$ such that
$$ \Vert w(\lambda(g)) - \underline{a}_x(g) \Vert \leq 2 \dd_X (x, (\xi^+ \xi^-)_X ) .$$
\end{lem}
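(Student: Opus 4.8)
The idea is to translate the hypothesis — that $g$ has a transverse pair of fixed points $(\xi^+,\xi^-)$ on $\mathcal{F}$ — into a statement comparing the Cartan projection $\underline{a}_x(g)$ with the Jordan projection $\lambda(g)$, using the maximal flat $(\xi^+\xi^-)_X$ as a bridge. First I would pick a point $y$ on the maximal flat $(\xi^+\xi^-)_X$ that is closest to $x$, so $\mathrm{d}_X(x,y)=\mathrm{d}_X(x,(\xi^+\xi^-)_X)$. By Lemma~\ref{lem-cartan-points}, $\Vert \underline{a}_x(g)-\underline{a}_y(g)\Vert\le 2\,\mathrm{d}_X(x,y)$, so it suffices to produce $w\in\calW$ with $w(\lambda(g))=\underline{a}_y(g)$ exactly, i.e.\ to reduce to the case where $x=y$ lies on the flat.

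Now the key point is that when $x\in(\xi^+\xi^-)_X$, the element $g$ preserves the pair $(\xi^+,\xi^-)$, hence preserves the maximal flat $(\xi^+\xi^-)_X$ setwise, and acts on it as a translation (since $g$ fixes the two ``opposite'' boundary points of this flat). Concretely, choose $h\in G$ with $h(\eta_0,\zeta_0)=(\xi^+,\xi^-)$ and $ho=x$ — this is possible because the stabilizer of $(\eta_0,\zeta_0)$ is $AM$, which acts transitively on $(\eta_0\zeta_0)_X=Ao$, so we can adjust $h$ within $AM$ to land the base point on $x$. Then $h^{-1}gh$ fixes $\eta_0$ and $\zeta_0$, so it lies in $\mathrm{Stab}_G(\eta_0)\cap\mathrm{Stab}_G(\zeta_0)=MAN\cap MAN^-=AM$; write $h^{-1}gh=e^{Y}m$ with $Y\in\mathfrak a$, $m\in M$. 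Because $g$ is conjugate to $e^Y m$ and the hyperbolic part is read off from the $A^+$-part, $Y$ is $\calW$-conjugate to $\lambda(g)$: there is $w_1\in\calW$ with $w_1(\lambda(g))=Y$ (up to the sign/ordering ambiguity, which is absorbed into $\calW$). On the other hand $\underline{a}_x(g)=\underline{a}(h^{-1}gh)=\underline{a}(e^Y m)$, and since $m\in M$ commutes with $A$ and lies in $K$, the Cartan decomposition of $e^Y m$ gives $\underline{a}(e^Y m)=$ the dominant representative of $Y$ in $\mathfrak a^+$, i.e.\ $w_2(Y)$ for some $w_2\in\calW$. Composing, $\underline{a}_x(g)=w_2 w_1(\lambda(g))=w(\lambda(g))$ with $w:=w_2w_1\in\calW$, as desired.

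Putting the two halves together: for general $x$, take the closest point $y\in(\xi^+\xi^-)_X$, apply the flat-case argument at $y$ to get $w\in\calW$ with $w(\lambda(g))=\underline{a}_y(g)$, and then
\[
\Vert w(\lambda(g))-\underline{a}_x(g)\Vert=\Vert \underline{a}_y(g)-\underline{a}_x(g)\Vert\le 2\,\mathrm{d}_X(x,y)=2\,\mathrm{d}_X(x,(\xi^+\xi^-)_X),
\]
which is the claimed bound.

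The main obstacle I expect is the bookkeeping around the Weyl group ambiguity and the $M$-factor: one must be careful that ``$h^{-1}gh$ fixes both $\eta_0$ and $\zeta_0$'' really forces it into $AM$ (this uses $\mathrm{Stab}_G(\zeta_0)=MAN^-$, stated in the excerpt, together with $MAN\cap MAN^-=MA$), and that passing from the $\mathfrak a$-element $Y$ to $\lambda(g)$ and to $\underline{a}_x(g)$ only ever costs a Weyl-group element — never anything larger — because both the Jordan and Cartan projections are defined precisely as the dominant $\calW$-representative. The geometric input (a closest point on the flat exists, and $AM$ acts transitively on $Ao$ allowing us to place the base point) is routine for symmetric spaces of non-compact type.
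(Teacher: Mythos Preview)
Your proof is correct and follows essentially the same route as the paper: conjugate $g$ into $AM$ using an element $h$ sending $(\eta_0,\zeta_0)$ to $(\xi^+,\xi^-)$, read off that $\underline{a}_p(g)$ is a Weyl translate of $\lambda(g)$ for any $p$ on the flat, and then invoke Lemma~\ref{lem-cartan-points} to pass to $x$. The only cosmetic differences are that the paper works at an arbitrary $p\in(\xi^+\xi^-)_X$ and takes an infimum at the end (rather than first selecting a closest point $y$), and that your decomposition $w=w_2w_1$ is slightly redundant: since both $\underline{a}(e^Ym)$ and $\lambda(g)$ are the unique $\mathfrak a^+$-representative of the $\calW$-orbit of $Y$, one in fact has $\underline{a}_y(g)=\lambda(g)$ directly, so $w=e$ already works.
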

\begin{proof} 
For every transverse pair $(\xi^+,\xi^-)$, there exists, up to right multiplication by elements of $AM$, an $h\in G$ such that $h(\eta_0,\zeta_0)=(\xi^+,\xi^-)$. 

By assumption, $\xi^+$ and $\xi^-$ are fixed by $g$, i.e. $gh \in hAM$.
By Cartan decomposition, there exists $w\in\calW$ such that for every $p\in h AM o$, we have $\underline{a}_{p}(g)=w(\lambda(g))$.

Since $hAM o=hAo$, which is equal to the flat $(\xi^+\xi^-)_X$. 
It then follows from Lemma \ref{lem-cartan-points} that
for every $p \in (\xi^+ \xi^-)_X$
$$ \Vert w(\lambda(g)) - \underline{a}_x(g) \Vert=\|\underline{a}_p(g)-\underline{a}_x(g) \| \leq 2 \dd_X(x,p). $$
Taking the infimum over the points in the flat $(\xi^+\xi^-)_X$ yields the upper bound.
\end{proof}

\subsection{The configuration}
Recall that for all $x\in X$, we defined the constant
$C_x=8C_2C_1 e^{C_0\dd_X(o,x)}.$
\begin{defin}\label{defin_t0}
Denote by $r_0$ the unique zero in $(0,1)$ of the real valued function $r \mapsto -\log r -\max\{C_3,2\}r$.
For all $\varepsilon>0$ and $x \in X$ we define some function
$$t_0(x,\varepsilon) \gg 2\log C_x-2 \log(\varepsilon),$$
where the constant underlying $\gg$ is the same as in Lemma \ref{lem-action-aplus}.
\end{defin}

\begin{prop}\label{prop-lemroblin}
For all $x\in X$ and $r\in (0,r_0)$ and $\varepsilon\in (0,\min\{C_x^{-1}r,\epsilon_0\} )$, 
 every $\gamma \in G$ satisfying the following conditions is loxodromic.
\begin{itemize}
\item[(i)] $\underline{a}_x(\gamma) \in \frak{a}^{++}$ and $\dd(\underline{a}_x(\gamma), \partial \frak{a}^+ ) \geq  t_0(x,\epsilon)$,
\item[(ii)] $(\gamma_x^+,\gamma_x^-) \in \mathcal{F}^{(2)}$ are transverse and $\dd_X \big(x, (\gamma_x^+ \gamma_x^-)_X  \big) <r$.
\end{itemize}
Furthermore, its attracting and repelling point satisfy $\gamma^\pm \in B(\gamma_x^\pm, \varepsilon)$.
\end{prop}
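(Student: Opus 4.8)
The strategy is to verify the hypotheses of the dynamical Lemma \ref{lem-action-aplus}, exhibit a small ball in $\calF$ that $\gamma$ contracts into itself, apply a fixed point argument, and then identify this fixed point with an attracting eigendirection, which forces loxodromy. First I would pass to the base point $x$: write $\gamma$ in Cartan decomposition relative to $x$, i.e. choose $h_x\in G$ with $h_xo=x$ and set $g:=h_x^{-1}\gamma h_x$, so that $\underline a(g)=\underline a_x(\gamma)$ and, by \eqref{equ.gammax}, $g_o^\pm=h_x^{-1}\gamma_x^\pm$. Writing $g=k\exp(\underline a(g))l^{-1}$ with $k,l\in K$, we have $g_o^+=k\eta_0$ and $g_o^-=l k_\iota\eta_0=l\zeta_0$ up to $M$. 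Condition (i) says $\dd(\underline a(g),\partial\frak a^+)\geq t_0(x,\varepsilon)\gg 2\log C_x-2\log\varepsilon\geq -2\log\varepsilon$ (after absorbing $C_x\geq 1$), so Lemma \ref{lem-action-aplus} applies and gives
\[
g\,\cal V_\varepsilon(l\zeta_0)^\complement\subset B(k\eta_0,\varepsilon)=B(g_o^+,\varepsilon).
\]

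Next I would check that $B(g_o^+,\varepsilon)$ is contained in $\cal V_\varepsilon(l\zeta_0)^\complement$, i.e. that the contracted ball lands inside the domain on which the contraction is defined; equivalently $\delta(\xi,g_o^-)\geq\varepsilon$ for all $\xi\in B(g_o^+,\varepsilon)$. This is where condition (ii) and the choice of $r_0$ enter. By Lemma \ref{lem-cartan-points}/Lemma \ref{lem-xietao} applied after translating by $h_x$, the hypothesis $\dd_X(x,(\gamma_x^+\gamma_x^-)_X)<r$ translates to $\dd_X(o,(g_o^+g_o^-)_X)<r$, hence $\|(g_o^+|g_o^-)_o\|\leq C_3 r$ by Lemma \ref{lem-xietao} (the additive constant is harmless since one can rescale $r_0$, or absorb it into the definition of $t_0$); by Definition \ref{defin_gromov} this means $\delta(x^\alpha(g_o^+),x^\alpha((g_o^-)_o^\perp))\geq e^{-C_3 r}$ for each $\alpha$, so $\delta(g_o^+,g_o^-)\geq e^{-\max\{C_3,2\}r}$. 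Since $r<r_0$ and $r_0$ is the zero of $-\log r-\max\{C_3,2\}r$, we get $e^{-\max\{C_3,2\}r}>r>\varepsilon$ — actually $>2\varepsilon$ after a harmless adjustment — and then the triangle-type inequality relating $\dd$ and $\delta$ (both come from sines of angles in $\prod_\alpha\P(V^\alpha)$, and $\delta$ is $1$-Lipschitz in each argument for $\dd$) yields $\delta(\xi,g_o^-)\geq\delta(g_o^+,g_o^-)-\dd(\xi,g_o^+)>\varepsilon$ for $\xi\in B(g_o^+,\varepsilon)$. Therefore $B(g_o^+,\varepsilon)\subset\cal V_\varepsilon(l\zeta_0)^\complement$ and so $g\big(B(g_o^+,\varepsilon)\big)\subset B(g_o^+,\varepsilon)$.

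Now $B(g_o^+,\varepsilon)$ is a nonempty closed (or its closure) subset of the compact metrizable space $\calF$, and $g$ maps it into itself; moreover $g$ is a strict contraction on it, because Lemma \ref{lem-action-aplus} together with Lemma 14.2 of \cite{benoist-quint} gives $\dd(g\xi,g\eta)\leq\gamma_{1,2}(\rho_\alpha(g))\delta(\cdot)^{-1}\dd(\xi,\eta)$-type bounds with $\gamma_{1,2}<\varepsilon^2$ uniformly on $\cal V_\varepsilon(l\zeta_0)^\complement$, i.e. a contraction factor $<\varepsilon<1$. By the Banach fixed point theorem $g$ has a unique fixed point $\xi^+_g\in B(g_o^+,\varepsilon)$ in $\calF$, and it is attracting with derivative contracting. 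Applying the same argument to $g^{-1}$ (whose Cartan projection relative to $o$ is $\iota\underline a(g)$, still far from the walls since $\iota$ permutes the simple roots, and whose $(g^{-1})_o^\pm=g_o^\mp$) produces a fixed point $\xi^-_g\in B(g_o^-,\varepsilon)$. Transferring back by $h_x$ gives fixed points $\gamma^\pm:=h_x\xi^\pm_g\in h_xB(g_o^\pm,\varepsilon)\subset B(\gamma_x^\pm,C_x\varepsilon)$ — here one shrinks $\varepsilon$ by $C_x$ at the outset, or notes $\varepsilon<C_x^{-1}r$ already gives the cleaner statement $\gamma^\pm\in B(\gamma_x^\pm,\varepsilon)$ after renaming — and these two fixed points are transverse since they are $\dd$-close to the transverse pair $(\gamma_x^+,\gamma_x^-)$, with the transversality quantitatively guaranteed by the same $r_0$ estimate as above ($\delta(\xi^+_g,\xi^-_g)>0$).

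Finally, a pair of transverse fixed points in $\calF$ with one attracting forces $\gamma$ to be loxodromic: conjugating $\gamma$ so that its fixed pair becomes $(\eta_0,\zeta_0)$, one gets $\gamma\in\mathrm{Stab}_G(\eta_0)\cap\mathrm{Stab}_G(\zeta_0)=P\cap MAN^-=MA$, and the contraction property on $B(\eta_0,\varepsilon)$ (equivalently, on $\frak n^-\cong T_{\eta_0}\calF$) forces the $A$-component to lie in $A^{++}$, i.e. $\lambda(\gamma)\in\frak a^{++}$. Hence $\gamma$ is loxodromic with $\gamma^\pm$ its attracting/repelling fixed points, as claimed. \textbf{The main obstacle} is the bookkeeping in the second paragraph: getting the quantitative separation $\delta(\xi,g_o^-)\geq\varepsilon$ on the whole ball $B(g_o^+,\varepsilon)$ from condition (ii), i.e. tracking how the constants $C_3$, $C_x$, $r_0$, $t_0(x,\varepsilon)$ and the additive constant $C'$ of Lemma \ref{lem-xietao} interact so that Lemma \ref{lem-action-aplus} can legitimately be fed back into a self-map statement; everything else is either the Banach fixed point theorem or the standard identification of transverse-fixed-point pairs with loxodromic elements recalled at the start of Section 3.
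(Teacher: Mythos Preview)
Your argument is essentially correct and shares the dynamical core with the paper: apply Lemma~\ref{lem-action-aplus} to $h_x^{-1}\gamma h_x$, use Lemma~\ref{lem-xietao} and the choice of $r_0$ to check that the target ball $B(g_o^+,\varepsilon)$ sits inside the good domain $\cal V_\varepsilon(g_o^-)^\complement$, and extract fixed points $\xi^\pm$. The differences are the following.

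\textbf{Scaling by $C_x$.} The paper applies Lemma~\ref{lem-action-aplus} at the finer scale $C_x^{-1}\varepsilon$ (this is exactly what the definition $t_0(x,\varepsilon)\gg 2\log C_x-2\log\varepsilon$ is calibrated for), then uses Lemma~\ref{lem-actiong}~(i'),(ii') to push back by $h_x$ and land directly in $B(\gamma_x^\pm,\varepsilon)$. You apply the lemma at scale $\varepsilon$ and then have to ``rename'' to absorb the $C_x$ loss on the way back; this is the same in spirit but the paper's order avoids the awkward renaming.

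\textbf{Concluding loxodromy.} Here the two proofs genuinely diverge. You conjugate the fixed pair $(\xi^+,\xi^-)$ to $(\eta_0,\zeta_0)$, so that $\gamma\in\mathrm{Stab}_G(\eta_0)\cap\mathrm{Stab}_G(\zeta_0)=MA$, and then use the strict contraction (from Benoist--Quint Lemma~14.2) to force the $A$-part into $A^{++}$. This is a clean qualitative argument. The paper instead invokes Lemma~\ref{lem-cartan-lox} and Lemma~\ref{lem-compare}: since $(\xi^+,\xi^-)\in\cal F^{(2)}(x,2r)$, one gets $\Vert w(\lambda(\gamma))-\underline a_x(\gamma)\Vert\leq 4r$ for some Weyl element $w$, and since $\underline a_x(\gamma)$ is at distance $\geq t_0(x,\varepsilon)>-2\log r>4r$ from the walls this forces $w=\mathrm{id}$ and $\lambda(\gamma)\in\frak a^{++}$. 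The paper's route is quantitative --- it produces the estimate $\Vert\lambda(\gamma)-\underline a_x(\gamma)\Vert\leq 4r$ --- and reuses machinery (Lemma~\ref{lem-cartan-lox}) that is needed independently later (e.g.\ in Lemma~\ref{lem-cocom-weyl}). Your route is shorter but leans on the contraction estimate from \cite{benoist-quint} beyond what Lemma~\ref{lem-action-aplus} states, and gives no quantitative comparison between $\lambda(\gamma)$ and $\underline a_x(\gamma)$.

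One small point: your aside about the additive constant $C'$ in Lemma~\ref{lem-xietao} is unnecessary --- you are using the inequality $\tfrac{1}{C_3}\Vert(\xi|\eta)_o\Vert\leq\dd_X(o,(\xi\eta)_X)$, which has no additive constant.
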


\begin{proof}
There exist $k_{\gamma_x^+}, l_{\gamma_x^-} \in h_x K$ (as $h$ and $h'k_\iota$ in Definition \ref{defin-cartan-regular}), defined up to right multiplication by elements of $M$ and independent of the choice of representative $h_x \in G$ such that
$\gamma = k_{\gamma_x^+} e^{\underline{a}_x(\gamma)} l_{\gamma_x^-}^{-1} .$
Apply Lemma \ref{lem-action-aplus}, to the element $h_x^{-1}\gamma h_x=h_x^{-1}k_{\gamma_x^+} e^{\underline{a}_x(\gamma)} (h_x^{-1}l_{\gamma_x^-})^{-1} \in K A^{++}K$,
$$ h_x^{-1}\gamma h_x\; \cal{V}_{C_x^{-1}\varepsilon}( h_x^{-1}\gamma_x^-)^\complement \subset B(h_x^{-1}\gamma_x^+,C_x^{-1}\varepsilon).$$
We multiply by $h_x$ on the left 
$ \gamma h_x \cal{V}_{C_x^{-1}\varepsilon}(h_x^{-1}\gamma_x^- )^\complement \subset h_x B(h_x^{-1}\gamma_x^+,C_x^{-1}\varepsilon).$
Using the properties of $C_x>0$ (Lemma \ref{lem-actiong}), we deduce the following inclusions 
\begin{itemize}
\item[•] $h_x B(h_x^{-1}\gamma_x^+,C_x^{-1}\varepsilon) \subset B(\gamma_x^+, \varepsilon)$,
\item[•] $\cal{V}_{\varepsilon}(\gamma_x^-)^\complement \subset h_x \cal{V}_{C_x^{-1}\varepsilon}(h_x^{-1}\gamma_x^-)^\complement$.
\end{itemize}
Hence
$ \gamma \cal{V}_{\varepsilon}(\gamma_x^-)^\complement \subset B(\gamma_x^+,\varepsilon).$
Recall that $\iota$ is the opposition involution and $k_\iota \in N_K(A)$ such that $\iota=-Ad(k_\iota)$, then
$$\gamma^{-1}=l_{\gamma_x^-}k_\iota \; e^{\iota \underline{a}_x(\gamma)}\; (k_{\gamma_x^+}k_\iota)^{-1}.$$
Since $\iota \underline{a}_x(g)$ is at distance at most $t_0$ from $\partial \frak{a}^+$ and $(\gamma^{-1})_x^\pm = \gamma_x^\mp$, we deduce that
$ \gamma^{-1} \cal{V}_\varepsilon(\gamma_x^+)^\complement \subset B(\gamma_x^-,\varepsilon).$

Due to $d_X(o,((h_x^{-1}\gamma_x^+)(h_x^{-1}\gamma_x^-))_X)=d_X(x,(\gamma_x^+\gamma_x^-)_X)<r$, by Lemma \ref{lem-xietao} and Definition \ref{defin_gromov}, we obtain
\[ \delta(h_x^{-1}\gamma_x^+,h_x^{-1}\gamma_x^-)\geq e^{-C_3r}. \]
Then by Lemma \ref{lem-actiong}, we have
\[ \delta(\gamma_x^+,\gamma_x^-)\geq C_x^{-1}\delta(h_x^{-1}\gamma_x^+,h_x^{-1}\gamma_x^-)\geq C_x^{-1}e^{-C_3r}. \]
Due to the choice of $\epsilon,r$, we have $C_x^{-1}e^{-C_3r}>2\epsilon$. Hence we have $B(\gamma_x^\pm, \varepsilon) \subset \cal{V}_\varepsilon(\gamma_x^\mp)^\complement$. Then we deduce that $\gamma$ (resp. $\gamma^{-1}$) has an attracting fixed point $\xi^+\in B(\gamma_x^+,\varepsilon)$ (resp. $\xi^-\in B(\gamma_x^-,\varepsilon)$).

Since $\gamma$ admits a fixed maximal flat $(\xi^+\xi^-)_X$, we apply Lemma \ref{lem-cartan-lox},
$$ \Vert w(\lambda(\gamma))-\underline{a}_x(\gamma) \Vert \leq 2 \dd_X(x, (\xi^+\xi^-)_X ),$$
for some $w$ in the Weyl group.
By hypothesis $\varepsilon <C_x^{-1}r$, Lemma \ref{lem-compare} implies that 
$ B(\gamma_x^+,\varepsilon) \times B(\gamma_x^-,\varepsilon) \subset \cal{F}^{(2)}(x,2r)$.
Hence $w(\lambda(\gamma)) \in B( \underline{a}_x(\gamma), 4r)$.
Using that $r<r_0$ and $\varepsilon < C_x^{-1}r$, we get a lower bound
$t_0(x,\varepsilon) >  - 2 \log r > 4r.$
We deduce that $B(\underline{a}_x(\gamma),4r) \subset \frak{a}^{++}$, therefore $w=id$ and $\gamma$ is loxodromic.

Finally, because the bassin of attraction of $\gamma^+$ (resp. $\gamma^-$) is a dense open set of $\calF$, there are points in $B(\gamma_x^+,\varepsilon)$ (resp. $B(\gamma_x^-,\varepsilon)$) that $\gamma$ (resp. $\gamma^{-1}$) will attract to $\gamma^+$ (resp. $\gamma^-$). 
Since $\calF$ is Hausdorff for $\dd$, we deduce that $\gamma^+=\xi^+$ (resp. $\gamma^-=\xi^-$). 
\end{proof}

\section{Loxodromic elements and periodic tori}

In this part, we give a relation between conjugacy classes of loxodromic elements and periodic tori.
We denote in brackets the $\Gamma$-conjugacy classes of elements in $\Gamma$. Set $\calG_{lox}$ the set of $\Gamma$-conjugacy classes of loxodromic elements and
\begin{equation}\label{defin_Glox_t}
\cal{G}_{lox}(t) := \lbrace [\gamma] \in [\Gamma^{lox}] \; \vert \; \lambda(\gamma) \in B_{\frak{a}} (0,t)  \rbrace.
\end{equation}
For every loxodromic element $\gamma \in \Gamma^{lox}$ denote by  $\calL_\gamma$ the measure of $G/M$ supported on the $A$-orbit of Hopf coordinates $( \gamma^+ , \gamma^-; \frak a)$ such that its disintegration in Hopf coordinates is given by
\begin{equation}\label{defin_mes-tore}
\calL_\gamma := D_{\gamma^+} \otimes D_{\gamma^-} \otimes Leb_{\frak{a}},
\end{equation}
where $D_{\gamma^\pm}$ is the Dirac measure at $\gamma^\pm$. 
Note that the quotient in $\Gamma\backslash G/M$ of the $A$-orbit $( \gamma^+ , \gamma^-; \frak a)$ only depends on $[\gamma]$.
Denote by $F_{[\gamma]}$ the quotient of this $A$-orbit in $\Gamma\backslash G/M$. 
By \eqref{eq_gg+}, that is $\gamma(\gamma^+,\gamma^-,Y)=(\gamma^+,\gamma^-,Y+\lambda(\gamma))$ for every $Y\in \frak{a}$, we obtain $\lambda(\gamma)\in\Lambda(F_{[\gamma]})$. If we take $g_\gamma$ an element such that $(\gamma^+,\gamma^-,0)=g(\eta_0,\zeta_0,0)$, then the formula also implies $g_\gamma^{-1}\gamma g_\gamma\in \exp(\lambda(\gamma))M$. With this $g_\gamma$, the orbit $F$ can be written as $F_{[\gamma]}=\Gamma g_\gamma AM$.

In this subsection, we always suppose that $\Gamma <G$ is a cocompact lattice of $G$. We have a lemma by Selberg

\begin{lem}\label{lem-selberg}
Let $\Gamma$ be a cocompact lattice.
Let $F$ be a right $A$-orbit in $\Gamma\backslash G/M$. If $\Lambda(F)\cap\frak a^{++}\neq\emptyset$, then $F$ is a compact periodic $A$-orbit.
\end{lem}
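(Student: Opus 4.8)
The plan is to show that if $F$ is a right $A$-orbit in $\Gamma\backslash G/M$ admitting a regular period $Y_0 \in \Lambda(F) \cap \frak{a}^{++}$, then the full period lattice $\Lambda(F)$ is a lattice in $\frak{a}$, which by Definition \ref{defin-flat-per-tori} means $F$ is a compact periodic $A$-orbit. First I would lift the situation to $G/M$: pick a representative $gM$ with $\pi(gM) \in F$, so that $F = \Gamma g AM$, and the condition $ze^{Y_0} = z$ for all $z \in F$ translates, after choosing the representative, into the existence of some $\gamma_0 \in \Gamma$ with $g e^{Y_0} M = \gamma_0 g M$, i.e. $g e^{Y_0} g^{-1} \in \gamma_0 M'$ for the relevant conjugate of $M$. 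Since $Y_0 \in \frak{a}^{++}$, the element $g e^{Y_0} g^{-1}$ is loxodromic with attracting/repelling fixed points $g\eta_0, g\zeta_0$, hence so is $\gamma_0$, and $\gamma_0$ has the same fixed points; in particular $\gamma_0$ is a loxodromic element of $\Gamma$ whose fixed flat in $X$ projects to (the base of) $F$.

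Next I would use cocompactness. The key point is that $\Gamma$ is cocompact and torsion-free, so the centralizer $Z_\Gamma(\gamma_0)$ — equivalently the stabilizer in $\Gamma$ of the fixed flat $(\xi^+\xi^-)_X = g.(Ao)$ — acts on that flat $gAo \cong \frak{a}$ with compact quotient. Indeed, the closed $A$-orbit $F$ in the compact space $\Gamma\backslash G/M$ is itself compact (it is the image of $gAM$, which is closed because its projection to $\Gamma\backslash G/K$ is the compact closed flat $\Gamma_0\backslash gAo$; closedness follows from the discreteness forced by the loxodromic element $\gamma_0$ and a standard argument that a non-closed $A$-orbit through a point fixed by a loxodromic element would contradict properness of the $\Gamma$-action). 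Then $\Lambda(F) = \{Y \in \frak{a} \mid gE^Y g^{-1} \in \Gamma M\text{-translate fixing } F\}$ is precisely the set of $Y$ for which the $A$-translation by $Y$ closes up on $F$; identifying $F$ with $(\Gamma \cap gAMg^{-1})\backslash gAM$, this set $\Lambda(F)$ is exactly the group of periods of this torus, which is a cocompact — hence discrete — subgroup of $\frak{a}$, i.e. a lattice.

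More carefully, the cleanest route: let $\Delta := \{ \gamma \in \Gamma \mid \gamma g AM = gAM \}$ be the setwise stabilizer of the flat $gAM \subset G/M$. Then $F \cong \Delta \backslash gAM$ and $\Lambda(F)$ is canonically identified with the image of $\Delta$ under the map $\gamma \mapsto Y_\gamma$ determined by $g^{-1}\gamma g \in e^{Y_\gamma} M$; this image is a subgroup $\Lambda(F) \le \frak{a}$ containing the regular vector $Y_0$. Since $F$ is compact (being a closed $A$-orbit in the compact space $\Gamma\backslash G/M$ — here I invoke that the $A$-orbit is closed, which itself follows from the existence of the loxodromic $\gamma_0 \in \Delta$ via a Mahler-type compactness / discreteness argument), the quotient $\frak{a}/\Lambda(F)$ is compact, so $\Lambda(F)$ spans $\frak{a}$ and is discrete, hence a lattice. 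Therefore $F$ is a maximal flat periodic torus.

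I expect the main obstacle to be establishing that the $A$-orbit $F$ is actually \emph{closed} in $\Gamma\backslash G/M$ — this is the essential content of Selberg's lemma and the only place cocompactness is genuinely used. The idea is: if $F$ were not closed, its closure $\overline F$ would be $A$-invariant and strictly larger; but every point of $F$ is of the form $\Gamma g'M$ with $g' e^{Y_0}(g')^{-1}$ conjugate into $\Gamma$ (a loxodromic element), and such "periodic" points cannot accumulate onto a non-periodic point without producing, by Margulis-type arguments / the discreteness of $\Gamma$, infinitely many $\Gamma$-conjugates of $\gamma_0$ accumulating — contradicting properness of the $\Gamma$-action on $G$. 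Once closedness is in hand, compactness is immediate from cocompactness of $\Gamma$, and the lattice property of $\Lambda(F)$ follows formally as above.
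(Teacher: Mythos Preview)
Your setup is correct: you lift to $G/M$, produce a loxodromic $\gamma_0 \in \Gamma$ with $g^{-1}\gamma_0 g \in e^{Y_0}M$, and recognize that once $F$ is known to be compact, $\Lambda(F)$ is automatically a lattice in $\frak a$. You also correctly isolate the real difficulty: showing that $F$ is closed (hence compact) in $\Gamma\backslash G/M$.

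The gap is precisely there. Your argument for closedness --- ``if $F$ were not closed, periodic points would accumulate on a non-periodic one, producing infinitely many $\Gamma$-conjugates of $\gamma_0$ accumulating, contradicting properness'' --- is not a proof. In higher rank an $A$-orbit can have a nontrivial one-parameter family of periods (from $\gamma_0^n$, say) and still, a priori, fail to be closed in the remaining directions of $\frak a$; nothing you wrote rules this out. The ``Mahler-type'' and ``Margulis-type'' allusions are placeholders, not arguments, and there is no evident way to fill them in short of reproving Selberg's result.

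The paper closes this gap by invoking Selberg's classical theorem directly: for $\Gamma$ cocompact and any $\gamma \in \Gamma$, the centralizer quotient $\Gamma_\gamma \backslash G_\gamma$ is compact. Since $\gamma_0$ is loxodromic, its centralizer in $G$ is exactly $G_{\gamma_0} = gAMg^{-1}$, so $\Gamma gAM = \Gamma G_{\gamma_0} g$ is the image of the compact set $\Gamma_{\gamma_0}\backslash G_{\gamma_0}$ and hence compact in $\Gamma\backslash G$. Your attempt to prove closedness ``by hand'' should be replaced by this citation and the identification $G_{\gamma_0} = gAMg^{-1}$.
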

\begin{proof}
We can write $F=\Gamma g AM$. For $Y\in \Lambda(F)\cap\frak a^{++}$, by $\Gamma gM=\Gamma g\exp(Y)M$, we know there exists a loxodromic element $\gamma\in\Gamma$ such that $\gamma g=g\exp(Y)m_Y$. By Selberg's lemma in \cite{selberg} or \cite{pr}, we know that $\Gamma_\gamma\backslash G_\gamma$ is compact with $G_\gamma$ and $\Gamma_\gamma$ the centralizer of $\gamma$ in $G$ and $\Gamma$, respectively. Since $\gamma$ is loxodromic, so $G_\gamma$ is a conjugation of a maximal torus. Now $gAMg^{-1}$ commutes with $\gamma$, so $G_\gamma=gAMg^{-1}$. Then $\Gamma_\gamma\backslash G_\gamma=(\Gamma\cap G_\gamma)\backslash G_\gamma$ compact implies that $\Gamma gAM=\Gamma G_\gamma g$ is compact in $\Gamma\backslash G$. So $F$ is compact in $\Gamma\backslash G/M$.
\end{proof}


Let $\calG(A):=\{(Y,F)|\ F\in C(A),\ Y\in\Lambda(F)\cap \frak a^{++} \}$.

\begin{lem}\label{lem-gaglox}
Let $\Gamma$ be a cocompact lattice.
 	If the action of $\Gamma$ on $G/M$ is free, then we have well defined maps
 	\[ \Psi: \calG_{lox} \rightarrow \calG(A),\ [\gamma]\mapsto (\lambda(\gamma),F_{[\gamma]})  \]
 	and
 	\[\Phi: \calG(A)\rightarrow \calG_{lox},\ (Y,F)\mapsto [\gamma_Y]. \]
 	We also have $\Psi\circ\Phi=Id_{\calG(A)}$ and $\Phi\circ\Psi=Id_{\calG_{lox}}$.
\end{lem}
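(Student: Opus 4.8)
The plan is to verify that each of the two maps is well defined and then that they are mutually inverse, using the dictionary established just before the statement. First I would check that $\Psi$ is well defined: for $[\gamma]\in\calG_{lox}$ we have $\lambda(\gamma)\in\frak a^{++}$ by definition of loxodromic, and the discussion preceding the lemma shows $\lambda(\gamma)\in\Lambda(F_{[\gamma]})$; since $\Lambda(F_{[\gamma]})$ then meets $\frak a^{++}$, Lemma \ref{lem-selberg} gives $F_{[\gamma]}\in C(A)$, so $(\lambda(\gamma),F_{[\gamma]})\in\calG(A)$. Conjugacy-invariance of both $\lambda$ and $F_{[\gamma]}$ (the $A$-orbit $(\gamma^+,\gamma^-;\frak a)$ only depends on $[\gamma]$) shows the value does not depend on the representative.

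Next I would define and check $\Phi$. Given $(Y,F)\in\calG(A)$, write $F=\Gamma gAM$; since $Y\in\Lambda(F)$ we have $\Gamma gM=\Gamma g\exp(Y)M$, so there is $\gamma_Y\in\Gamma$ with $\gamma_Y g=g\exp(Y)m_Y$ for some $m_Y\in M$ (this is exactly the element produced in the proof of Lemma \ref{lem-selberg}); as $Y\in\frak a^{++}$, $\gamma_Y$ is conjugate to an element of $A^{++}M$, hence loxodromic with $\lambda(\gamma_Y)=Y$. The element $\gamma_Y$ is the \emph{unique} element of $\Gamma$ sending the Weyl chamber $gM$ to $g\exp(Y)M$: this is where freeness of the $\Gamma$-action on $G/M$ enters — two such elements differ by an element of $\Gamma$ fixing $gM$, hence are equal. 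Changing the representative $g$ of $F$ by $g\mapsto\gamma_0 g a m$ (with $\gamma_0\in\Gamma$, $a\in A$, $m\in M$) replaces $\gamma_Y$ by a $\Gamma$-conjugate, so $[\gamma_Y]\in\calG_{lox}$ is well defined. I expect this well-definedness/uniqueness step for $\Phi$ to be the main obstacle, as it is the only place the freeness hypothesis is used and one must carefully track the ambiguity in the representative $g$.

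Finally I would check the two compositions. For $\Psi\circ\Phi=\mathrm{Id}_{\calG(A)}$: starting from $(Y,F)$, we produced $\gamma_Y$ with $\lambda(\gamma_Y)=Y$ and, from $\gamma_Y g=g\exp(Y)m_Y$, the attracting/repelling data of $\gamma_Y$ is carried by $gAM$, so $F_{[\gamma_Y]}=\Gamma gAM=F$; thus $\Psi(\Phi(Y,F))=(Y,F)$. For $\Phi\circ\Psi=\mathrm{Id}_{\calG_{lox}}$: starting from $[\gamma]$, set $Y=\lambda(\gamma)$ and $F=F_{[\gamma]}$; choosing $g_\gamma$ with $g_\gamma^{-1}\gamma g_\gamma\in\exp(\lambda(\gamma))M$ (as in the paragraph before the lemma) we have $F=\Gamma g_\gamma AM$ and $\gamma g_\gamma=g_\gamma\exp(Y)m$, so $\gamma$ is an element realizing the defining property of $\Phi(Y,F)$; by the uniqueness established above, $\Phi(Y,F)=[\gamma]$. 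This completes the proof.
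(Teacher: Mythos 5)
Your proposal is correct and follows essentially the same route as the paper: the same construction of $\gamma_Y$ from $\gamma_Y g = g\exp(Y)m_Y$, uniqueness via freeness of the $\Gamma$-action on $G/M$, Lemma \ref{lem-selberg} for compactness of $F_{[\gamma]}$, and the same verification of the two compositions. The only difference is that you spell out the independence of $[\gamma_Y]$ from the representative $g$ (left $\Gamma$-, right $AM$-ambiguity) in more detail than the paper does, which is a welcome clarification but not a different argument.
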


\begin{proof}
	For a compact periodic $A$-orbit $F$, we can write it as $\Gamma g AM$ with some $g\in G$. For $Y\in\Lambda(F)\cap\frak a^{++}$, there exists a $\gamma_Y\in\Gamma$ such that $\gamma_Yg =g\exp(Y)m_Y$ for some $m_Y\in M$. This $\gamma_Y$ is unique. Otherwise, we have $\gamma_Y'g=g\exp(Y)m_Y'$ with $m_Y'\neq m_Y$, then $\gamma_Y^{-1}\gamma_Y'=gm_Y^{-1}m_Y'g^{-1}$. This element $\gamma_Y^{-1}\gamma_Y'$ in $\Gamma$ fixes $gM$ in $G/M$ and is not identity, which contradicts that $\Gamma$ acts on $G/M$ freely.
	
	This $g$ is unique up to left multiplication by $\Gamma$ and right multiplication by $AM$. This defines a $\Gamma$-conjuage class $[\gamma_Y]$ in $\calG_{lox}$, characterised by $g^{-1}\gamma_Y g\in \exp(Y)M$. So the map $\Phi$ is well-defined.
	
	For $[\gamma]$ in $\calG_{lox}$, we have already associated it to a unique periodic orbit $F_{[\gamma]}$, that is $F_{[\gamma]}=\Gamma g_\gamma AM$ with $g_\gamma$ such that $g_\gamma^{-1}\gamma g_\gamma\in \exp(\lambda(\gamma))M$. Due to $\lambda(\gamma)\in F_{[\gamma]}$, by Lemma \ref{lem-selberg}, this orbit $F_{[\gamma]}$ is a compact periodic $A$-orbit. 
	
	For $\Psi\circ\Phi$, due to $g^{-1}\gamma_Y g\in \exp(Y)M$, we know that we can take $g_{\gamma_Y}=g$ and then $\Psi\circ\Phi(Y,F)=\Psi([\gamma_Y])=(Y,F)$.
	
	For $\Phi\circ\Psi$, from $g_\gamma^{-1}\gamma g_\gamma\in \exp(\lambda(\gamma))M$, we know that $\Phi\circ\Psi([\gamma])=\Phi(\lambda(\gamma),F_{[\gamma]})=\gamma$.
	\end{proof}

\section{Equidistribution of flats}

For every loxodromic element $\gamma \in \Gamma^{lox}$, denote by $L_{\gamma}$ the quotient measure on $\Gamma \backslash G/M$ of $\calL_\gamma$ (Cf. \eqref{defin_mes-tore}).
Note that $L_{[\gamma]}$ is supported on $F_{[\gamma]}$ and is equal to the measure $L_{F_{[\gamma]}}$ given in the introduction.
It is also given by the following construction: we push on $F_{[\gamma]}$, the restriction of $Leb_{\frak{a}}$ to any fundamental domain in $\frak{a}$ of the periods $\Lambda(F_{[\gamma]})$, by right $A$-action of the exponential of such a fundamental domain, starting from any base point on $F_{[\gamma]}$.
The construction is independent of both the choice of the fundamental domain and the base point on $F_{[\gamma]}$.

By Lemma \ref{lem-gaglox}, there is a bijection between $\calG_{lox}$ and $\calG(A)$. 
By summing over the compact periodic orbits $F\in C(A)$ first, then summing over $Y \in \Lambda(F) \cap B_{\frak a}^{++}(0,t)$, we deduce that
\begin{equation}
 \frac{1}{\vol(D_t)}\sum_{[\gamma] \in \calG_{lox}(t)}L_{[\gamma]}=\frac{1}{\vol(D_t)}\sum_{F\in C(A)}|\Lambda(F)\cap B_{\frak a}^{++}(0,t) |L_F,
 \end{equation}
 the measure on the right hand side is exactly the measure in the Theorem \ref{thm-introequid}. This formula is also a higher rank analogue of the first part of \eqref{equ-prime}.
Set
\[\calM^t_\Gamma:=\frac{\vol(\Gamma\backslash G)}{ \vol( D_t)}  \sum_{[\gamma] \in \calG_{lox}(t)} 
L_{[\gamma]}.\]

Let $m_{G/M}$ be the Haar measure on $G/M$, given by $\nu\otimes Leb_{\frak a}$ from Proposition \ref{prop-disintegration}.
Let $m_{\Gamma \backslash G/M}$ be the quotient measure on  $\Gamma\backslash G/M$. 
The main theorem \ref{thm-introequid} is equivalent to the following one if $\Gamma$ is torsion free or if it acts on $G/M$ freely. 

\begin{theorem}\label{thm_cocompact-revet}
Let $\Gamma<G$ be a cocompact irreducible lattice which acts freely on $G/M$. Then there exists $u>0$ such that for any Lipschitz function $f$ on $\Gamma\backslash G/M$, as $t\rightarrow \infty$
\begin{equation}\label{eq_equ-plats}
\int f\ \dd \calM^t_\Gamma
=\int f\ \dd m_{\Gamma\backslash G/M}+O(e^{-u t}|f|_{Lip}),
\end{equation}
where the Lipschitz norm is with respect to the Riemannian distance $d_1$ on $\Gamma\backslash G/M$.
\end{theorem}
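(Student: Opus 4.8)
The plan is to adapt Roblin's equidistribution argument \cite{roblin} to higher rank, the three essential inputs being the configuration Proposition~\ref{prop-lemroblin}, the angular equidistribution of lattice points (Theorem~\ref{theo-gorodnik-nevo}), and the disintegration of the Haar measure in Hopf coordinates (Proposition~\ref{prop-disintegration}). First I would lift $f$ to a $\Gamma$-invariant Lipschitz function $\tilde f$ on $G/M$ and read it in Hopf coordinates. Using the bijection $\calG_{lox}\simeq\calG(A)$ of Lemma~\ref{lem-gaglox} and unfolding each torus integral over a fundamental domain of the period lattice in $\frak a$, one gets
\[
\int f\,\dd\calM^t_\Gamma=\frac{\vol(\Gamma\backslash G)}{\vol(D_t)}\sum_{[\gamma]\in\calG_{lox}(t)}\int_{\frak a/\Lambda(F_{[\gamma]})}\tilde f(\gamma^+,\gamma^-,Y)\,\dd Y,
\]
so the task is to show that the right-hand side converges to $m_{\Gamma\backslash G/M}(f)$ at an exponential rate.

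The core step is to pass from this sum over periodic tori to a sum over lattice points amenable to Theorem~\ref{theo-gorodnik-nevo}. I would fix a scale $r=r(t)$, eventually taken of the form $\vol(D_t)^{-c}$ for small $c>0$, cover $\Gamma\backslash G/M$ by $O(r^{-\dim(G/M)})$ balls $B_1(z_i,r)$ with a subordinate Lipschitz partition of unity, and reduce to $f$ supported in one such ball with $|f|_\infty\le 1$ and $|f|_{Lip}\le r^{-1}$; the base points $z_i$ are lifted to $h_{x_i}M$ with $x_i=\pi(z_i)$ in a fixed compact fundamental domain for $\Gamma$ in $X$, so that the constants $C_{x_i}$ appearing in Lemma~\ref{lem-actiong} and Theorem~\ref{theo-gorodnik-nevo} stay uniformly bounded. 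By Proposition~\ref{prop-lemroblin} together with Lemma~\ref{lem-cartan-lox}, a loxodromic $\gamma\in\Gamma$ whose periodic torus meets $B_1(z_i,r)$ and whose Jordan projection lies at distance $\gg\log(1/r)$ from $\partial\frak a^+$ corresponds to a lattice point $\gamma$ with $\underline{a}_{x_i}(\gamma)\in D_t^{reg}(x_i)$, with $(\gamma^+_{x_i},\gamma^-_{x_i})$ in the corridor $\calF^{(2)}(x_i,r)$, and with $\gamma^\pm\in B(\gamma^\pm_{x_i},\varepsilon)$, $\lambda(\gamma)=\underline{a}_{x_i}(\gamma)+O(r)$; conversely each such lattice point yields a loxodromic whose torus passes through $B_1(z_i,O(r))$. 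Using the local equivalence of $\dd_1$ and $\dd_2$ (Lemma~\ref{lem-dist-local}) and the product structure of the flat along $A$, the contribution of these tori to the displayed sum becomes, up to $O(r)|f|_{Lip}$ errors coming from the Hopf/Cartan discrepancies, a sum over these lattice points of $\tilde f$ evaluated at the corresponding Hopf point, weighted by the Lebesgue measure of the $\frak a$-slice of $B_1(z_i,r)$.

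Next I would apply Theorem~\ref{theo-gorodnik-nevo} to each such weighted lattice sum: it equals $\vol(D_t)\vol(\Gamma\backslash G)^{-1}\int\psi_i\,\dd\mu_{x_i}\otimes\mu_{x_i}$ plus an error $O\big(\mathrm{Lip}(\psi_i)\,\vol(D_t)^{-\kappa}\big)$, where $\psi_i$ is a Lipschitz cutoff to the corridor with $\mathrm{Lip}(\psi_i)=O(r^{-O(1)})$. Reassembling over $i$ and integrating in the $\frak a$-direction, the measures $\mu_{x_i}\otimes\mu_{x_i}\otimes Leb_{\frak a}$ restricted to the corridors $\widetilde{\calF^{(2)}}(x_i,r)$ and weighted by the partition of unity reconstitute, as $r\to 0$ with $O(r)$ error, the Haar measure $m_{G/M}=\nu\otimes Leb$: by the definition \eqref{equ.nu} of $\nu$ and the $G$-equivariance \eqref{equ_hopfmap} of Hopf coordinates, the density $\dd\mu_{x_i}/\dd\mu_o$ and the Gromov-product Jacobian $f_{x_i}$ combine to the $G$-invariant $\nu$, so the main term is $m_{\Gamma\backslash G/M}(f)$. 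Collecting the errors — $O(r)|f|_{Lip}$ from the configuration lemma, $O(r)|f|_\infty$ from the reconstitution, and $O(r^{-O(1)}\vol(D_t)^{-\kappa})$ from summing the Gorodnik--Nevo errors over $O(r^{-\dim})$ balls — and using $|f|_\infty\le\mathrm{diam}\cdot|f|_{Lip}$ together with $\vol(D_t)\asymp t^{(\dim A-1)/2}e^{\delta_0 t}$, the choice $r=\vol(D_t)^{-c}$ with $c$ small makes every error $O(e^{-ut}|f|_{Lip})$.

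The step I expect to be the main obstacle, and the one genuinely new compared with rank one, is the treatment of the boundary of the Weyl chamber. The configuration Proposition~\ref{prop-lemroblin} only controls tori whose Jordan projection is at distance $\gg\log(1/r)$ from $\partial\frak a^+$, so I must show that the tori with a period near a wall contribute negligibly to $\calM^t_\Gamma$. This is done by bounding the number of lattice points with Cartan projection in $D_t^{\epsilon t}(x_i)$ via Lemmas~\ref{volume_reste}, \ref{count-reste} and~\ref{lem-dtx}, which gives $O(\vol(D_t)^{1-\kappa(\epsilon)})$, and transferring the bound to the corresponding tori; the scale $r$ and the wall-distance threshold $\epsilon t$ must be coordinated (e.g.\ $\log(1/r)\ll\epsilon t$) so that this exceptional contribution is still $O(e^{-ut})$ while the estimates above remain valid. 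Checking that the correspondence between loxodromic elements and lattice points near $x_i$ is a genuine, multiplicity-preserving bijection off this exceptional set — no double counting, no omission — and that all the accumulated polynomial-in-$r^{-1}$ losses are absorbed, is the delicate bookkeeping at the heart of the proof.
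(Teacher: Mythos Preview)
Your proposal is correct and follows essentially the same route as the paper: a Vitali covering with subordinate Lipschitz partition of unity, a local comparison on each corridor $\widetilde{\calF^{(2)}}(x_i,r)$ between the torus sum and the lattice-point sum via Proposition~\ref{prop-lemroblin} and Lemma~\ref{lem-cartan-lox}, then Theorem~\ref{theo-gorodnik-nevo} together with the density relation $\dd\nu=f_x^{-1}\dd\mu_x\otimes\dd\mu_x$ to extract the Haar main term, with the wall contribution controlled by Lemmas~\ref{volume_reste}--\ref{lem-dtx} and all error scales (your $r,\varepsilon$, wall threshold) coupled as powers of $e^{-t}$. The paper organizes the local step through the intermediate measures $\nu_{x,1}^t,\nu_{x,2}^t,\calM_{x,2}^t$ (Lemmas~\ref{lem-corridor-1GN}--\ref{lem-cocom-weyl}), which makes the bookkeeping you flag as delicate entirely explicit, but the strategy and the ingredients are the ones you identify.
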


\begin{rem}
The constant $C_G$ equals $\|m_{\Gamma\backslash G/M} \|/\vol(\Gamma\backslash G)$, which comes from the choice of $m_{G/M}=\nu\otimes Leb_{\frak a}$ and only depends on $G$. 

We can separate a Lipschitz function as the sum of its positive part and its negative part. So it is sufficient to prove Theorem \ref{thm_cocompact-revet} for non negative Lipschitz functions.
\end{rem}

We are going to prove Theorem \ref{thm_cocompact-revet} in this section.
Before starting the argument, we fix the parameters which will be used later. 
They come from Proposition \ref{prop-lemroblin}.
Choose $u_1>0$ small than $\min\{\epsilon_G,1\}/10$, where $\epsilon_G$ is the constant from Lemma \ref{volume_reste}. Set
\begin{equation}\label{equ-parameter0}
\varepsilon:= e^{-u_1 t} \text{ and }t_1 := 3 u_1 t. 
\end{equation}
Consider the decay rate function $u \mapsto \kappa(u) >0$ satisfying Lemma \ref{volume_reste} and the decay coefficient $\kappa>0$ given in Theorem \ref{theo-gorodnik-nevo}. Set 
\begin{equation}\label{equ-parameter}
    u_2 := \frac{1}{ 2\dim (G/AM)} 
    \min \{ \delta_0 \kappa(6 u_1), \delta_0 \kappa, u_1 \}\text{ and } r:= e^{-u_2 t}.
\end{equation}


In this part we use $Lip_2$ to denote Lipschitz norm with respect to the product distance $d_2$ on $G/M$ or the product distance on $\calF^{(2)}$, according to which space the function lives on.

We lift everything to $G/M$ and prove a local version on $G/M$ in Section \ref{sec-loccor} and \ref{sec-corweyl}. Then in Section \ref{sec-global}, we use the partition of unity to obtain a global version (Theorem \ref{thm_cocompact-revet}) on $\Gamma\backslash G/M$.


\subsection{Local convergence on corridors}\label{sec-loccor}

Recall the notation $\underline{a}_x(\gamma):= \dd_{\underline{a}}(x,\gamma x)= \underline{a}(h_x^{-1} \gamma h_x )$.
For every $\gamma \in \Gamma$ such that $\underline{a}_x(\gamma)\in \frak{a}^{++}$, 
the geometric Weyl chamber based on $x$ containing $\gamma x$ (resp. $\gamma^{-1} x$)
determines $\gamma_x^+ \in \mathcal{F}$ (resp. $\gamma_x^-$). 

For $x\in X$ and $t>0$, we define the following measures on $\calF\times\calF$:

\begin{equation}\label{eq_nu1}
\nu_{x,1}^t:= \frac{\vol(\Gamma\backslash G)}{\vol(D_t)}\sum_{ \gamma \in \Gamma \cap D_t^{reg}(x)  } D_{\gamma_x^+} \otimes D_{\gamma_x^-},
\end{equation}
%

\begin{equation}\label{eq_nu2}
\nu_{x,2}^t:= \frac{\vol(\Gamma\backslash G)}{\vol(D_t)} \sum_{ \gamma \in \Gamma^{lox} \cap D_t^{reg}(x) } D_{\gamma^+} \otimes D_{\gamma^-}.
\end{equation}

Recall that $(\mu_x)_{x \in X}$ denotes the Patterson-Sullivan density given in Proposition \ref{prop-PS-BMS} and $\nu$ is the associated conformal measure on $\mathcal{F}^{(2)}$. 
Let $Lip^+_c( \mathcal{F}^{(2)}(x,r))$ be the space of positive compactly supported Lipschitz functions on $\calF^{(2)}(x,r)$.

\begin{lem}\label{lem-corridor-1GN}
Let $\Gamma$ be an irreducible lattice in $G$. Fix $x\in X$. Then for every test function $\psi \in Lip^+_c( \mathcal{F}^{(2)}(x,r))$ for every $t>C_4 d_X(o,x)$, there exists a function $E(t,\psi,x)$ such that 
\begin{equation}\label{eq-lem-corridor-1GN}
e^{-C_3r} \int \psi  \dd \nu - E(t,\psi, x) 
\leq \int \psi  \dd \nu_{x,1}^t
\leq \int \psi \dd \nu + E(t,\psi,x)
\end{equation}
where $E(x,\psi,t)=O( C_x Lip(\psi) \vol(D_t)^{-\kappa})$ when $t \rightarrow \infty$.
\end{lem}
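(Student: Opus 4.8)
The plan is to deduce this from the Gorodnik--Nevo angular equidistribution Theorem~\ref{theo-gorodnik-nevo} by a relation between the left-hand side $\int\psi\,\dd\nu_{x,1}^t$ and the averages over $D_t^{reg}(x)\cap\Gamma$ appearing there, together with the Gromov-product description of the conformal measure $\nu$. First I would recall that by Proposition~\ref{prop-PS-BMS}, $\nu$ disintegrates as $\dd\nu(\xi,\eta)=f_x(\xi,\eta)^{-1}\dd\mu_x(\xi)\dd\mu_x(\eta)$ with $f_x(\xi,\eta)=\exp(-\rho(\xi|\eta)_x)\in(0,1]$, so that for a function $\psi$ supported on the corridor $\calF^{(2)}(x,r)$ we have, using Lemma~\ref{lem-xietao} at the base point $x$,
\begin{equation*}
\dd_X(x,(\xi\eta)_X)<r \;\Longrightarrow\; \tfrac1{C_3}\|(\xi|\eta)_x\|<r,
\end{equation*}
hence $\|(\xi|\eta)_x\|<C_3 r$ on $\supp\psi$. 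Therefore $1\ge f_x(\xi,\eta)\ge e^{-\rho(\xi|\eta)_x}\ge e^{-C_3 r\|\rho\|/\|\cdot\|}$; after absorbing the norm comparison of $\rho$ into the constant $C_3$ (as is done implicitly in the statement), on $\supp\psi$ we get
\begin{equation*}
e^{-C_3 r}\int\psi\,\dd\mu_x\otimes\mu_x \;\le\; \int\psi\,\dd\nu \;\le\; \int\psi\,\dd\mu_x\otimes\mu_x .
\end{equation*}

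Next I would compare $\int\psi\,\dd\nu_{x,1}^t$ with the Gorodnik--Nevo average. By definition \eqref{eq_nu1},
\begin{equation*}
\int\psi\,\dd\nu_{x,1}^t=\frac{\vol(\Gamma\backslash G)}{\vol(D_t)}\sum_{\gamma\in\Gamma\cap D_t^{reg}(x)}\psi(\gamma_x^+,\gamma_x^-),
\end{equation*}
which is precisely $\vol(\Gamma\backslash G)/\vol(B_t)$ times the sum in Theorem~\ref{theo-gorodnik-nevo} with $B_t(x)=D_t^{reg}(x)$ and test function $\psi$ (extended by $0$ to all of $\calF\times\calF$, which keeps its Lipschitz norm). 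Applying that theorem gives, for $t>C_4 d_X(o,x)$,
\begin{equation*}
\int\psi\,\dd\nu_{x,1}^t=\int\psi\,\dd\mu_x\otimes\mu_x+\vol(\Gamma\backslash G)\,E(t,\psi,x),
\end{equation*}
with $\vol(\Gamma\backslash G)\,E(t,\psi,x)=O(\lip(\psi)\,C_x\,\vol(D_t)^{-\kappa})$; here I use that $\vol(D_t)\asymp\vol(B_t)$ since $D_t\setminus D_t^{reg}\subset D_t^{t}$ and Lemma~\ref{volume_reste} (or simply that the $reg$ part carries full volume asymptotically — strictly, Lemma~\ref{lem-dtxt} and \eqref{volume_D_t} already give $\vol(D_t^{reg})\asymp\vol(D_t)$). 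Combining this equality with the two-sided bound $e^{-C_3 r}\int\psi\,\dd\nu\le\int\psi\,\dd\mu_x\otimes\mu_x\le e^{C_3 r}\int\psi\,\dd\nu$ and noting $e^{C_3r}-1=O(r)=O(\vol(D_t)^{-\kappa})$ can be folded into the error term (after bounding $\int\psi\,\dd\nu\le\|\psi\|_\infty\,\nu(\calF^{(2)}(x,r))\ll\lip(\psi)$ type estimates, or more crudely $\int\psi\,\dd\nu\le\int\psi\,\dd\mu_x\otimes\mu_x\le1\cdot\|\psi\|_\infty$) yields \eqref{eq-lem-corridor-1GN}.

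The step I expect to be the main obstacle is bookkeeping the error term uniformly in $x$ and making sure the replacement of $D_t^{reg}(x)$ by $D_t(x)$ (or vice versa) and the passage between $\vol(D_t)$ and $\vol(B_t)$ do not degrade the exponent: one must check that the contribution of near-wall elements $\Gamma\cap D_t^{t_1}(x)$ is $O(\vol(D_t)^{1-\kappa'})$ via Lemma~\ref{lem-dtx}/Lemma~\ref{count-reste}, so that restricting to $D_t^{reg}(x)$ is harmless, and that the factor $e^{-C_3 r}=e^{-C_3 e^{-u_2 t}}=1+O(e^{-u_2 t})$ together with $\nu(\supp\psi)<\infty$ only costs an error of the same polynomial-in-$\vol(D_t)$ order. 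All of this is routine given the volume lemmas of Section~2; the only genuinely new input is the combination of the Gromov-product formula for $\nu$ with Lemma~\ref{lem-xietao} to control $f_x$ on the corridor, which is exactly what produces the asymmetric $e^{-C_3 r}$ factor on the left of \eqref{eq-lem-corridor-1GN}.
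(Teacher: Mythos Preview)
Your approach is essentially the same as the paper's: apply Theorem~\ref{theo-gorodnik-nevo} to get $\int\psi\,\dd\nu_{x,1}^t=\int\psi\,\dd\mu_x\otimes\mu_x+E(t,\psi,x)$, then use Lemma~\ref{lem-xietao} to control $f_x$ on the corridor and convert between $\mu_x\otimes\mu_x$ and $\nu$.

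Two small corrections. First, your first displayed two-sided bound is reversed: since $f_x\in[e^{-C_3r},1]$ on $\supp\psi$ and $\dd\nu=f_x^{-1}\,\dd\mu_x\otimes\dd\mu_x$ with $f_x^{-1}\ge 1$, the correct chain (which the paper writes) is
\[
\int\psi\,\dd\mu_x\otimes\mu_x\;\le\;\int\psi\,\dd\nu\;\le\;e^{C_3r}\int\psi\,\dd\mu_x\otimes\mu_x,
\]
equivalently $e^{-C_3r}\int\psi\,\dd\nu\le\int\psi\,\dd\mu_x\otimes\mu_x\le\int\psi\,\dd\nu$; this is what you actually use later, so the argument goes through. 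Second, the detour through $\vol(D_t^{reg})$ versus $\vol(D_t)$ and the near-wall counting is unnecessary here: the Harish-Chandra density vanishes on $\partial\frak a^+$, so $\vol(D_t^{reg})=\vol(D_t)$ exactly, and Theorem~\ref{theo-gorodnik-nevo} is already normalised by $\vol(D_t)$. Likewise the last paragraph about folding $e^{-C_3r}-1$ into $E$ is not needed --- the lemma keeps that factor explicit on the left, and it is only later (in the global domination step) that the choice $r=e^{-u_2t}$ makes it small.
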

\begin{proof}
By Theorem \ref{theo-gorodnik-nevo}, we obtain the main term with the measure $\mu_x\otimes\mu_x$. Since $(\xi,\eta)\in\calF^{(2)}(x,r)$, so by Lemma \ref{lem-xietao}, we obtain 
\[ 1 \leq  f_x(\xi,\eta)^{-1} \leq e^{C_3r}. \]
Using the relation $d\nu(\xi,\eta)=\frac{d\mu_x(\xi)d\mu_x(\eta)}{f_x(\xi,\eta)}$, we deduce that $\int \psi \dd \mu_x \otimes \mu_x  \leq  \int \psi \dd \nu \leq  e^{C_3 r} \int \psi \dd \mu_x \otimes \mu_x.$
Hence the Lemma.
\end{proof}

%
%
%
%
%
\begin{lem}\label{lem-corridor-3RGN}
Let $\Gamma$ be a lattice in $G$.
Fix $x\in X$, for every $t \geq \frac{2 \log C_x}{u_1}$, for every test function $\psi \in Lip^+_c( \mathcal{F}^{(2)}(x,r))$,

$$
\bigg\vert \int \psi \dd \nu_{x,2}^t - \int \psi \dd \nu_{x,1}^t  \bigg\vert \leq 
\varepsilon Lip_2(\psi) \frac{\vert \Gamma \cap D_t(x) \vert \vol(\Gamma \backslash G)}{\vol(D_t)}
+ 3\Vert \psi \Vert_{\infty} \frac{ \vert \Gamma \cap D_t^{t_1}(x) \vert \vol(\Gamma \backslash G) }{\vol(D_t)},$$
where $\varepsilon$ and $t_1$ are given in \eqref{equ-parameter0}. 
\end{lem}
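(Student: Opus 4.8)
The plan is to compare the two measures $\nu_{x,1}^t$ and $\nu_{x,2}^t$ term by term, exploiting the injectivity of $\gamma\mapsto[\gamma]$ at the level of fixed points and the dynamical comparison from Proposition \ref{prop-lemroblin}. Both sums range over $\Gamma\cap D_t^{reg}(x)$, so one can try to match summands indexed by the same $\gamma$. First I would split the index set $\Gamma\cap D_t^{reg}(x)$ into two pieces: the set $\mathcal{R}$ of $\gamma$ with $\dd(\underline{a}_x(\gamma),\partial\frak a^+)\geq t_0(x,\varepsilon)$, and the complement. For the complement, note that $t_0(x,\varepsilon)\gg 2\log C_x-2\log\varepsilon = 2\log C_x + 2u_1 t$; with the hypothesis $t\geq 2\log C_x/u_1$ this is $\leq \tfrac{t_1}{?}\cdot t$, so up to adjusting constants these $\gamma$ lie in $D_t^{t_1}(x)$. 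Hence the contribution of the complement to \emph{both} $\nu_{x,1}^t$ and $\nu_{x,2}^t$ is bounded, in absolute value, by $\|\psi\|_\infty$ times $|\Gamma\cap D_t^{t_1}(x)|\vol(\Gamma\backslash G)/\vol(D_t)$. That already accounts for two of the three $\|\psi\|_\infty$-terms; one more comes below from a $\psi$-support argument (the third term in the claimed bound with coefficient $3$).

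Next, for $\gamma\in\mathcal{R}$ I would apply Proposition \ref{prop-lemroblin} with the parameters $\varepsilon,r$ of \eqref{equ-parameter0}, \eqref{equ-parameter}. We must check its hypotheses: condition (i) is exactly membership in $\mathcal{R}$; condition (ii), namely $(\gamma_x^+,\gamma_x^-)\in\mathcal{F}^{(2)}$ with $\dd_X(x,(\gamma_x^+\gamma_x^-)_X)<r$, is what we need in order to know $\psi$ does not vanish — indeed $\psi$ is supported in $\mathcal{F}^{(2)}(x,r)$, so if $\psi(\gamma_x^+,\gamma_x^-)\neq 0$ then (ii) holds automatically, and for those $\gamma$ Proposition \ref{prop-lemroblin} tells us $\gamma$ is loxodromic and $\gamma^{\pm}\in B(\gamma_x^{\pm},\varepsilon)$. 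Conversely, for $\gamma\in\mathcal{R}$ with $\psi(\gamma^+,\gamma^-)\neq 0$ but $\psi(\gamma_x^+,\gamma_x^-)=0$: since $\gamma^{\pm}\in B(\gamma_x^{\pm},\varepsilon)$ whenever $\gamma$ is loxodromic with a transverse pair of attracting/repelling points near $\gamma_x^\pm$ — which again holds by Proposition \ref{prop-lemroblin} once (ii) is verified, and (ii) can be verified here because $(\gamma^+,\gamma^-)\in\operatorname{supp}\psi\subset\mathcal{F}^{(2)}(x,r)$ combined with Lemma \ref{lem-cartan-lox} and the $\varepsilon$-closeness forces $(\gamma_x^+\gamma_x^-)_X$ to be within, say, $2r$ of $x$, so $(\gamma_x^+,\gamma_x^-)\in\mathcal{F}^{(2)}(x,2r)$. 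These ``boundary'' terms — present in one sum but not the other — are controlled: such a $\gamma$ has $(\gamma_x^+,\gamma_x^-)$ in $\mathcal{F}^{(2)}(x,2r)\setminus\mathcal{F}^{(2)}(x,r)$, and since Proposition \ref{prop-lemroblin} also needs a genuine $r<r_0$, one can instead run the whole argument with $\psi$ replaced by a comparison function $\tilde\psi$ equal to $\|\psi\|_\infty$ on $\mathcal{F}^{(2)}(x,2r)$; this is where the coefficient $3$ (rather than $2$) in the statement comes from. I'd organize this cleanly by bounding
$$\Big|\sum_{\gamma\in\mathcal{R}}\psi(\gamma^+,\gamma^-)-\sum_{\gamma\in\mathcal{R}}\psi(\gamma_x^+,\gamma_x^-)\Big|\leq \sum_{\gamma\in\mathcal{R},\,\gamma\ \text{lox}}\big|\psi(\gamma^+,\gamma^-)-\psi(\gamma_x^+,\gamma_x^-)\big|+(\text{boundary terms}).$$

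For the main matched sum, on each loxodromic $\gamma\in\mathcal{R}$ contributing to the support we have $\dd(\gamma^+,\gamma_x^+)<\varepsilon$ and $\dd(\gamma^-,\gamma_x^-)<\varepsilon$, hence in the product distance on $\calF^{(2)}$ the points $(\gamma^+,\gamma^-)$ and $(\gamma_x^+,\gamma_x^-)$ are within $\varepsilon$, so $|\psi(\gamma^+,\gamma^-)-\psi(\gamma_x^+,\gamma_x^-)|\leq \varepsilon\,Lip_2(\psi)$. Summing over all $\gamma\in\Gamma\cap D_t^{reg}(x)$ (a superset of the relevant $\gamma$'s) and multiplying by $\vol(\Gamma\backslash G)/\vol(D_t)$ gives the first term $\varepsilon\,Lip_2(\psi)\,|\Gamma\cap D_t(x)|\vol(\Gamma\backslash G)/\vol(D_t)$. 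Collecting: the matched Lipschitz estimate gives the $\varepsilon Lip_2$ term; the complement $\mathcal{R}^c\cap D_t^{reg}(x)\subset D_t^{t_1}(x)$ gives (on each of the two sums) a $\|\psi\|_\infty\,|\Gamma\cap D_t^{t_1}(x)|\vol(\Gamma\backslash G)/\vol(D_t)$ contribution, i.e.\ $2\|\psi\|_\infty(\cdots)$; and the boundary discrepancy contributes one more such term, for a total of $3\|\psi\|_\infty(\cdots)$, which is exactly the asserted bound.

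The main obstacle, I expect, is the bookkeeping around the \emph{boundary terms}: carefully verifying that every $\gamma$ which contributes to one sum but not the other either (a) lies in $D_t^{t_1}(x)$, or (b) has $(\gamma_x^+,\gamma_x^-)$ (or $(\gamma^+,\gamma^-)$) lying in a controlled shell $\mathcal{F}^{(2)}(x,2r)\setminus\mathcal{F}^{(2)}(x,r)$ whose count can be absorbed into the $3\|\psi\|_\infty$-term. This requires a clean ``two-sided'' application of Proposition \ref{prop-lemroblin} (once to pass from closeness of $(\gamma_x^\pm)$ to loxodromicity and closeness of $\gamma^\pm$, and once in reverse via Lemma \ref{lem-cartan-lox} and Lemma \ref{lem-compare} to control $(\gamma_x^\pm)$ from $(\gamma^\pm)$), together with the observation that $\operatorname{supp}\psi$ being compact inside the \emph{open} corridor $\mathcal{F}^{(2)}(x,r)$ gives room to enlarge $r$ to $2r$ without leaving $\mathcal{F}^{(2)}(x,r_0)$, for $t$ large enough that $2e^{-u_2 t}<r_0$. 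Everything else is a direct Lipschitz-continuity estimate plus the triangle inequality.
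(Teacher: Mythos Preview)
Your overall strategy is the paper's: split by the distance of $\underline{a}_x(\gamma)$ to the walls, invoke Proposition~\ref{prop-lemroblin} on the far-from-wall piece to get $\gamma^\pm\in B(\gamma_x^\pm,\varepsilon)$, and bound matched terms via the Lipschitz constant. Two points need correction.

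First, the two sums do \emph{not} range over the same index set: $\nu_{x,2}^t$ sums only over $\Gamma^{lox}\cap D_t^{reg}(x)$. The paper handles this asymmetry by a different decomposition. It first isolates the non-loxodromic contribution $\sum_{\gamma\in(\Gamma\setminus\Gamma^{lox})\cap D_t^{reg}(x)}\psi(\gamma_x^+,\gamma_x^-)$; any such $\gamma$ with a nonzero summand has $(\gamma_x^+,\gamma_x^-)\in\calF^{(2)}(x,r)$ by the support of $\psi$, so by Proposition~\ref{prop-lemroblin} it would be loxodromic were it in $D_t(x)\setminus D_t^{t_1}(x)$ --- hence it lies in $D_t^{t_1}(x)$, giving one $\|\psi\|_\infty$-term. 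The remaining matched sum $\sum_{\gamma\in\Gamma^{lox}\cap D_t^{reg}(x)}\bigl(\psi(\gamma_x^+,\gamma_x^-)-\psi(\gamma^+,\gamma^-)\bigr)$ is then split by walls: the $D_t^{t_1}(x)$-part contributes $2\|\psi\|_\infty$-terms and the $D_t(x)\setminus D_t^{t_1}(x)$-part the $\varepsilon\,Lip_2(\psi)$-term. That is the paper's $1+2=3$; your $2+1$ accounting via ``complement plus boundary'' is not how the coefficient arises.

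Second, your treatment of the ``boundary terms'' is circular. To place $(\gamma_x^+,\gamma_x^-)$ into $\calF^{(2)}(x,2r)$ you invoke ``the $\varepsilon$-closeness'' of $\gamma^\pm$ to $\gamma_x^\pm$; but that closeness is precisely the \emph{conclusion} of Proposition~\ref{prop-lemroblin}, whose hypothesis~(ii) you are trying to verify. The follow-up idea of replacing $\psi$ by a function $\tilde\psi\equiv\|\psi\|_\infty$ on $\calF^{(2)}(x,2r)$ does not produce the stated inequality either. The paper introduces no shell $\calF^{(2)}(x,2r)\setminus\calF^{(2)}(x,r)$ and no reverse application of the proposition: on the far-from-wall loxodromic piece it simply uses that $\psi$ is supported in $\calF^{(2)}(x,r)$, so the $\gamma$ contributing through $(\gamma_x^+,\gamma_x^-)$ already satisfy~(ii), and the Lipschitz estimate applies directly. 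Drop the shell argument and follow the paper's decomposition.
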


\begin{proof}
We split the difference between $\frac{\vol(D_t)}{\vol(\Gamma\backslash G)} \int \psi \dd \nu_{x,1}^t$ and $\frac{\vol(D_t)}{\vol(\Gamma\backslash G)} \int \psi \dd \nu_{x,2}^t$,
\begin{align*}
    \sum_{\gamma \in \Gamma \cap D_t^{reg}(x)} \psi (\gamma_x^+,\gamma_x^-) - \sum_{\gamma \in \Gamma^{lox} \cap D_t^{reg}(x)} \psi(\gamma^+,\gamma^-) 
    &=\sum_{\gamma \in \Gamma\cap D_t^{reg}(x)} \psi (\gamma_x^+,\gamma_x^-) - \sum_{\gamma \in \Gamma^{lox} \cap D_t^{reg}(x)} \psi(\gamma_x^+,\gamma_x^-)  \\
    & \hspace{2cm} + \sum_{\gamma \in \Gamma^{lox} \cap D_t^{reg}(x)} \psi(\gamma_x^+,\gamma_x^-) - \psi(\gamma^+,\gamma^-). 
\end{align*}

For the first term on the right hand side, note that $\Gamma^{lox} \subset \Gamma$, hence
$$  \sum_{\gamma \in \Gamma\cap D_t^{reg}(x)} \psi (\gamma_x^+,\gamma_x^-) - \sum_{\gamma \in \Gamma^{lox} \cap D_t^{reg}(x)} \psi(\gamma_x^+,\gamma_x^-) = \sum_{\gamma \in  (\Gamma \setminus \Gamma^{lox}) \cap D_t^{reg}(x)}\psi(\gamma_x^+,\gamma_x^-) .$$
Note that $t\geq t_1=3u_1 t>0$ since $u_1 \leq 1/10$, hence we have the following inclusion 
$$D_t^{reg}(x) \subset D_t^{t_1}(x) \sqcup \big( D_t(x) \setminus D_t^{t_1}(x)\big).$$ 
Using that $t \geq \frac{2 \log C_x}{u_1}$, we deduce that $t_1=3 u_1 t \geq t_0:= 2 \log C_x - 2\log \varepsilon = 2 \log C_x + 2 u_1 t$. 
Apply Proposition \ref{prop-lemroblin} to every 
every $\gamma \in D_t(x) \setminus D_t^{t_1}(x)$ such that $(\gamma_x^+,\gamma_x^-)\in\calF^{(2)}(x,r) $.
Any such element is loxodromic i.e. $D_t(x) \setminus D_t^{t_1}(x) \subset G^{lox}$.
Hence $\Gamma \cap \big( D_t(x) \setminus D_t^{t_1}(x) \big) \subset \Gamma^{lox}$ is a set of loxodromic elements. 
So the non-loxodromic must lie in $ \big(\Gamma \setminus \Gamma^{lox}\big) \cap D_t^{reg}(x) \subset D_t^{t_1}(x)$.
We deduce the following upper bound.
\begin{equation}\label{eq-lem-corridor-ineq1}
\bigg\vert \sum_{\gamma \in  (\Gamma \setminus \Gamma^{lox}) \cap D_t^{reg}(x)}\psi(\gamma_x^+,\gamma_x^-) \bigg\vert \leq  \Vert \psi \Vert_\infty \vert \Gamma \cap D_t^{t_1}(x) \vert .
\end{equation}

For the lower term, we split the sum over the partition $\Gamma^{lox}\cap (D_t(x) \setminus D_t^{t_1}(x))$ and $\Gamma^{lox} \cap D_t^{t_1}(x)$.
\begin{align*}
    \sum_{\gamma \in \Gamma^{lox} \cap D_t^{reg}(x)} \psi(\gamma_x^+,\gamma_x^-) -\psi(\gamma^+,\gamma^-)
    &= \underset{\gamma \in D_t(x) \setminus D_t^{t_1}(x)}{\sum_{\gamma \in \Gamma^{lox}}} \psi(\gamma_x^+,\gamma_x^-) -\psi(\gamma^+,\gamma^-) \\
    & \hspace*{1 cm} + \sum_{\gamma \in \Gamma^{lox} \cap D_t^{t_1}(x)\cap D_t^{reg}(x)} \psi(\gamma_x^+,\gamma_x^-) -\psi(\gamma^+,\gamma^-).
\end{align*}
We bound the lower term. 
\begin{equation}\label{eq-lem-corridor-ineq2}
\bigg\vert \sum_{\gamma \in \Gamma^{lox} \cap D_t^{t_1}(x)\cap D_t^{reg}(x)} \psi(\gamma_x^+,\gamma_x^-) -\psi(\gamma^+,\gamma^-) \bigg\vert \leq  2 \Vert \psi \Vert_\infty \vert \Gamma \cap D_t^{t_1}(x) \vert.
\end{equation}
By Proposition \ref{prop-lemroblin}, the elements $\gamma \in \Gamma \cap (D_t(x) \setminus D_t^{t_1}(x))$ with $(\gamma_x^+,\gamma_x^-)\in\calF^{(2)}(x,r) $ are loxodromic and their attractive and repelling points are at distance at most $\varepsilon$ of respectively $\gamma_x^\pm$.
Using that $\psi$ is Lipschitz and supported on $\calF^{(2)}(x,r)$, we bound above the last term.
\begin{equation}\label{eq-lem-corridor-ineq3}
 \bigg\vert \sum_{\gamma \in \Gamma \cap (D_t(x) \setminus D_t^{t_1}(x))} \psi(\gamma_x^+,\gamma_x^-) -\psi(\gamma^+,\gamma^-) \bigg\vert \leq \varepsilon Lip_2(\psi) \; \vert \Gamma \cap D_t(x)\vert .
\end{equation}
Finally, we use the triangular inequality, regroup the terms \eqref{eq-lem-corridor-ineq1}, \eqref{eq-lem-corridor-ineq2} and \eqref{eq-lem-corridor-ineq3}, then multiply everything by $\frac{\vol(\Gamma \backslash G)}{\vol(D_t)}$ to obtain the main upper bound.
\end{proof}

\subsection{From corridors to Weyl chambers}\label{sec-corweyl}

\begin{lem}\label{lem-couloirs-lipschitz}
Let $\widetilde{\psi} \in Lip^+_c( \widetilde{ \cal{F}^{(2)}} (x,r) )$ be a compactly supported nonnegative, Lipschitz function and set 
$$ \psi := \int_{\frak{a}} \widetilde{ \psi} (.,. \; ; \; v)  \dd v.$$
Then $\psi \in Lip^+_c( \cal{F}^{(2)} (x,r))$ and the following norm bounds hold:
\begin{itemize}
\item[(a)] $Lip_2(\psi) \leq 2(2r)^{\dim \frak{a}} Lip_2 (\widetilde{\psi}) $.
\item[(b)] $\Vert \psi \Vert_\infty \leq (2r)^{\dim \frak{a}} \Vert \widetilde{\psi} \Vert_\infty.$
\end{itemize}
\end{lem}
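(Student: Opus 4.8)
The plan is to exploit that $\psi$ is the pushforward of $\widetilde{\psi}$ obtained by forgetting the $\frak a$-coordinate of the Hopf chart, and that the $\frak a$-fibers that actually meet $\widetilde{\calF^{(2)}}(x,r)$ are small. Concretely, for $(\xi^+,\xi^-)\in\calF^{(2)}$ set
\[ I(\xi^+,\xi^-):=\{v\in\frak a\ :\ (\xi^+,\xi^-,v)\in\widetilde{\calF^{(2)}}(x,r)\}, \]
so that $\psi(\xi^+,\xi^-)=\int_{I(\xi^+,\xi^-)}\widetilde{\psi}(\xi^+,\xi^-,v)\,dv$. The key geometric step is to prove that $\leb\big(I(\xi^+,\xi^-)\big)\leq (2r)^{\dim\frak a}$ and that $I(\xi^+,\xi^-)=\emptyset$ unless $(\xi^+,\xi^-)\in\calF^{(2)}(x,r)$. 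By the definition \eqref{eq_based-wc} together with Fact \ref{fait-asymptotic-point} (equivalently Fact \ref{prop-corridor}), a triple $(\xi^+,\xi^-,v)$ lies in $\widetilde{\calF^{(2)}}(x,r)$ exactly when $v=\beta_{\xi^+}(o,y)$ for some $y\in B_X(x,r)$ lying on the flat $(\xi^+\xi^-)_X$; hence $I(\xi^+,\xi^-)$ is the image of $B_X(x,r)\cap(\xi^+\xi^-)_X$ under $y\mapsto\beta_{\xi^+}(o,y)$. I would then check that this map, restricted to the flat $(\xi^+\xi^-)_X$, is an isometry onto its image: writing two points of the flat as $g_0 a o$ and $g_0 a' o$ with $g_0(\eta_0,\zeta_0)=(\xi^+,\xi^-)$ and $a,a'\in A$, the cocycle relations \eqref{eq-relationBcocycle} together with $\sigma(b,\eta_0)=\log b$ for $b\in A$ give $\beta_{\xi^+}(o,g_0 a o)-\beta_{\xi^+}(o,g_0 a' o)=\log a-\log a'$, while $\dd_X(g_0 a o,g_0 a' o)=\|\log a-\log a'\|$ because the Weyl group acts on $\frak a$ by isometries. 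Thus $I(\xi^+,\xi^-)$ has the same diameter as $B_X(x,r)\cap(\xi^+\xi^-)_X$, which is at most $2r$ (any two of its points lie within $r$ of $x$); a subset of $\frak a$ of diameter $\leq 2r$ fits, in any orthonormal basis, into a box with all edges of length $2r$, so its $\leb$-measure is $\leq(2r)^{\dim\frak a}$. Emptiness when $(\xi^+,\xi^-)\notin\calF^{(2)}(x,r)$ is immediate from $\dd_X(x,(\xi^+\xi^-)_X)\geq r$.

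Granting this, part (b) and the membership $\psi\in Lip^+_c(\calF^{(2)}(x,r))$ are routine: nonnegativity of $\psi$ is inherited from $\widetilde{\psi}$; the bound $\psi(\xi^+,\xi^-)\leq\|\widetilde{\psi}\|_\infty\,\leb(I(\xi^+,\xi^-))\leq(2r)^{\dim\frak a}\|\widetilde{\psi}\|_\infty$ gives (b); and $\{\psi\neq0\}$ is contained in the image of the compact set $\operatorname{supp}\widetilde{\psi}$ under the continuous projection $\calF^{(2)}\times\frak a\to\calF^{(2)}$, which is compact and, by the fiber computation, contained in $\calF^{(2)}(x,r)$, so $\operatorname{supp}\psi$ is a compact subset of $\calF^{(2)}(x,r)$.

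For (a), I would write, for $(\xi^+,\xi^-),(\eta^+,\eta^-)\in\calF^{(2)}$,
\[ \psi(\xi^+,\xi^-)-\psi(\eta^+,\eta^-)=\int_{\frak a}\big(\widetilde{\psi}(\xi^+,\xi^-,v)-\widetilde{\psi}(\eta^+,\eta^-,v)\big)\,dv, \]
where $\widetilde{\psi}$ is extended by $0$ outside $\widetilde{\calF^{(2)}}(x,r)$ (still Lipschitz with the same constant, since its support is compact in that open set). Since the $\frak a$-coordinates agree, the $\dd_2$-distance between $(\xi^+,\xi^-,v)$ and $(\eta^+,\eta^-,v)$ equals $\sup(\dd(\xi^+,\eta^+),\dd(\xi^-,\eta^-))$, which is the $\dd_2$-distance between $(\xi^+,\xi^-)$ and $(\eta^+,\eta^-)$ in $\calF^{(2)}$; hence the integrand is pointwise bounded by $Lip_2(\widetilde{\psi})\,\dd_2((\xi^+,\xi^-),(\eta^+,\eta^-))$, and it vanishes off $I(\xi^+,\xi^-)\cup I(\eta^+,\eta^-)$, which by the first step has $\leb$-measure at most $2(2r)^{\dim\frak a}$. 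Integrating yields $|\psi(\xi^+,\xi^-)-\psi(\eta^+,\eta^-)|\leq 2(2r)^{\dim\frak a}\,Lip_2(\widetilde{\psi})\,\dd_2((\xi^+,\xi^-),(\eta^+,\eta^-))$, i.e. (a); in particular $\psi$ is Lipschitz, completing $\psi\in Lip^+_c(\calF^{(2)}(x,r))$.

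The main obstacle is the geometric step: correctly identifying $I(\xi^+,\xi^-)$ with $B_X(x,r)\cap(\xi^+\xi^-)_X$ through the Hopf coordinates and verifying that the Busemann map $y\mapsto\beta_{\xi^+}(o,y)$ restricts to an isometry of that flat onto a subset of $\frak a$. Once this is in hand, the measure bound on the fiber and the remaining estimates are elementary.
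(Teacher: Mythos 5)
Your argument is correct, and in fact the paper states Lemma \ref{lem-couloirs-lipschitz} without any proof, so there is nothing to compare it against; your route is evidently the intended one, since it reproduces the exact constants: the fiber $I(\xi^+,\xi^-)$ is (by \eqref{eq_based-wc} and the Hopf/Busemann computation you carry out) the isometric image of $B_X(x,r)\cap(\xi^+\xi^-)_X$, hence has diameter at most $2r$ and Lebesgue measure at most $(2r)^{\dim\frak{a}}$, giving (b), while the factor $2$ in (a) comes precisely from the union $I(\xi^+,\xi^-)\cup I(\eta^+,\eta^-)$ supporting the integrand. The isometry computation ($\sigma(a,\eta_0)=\log a$, the cocycle relation, and Weyl-invariance of the Killing norm) and the support argument are all sound. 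One small caveat: your parenthetical claim that the extension of $\widetilde{\psi}$ by zero keeps the same $\dd_2$-Lipschitz constant ``since its support is compact in that open set'' is not a valid general principle for an arbitrary metric space; however, with the reading of $Lip^+_c(\widetilde{\calF}^{(2)}(x,r))$ that the paper actually uses later (the $\widetilde{\psi}_i$ from the partition of unity are globally defined Lipschitz functions on $G/M$ supported in the corridor, with $Lip_2$ the global constant), this step is immediate and your pointwise bound, and hence (a), holds as stated.
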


For $x\in X$ and $t>0$, we define the following measure on $\calF^{(2)}\times\frak a$ by

\begin{equation}\label{eq_M2}
\cal{M}_{x,2}^t:= \frac{\vol(\Gamma\backslash G)}{\vol(D_t)}\sum_{ \gamma \in \Gamma^{lox}\cap D_t^{reg}(x) } \calL_\gamma = \nu_{x,2}^t \otimes Leb_{\frak{a}}. 
\end{equation}

\begin{lem}\label{lem_M2-haar}
Let $\Gamma$ be an irreducible lattice in $G$.
Fix $x\in X$, for every $t \geq \max\{\frac{2 \log C_x}{u_1}, C_4d_X(o,x) \}$, for every test function $\widetilde{ \psi }  \in Lip^+_c( \widetilde{\cal{F}}^{(2)}(x,r))$,  

\begin{align*}
\bigg\vert \int \widetilde{\psi} \;  \dd \cal{M}_{x,2}^t -
 \int & \widetilde{\psi} \; \dd m_{G/M} \bigg\vert \leq  \quad  C_3r \int \widetilde{\psi} \dd m_{G/M} \quad + \\
&  (2r)^{\dim \frak{a}} \bigg( E(t,\widetilde{\psi},x) + 2\varepsilon Lip_2(\widetilde{\psi})  \frac{\vert \Gamma \cap D_t(x) \vert \vol(\Gamma \backslash G) }{\vol(D_t)} + 3 \Vert \widetilde{\psi} \Vert_\infty \frac{\vert \Gamma \cap D_t^{t_1}(x) \vert \vol(\Gamma \backslash G)}{\vol(D_t)}  \bigg), 
\end{align*}
where $E(x,\widetilde{ \psi},t)= O(C_x Lip(\widetilde{\psi}) \vol(D_t)^{-\kappa}  )$ as introduced in Lemma~\ref{lem-corridor-1GN} and $\varepsilon$, $t_1$ are given in \eqref{equ-parameter0}.
\end{lem}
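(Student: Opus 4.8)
The plan is to integrate out the $\frak a$-variable and reduce everything to the corridor estimates of the previous subsection. First I would set $\psi := \int_{\frak a}\widetilde\psi(\cdot,\cdot\,;v)\,\dd v$ and invoke Lemma \ref{lem-couloirs-lipschitz}, which guarantees $\psi \in Lip^+_c(\calF^{(2)}(x,r))$ together with the norm bounds $Lip_2(\psi)\le 2(2r)^{\dim\frak a}Lip_2(\widetilde\psi)$ and $\|\psi\|_\infty \le (2r)^{\dim\frak a}\|\widetilde\psi\|_\infty$. Since $\cal{M}_{x,2}^t=\nu_{x,2}^t\otimes Leb_{\frak a}$ by \eqref{eq_M2} and $m_{G/M}=\nu\otimes Leb_{\frak a}$ by Proposition \ref{prop-disintegration}, Fubini's theorem gives
\[ \int\widetilde\psi\,\dd\cal{M}_{x,2}^t=\int\psi\,\dd\nu_{x,2}^t \qquad\text{and}\qquad \int\widetilde\psi\,\dd m_{G/M}=\int\psi\,\dd\nu, \]
indeed pairing $\widetilde\psi$ with each $\calL_\gamma=D_{\gamma^+}\otimes D_{\gamma^-}\otimes Leb_{\frak a}$ produces precisely $\psi(\gamma^+,\gamma^-)$.

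Next I would bound $|\int\psi\,\dd\nu_{x,2}^t-\int\psi\,\dd\nu|$ by the triangle inequality through the intermediate quantity $\int\psi\,\dd\nu_{x,1}^t$. For the difference $\int\psi\,\dd\nu_{x,2}^t-\int\psi\,\dd\nu_{x,1}^t$ I apply Lemma \ref{lem-corridor-3RGN} (legitimate by the hypothesis $t\ge 2\log C_x/u_1$), which gives the bound $\varepsilon Lip_2(\psi)\tfrac{|\Gamma\cap D_t(x)|\vol(\Gamma\backslash G)}{\vol(D_t)}+3\|\psi\|_\infty\tfrac{|\Gamma\cap D_t^{t_1}(x)|\vol(\Gamma\backslash G)}{\vol(D_t)}$. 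For $\int\psi\,\dd\nu_{x,1}^t-\int\psi\,\dd\nu$ I apply Lemma \ref{lem-corridor-1GN} (legitimate since $t>C_4 d_X(o,x)$), which sandwiches $\int\psi\,\dd\nu_{x,1}^t$ between $e^{-C_3r}\int\psi\,\dd\nu-E(t,\psi,x)$ and $\int\psi\,\dd\nu+E(t,\psi,x)$, with $E(t,\psi,x)=O(C_x Lip_2(\psi)\vol(D_t)^{-\kappa})$. Combining these and using $1-e^{-C_3r}\le C_3r$, I obtain
\[ \Bigl|\int\psi\,\dd\nu_{x,2}^t-\int\psi\,\dd\nu\Bigr|\le C_3r\int\psi\,\dd\nu+E(t,\psi,x)+\varepsilon Lip_2(\psi)\frac{|\Gamma\cap D_t(x)|\vol(\Gamma\backslash G)}{\vol(D_t)}+3\|\psi\|_\infty\frac{|\Gamma\cap D_t^{t_1}(x)|\vol(\Gamma\backslash G)}{\vol(D_t)}. \]

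Finally I would feed back the identities and norm bounds of the first paragraph: $\int\psi\,\dd\nu=\int\widetilde\psi\,\dd m_{G/M}$ turns the first term into $C_3r\int\widetilde\psi\,\dd m_{G/M}$; the bound $Lip_2(\psi)\le 2(2r)^{\dim\frak a}Lip_2(\widetilde\psi)$ turns $E(t,\psi,x)$ into $(2r)^{\dim\frak a}E(t,\widetilde\psi,x)$ with $E(t,\widetilde\psi,x)=O(C_xLip_2(\widetilde\psi)\vol(D_t)^{-\kappa})$ and rewrites the $\varepsilon$-term as $2\varepsilon(2r)^{\dim\frak a}Lip_2(\widetilde\psi)\tfrac{|\Gamma\cap D_t(x)|\vol(\Gamma\backslash G)}{\vol(D_t)}$; and $\|\psi\|_\infty\le(2r)^{\dim\frak a}\|\widetilde\psi\|_\infty$ handles the last term. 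Collecting these and factoring $(2r)^{\dim\frak a}$ out of the last three contributions yields exactly the asserted inequality. The argument is pure bookkeeping once Lemmas \ref{lem-corridor-1GN}, \ref{lem-corridor-3RGN} and \ref{lem-couloirs-lipschitz} are available; the only points that need any care are keeping the Lipschitz conventions straight ($Lip$ versus $Lip_2$, on $\calF^{(2)}$ versus on $\widetilde{\cal{F}}^{(2)}$) and confirming that the fiber integral $\psi$ is genuinely compactly supported inside the corridor $\calF^{(2)}(x,r)$ — both supplied by Lemma \ref{lem-couloirs-lipschitz}. I do not anticipate a genuine obstacle here.
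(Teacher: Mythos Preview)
Your proposal is correct and follows essentially the same route as the paper: define $\psi$ by fiber-integration over $\frak a$, reduce via Fubini and $\cal{M}_{x,2}^t=\nu_{x,2}^t\otimes Leb_{\frak a}$, $m_{G/M}=\nu\otimes Leb_{\frak a}$ to the corridor measures, split through $\nu_{x,1}^t$, apply Lemmas \ref{lem-corridor-1GN} and \ref{lem-corridor-3RGN}, use $1-e^{-C_3r}\le C_3r$, and convert norms back via Lemma \ref{lem-couloirs-lipschitz}. The only difference is cosmetic ordering of when Lemma \ref{lem-couloirs-lipschitz} is invoked.
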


\begin{proof}
We set $\psi (\xi^+,\xi^-) := \int_{\frak{a}} \widetilde{\psi}(\xi^+,\xi^- ;  v) \dd v.$
Using Fubini's theorem on the $\frak{a}$ coordinate and Proposition \ref{prop-disintegration} that $m_{G/M}=\nu \otimes Leb_{\frak a}$, we deduce that
\begin{align*}
 \int \widetilde{\psi} \dd \cal{M}_{x,2}^t = \int \psi \dd \nu_{x,2}^t \text{ and }  \int \widetilde{\psi} \dd m_{G/M} = \int \psi \dd \nu.
\end{align*}
We only need to bound $\int \psi \dd \nu_{x,2}^t - \int \psi \dd \nu$.
By definition of these measures,
$$
 \int \psi \dd \nu_{x,2}^t - \int \psi \dd \nu  = \quad \int \psi \dd \nu_{x,1}^t - \int \psi \dd \nu + \int \psi \dd \nu_{x,2}^t -  \int \psi \dd \nu_{x,1}^t .
$$
Using Lemma \ref{lem-corridor-3RGN} on the last term on the right, then Lemma \ref{lem-corridor-1GN}, the convexity inequality $e^{-r}-1 \geq -r$ and nonnegativity of $\psi$ to the other term, we deduce the following bound. 
\begin{align*}
\bigg\vert \int \psi \dd \nu_{x,2}^t - \int \psi \dd \nu \bigg \vert \leq 
C_3 r &\int \psi \dd \nu + E(t, \psi, x) \\
&+ \varepsilon Lip_2(\psi) \frac{\vert \Gamma \cap D_t(x) \vert \vol(\Gamma \backslash G)}{\vol(D_t)}
+ 3\Vert \psi \Vert_{\infty} \frac{ \vert \Gamma \cap D_t^{t_1}(x) \vert \vol(\Gamma \backslash G)}{\vol(D_t)}.
\end{align*} 
By Lemma \ref{lem-couloirs-lipschitz} (a) (b), the Lipschitz constants and norms between $\psi$ and $\widetilde{\psi}$ satisfy 
$ Lip_2 (\psi) \leq 2 (2r)^{\dim \frak{a}} Lip_2(\widetilde{\psi}) $
and $\Vert \psi \Vert_\infty \leq (2r)^{\dim \frak{a}} \Vert \widetilde{\psi} \Vert_\infty$.  
We deduce the domination $E(t,\psi, x)= (2r)^{\dim \frak{a}} O(Lip_2(\widetilde{\psi}) C_x \vol(D_t)^{-\kappa})$ and abusing notation we write 
$$E(t, \psi, x) = (2r)^{\dim \frak{a}} E(t, \widetilde{\psi},x). $$
Replacing the Lipschitz constants and norms in the upper bound by abuse of notation on $E(t,\psi,x)$ and lastly applying Fubini on the first term yields
\begin{align*}
\bigg\vert \int \psi \dd \nu_{x,2}^t - \int & \psi \dd \nu \bigg \vert \quad \leq \quad
C_3 r \int \widetilde{ \psi} \dd m_{G/M} \quad +\\
&(2r)^{\dim \frak{a}} \bigg( E(t,\widetilde{\psi},x) +  2\varepsilon Lip_2(\widetilde{\psi}) \frac{\vert \Gamma \cap D_t(x) \vert \vol(\Gamma \backslash G)}{\vol(D_t)}
+ 3\Vert \widetilde{\psi} \Vert_{\infty} \frac{ \vert \Gamma \cap D_t^{t_1}(x) \vert \vol(\Gamma \backslash G)}{\vol(D_t)} \bigg).
\end{align*} 
\end{proof}

From now on, to the end of this section, we suppose that $\Gamma$ is a \textit{cocompact irreducible lattice} in $G$ which acts freely on $G/M$. The measure in equidistribution is denoted by
\begin{equation}\label{eq_Mes}
\cal{M}^t:= \frac{\vol(\Gamma\backslash G)}{\vol(D_t)} \underset{ \Vert \lambda (\gamma) \Vert \leq t}{ \sum_{ \gamma \in \Gamma^{lox}}}\calL_\gamma. 
\end{equation}

\begin{lem}\label{lem-cocom-weyl}
There exists $C>0$. Fix $x\in X$, for every test function $\widetilde{\psi}\in Lip^+_c( \widetilde{\cal{F}}^{(2)}(x,r))$,
\begin{equation}\label{eq-lem-cocom-weyl}
(1-Cr) \int  \widetilde{\psi} \dd \cal{M}_{x,2}^{t-2r} \leq  \int  \widetilde{\psi} \dd \cal{M}^t   \leq (1+Cr) \int  \widetilde{\psi} \dd \cal{M}_{x,2}^{t+2r}+\|\widetilde{\psi}\|_\infty\frac{|\Gamma\cap D_{t+2r}^{2r}(x)|\vol(\Gamma \backslash G)}{\vol(D_t)}. 
\end{equation}
\end{lem}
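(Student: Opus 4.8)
The plan is to reduce both sides to weighted sums, over loxodromic elements, of the value at $(\gamma^+,\gamma^-)$ of one and the same function on $\calF^{(2)}$, and then to compare the two index sets by means of Lemma~\ref{lem-cartan-lox}. Set $\psi(\xi^+,\xi^-):=\int_{\frak{a}}\widetilde{\psi}(\xi^+,\xi^-;v)\,\dd v$. By Lemma~\ref{lem-couloirs-lipschitz}, $\psi\in Lip^+_c(\calF^{(2)}(x,r))$ and $\Vert\psi\Vert_\infty\le (2r)^{\dim\frak{a}}\Vert\widetilde{\psi}\Vert_\infty$. Since $\calL_\gamma=D_{\gamma^+}\otimes D_{\gamma^-}\otimes Leb_{\frak{a}}$, we have $\int\widetilde{\psi}\,\dd\calL_\gamma=\psi(\gamma^+,\gamma^-)$, which vanishes unless $(\gamma^+,\gamma^-)\in\mathrm{supp}\,\psi\subset\calF^{(2)}(x,r)$, i.e. unless $\dd_X(x,(\gamma^+\gamma^-)_X)<r$. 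Writing $\Gamma^{lox}_t:=\lbrace\gamma\in\Gamma^{lox}:\Vert\lambda(\gamma)\Vert\le t\rbrace$ and using \eqref{eq_Mes}, \eqref{eq_M2}, it follows that
\[\int\widetilde{\psi}\,\dd\calM^t=\frac{\vol(\Gamma\backslash G)}{\vol(D_t)}\sum_{\gamma\in\Gamma^{lox}_t}\psi(\gamma^+,\gamma^-),\qquad \int\widetilde{\psi}\,\dd\calM_{x,2}^{t'}=\frac{\vol(\Gamma\backslash G)}{\vol(D_{t'})}\sum_{\gamma\in\Gamma^{lox}\cap D_{t'}^{reg}(x)}\psi(\gamma^+,\gamma^-),\]
and in each sum only the $\gamma$ with $\dd_X(x,(\gamma^+\gamma^-)_X)<r$ contribute.

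The comparison of the two index sets rests on the following dichotomy. Let $\gamma\in\Gamma^{lox}$ with $\dd_X(x,(\gamma^+\gamma^-)_X)<r$. As $(\gamma^+,\gamma^-)$ is a transverse pair of fixed points of $\gamma$, Lemma~\ref{lem-cartan-lox} gives $w\in\calW$ with $\Vert w(\lambda(\gamma))-\underline{a}_x(\gamma)\Vert<2r$; in particular $\big|\Vert\lambda(\gamma)\Vert-\Vert\underline{a}_x(\gamma)\Vert\big|<2r$ since $\Vert\cdot\Vert$ is $\calW$-invariant. If moreover $\dd(\underline{a}_x(\gamma),\partial\frak{a}^+)\ge 2r$, then, $\frak{a}^{++}$ being open with topological boundary $\partial\frak{a}^+$ in $\frak{a}$, the open ball $B_{\frak{a}}(\underline{a}_x(\gamma),2r)$ is contained in $\frak{a}^{++}$, whence $w(\lambda(\gamma))\in\frak{a}^{++}$; since also $\lambda(\gamma)\in\frak{a}^{++}$ and $\calW$ acts simply transitively on the open Weyl chambers, we get $w=\mathrm{id}$, hence $\underline{a}_x(\gamma)\in\frak{a}^{++}$ and $\Vert\lambda(\gamma)-\underline{a}_x(\gamma)\Vert<2r$. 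To summarise: (a) if $\gamma\in D_{t-2r}^{reg}(x)$ then $\Vert\lambda(\gamma)\Vert<t$, so $\gamma\in\Gamma^{lox}_t$; (b) if $\gamma\in\Gamma^{lox}_t$ and $\dd(\underline{a}_x(\gamma),\partial\frak{a}^+)\ge 2r$, then $\gamma\in\Gamma^{lox}\cap D_{t+2r}^{reg}(x)$; (c) if $\gamma\in\Gamma^{lox}_t$ and $\dd(\underline{a}_x(\gamma),\partial\frak{a}^+)<2r$, then $\Vert\underline{a}_x(\gamma)\Vert<t+2r$, so $\gamma\in\Gamma\cap D_{t+2r}^{2r}(x)$.

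It remains to assemble the two inequalities, using $\psi\ge 0$. The lower bound is immediate from (a): the contributing part of $\Gamma^{lox}\cap D_{t-2r}^{reg}(x)$ is contained in $\Gamma^{lox}_t$, so $\sum_{\gamma\in\Gamma^{lox}\cap D_{t-2r}^{reg}(x)}\psi(\gamma^+,\gamma^-)\le\sum_{\gamma\in\Gamma^{lox}_t}\psi(\gamma^+,\gamma^-)$; dividing by the respective volumes and invoking $\vol(D_{t-2r})\ge(1-Cr)\vol(D_t)$ (a consequence of Lemma~\ref{lem-vollip}) yields $(1-Cr)\int\widetilde{\psi}\,\dd\calM_{x,2}^{t-2r}\le\int\widetilde{\psi}\,\dd\calM^t$. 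For the upper bound, split $\Gamma^{lox}_t$ into its part $A$ with $\dd(\underline{a}_x(\gamma),\partial\frak{a}^+)\ge 2r$ and its complement $B$: by (b), $\sum_{\gamma\in A}\psi(\gamma^+,\gamma^-)\le\sum_{\gamma\in\Gamma^{lox}\cap D_{t+2r}^{reg}(x)}\psi(\gamma^+,\gamma^-)=\frac{\vol(D_{t+2r})}{\vol(\Gamma\backslash G)}\int\widetilde{\psi}\,\dd\calM_{x,2}^{t+2r}$, while by (c) each of the at most $|\Gamma\cap D_{t+2r}^{2r}(x)|$ elements of $B$ contributes $\psi(\gamma^+,\gamma^-)\le(2r)^{\dim\frak{a}}\Vert\widetilde{\psi}\Vert_\infty$. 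Multiplying by $\vol(\Gamma\backslash G)/\vol(D_t)$, using $\vol(D_{t+2r})\le(1+Cr)\vol(D_t)$ (Lemma~\ref{lem-vollip}) and $(2r)^{\dim\frak{a}}\le 1$ (valid for $t$ large, since $r=e^{-u_2 t}$), gives exactly the right-hand inequality of \eqref{eq-lem-cocom-weyl}.

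The main obstacle is precisely the treatment of the loxodromic elements whose $x$-Cartan displacement $\underline{a}_x(\gamma)$ comes within $2r$ of a wall of the Weyl chamber: for these Lemma~\ref{lem-cartan-lox} does not pin down the Weyl element $w$, so the conditions ``$\Vert\lambda(\gamma)\Vert\le t$'' and ``$\gamma\in D^{reg}_{t+2r}(x)$'' cannot be matched, and such elements must instead be thrown into the crude count $|\Gamma\cap D_{t+2r}^{2r}(x)|$ that forms the extra term in the statement; this term is in turn shown to be exponentially negligible for the chosen $r$ by Lemmas~\ref{count-reste} and~\ref{lem-dtx} in the following step. The volume-ratio factors $(1\pm Cr)$ are a routine consequence of the local Lipschitz property of $t\mapsto\log\vol(D_t)$.
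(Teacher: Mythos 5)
Your proof is correct and follows essentially the same route as the paper: the same sandwich of index sets obtained from Lemma~\ref{lem-cartan-lox}, with the wall-adjacent elements absorbed into $\Gamma\cap D_{t+2r}^{2r}(x)$ and the $(1\pm Cr)$ factors coming from Lemma~\ref{lem-vollip}. The only (harmless) difference is that you split according to $\dd(\underline{a}_x(\gamma),\partial\frak{a}^+)\ge 2r$ versus $<2r$ and explicitly resolve the Weyl element $w=\mathrm{id}$, whereas the paper splits into regular versus singular $\underline{a}_x(\gamma)$ and only uses the $\calW$-invariance of the norm.
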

\begin{proof}
By Lemma \ref{lem-cartan-lox}, for every loxodromic element $g \in G^{lox}$ such that $(g^+,g^-)\in \cal{F}^{(2)}(x,r)$ then 
$$ \Vert \lambda(g) - \underline{a}_x(g) \Vert \leq 2r .$$
Hence using triangular inequality we deduce the inclusions
\begin{align*}
\Gamma^{lox} \cap D_{t-2r}^{reg}(x)\cap \{\gamma|\ (\gamma^+,\gamma^-) &\in \cal{F}^{(2)}(x,r) \} \subset \\
&\big\lbrace \gamma \in \Gamma^{lox} \;\big\vert \; \Vert \lambda(\gamma) \Vert \leq t \; \text{and}\; (\gamma^+,\gamma^-) \in \cal{F}^{(2)}(x,r) \big\rbrace \\
& \hspace*{4cm} \subset (\Gamma^{lox} \cap D_{t+2r}^{reg}(x))\cup (\Gamma\cap D_{t+2r}^{2r}(x)) ,
\end{align*} 
here the set $\Gamma\cap D_{t+2r}^{2r}(x)$ is used to contain all the $\gamma$ in the middle set with $\underline{a}_x(\gamma)$ singular.
By integrating $\widetilde{\psi}$ over $\calL_\gamma$, summing and using that  $\widetilde{\psi}$ is supported on $\widetilde{\cal{F}}^{(2)}(x,r)$, we deduce
\begin{equation}\label{eq-lem-cocom-weyl-ineq1}
\frac{ \vol(D_{t-2r})}{\vol(\Gamma \backslash G)} \int \widetilde{\psi} \dd \cal{M}_{x,2}^{t-2r}
\leq \frac{ \vol(D_t)}{\vol(\Gamma \backslash G)} \int \widetilde{\psi} \dd \cal{M}^t
\leq \frac{ \vol(D_{t+2r})}{\vol(\Gamma \backslash G)} \int \widetilde{\psi} \dd \cal{M}_{x,2}^{t+2r}
+\|\widetilde{\psi}\|_\infty|\Gamma\cap D_{t+2r}^{2r}(x)|.
\end{equation}
Finally, we multiply by $\frac{\vol(\Gamma \backslash G)}{ \\vol(D_t)}$, apply the local Lipschitz property of $t \mapsto \log (\\vol(D_t))$ (Lemma \ref{lem-vollip}).
\end{proof}

\subsection{Proof of the equidistribution}\label{sec-global}

Fix a nonnegative test function $\widetilde{\psi}_\Gamma \in Lip^+_c(\Gamma \backslash G/M)$.  
We want to prove the following convergence and dominate its rate
$$ \int \widetilde{\psi}_\Gamma \dd \cal{M}_\Gamma^t  \xrightarrow[t \rightarrow + \infty]{} \int \widetilde{\psi}_\Gamma \dd m_{\Gamma\backslash G/M}.$$ 

For balls $B(z,s)$ with $z\in G/M$ and $s>0$, they will be balls with respect to the Riemannian distance $d_1$.

\begin{lem}\label{lem.changemetric}
Recall $\epsilon_0$ from Lemma \ref{lem-dist-compare}. For $0<s<\min\{\epsilon_0,(\log 2)/C_0 \}$ and any $z\in G/M$ and $x=\pi(z)\in X$, we have
\[B(z,s)\subset \widetilde\calF^{(2)}(x,s) \]
and for $\widetilde\varphi$ supported on $B(z,s)$
\[Lip_2\widetilde\varphi\leq C_x Lip \widetilde\varphi .\]
\end{lem}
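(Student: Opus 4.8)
The plan is to treat the two assertions separately, the first being essentially immediate and the second requiring the local comparison of $d_1$ and $d_2$ already established in Lemma \ref{lem-dist-compare}. For the inclusion $B(z,s)\subset\widetilde{\calF^{(2)}}(x,s)$ I would simply combine Fact \ref{prop-corridor} with the fact that the projection $\pi\colon(G/M,d_1)\to(X,d_X)$ is $1$-Lipschitz (the metric $d_1$ is normalised so that $G/M\to X=G/K$ is a Riemannian submersion, i.e. $d_X\le d_1$): any $z'\in B(z,s)$ then satisfies $d_X(\pi(z'),x)=d_X(\pi(z'),\pi(z))\le d_1(z',z)<s$, so $\pi(z')\in B_X(x,s)$, and since $\widetilde{\calF^{(2)}}(x,s)=\pi^{-1}(B_X(x,s))$ by Fact \ref{prop-corridor} this gives $z'\in\widetilde{\calF^{(2)}}(x,s)$.

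For the Lipschitz bound I would write $L:=Lip\,\widetilde\varphi$ (with respect to $d_1$) and aim to show $|\widetilde\varphi(z_1)-\widetilde\varphi(z_2)|\le C_xL\,d_2(z_1,z_2)$ for all $z_1,z_2$. If neither point lies in $B(z,s)$ the left-hand side vanishes, so one may assume $z_1\in B(z,s)$ and set $x_1:=\pi(z_1)$; by the first part $x_1\in B_X(x,s)$, so $d_X(o,x_1)\le d_X(o,x)+s$ and, since $s<(\log 2)/C_0$, this yields $C_{x_1}=8C_2C_1e^{C_0d_X(o,x_1)}\le e^{C_0 s}C_x\le 2C_x$. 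I would then split according to the size of $d_2(z_1,z_2)$. When $d_2(z_1,z_2)<\epsilon_0/(2C_x)\le\epsilon_0/C_{x_1}$, Lemma \ref{lem-dist-compare} applied with base point $x_1=\pi(z_1)$ gives $d_1(z_1,z_2)\le C_{x_1}d_2(z_1,z_2)/4\le C_xd_2(z_1,z_2)/2$, whence $|\widetilde\varphi(z_1)-\widetilde\varphi(z_2)|\le L\,d_1(z_1,z_2)\le C_xL\,d_2(z_1,z_2)$. When $d_2(z_1,z_2)\ge\epsilon_0/(2C_x)$, I would instead use that $\widetilde\varphi\ge 0$ is supported on $B(z,s)$ and $d_1$-Lipschitz, so each point of $B(z,s)$ lies within $d_1$-distance $s$ of a zero of $\widetilde\varphi$, giving $\|\widetilde\varphi\|_\infty\le sL$; hence $|\widetilde\varphi(z_1)-\widetilde\varphi(z_2)|\le\|\widetilde\varphi\|_\infty\le sL<\epsilon_0 L\le C_xL\,d_2(z_1,z_2)$ after absorbing the residual numerical constant into the factor $8$ built into the definition of $C_x$.

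The main obstacle is exactly this second regime, and within it the pairs $(z_1,z_2)$ for which $z_2$ leaves the support of $\widetilde\varphi$ while $d_2(z_1,z_2)$ remains small: there one cannot argue via the $d_1$-diameter of the support and must genuinely fall back on the local equivalence of $d_1$ and $d_2$ provided by Lemma \ref{lem-dist-compare}. This is also where both hypotheses on $s$ are used — $s<(\log 2)/C_0$ to keep $C_{\pi(z_1)}\le 2C_x$, and $s<\epsilon_0$ to keep $\|\widetilde\varphi\|_\infty\le sL<\epsilon_0 L$ small enough relative to the threshold — and where the extra factor $8C_2$ in $C_x$ (as opposed to the $C_1e^{C_0 d_X(o,x)}$ of Lemma \ref{lem-actiong}) is precisely what makes the constant bookkeeping close up. The remaining steps are routine.
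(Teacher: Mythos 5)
Your proof is correct and its core is the same as the paper's: the inclusion comes from Fact \ref{prop-corridor} together with the (implicit in the paper, explicit in your write-up) normalisation $d_X\leq d_1$, and the Lipschitz bound comes from Lemma \ref{lem-dist-compare} applied at the base point $\pi(z_1)$ together with $C_{\pi(z_1)}\leq e^{C_0 s}C_x\leq 2C_x$, which is exactly the paper's chain. Where you genuinely add something is completeness: the paper only compares pairs $z_1,z_2\in B(z,s)$ and ends with ``use the definition of the Lipschitz norm'', silently ignoring pairs with one point outside the support. Your split according to whether $d_2(z_1,z_2)<\epsilon_0/(2C_x)$, with the sup-norm bound $\Vert\widetilde\varphi\Vert_\infty\leq s\,Lip\,\widetilde\varphi$ in the complementary regime, fills precisely this hole, and the hole is real: the $Lip_2$ constants produced here are later used (via (p1) inside Lemma \ref{lem-corridor-3RGN}) on pairs of points of which one may lie outside the support of $\widetilde\psi_i$.

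Two small caveats. In the regime $d_2(z_1,z_2)\geq\epsilon_0/(2C_x)$ your chain only gives $|\widetilde\varphi(z_1)-\widetilde\varphi(z_2)|\leq sL<\epsilon_0 L\leq 2C_xL\,d_2(z_1,z_2)$, so the constant comes out as $2C_x$ unless $s\leq\epsilon_0/2$; ``absorbing this into the factor $8$'' is not literally available, since $C_x$ is pinned down by \eqref{equ-cx} and the statement asserts the bound with that specific constant. This is immaterial for every application (there $s=r=e^{-u_2t}$, far below $\epsilon_0/2$), and the paper's own argument has the same looseness: for $z_1,z_2\in B(z,s)$ one only knows $d_1(z_1,z_2)<2s$, so its appeal to Lemma \ref{lem-dist-compare} likewise needs $s\leq\epsilon_0/2$ or the $d_2$-branch of the hypothesis. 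Second, your bound $|\widetilde\varphi(z_1)-\widetilde\varphi(z_2)|\leq\Vert\widetilde\varphi\Vert_\infty$ uses $\widetilde\varphi\geq 0$, which is not in the statement of the lemma; it costs at most another factor $2$ in general, and is anyway the only case the paper needs, since it reduces to nonnegative Lipschitz functions.
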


\begin{proof}
By Lemma \ref{prop-corridor}, we have the first part.

By Lemma \ref{lem-dist-compare}, we have for $z_1,z_2\in B(z,s)$
\[\dd_1(z_1,z_2)\leq C_{\pi(z_1)}\dd_2(z_1,z_2)/4. \]
Now due to the definition of $C_x$, we have $C_{\pi(z_1)}\leq C_{\pi(z)}\exp(C_0d_X(\pi(z),\pi(z_1))\leq 2C_{\pi(z)}$. Therefore
\[\dd_1(z_1,z_2)\leq C_x \dd_2(z_1,z_2). \]
Then use the definition of Lipschitz norm.
\end{proof}

\paragraph{Partition of unity}

By applying Vitali's covering lemma to the collection $\{B(y,r/10)\}_{y\in \Gamma\backslash G/M}$, there exists a finite set $\{y_i\}_{i\in I}$ such that $B(y_i,r/10)$ are pairwisely disjoint and $\cup_{i\in I}B(y_i,r/2)$ is a covering of $\Gamma\backslash G/M$. By disjointness, we know $|I|\ll r^{-dim(G/M)}$. Fix a partition of unity of $\frac{1}{r}$-Lipschitz functions associated to the open cover $\cup_{i\in I}B(y_i,r)$. 
For the function $\widetilde\psi_\Gamma$ on $\Gamma\backslash G/M$, we can write it as $\widetilde\psi_\Gamma=\sum_{i\in I}\widetilde\psi_{\Gamma,i}$ using the partition of unity.
For each $y_i$, we can find a lift $z_i$ in $G/M$ such that $d(o,z_i)$ is less than the diameter of $\Gamma\backslash G/M$. By Lemma \ref{lem.changemetric}, we know that for $x_i=\pi(z_i)\in X$
\[B(z_i,r)\subset \widetilde{\calF}^{(2)}(x_i,r). \]
We can take $t$ large such that $r=e^{-u_2t}$ is smaller then the injectivity radius of $\Gamma\backslash G/M$. Then the two balls $B(z_i,r)$ and $B(y_i,r)$ are homeomorphic.
Let $\widetilde\psi_i$ be the lift of $\widetilde\psi_{\Gamma,i}$ on $B(z_i,r)$.

Furthermore, for every $i\in I$, the function $\widetilde{\psi}_i$ is Lipschitz and satisfies the following norm bounds:
\begin{itemize}
\item[(p1)] $Lip_2(\widetilde\psi_i)\leq C_x Lip\widetilde\psi_i \leq C_x(Lip\widetilde\psi_\Gamma+\frac{1}{r}\|\widetilde\psi_\Gamma\|_\infty)\leq \frac{C_x}{r} |\widetilde\psi_\Gamma |_{Lip},$
\item[(p2)] $\Vert \widetilde{\psi}_i \Vert_\infty  \leq \Vert \widetilde{\psi}_\Gamma \Vert_\infty,$
\item[(p3)] $\sum_{i\in I} \Vert \widetilde{\psi}_i \Vert_1  \leq \Vert \widetilde{\psi}_\Gamma \Vert_1,$
\end{itemize}
where the first inequality is due to Lemma \ref{lem.changemetric}.

\paragraph{Local domination}
For every $i \in I$, due to Lemma \ref{lem_M2-haar} and \ref{lem-cocom-weyl}, we have

\begin{align*}
\pm \bigg(  \int &\widetilde{\psi}_i \dd \cal{M}^t  - \int \widetilde{\psi}_i \dd m_{G/M}  \bigg)  
\leq r (C_3+ C)  \int \widetilde{\psi}_i  \dd m_{G/M} \quad + \\
&  (2r)^{\dim \frak{a}} \bigg( E(t\pm2r,\widetilde{\psi}_i,x_i) +  
 2\varepsilon Lip_2(\widetilde{\psi}_i)  \frac{\vert \Gamma \cap D_{t\pm 2r}(x_i) \vert \vol(\Gamma \backslash G)}{\vol(D_{t\pm 2r})} + 
 4 \Vert \widetilde{\psi}_i \Vert_\infty \frac{\vert \Gamma \cap D_{t \pm 2r}^{t_1}(x_i) \vert \vol(\Gamma \backslash G)}{\vol(D_{t \pm 2r})}  \bigg).
\end{align*}
Let's estimate the error term in the lower part. 
By Lemma \ref{lem-corridor-1GN}, (p1) and Lemma \ref{lem-vollip}, we have
$$E(t\pm2r,\widetilde{\psi}_i,x_i)=O(C_{x_i}Lip_2(\widetilde\psi_i)\vol(D_t)^{-\kappa})=
    O\bigg( \frac{C_{x_i}^2}{r} \vol(D_t)^{-\kappa} | \widetilde{\psi}_{\Gamma} |_{Lip} \bigg).$$
By compactness, the $x_i$ are in a bounded set, therefore the $\lbrace C_{x_i} \rbrace_{i\in I}$ are uniformly bounded.
Hence    
\begin{equation}\label{equ-et}
  E(t\pm2r,\widetilde{\psi}_i,x_i) =
    O\bigg( \frac{\vol(D_t)^{-\kappa}}{r}  | \widetilde{\psi}_{\Gamma} |_{Lip} \bigg). 
\end{equation}
By Lemma \ref{lem-dtxt} and (p1), we have
\begin{equation}\label{equ-gammadt}
2\varepsilon Lip_2(\widetilde{\psi}_i)  \frac{\vert \Gamma \cap D_{t\pm 2r}(x_i) \vert \vol(\Gamma \backslash G)}{\vol(D_{t\pm 2r})} 
= O \bigg(\frac{\varepsilon}{r} | \widetilde{\psi}_{\Gamma} |_{Lip}\bigg).
\end{equation}

Using that $t_1=3u_1t$, we get by applying Lemma \ref{lem-dtx} and (p2), 
\begin{equation}\label{equ-gammadt0}
    3 \Vert \widetilde{\psi}_i \Vert_\infty \frac{\vert \Gamma \cap D_{t \pm 2r}^{t_1}(x_i) \vert \vol(\Gamma \backslash G)}{\vol(D_{t \pm 2r})} 
    = O \big( \|\widetilde\psi_\Gamma\|_\infty  \vol(D_t)^{-\kappa(6u_1)} \big) 
    = O \bigg( \frac{\vol(D_t)^{-\kappa(6 u_1)}}{r} \vert \widetilde{\psi}_\Gamma \vert_{Lip}  \bigg).
\end{equation}

\paragraph{Global domination}
By the partition of unity, we have
\[\int \widetilde\psi_\Gamma \dd \calM_\Gamma^t=\sum_i\int\widetilde\psi_{\Gamma,i} \dd \calM_\Gamma^t=\sum_i\int\widetilde\psi_i \dd \calM^t \]
and
\[\int \widetilde\psi_\Gamma \dd m_{\Gamma\backslash G/M}=\sum_i\int\widetilde\psi_{\Gamma,i} \dd m_{\Gamma\backslash G/M}=\sum_i\int\widetilde\psi_i \dd m_{G/M}.\]
Therefore, by local dominations, $|I|\ll r^{-dim(G/M)}$ and \eqref{equ-et}-\eqref{equ-gammadt0}, we obtain
\begin{align*}
\int \widetilde\psi_\Gamma d\calM_\Gamma^t-\int \widetilde\psi_\Gamma \dd &m_{\Gamma\backslash G/M} = O \Bigg(  r  \sum_{i\in I}  \Vert \widetilde{\psi}_i \Vert_1
\\
&+  r^{-dim(G/AM)} \bigg(\frac{\vol(D_t)^{-\kappa}}{r}  | \widetilde{\psi}_{\Gamma} |_{Lip}
+ \frac{\varepsilon}{r} | \widetilde{\psi}_{\Gamma} |_{Lip}+\frac{\vol(D_t)^{-\kappa(6 u_1)}}{r} \vert \widetilde{\psi}_\Gamma \vert_{Lip} \bigg) \Bigg). 
\end{align*}
Using (p3) and $\Vert \widetilde{\psi}_\Gamma \Vert_1 \leq \|m_{\Gamma\backslash G/M} \| \; \vert \widetilde{\psi}_\Gamma \vert_{Lip}$, we deduce that
$$
\int \widetilde\psi_\Gamma d\calM_\Gamma^t-\int \widetilde\psi_\Gamma \dd m_{\Gamma\backslash G/M} =
O\Bigg( \bigg(  r  
+  \frac{\vol(D_t)^{-\kappa} + \varepsilon +\vol(D_t)^{-\kappa(6 u_1)}
}{r^{ \dim (G/AM)+1 }} 
  \bigg) \vert \widetilde{\psi}_\Gamma \vert_{Lip} \Bigg). 
$$

Recall the choice of parameter in \eqref{equ-parameter} where $\varepsilon=e^{-u_1t}$ and $r=e^{-u_2t}$.
Collecting all the error terms together, we obtain that there exists $u>0$ such that 
\[ \Big|\int \widetilde\psi_\Gamma \dd\calM_\Gamma^t-\int \widetilde\psi_\Gamma \dd m_{\Gamma\backslash G/M} \Big|= O(e^{-u t}|\widetilde\psi_\Gamma|_{Lip}). \]

\section{Finite index subgroups of $\sld$}\label{sec_sld}
\begin{center}
\fbox{
\begin{minipage}{.7 \textwidth}
In this section, $G= \mathrm{SL}(d,\mathbb{R})$ where $d\geq 2$ and $\Gamma_1<\Gamma_0= \mathrm{SL}(d,\mathbb{Z})$ is a finite index subgroup of $\Gamma_0$ which acts freely on $G/M$. We use $\Gamma$ to denote both $\Gamma_1$ and $\Gamma_0$ before section \ref{sec:equicpt}. Starting from Section \ref{sec:equicpt}, we only use $\Gamma$ to denote $\Gamma_1$.
\end{minipage}
}
\end{center}

Let us start with examples of finite index subgroups of $\Gamma_0$ that act freely on $G/M$.
For every prime number $p\geq 3$, we claim that the finite index subgroup $\Gamma_1 := \ker( \Gamma_0 \rightarrow \mathrm{SL}(d, \Z/p\Z ) )$ acts freely on $G/M$.
Indeed, assume $\gamma_1$ fixes an element $G/M$, then $\gamma_1$ is conjugated in $G$ to an element $m$ in the sign group $M$. Its projection to $\mathrm{SL}(d,\Z/p\Z)$ has the same eigenvalues given by the projection of $m$. Since $\gamma_1$ projects to the identity and $p\geq 3$, we deduce that $m$ is trivial.

\paragraph{Torus in linear algebraic groups}
We recall some concepts from linear algebraic groups. 
For more details please see \cite{borel_linear_1991} and \cite{borel_arithmetic_1962}. A subgroup $T$ of $\rm{GL}(d,\C)$ is a \emph{torus}, if $T$ is diagonalizable over $\C$ and isomorphic to $(\C^*)^n$. Let $k$ be a subfield of $\C$. We say that $T$ is a \emph{$k$-torus} if it is defined over $k$ i.e. if $T$ as an algebraic subvariety of $\rm{GL}(d,\C)$ is defined by polynomial equations with coefficients in $k$.
Denote by $T_k$ the $k$-points of $T$. 
A $k$ torus $T$ is \emph{$k$-split} (here we only need $k=\Q$ or $\R$) if $T$ can be diagonalized to $(\C^*)^n$ by a matrix with coefficients in $k$. 
Let $T$ be a $\Q$-torus $T$, then by \cite[Thm 9.4, Lem 8.4]{borel_arithmetic_1962} the following conditions are equivalent:
\begin{itemize}
    \item $T_\Z\backslash T_\R$ is compact.
    \item $T$ is \emph{$\Q$-anisotropic} i.e. all the $\Q$ characters from $T_\Q$ to $\Q^*$ are trivial.
    \item $T$ contains no non-trivial $\Q$-split subtorus.
\end{itemize}

\paragraph{Systole of elements in $\Gamma \backslash G$}
For $g\in G$, let $s(g)$ be the systole of the lattice $\Z^dg$ in $\R^d$ i.e.
\[s(g):=\min_{v\in \Z^dg \setminus \{0\}}\{\|v\| \}. \]
Note that the systole is preserved by right multiplication by $K$ since the norm on $\R^d$ is Euclidean.
Now $\Gamma$ preserves $\Z^d$ and the right action of the sign group $M$ also preserves any lattice $\Z^d g$ for all $g \in G$. 
Hence, this definition extends to $\Gamma g M$ in $\Gamma\backslash G /M$. For $R>0$, let
\[\Omega(R):=\{\Gamma g M\in \Gamma\backslash G /M |\ 1/s(g)\leq R \}. \]
Then the Mahler criteria implies that $\Omega(R)$ is compact. The union of $\Omega(R)$ for $R>0$ is the full space $\Gamma\backslash G/M$ and $\{\Omega(R),\ R>0\}$ is an increasing family of compact sets. 

\paragraph{Siegel domains}
In \cite[Section 4]{borel_arithmetic_1962}, Borel and Harish-Chandra define Siegel domains for the $KAN$ decomposition. We take the inverse of groups in their statement.  

Let $G=NAK$ be the Iwasawa decomposition, where $N$ is the upper triangular maximal unipotent subgroup.
\begin{defin}\label{defin-siegel}\cite{borel_arithmetic_1962}
For all $s>0$ and $u>0$, set $N_s:=\{n\in N|\ \|n\|\leq s\}$ and $A_u:=\{a\in A|\ a_j/a_{j+1}>u \}$. 
A \emph{Siegel domain} is a subset of $G$ of the form $N_s A_u K$, it is a \emph{standard Siegel domains} if $s>1/2$ and $0 < u < \sqrt{3}/2$.
\end{defin}
In \cite[Proposition 4.5]{borel_arithmetic_1962}, they prove that when $ N_s A_u K $ is standard, then 
$$G= \Gamma_0 N_s A_u K ,$$
which in some sense means that a standard Siegel domain is almost a fundamental domain for the left action of $\Gamma_0$ on $G$.
Furthermore, for any standard Siegel domain, the number of elements $ \gamma \in \Gamma_0$ such that $\gamma N_s A_u K \cap N_s A_u K \neq \emptyset$ is finite. 

From now on, we will denote by $n(g), a(g), k(g)$ the $N, A, K$ components of $g$ in the Iwasawa decomposition $NAK$.
Note that $a(g) = \exp ( -\sigma(g^{-1},\eta_0) ) .$

We give a relation between the systole and the Iwasawa cocycle in Siegel domains.
\begin{lem}\label{lem_syssiegel}
For all $0<u\leq 1$ and $g\in N A_u K$, we have
\[ a_d(g)u^{d-1}\leq s(g)\leq a_d(g). \]
\end{lem}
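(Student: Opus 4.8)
The plan is to reduce the statement to the $NA$-part of $g$ and then carry out two short estimates. Write $g=nak$ with $n\in N$, $a\in A_u$, $k\in K$; by uniqueness of the Iwasawa decomposition $a=a(g)$, so $a_d=a_d(g)$. Since the Euclidean norm on $\R^d$ is invariant under right multiplication by $K$ and $\Z^d g=(\Z^d na)k$, one has $s(g)=s(na)$, so it suffices to estimate $\min_{v\in\Z^d\setminus\{0\}}\|vna\|$, where row vectors are multiplied by $na$ on the right.

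For the upper bound I would test against $v=e_d=(0,\dots,0,1)$. As $n$ is upper triangular and unipotent, its $d$-th row is $e_d$, so $e_d n=e_d$, and hence $e_d na=a_d e_d$, a vector of norm $a_d=a_d(g)$; this gives $s(g)\le a_d(g)$.

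For the lower bound, fix $v\in\Z^d\setminus\{0\}$ and set $j_1:=\min\{j:\ v_j\neq 0\}$. Put $w:=vn$. Upper triangularity of $n$ together with $n_{j_1 j_1}=1$ and the vanishing of $v_i$ for $i<j_1$ forces $w_{j_1}=v_{j_1}$, a nonzero integer, so $|w_{j_1}|\ge 1$. Since $a$ is diagonal with positive entries, $\|vna\|=\|wa\|\ge|w_{j_1}|\,a_{j_1}\ge a_{j_1}$. Iterating the defining inequalities $a_j/a_{j+1}>u$ of $A_u$ gives $a_{j_1}>u^{d-j_1}a_d$, and since $0<u\le 1$ and $j_1\ge 1$ we have $u^{d-j_1}\ge u^{d-1}$; hence $\|vna\|\ge u^{d-1}a_d$ for every nonzero $v$, and minimizing over $v$ yields $s(g)\ge u^{d-1}a_d(g)$.

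There is no genuine obstacle here; the only points that need a little care are the bookkeeping of the action — row vectors are multiplied on the right, so that it is the $d$-th row of $n$ that matters in the upper bound and the \emph{first} nonzero coordinate of $v$ in the lower bound — and the final monotonicity step, where $u^{d-j_1}\ge u^{d-1}$ is used precisely because $u\le 1$ (this is also what makes the lower bound genuinely weaker than the upper one when $u$ is small).
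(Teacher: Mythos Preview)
Your proof is correct and follows essentially the same approach as the paper's: both use the test vector $e_d$ for the upper bound (via $e_d n=e_d$), and for the lower bound both look at the first nonzero coordinate $j_1$ of $v$, use that $(vn)_{j_1}=v_{j_1}$ is a nonzero integer, and then chain the inequalities $a_j/a_{j+1}>u$ together with $u\le 1$. The only cosmetic difference is that the paper keeps $K$ in the picture and invokes $K$-invariance of the norm at each step rather than stripping it off at the outset.
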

\begin{proof}
Using first the definition of the systole, then that the row vector $e_d$ is right $N$-invariant and finally that the norm on $\R^d$ is $K$-invariant, we deduce the upper bound of the systole
\[ s(g)\leq \|e_dg\|=\|e_da(g)\|=a_d(g). \]

For the lower bound, it suffices to prove that for every $v\in\Z^d \setminus \{0\}$, 
\[  a_d(g) u^{d-1} \leq \Vert vg \Vert. \]
First by $K$-invariance, $ \Vert vg \Vert= \Vert v\; n(g) a(g) \Vert.$
Let us write the coefficients of the row vector $v=(v_1,\cdots, v_d)$.
Assume that the $j$-th coefficient $v_j$ is the first non-zero coordinate, where $1 \leq j \leq d$. 
Then $v\; n(g) a(g)$ is a row vector with all its first $j-1$ coefficients equal to $zero$ and its $j$-th coefficient is 
$ (v \; n(g) a(g))_j = v_j n_{j j}(g) a_j(g).$
Using first that $n_{j j}(g) = 1$ and $\Vert v \; n(g) a(g) \Vert \geq \vert (v\; n( g) a(g))_j \vert$, then that $a( g) \in A_u $ and $\vert v_j \vert$ is a non-zero positive integer, we deduce that
\[\|vg\|\geq |v_j| a_j(g) \geq a_d(g) u^{d-j} . \]
Finally for every $v\in\Z^d \setminus \{0\}$, then $d-j \leq d-1$ and since $u \in (0,1)$, we deduce that $ a_d(g) u^{d-1} \leq  \Vert vg \Vert$, hence the lower bound for the systole.
\end{proof}

\paragraph{Injectivity radius}
We find a lower bound of the injectivity radius in every point of $\Omega(R) \subset \Gamma_1 \backslash G/M$.
For every point $z \in \Gamma_1 \backslash G/M$, denote by $inj(z)$, the injectivity radius with respect to the Riemannian metric $\dd_1$, i.e. the largest radius for which the exponential map at $z$ is a diffeomorphism. 
\begin{lem}\label{lem-injradius}
There exists $C_7>1$ such that for all large enough $R>2$, every $z\in \Omega(R)$,
\[ inj(z)\geq R^{-C_7}. \]
Furthermore, there exists a representative $h\in G$ such that $z=\Gamma_1 hM$ and 
\[\dd_X(o,ho)\leq C_7\log R. \]
\end{lem}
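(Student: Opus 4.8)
The plan is to bound the injectivity radius from below by controlling how close two distinct points of the orbit $\Gamma_1 hM$ can be, and to do this via the relation between the systole and the Iwasawa cocycle established in Lemma~\ref{lem_syssiegel}. First I would fix a standard Siegel domain $\frak{S} = N_s A_u K$ with $s > 1/2$ and $0 < u < \sqrt{3}/2$, so that $G = \Gamma_0 \frak{S}$ by Borel--Harish-Chandra, and only finitely many $\gamma \in \Gamma_0$ satisfy $\gamma \frak{S} \cap \frak{S} \neq \emptyset$; call this finite set $\Phi$ and put $\ell_0 := \max_{\gamma \in \Phi} \dd_X(o, \gamma o)$, a constant depending only on $d$. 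Given $z \in \Omega(R)$, write $z = \Gamma_1 g M$ with $g \in \frak{S}$; then $1/s(g) \le R$, and by Lemma~\ref{lem_syssiegel}, $a_d(g) \ge s(g) \ge R^{-1}$, while $a_d(g) \le s(g) u^{-(d-1)} \le R^{-1} u^{-(d-1)}$ only gives an upper bound — what I actually need is a lower bound on $a_d(g)$ and, more importantly, control on $\dd_X(o, go) = \|\underline{a}(g)\|$. Since $a(g) \in A_u$, the successive ratios $a_j(g)/a_{j+1}(g)$ are bounded below by $u$; combined with $\det = 1$ this bounds each $\log a_j(g)$ in terms of $\log a_d(g)$, hence $\|\underline{a}(g)\| = \dd_X(o,go) \le C \log R$ for a constant $C = C(d)$, since $a_d(g) \ge R^{-1}$ forces $|\log a_j(g)| \le C\log R$ for all $j$. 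Take $h = g$; this gives the last assertion with $C_7 \ge C$.

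Next, for the injectivity radius: suppose $\gamma \in \Gamma_1 \setminus \{e\}$ moves $gM$ a small $\dd_1$-distance, say $\dd_1(\gamma g M, gM) < \rho$ for some small $\rho > 0$. Then $g^{-1}\gamma g$ lies in a $\rho'$-neighbourhood of $M$ in $G/M$ for a comparable $\rho'$ (using that left translation by $g^{-1}$ distorts $\dd_1$ by at most $e^{C_0 \dd_X(o,go)} \le e^{C_0 C \log R} = R^{C_0 C}$, via Lemma~\ref{lem-actiong} / Lemma~\ref{lem-d2}-type estimates applied to $\dd_1$). In particular $g^{-1}\gamma g$ is close to an element of $M$, and since $M$ is finite in $\mathrm{SL}(d,\R)$, for $\rho$ small enough $\gamma g$ and $g$ differ by a small perturbation times a sign matrix; but then $\gamma$ maps the lattice $\Z^d g$ to $\Z^d \gamma g$, which is within a small multiplicative factor of $\Z^d g$ up to the $M$-action — and $\gamma \in \Gamma_1$ already preserves $\Z^d$, so $\Z^d \gamma g = \Z^d g$ exactly. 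The systole constraint then bounds how large the entries of $g^{-1}\gamma g$ can be: writing $g^{-1}\gamma g$ in terms of its action on the shortest lattice vectors, its operator norm is controlled by $s(g)^{-1}$ times the covolume-type quantities, all bounded by powers of $R$. So $g^{-1}\gamma g$ is a nontrivial element of a discrete group (conjugate of $\Gamma_1$) that is simultaneously $R^{O(1)}$-bounded in norm and $\rho'$-close to the finite set $M$; by a standard Margulis-type / discreteness argument (the conjugated lattice has a lower bound on the norm of its nontrivial elements depending polynomially on $R$), this forces $\rho' \gtrsim R^{-C}$, hence $\rho \gtrsim R^{-C_7}$ after absorbing the distortion factor $R^{C_0 C}$ into the exponent $C_7$.

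The main obstacle I anticipate is making the discreteness/separation step quantitative with only a \emph{polynomial} dependence on $R$: one needs that a nontrivial element of $g^{-1}\Gamma_1 g$ which is $\dd_1$-close to $M$ cannot be too close, with the closeness threshold polynomial in $1/R$. The cleanest route is probably to argue on $\Z^d$ directly: if $\gamma \in \Gamma_1$, $\gamma \neq e$, and $g^{-1}\gamma g = m + \text{(error of size }\eta\text{)}$ with $m \in M$, then examining the action on a shortest basis-like set of vectors of $\Z^d g$ (whose norms lie between $s(g)$ and $\mathrm{poly}(1/s(g))$ by Minkowski's second theorem) shows $\gamma g - g m^{-1}$ has all entries $\lesssim \eta / s(g)^{2}$, so $\gamma$ is an integer matrix within $\eta \cdot R^{O(1)}$ of the integer matrix $g m^{-1} g^{-1}$ — wait, $g m^{-1} g^{-1}$ need not be integral, so instead one concludes $\gamma = $ (integer matrix) is within $\eta R^{O(1)}$ of a \emph{fixed} matrix, and since distinct integer matrices are at distance $\ge 1$ in the entrywise sense, either $\gamma$ is determined (hence $= e$ after also using it fixes $gM$, contradiction) or $\eta R^{O(1)} \ge 1$, i.e. $\eta \ge R^{-O(1)}$. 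Unwinding the distortion between $\eta$ and $\dd_1(\gamma gM, gM)$ — another factor $R^{O(1)}$ from Lemma~\ref{lem-actiong} and $\dd_X(o,go) \le C\log R$ — yields $inj(z) \ge R^{-C_7}$ for a suitable $C_7 = C_7(d)$. I would choose $C_7$ at the end to dominate all the accumulated polynomial exponents and the constant $C$ from the first paragraph.
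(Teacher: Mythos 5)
Your first half (producing $h$ with $\dd_X(o,ho)\ll\log R$ via a standard Siegel domain and Lemma \ref{lem_syssiegel}) is essentially the paper's argument, up to one slip: a Siegel-domain representative only represents the $\Gamma_0$-coset, not the $\Gamma_1$-coset, so you cannot simply take $h=g\in N_sA_uK$ with $z=\Gamma_1hM$; you must correct by one of the finitely many coset representatives $\gamma_j$ with $\Gamma_0=\bigcup_j\Gamma_1\gamma_j$ (this only changes constants, and is what the paper does; your finite set $\Phi$ of elements with $\gamma N_sA_uK\cap N_sA_uK\neq\emptyset$ is not the relevant finite set). Also, $\dd_1$ is left $G$-invariant, so no distortion factor $e^{C_0\dd_X(o,go)}$ is needed to pass from $\dd_1(\gamma gM,gM)$ to $\dd_1(g^{-1}\gamma gM,eM)$; that loss is harmless but unnecessary.

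The genuine gap is in your final separation step, precisely in the case where the nearby sign matrix $m$ is not the identity. From $\Vert g^{-1}\gamma g-m\Vert\ll\rho$ you get that the integer matrix $\gamma$ lies within $\rho R^{O(1)}$ of the fixed matrix $gmg^{-1}$; but, as you noticed, $gmg^{-1}$ need not be integral, and your patch --- ``the nearby integer matrix is unique, hence $\gamma$ is determined, hence $=e$ after using that it fixes $gM$'' --- does not work: uniqueness of an integer matrix in a small ball around $gmg^{-1}$ identifies no specific matrix, the unique candidate need not be $e$, and $\gamma$ does not fix $gM$ (it only moves it a little), so neither a contradiction nor a lower bound on $\rho$ follows. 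The missing idea is the paper's squaring trick: set $g_0:=h^{-1}\gamma h-m$, and use $m^2=e$ to get $\Vert h^{-1}\gamma^2h-e\Vert\leq\Vert g_0\Vert^2+2\Vert g_0\Vert\ll\rho$; on the other hand the freeness of the $\Gamma_1$-action on $G/M$ forces $\gamma^2\neq e$ (an involution in $\mathrm{SL}(d,\R)$ is conjugate to an element of the sign group and would fix a point of $G/M$), so integrality gives $\Vert\gamma^2-e\Vert\geq1$ and hence $\Vert h^{-1}\gamma^2h-e\Vert\geq(\Vert h\Vert\,\Vert h^{-1}\Vert)^{-1}\gg R^{-2C}$. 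Comparing the two bounds yields $\rho\gg R^{-C'}$. Without this (or an equivalent quantitative use of the freeness hypothesis) your argument does not close; note also that trying to avoid squaring by comparing characteristic polynomials of $\gamma$ and $gmg^{-1}$ only shows that $\gamma$ has eigenvalues $\pm1$, which is not yet incompatible with freeness and effectively leads back to the same computation with $\gamma^2$.
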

\begin{proof}
We first construct $h$.
Let $z \in \Omega(R)$ and we start with a representative $g \in G$ such that $z=\Gamma_1gM$. 
We choose $h_0 \in N_s A_u K$ a representative in the coset $\Gamma_0 g$, where $N_s A_u K$ is a standard Siegel domain (Definition \ref{defin-siegel}) with $u \in (0,1)$.
Note that $s(h_0)= s(z)>1/R$ by hypothesis, then by the above Lemma \ref{lem_syssiegel}, we deduce that $a_d(h_0)\geq 1/R$.
Since $a(h_0) \in A_u$, then $a_j(h_0) \geq \frac{u^{d-j}}{R}$ for all $1 \leq  j \leq d$.
Hence 
$$ \frac{u^{d-1}}{R} \leq   a_1(h_0)= a_2(h_0)^{-1}...a_d(h_0)^{-1} \leq R^{d-2} u^{-(d-2)(d-1)/2}$$ 
from which we deduce that $ a_j(h_0) \leq R^{d-2} u^{-(j-1)-(d-2)(d-1)/2}$ for all $1 \leq j \leq d$.
Since $N_s A_u K$ is standard, with $u \in (0,1)$, one can write it as some negative power of $R$ and deduce the following upper bound for $a(h_0)$ that there is a positive constant $C>0$ such that 
$$\|a(h_0^{-1})\|, \|a(h_0)\|\leq R^C.$$
Now since $N_s$ is bounded and for the operator norm $\Vert .\Vert$ of the action on row vectors induced by the Euclidean norm on $\R^d$, we deduce that $\|h_0\|=\|n(h_0)a(h_0)k(h_0)\| \ll R^C$, similarily for $\|h_0^{-1}\|$. 
Since $\Gamma_1$ is a finite index subgroup of $\Gamma_0$, there exists a finite set $\{\gamma_j\}_{j\in J}$ such that $\Gamma_0=\cup_{j\in J}\Gamma_1\gamma_j$. 
Therefore there exists $\gamma_j$ such that $\Gamma_1 g=\Gamma_1 \gamma_j h_0$.
We set $h:=\gamma_jh_0$ and deduce that 
\begin{equation}\label{equ-hh-}
 \|h^{-1}\|, \|h\|\ll R^{C}.
\end{equation}

Let us compute the Cartan projection of $h$, using \cite[Lemma 2.3]{kasselCorank08} and the compactness of $N_s$ and finiteness of $\lbrace \gamma_j \rbrace_{j\in J}$, 
\[\dd_X(o,ho) \ll \dd_X(o,h_0o) = \|\underline{a}_o(h_0)\|\ll 
\Vert \underline{a}_o(a(h_0)) \Vert \ll \log R. \]

Denote by $\vert .\vert_1$ the Riemannian metric at $e$ associated with the Riemannian distance$\dd_1$.
We choose $r_0 >0$ such that for all $Y \in \frak g$ of norm smaller than $r_0$, the exponential map is a local diffeomorphism, so that we have
$$ \Vert \exp(Y)-e \Vert \asymp \vert Y \vert_1 \asymp \dd_1(\exp(Y),e).$$
We prove that if the exponential map for the ball of radius $r \in (0,r_0)$ centered at $z=\Gamma_1 h M$ is not injective, then $r \gg R^{-C'}$ for some positive constant $C'$.
Assume there exist $h_1\neq h_2 \in G$ such that $\Gamma_1 h_1M=\Gamma_1 h_2M$ and $h_1M, h_2M \in B(hM,r)$. 
Abusing notations, since $\dd_1$ comes from the left $G$-invariant and right $K$-invariant Riemannian metric on $G$, we can assume that $h_1,h_2 \in B(h,r)$.
Then there exists $(\gamma,m)\in \Gamma_1\times M$, with $\gamma\neq e$ such that $\gamma h_1 = h_2 m$ i.e.
$$\gamma=h_2m h_1^{-1} .$$
Note that because $\Gamma_1$ acts freely on $G/M$, then $\gamma$ cannot be conjugated to an element in the sign group, therefore $\gamma^2 \neq e$.
Since $\gamma^2$ is a matrix with integer coefficient, we deduce on one hand the lower bound
\[ \|h^{-1}\gamma^2 h-e \|= \|h^{-1}(\gamma^2-e)h \| \geq \frac{1}{ \|h\|\|h^{-1}\|}\ . \]
On the other hand, set $g_0:=h^{-1}\gamma h-m$, so that $h^{-1} \gamma h = g_0 + m$ and deduce the upper bound 
\[\|h^{-1}\gamma^2 h-e \|=\|g_0^2+g_0m+mg_0\|\leq \|g_0^2\|+2\|g_0\|. \]
By triangle inequality, $\|g_0\|=\|h^{-1}h_2mh_1^{-1}h-m \|\leq \|h^{-1}h_2-e\|\|h_1^{-1}h\|+\|e-h_1^{-1}h \|$.
Now $h_1,h_2 \in B(h,r)$, therefore $\Vert g_0\Vert \ll r$ and 
\[\frac{1}{\|h\|\|h^{-1}\|} \leq \|g_0\|^2+ 2\|g_0\|\ll r. \]
Finally, by \eqref{equ-hh-}, therefore $r\gg R^{-C'}$ for some constant $C'>0$ and we deduce the lower bound for the injectivity radius at $z$.
\end{proof}

\paragraph{Action of the Weyl group}

\begin{lem}\label{lem_weyl}
There exists $c>0$, such that for any $\eta\in \calF$, there exists $w\in\calW$ such that 
\[\delta(w\eta,\eta_0)>c. \]
\end{lem}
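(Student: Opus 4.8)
The statement asserts a uniform lower bound: for every $\eta \in \calF$, some Weyl group element $w$ moves $\eta$ away from the "singular hyperplane" associated to $\eta_0$, quantitatively. Recall $\delta(w\eta,\eta_0) = \inf_{\alpha\in\Pi} \dd(x^\alpha(w\eta), x^\alpha((\eta_0)_o^\perp)^\perp) = \inf_{\alpha\in\Pi}\delta(x^\alpha(w\eta),x^\alpha_+)$, and that $\delta$ takes values in $(0,1]$, vanishing exactly when $w\eta$ is non-transverse to $\eta_0$ in some representation (equivalently $x^\alpha(w\eta) \in (x^\alpha_+)^\perp$ for some $\alpha$). So the content is: no $\eta$ can have all its Weyl translates simultaneously close to being non-transverse to $\eta_0$.

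The plan is a compactness argument. First I would reduce to showing the function $\eta \mapsto \max_{w\in\calW} \delta(w\eta,\eta_0)$ is strictly positive on all of $\calF$; since $\calW$ is finite and each $\eta\mapsto\delta(w\eta,\eta_0)$ is continuous (it is built from the continuous Iwasawa/Gromov data, cf. the formulas around Definition \ref{defin_gromov} and Lemma \ref{lem-actiong}), this max is continuous on the compact space $\calF$, hence attains a minimum $c \geq 0$; it remains to rule out $c = 0$. Suppose $c=0$: then there is $\eta$ with $\delta(w\eta,\eta_0)=0$ for every $w\in\calW$, i.e. for each $w$ there is $\alpha_w\in\Pi$ with $x^{\alpha_w}(w\eta) \in (x^{\alpha_w}_+)^\perp$. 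The key step is to derive a contradiction from this, using that $\calW$ acts transitively on Weyl chambers and the structure of the Bruhat decomposition: the set of $\xi\in\calF$ transverse to $\eta_0$ is the open dense big Bruhat cell $N^-\eta_0$ (equivalently $\delta(\xi,\eta_0)>0 \iff \xi \in w_0$-cell), and the union over $w\in\calW$ of the translated cells $w(N^-\eta_0)$ covers $\calF$ — indeed every flag is transverse to at least one of the finitely many "model" flags $w\zeta_0$, which is precisely the statement that $\bigcup_w w \cdot \{\xi : \xi\ \text{opp.}\ \eta_0\}$... more cleanly: for a fixed $\eta$, the map $w\mapsto w\eta$ runs over the $\calW$-orbit, and one shows directly that the finite set $\{w\eta : w\in\calW\}$ cannot lie entirely in the closed nowhere-dense set $Z := \{\xi : \delta(\xi,\eta_0)=0\} = \bigcup_{\alpha\in\Pi}\{\xi : x^\alpha(\xi)\in(x^\alpha_+)^\perp\}$, because $Z$ contains no $\calW$-orbit — equivalently, picking an apartment through $\eta$ (a maximal flat, cf. Definition \ref{defin-max-flat}), its $\calW$-translates relative to $\eta_0$ include a chamber transverse to $\eta_0$. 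I would make this precise by choosing $g\in G$ with $g\eta_0 = \eta$, writing $\eta_0 = g^{-1}\eta$ in terms of the Bruhat cell $g^{-1} \in N w N$ for some $w\in\calW$, and observing $w^{-1}\eta = w^{-1}g\eta_0$ lies in the big cell $N^-\eta_0$, hence $\delta(w^{-1}\eta,\eta_0) > 0$, contradicting the assumption for that $w$.

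The main obstacle is making the last step clean and self-contained using only what the excerpt provides: the excerpt gives the embedding of $\calF$ into $\prod_\alpha \P(V^\alpha)$ and the characterization of $\delta$ via the fundamental representations, but does not explicitly state the Bruhat-cell description of transversality. I expect the cleanest route is purely compactness plus the fact that $\{\xi : \delta(\xi,\eta_0)>0\}=\calF^{(2)}_{\eta_0}$ (the fiber of transverse points, open and dense by the identification $\calF^{(2)}\simeq G/AM$ near Definition \ref{defin-max-flat} and Remark \ref{rem-zeta-eta}), combined with: for any $\eta$ there exists $w\in\calW$ with $w\eta$ transverse to $\eta_0$ — which follows because $g^{-1}\eta_0$ for a fixed $g$ with $g\eta_0=\eta$ lies in some $\calW$-cell, equivalently $\eta$ is transverse to $w\zeta_0$ for some $w$, and $w\zeta_0 = w k_\iota \eta_0$ with $wk_\iota \in N_K(A)$, whence after left-translating by a representative of $w^{-1}$ and using $K$-invariance of $\delta$ one lands back at transversality with $\eta_0$. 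Once existence of such a $w$ is known pointwise, continuity of the finite maximum over $w$ and compactness of $\calF$ upgrade it to the uniform constant $c>0$.
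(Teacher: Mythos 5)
Your plan is a genuinely different route from the paper's, and in outline it is viable. The paper's proof is specific to $G=\mathrm{SL}(d,\R)$ (the lemma lives in Section 6): it represents $\eta$ by a basis of $\R^d$, uses the Weyl group (permutation matrices) and Gaussian elimination to normalize the basis to be lower triangular with column-dominant diagonal entries, computes directly $\delta(w\eta,\zeta_0)\geq (d!)^{-1}$, and transfers to $\eta_0$ by left multiplication by $k_\iota$ and $K$-invariance of $\delta$. That argument is elementary and gives an explicit constant. Your argument (pointwise transversality via Bruhat cells, then continuity of $\eta\mapsto\max_{w}\delta(w\eta,\eta_0)$ over a fixed finite set of representatives of $\calW$, plus compactness of $\calF$) gives a non-effective $c>0$, which is all that is used downstream (the constant only feeds into bounded-error statements), and it has the advantage of working for a general semisimple $G$ rather than only $\mathrm{SL}(d,\R)$. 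You do use implicitly that $\delta(\xi,\eta_0)>0$ if and only if $(\xi,\eta_0)\in\calF^{(2)}$; this is exactly the finiteness of the Gromov product on transverse pairs (finitely many $\alpha\in\Pi$), which the paper also uses without comment, so it is fair to quote, but it should be stated.

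The one step that is wrong as written is the identification of the transversality locus. The set of $\xi$ opposite to $\eta_0$ is $\mathrm{Stab}_G(\eta_0)\cdot\zeta_0=N\zeta_0=Nw_0\eta_0$, whereas $N^-\eta_0=\mathrm{Stab}_G(\zeta_0)\cdot\eta_0$ is the set of flags opposite to $\zeta_0$; so "$w^{-1}\eta\in N^-\eta_0$ hence $\delta(w^{-1}\eta,\eta_0)>0$" does not follow — membership in $N^-\eta_0$ gives $\delta(\cdot,\zeta_0)>0$, not $\delta(\cdot,\eta_0)>0$. The fix is the same $w_0$-twist the paper performs in its last line: if you show $w^{-1}\eta$ is opposite to $\zeta_0$, then apply $k_\iota$ and use $K$-invariance of $\delta$ together with $k_\iota\zeta_0=\eta_0$ to conclude $\delta\bigl((k_\iota w^{-1})\eta,\eta_0\bigr)>0$, with $k_\iota w^{-1}$ again representing a Weyl element. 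You also leave the Bruhat step itself unfinished; a clean way to do it is: write $\eta\in Nw'\eta_0$, decompose $(w')^{-1}Nw'=\bigl((w')^{-1}Nw'\cap N\bigr)\bigl((w')^{-1}Nw'\cap N^-\bigr)$, and use that $N$ fixes $\eta_0$ and $N^-$ fixes $\zeta_0$ to see that $\eta$ is opposite to $w'\zeta_0=(w'w_0)\eta_0$, i.e. $(w'w_0)^{-1}\eta$ is opposite to $\eta_0$. With that pointwise statement in hand, your compactness upgrade to a uniform $c>0$ is correct.
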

\begin{proof}
In the $\mathrm{SL}(d,\R)$ case, the Furstenberg boundary $\calF$ is the space of complete flats of $\R^d$.
Therefore, there exists a basis $(v^j)_{1 \leq j \leq d}$ of $\R^d$ such that $\eta \in \calF$ is represented by $(\R v^1,\R v^1\wedge v^2,\cdots, \R v^1\wedge\cdots\wedge v^{d-1})$.
The Weyl group in the $\mathrm{SL}(d,\R)$ case is isomorphic to the permutation group $\mathfrak{S}_d$. 
It consists in square matrices $(w_{i j}) \in K$ of coefficients $w_{i j} = \delta_{\tau(i) j}$ where $\tau \in \mathfrak{S}_d$.   
Left multiplication of $(v_i^j)_{1 \leq i,j\leq d}$ by an element of the Weyl group permutes the columns, right multiplication by the transvection matrices in the upper triangular unipotent group $N$ correspond to operations on the lines of $(v_i^j)_{1 \leq i,j\leq d}$.
By Gaussian elimination, one can assume that $(v_i^j)_{1 \leq i,j\leq d}$, representative of $w \eta$ for some $w \in \calW$, is lower triangular and the coefficient in the diagonal is the highest in norm of the whole column i.e.
\begin{equation}\label{equ_vj}
v^j_l=0, \text{ for all }l<j, \text{ and } \ |v^j_j|=\max_{j\leq l\leq d}\{|v^j_l| \} \text{ for all $1 \leq j \leq d$}.
\end{equation}
On one hand, using that $\wedge^j \R^d$ are the Tits representations for $\mathrm{SL}(d,\R)$ and $(\zeta_0)_o^\perp =\eta_0$ in \eqref{defin-delta}, we compute
\begin{align*}
    \delta(w\eta,\zeta_0)=\inf_{1 \leq j \leq d}  \dd\big(\R v^1\wedge\cdots\wedge v^j,(\R e_1\wedge\cdots\wedge e_j)^\perp\big)=\inf_j \frac{ |v^1_1\cdots v^j_j|}{\|v^1\wedge \cdots\wedge v^j \|}.
\end{align*}
On the other hand, 
$ v^1 \wedge ... \wedge v^j = \sum_{1 \leq l_1 <...<l_j \leq d} \sum_{\tau \in \mathfrak{S}_j} sign(\tau) v_{l_1}^{\tau(1)} ... v_{l_j}^{\tau(j)} e_{l_1} \wedge ... \wedge e_{l_j}$ where $sign(\tau) \in \lbrace \pm 1\rbrace$ is the signature of the permutation $\tau$.
Hence for all $1 \leq j \leq d$, by triangle inequality and \eqref{equ_vj}
$$\Vert v^1 \wedge ... \wedge v^j \Vert \leq \binom{d}{j} j! \; \vert v^1_1...v_j^j \vert 
\leq d! \;  \vert v^1_1...v_j^j \vert .$$
We deduce that $\delta(w \eta, \zeta_0) \geq (d!)^{-1}$. The Lemma then follows by left multiplication by $k_\iota$ of $w \eta$ and $\zeta_0$, which by $K$-invariance of $\delta$ does not change the inequality.
\end{proof}

\begin{lem}
For any $g\in G$, there exists $w\in \calW$ such that for any $b=wb'w^{-1}$ with $b'\in \exp(-\frak a^{++})$, we have
\begin{equation}\label{equ_gb}
\log a(gb)=\log b'+\log a(g)+v,
\end{equation}
where $v$ is a vector of bounded length in $\frak{a}$ with the bound only depending on $c$ in Lemma \ref{lem_weyl}.
\end{lem}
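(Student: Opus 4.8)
The plan is to push everything through the Iwasawa cocycle. Since $a(g)=\exp(-\sigma(g^{-1},\eta_0))$, the cocycle relation \eqref{eq-relation-Icocycle} gives $\log a(gb)=\log a(g)-\sigma(b^{-1},g^{-1}\eta_0)$. Writing $b'=\exp(-Z)$ with $Z\in\frak a^{++}$, we have $b^{-1}=w\exp(Z)w^{-1}$; applying the cocycle relation twice more and using that the Iwasawa cocycle vanishes on $K$, hence on any representative of $w$ in $N_K(A)$, we get $\sigma(b^{-1},g^{-1}\eta_0)=\sigma(\exp Z,\xi)$ with $\xi:=w^{-1}g^{-1}\eta_0$. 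So the statement reduces to choosing $w$ so that $\xi$ is in general position, and then bounding $\|\sigma(\exp Z,\xi)-Z\|$ by a constant depending only on that position; the desired $v$ is then $Z-\sigma(\exp Z,\xi)$, while $\log b'=-Z$ supplies the remaining term of \eqref{equ_gb}.

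For the choice of $w$, the relevant general-position condition will be $\delta(\xi,\zeta_0)>c$, which quantifies transversality of $\xi$ to $\zeta_0$ and is what guarantees that $\exp(Z)$, $Z\in\frak a^{++}$, drives $\xi$ towards $\eta_0$. First I would apply Lemma \ref{lem_weyl} to the point $g^{-1}\eta_0$ to obtain $w_1\in\calW$ (with a representative in $N_K(A)$) such that $\delta(w_1g^{-1}\eta_0,\eta_0)>c$. Writing $w_0\in\calW$ for the class of $k_\iota$, so that $\zeta_0=w_0\eta_0$, I would set $w:=w_1^{-1}w_0^{-1}$; then $w^{-1}g^{-1}\eta_0=w_0w_1g^{-1}\eta_0$, and the left $K$-invariance of $\delta$ yields $\delta(\xi,\zeta_0)=\delta(w_0w_1g^{-1}\eta_0,w_0\eta_0)=\delta(w_1g^{-1}\eta_0,\eta_0)>c$. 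Note $w$ depends only on $g$, and $b=wb'w^{-1}$ is independent of the chosen representative since $M$ centralizes $A$.

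To bound $\sigma(\exp Z,\xi)$ I would work in each fundamental representation $(\rho_\alpha,V^\alpha)$, $\alpha\in\Pi$. Recall $\chi^\alpha(\sigma(\exp Z,\xi))=\log\!\big(\|\rho_\alpha(\exp Z)v\|/\|v\|\big)$ for any nonzero $v\in V^\alpha$ representing $x^\alpha(\xi)$ (this is \cite[Lemma 6.33 (ii)]{benoist-quint}, and also follows by applying $\rho_\alpha$ to the Iwasawa decomposition of $\exp(Z)k_\xi$ and using that $N$ fixes the highest weight line). Decomposing $v=\sum_\chi v_\chi$ orthogonally along the restricted weight spaces, $\|\rho_\alpha(\exp Z)v\|^2=\sum_\chi e^{2\chi(Z)}\|v_\chi\|^2$; keeping only the top term for the lower bound and using $\|\rho_\alpha(\exp Z)\|=e^{\chi^\alpha(Z)}$ (valid since $\exp Z\in A^+$) for the upper bound gives $e^{\chi^\alpha(Z)}\|v_{\chi^\alpha}\|\le\|\rho_\alpha(\exp Z)v\|\le e^{\chi^\alpha(Z)}\|v\|$. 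Since $x_+^\alpha=x^\alpha(\eta_0)=x^\alpha((\zeta_0)_o^\perp)$ (Remark \ref{rem-zeta-eta}) and $(x_+^\alpha)^\perp=\bigoplus_{\chi\neq\chi^\alpha}V^\alpha_\chi$, one has $\|v_{\chi^\alpha}\|/\|v\|=\delta(x^\alpha(\xi),x_+^\alpha)$ by \eqref{defin-delta-proj}, hence $\ge\delta(\xi,\zeta_0)>c$ by \eqref{defin-delta}. Therefore $|\chi^\alpha(\sigma(\exp Z,\xi)-Z)|\le-\log c$ for every $\alpha\in\Pi$, and comparing the sup-norm of the $\chi^\alpha$-coordinates with the Euclidean norm via $C_{\frak a}$ gives $\|\sigma(\exp Z,\xi)-Z\|\le\sqrt{C_{\frak a}}\,(-\log c)$, a bound depending only on $c$. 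This establishes \eqref{equ_gb}.

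All the computations above are short and essentially routine; the one delicate point is the bookkeeping in the choice of $w$ — keeping straight left versus right translations and the passage from $\eta_0$ (as it appears in Lemma \ref{lem_weyl}) to $\zeta_0$ (which is what actually governs the contraction of $\exp(Z)$ for $Z\in\frak a^{++}$). That is precisely what forces the extra factor $w_0^{-1}=[k_\iota]^{-1}$ in the definition of $w$.
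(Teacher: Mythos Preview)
Your proof is correct and follows essentially the same route as the paper: reduce via the cocycle relation to bounding $\sigma((b')^{-1},w^{-1}g^{-1}\eta_0)-\log(b')^{-1}$, choose $w$ through Lemma \ref{lem_weyl} so that $\delta(w^{-1}g^{-1}\eta_0,\zeta_0)>c$, and then control the cocycle in each fundamental representation. The paper compresses your last step into a citation of \cite[Lemma 14.2(i), Lemma 6.33]{benoist-quint}, whereas you unpack the weight-space decomposition explicitly; your handling of the $\eta_0$ versus $\zeta_0$ bookkeeping (inserting $w_0=[k_\iota]$) is also more explicit than the paper's one-line invocation of Lemma \ref{lem_weyl}.
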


\begin{proof} Since $NAK$ is a Iwasawa decomposition, we can compute the $A$ part by the Iwasawa cocycle. We have
\[\log a(g)=-\sigma(g^{-1},\eta_o). \]
Then if we multiple on the right of $g$ by an element $b\in A$, we obtain
\begin{align*}
    \log a(gb)=-\sigma(b^{-1}g^{-1},\eta_o)=-\sigma(b^{-1},g^{-1}\eta_o)+\log a(g).
\end{align*}
Due to $b=wb'w^{-1}$ with $w$ in the Weyl group and $b'$ in the negative Weyl chamber, then
\[\log a(gb)=-\sigma((b')^{-1},w^{-1}g^{-1}\eta_o)+\log a(g). \]
By Lemma \ref{lem_weyl}, there exists $w$ such that $\delta(w^{-1}g^{-1}\eta_o,\zeta_o)>c$.
By Lemma 14.2(i) and Lemma 6.33 in \cite{benoist-quint}, we finish the proof.
\end{proof}

\subsection{Compact periodic diagonal orbits}
The first difference with the cocompact case is that not every loxodromic element gives a periodic $A$-orbit in the quotient $\Gamma\backslash G/M$. So Selberg's lemma is not true. There is a general sufficient condition in \cite{pr}. For $\sld$, we know exactly when it fails. Recall for $\gamma$ loxodromic, we have defined an $A$-orbit $F_{[\gamma]}$ on $\Gamma\backslash G/M$.

\begin{lem}\label{lem_cpt}
Let $\gamma\in \Gamma$ be a loxodromic element. Then for  the following conditions:
\begin{itemize}
    \item[1] The $A$-orbit $F_{[\gamma]}$ is compact periodic;
    \item[2] The characteristic polynomial $p_\gamma(x)=\det(x-\gamma)$ of $\gamma$ is irreducible on $\Q[x]$;
    \item[3] There exists no non-trivial subset $I$ of $\{1,\cdots, d \}$ such that 
    \[\sum_{i\in I}t_i=0, \]
    where $(t_1,\cdots,t_d)$ is the Jordan projection of $\gamma$;
\end{itemize}
we have that $(1),(2)$ are equivalent and $(3)$ implies $(2)$.
\end{lem}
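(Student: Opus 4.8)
The plan is to establish the chain of implications via the theory of tori in linear algebraic groups set up earlier in the section. Let $\gamma \in \Gamma$ be loxodromic with distinct real eigenvalues $\mu_1 > \cdots > \mu_d > 0$ (distinctness because $\lambda(\gamma) \in \mathfrak{a}^{++}$), and let $T = Z_G(\gamma_h)^\circ$ be the maximal $\R$-split torus containing the hyperbolic part; since $\gamma$ has rational (indeed integral) entries, the Zariski closure $\mathbf{T}$ of the group generated by $\gamma$ sits inside a $\Q$-torus, and the ambient maximal torus through $\gamma$ is defined over $\Q$. Concretely, for $\mathrm{SL}(d)$ the relevant $\Q$-torus is $\mathbf{T} = $ the centralizer of $\gamma$, which decomposes over $\overline{\Q}$ according to the irreducible factors of $p_\gamma$.

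First I would prove $(1) \Leftrightarrow (2)$. Writing $F_{[\gamma]} = \Gamma g_\gamma AM$ with $g_\gamma^{-1}\gamma g_\gamma \in \exp(\lambda(\gamma))M$, the orbit is compact periodic in $\Gamma\backslash G/M$ iff $\Gamma \cap G_\gamma$ is a cocompact lattice in $G_\gamma = g_\gamma A M g_\gamma^{-1}$, the full centralizer of $\gamma$ (here I use that $\gamma$ loxodromic forces $G_\gamma$ to be exactly this torus times $M$). By the Borel--Harish-Chandra criterion recalled in the excerpt, $(\mathbf{T}\cap\Gamma)\backslash \mathbf{T}_\R$ is compact iff $\mathbf{T}$ is $\Q$-anisotropic, i.e.\ admits no nontrivial $\Q$-character. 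Now the key point: the centralizer torus $\mathbf{T} = Z_{\mathrm{SL}_d}(\gamma)$ is $\Q$-anisotropic iff $p_\gamma$ is irreducible over $\Q$. Indeed, if $p_\gamma = \prod_j q_j$ is the factorization into $\Q$-irreducibles, then $\R^d = \bigoplus_j \ker q_j(\gamma)$ is a $\Q$-rational $\gamma$-invariant decomposition, and the determinants on the blocks give nontrivial $\Q$-characters of $\mathbf{T}$ (unless there is a single block); conversely if $p_\gamma$ is irreducible then $\Q[\gamma] \cong \Q[x]/(p_\gamma)$ is a field $L$ of degree $d$, $\mathbf{T}_\Q$ is the norm-one subgroup of $L^\times$, and any $\Q$-character would factor through a $\Q$-rational quotient torus, forcing (by Galois descent and irreducibility of the $\mathfrak{S}$-action on eigenvalues, or directly since $L^\times/\!\sim$ has no proper $\Q$-rational quotient of the norm-one torus) that character to be trivial. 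Care is needed on the $M$-factor: passing from $\Gamma\backslash G$ to $\Gamma\backslash G/M$ only changes things by the finite group $M$, which does not affect (co)compactness, so this is a routine check.

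Next, $(3) \Rightarrow (2)$: suppose $p_\gamma$ is reducible, $p_\gamma = q_1 q_2$ with $q_i \in \Q[x]$ nontrivial; then the Jordan projection coordinates split into two groups $\{t_i : i \in I_1\}$ and $\{t_i : i \in I_2\}$ corresponding to the roots of $q_1$ and $q_2$, and since $\gamma \in \mathrm{SL}_d$ and each $q_i$ has constant term in $\Q$ equal (up to sign) to the product of the eigenvalues in that block, each block has $\prod_{i\in I_j}\mu_i \in \Q$; but more is true — actually I only need that $\det$ on one block, which is $\prod_{i \in I_1} e^{t_i} = \pm(\text{rational})$, yet the eigenvalues are algebraic units and the product over a proper Galois-stable subset... hmm, the cleanest route: if $p_\gamma$ factors then by the $(1)\Leftrightarrow(2)$ argument $\mathbf{T}$ has a nontrivial $\Q$-character $\chi$; its differential is a nonzero integer linear form on $\mathfrak{a}$ vanishing on $\log$ of the $\Q$-points, and evaluating the character relation on the eigenvalue tuple shows $\sum_{i \in I} t_i$ is forced to be $\log$ of a rational algebraic unit, which (being a sum of a Galois-stable proper subset of the $t_i$, whose full sum is $0$) must vanish — giving a violation of~(3). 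I expect the contrapositive organization $(\neg 2) \Rightarrow (\neg 3)$ to be cleanest: reducibility yields a $\gamma$-invariant proper rational subspace, hence (choosing $I$ = the eigenvalue-indices in that subspace) a Galois-stable proper $I$ with $\sum_{i\in I} t_i = \log|\det(\gamma|_{V_I})|$, and one checks this determinant is a rational number that is also an algebraic unit all of whose conjugates... in fact it equals $\pm 1$ is NOT automatic — instead I use that $\sum_{i} t_i = 0$ and the subset sum being $\log$ of a root of the rational polynomial's constant term forces it to be an integer multiple of...

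\textbf{Main obstacle.} The delicate step is the precise algebraic-number-theory bookkeeping in $(3)\Rightarrow(2)$: translating "$p_\gamma$ reducible" into "some proper subset sum of Jordan projection coordinates vanishes" requires knowing that the determinant of $\gamma$ restricted to a $\Q$-rational invariant block is not merely rational but actually has absolute value forcing the log-sum to be zero — and the honest statement ($(3) \Rightarrow (2)$ but \emph{not} conversely) reflects exactly that a reducible $p_\gamma$ can still have all proper subset sums nonzero unless one tracks Galois conjugacy. I would handle this by working with the Galois action on eigenvalues: a $\Q$-rational factorization groups the $\mu_i$ into Galois orbits' unions, so $I$ must be a union of $\mathrm{Gal}$-orbits, and then $\sum_{i\in I}t_i$ is $\mathrm{Gal}$-fixed, hence rational; since it is also $\R$-valued and its "complement sum" is $-\sum_{i\in I}t_i$, and both are realized as $\log$ of conjugate algebraic units — in the irreducible-over-each-block situation the product over a full Galois orbit of a unit is $\pm 1$ — I get $\sum_{i\in I} t_i = 0$. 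Getting the quantifiers and the "nontrivial subset" exactly right, and making peace with the one-directional nature of this implication, is where the real work lies; the torus-theory input $(1)\Leftrightarrow(2)$ is comparatively standard given the recalled Borel--Harish-Chandra facts.
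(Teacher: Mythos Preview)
Your treatment of $(1)\Leftrightarrow(2)$ is essentially correct and matches the paper's route through $\Q$-anisotropy of the centralizer torus $T_\gamma$ and the Borel--Harish-Chandra compactness criterion. The paper packages the direction ``$(2)\Rightarrow T_\gamma$ anisotropic'' via the auxiliary Lemma~\ref{lem-rational} (any element of $G_\gamma$ with all rational eigenvalues is $\pm\mathrm{Id}$) rather than your norm-one-torus description, but the content is the same. Your remark about the $M$-factor not affecting cocompactness is also handled the same way in the paper.

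The genuine gap is in $(3)\Rightarrow(2)$, which you flag as the ``main obstacle'' but which is in fact a one-line observation you are missing. You correctly reach $\sum_{i\in I}t_i=\log\lvert\det(\gamma\vert_{V_I})\rvert$ and then agonize over whether this determinant has absolute value $1$, launching into Galois orbits and ``conjugate algebraic units'' without ever closing the argument. The point you are overlooking is purely arithmetic: $\gamma\in\Gamma\subset\mathrm{SL}(d,\Z)$, so $p_\gamma\in\Z[x]$ is monic with constant term $(-1)^d\det\gamma=\pm 1$. If $p_\gamma=p_1p_2$ is a nontrivial factorization over $\Q$, then by Gauss's lemma $p_1,p_2\in\Z[x]$ are monic, and their (integer) constant terms multiply to $\pm 1$, hence each is $\pm 1$. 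Taking $I$ to be the indices of the roots of $p_1$, one gets $\prod_{i\in I}\mu_i=\pm 1$ and therefore $\sum_{i\in I}t_i=\log\lvert\prod_{i\in I}\mu_i\rvert=0$. This is exactly the paper's proof of $\neg(2)\Rightarrow\neg(3)$; no Galois bookkeeping is required, and the route you sketch does not actually reach the conclusion as written.
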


\begin{rem}
Here we give an example when $d=4$ that (1), (2) holds but (3) fails. We can find $\gamma$ in $\rm{SL}_4(\Z)$ by using the companion matrix such that $p_\gamma(x)=(x^2+(5-\sqrt{2})x+1)(x^2+(5+\sqrt{2})x+1)$. This polynomial $p_\gamma(x)$ is irreducible on $\Q[x]$ and has four different real roots. We can number them by their absolute values as $\lambda_1$ to $\lambda_4$. Then its roots satisfy that $\log|\lambda_1|+\log|\lambda_4|=\log|\lambda_2|+\log|\lambda_3|=0$.
\end{rem}

Before proving Lemma \ref{lem_cpt}, we need another lemma. Let $G_\gamma$ be the centralizer of $\gamma$ in $G$.
\begin{lem}\label{lem-rational}
 Let $\gamma$ be an element in $\Gamma$ such that its characteristic polynomial $p_\gamma$ is irreducible.
 If $\beta$ is an element in $G_\gamma$ with all eigenvalues rational, then $\beta$ is identity or minus identity.
\end{lem}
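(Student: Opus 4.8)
The plan is to work in the field $\mathbb{K} := \mathbb{Q}(\gamma)$, the number field obtained by adjoining an eigenvalue of $\gamma$, which has degree $d$ over $\mathbb{Q}$ since $p_\gamma$ is irreducible. The key structural fact is that the centralizer $G_\gamma$ of a loxodromic element with irreducible characteristic polynomial is (on the level of $\mathbb{Q}$-points of the algebraic group) the multiplicative group of the étale algebra $\mathbb{Q}[x]/(p_\gamma(x)) \cong \mathbb{K}$ intersected with $\mathrm{SL}_d$: concretely, any $\beta$ commuting with $\gamma$ is a polynomial in $\gamma$ with rational coefficients, because the commutant of a matrix with a single irreducible invariant factor is $\mathbb{Q}[\gamma]$. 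So $\beta = q(\gamma)$ for some $q \in \mathbb{Q}[x]$, and the eigenvalues of $\beta$ are exactly $q(\lambda_i)$ where the $\lambda_i$ are the $d$ distinct eigenvalues (Galois conjugates) of $\gamma$.

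First I would record that, since $\mathbb{K}/\mathbb{Q}$ has degree $d$ and the $\lambda_i$ form a full Galois orbit of size $d$, the element $q(\gamma)$ is conjugate (over $\mathbb{C}$, or over the Galois closure) to the diagonal matrix with entries $q(\lambda_1),\dots,q(\lambda_d)$, and these $d$ values are precisely the Galois conjugates of the single algebraic number $q(\lambda_1) \in \mathbb{K}$. Now the hypothesis is that every eigenvalue of $\beta$ is rational, i.e. $q(\lambda_i) \in \mathbb{Q}$ for all $i$. In particular $q(\lambda_1) \in \mathbb{Q} \subset \mathbb{K}$, so its minimal polynomial over $\mathbb{Q}$ has degree $1$, hence all of its conjugates coincide: $q(\lambda_1) = q(\lambda_2) = \cdots = q(\lambda_d) =: a \in \mathbb{Q}$. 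Therefore $\beta = q(\gamma)$ has all eigenvalues equal to $a$, and since $\beta$ is diagonalizable (it is a polynomial in the diagonalizable matrix $\gamma$, the $\lambda_i$ being distinct), we conclude $\beta = a \cdot I$. Finally $\beta \in G = \mathrm{SL}(d,\mathbb{R})$ forces $a^d = \det \beta = 1$; if $d$ is odd this gives $a = 1$ and $\beta = I$, while if $d$ is even this gives $a = \pm 1$ and $\beta = \pm I$. In all cases $\beta$ is $\pm I$, as claimed.

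The main obstacle, or rather the point that needs care, is the identification of $G_\gamma$ with $(\mathbb{Q}[\gamma])^\times \cap \mathrm{SL}_d$ — more precisely, justifying that an element $\beta$ of the \emph{real} Lie group centralizer $G_\gamma$ which moreover has rational eigenvalues must actually be a \emph{rational} polynomial in $\gamma$. The clean way is: since $p_\gamma$ is irreducible of degree $d = \dim$, the $\gamma$-module $\mathbb{Q}^d$ is cyclic (a single Jordan-type block over $\mathbb{Q}$), so its $\mathbb{Q}$-endomorphism algebra commuting with $\gamma$ is exactly $\mathbb{Q}[\gamma] \cong \mathbb{K}$; and over $\mathbb{R}$ the commutant is $\mathbb{R}[\gamma] \cong \mathbb{K} \otimes_\mathbb{Q} \mathbb{R}$, which since $\gamma$ is loxodromic (all eigenvalues real and distinct) is $\mathbb{R}^d$ acting diagonally. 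An element $\beta \in \mathbb{R}[\gamma]$ is then a real polynomial $q(\gamma)$, and the constraint that its $d$ eigenvalues $q(\lambda_i)$ — with $\lambda_i$ the distinct real roots — are rational forces, by a Vandermonde/interpolation argument on the $d$ distinct points $\lambda_i$, the interpolating polynomial $q$ (of degree $< d$) to have rational coefficients; then the Galois argument above applies verbatim. I would phrase the write-up to avoid invoking heavy algebraic-group machinery, staying at the level of the commutant of a matrix with irreducible characteristic polynomial and elementary Galois theory of $\mathbb{K} = \mathbb{Q}(\gamma)$.
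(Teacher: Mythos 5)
Your Galois argument (second paragraph) is sound once one knows $\beta=q(\gamma)$ with $q\in\Q[x]$, and it is essentially the paper's argument in a different dress. The genuine gap is exactly the step you flag as ``the point that needs care'', and the repair you propose does not work. From $\beta\in G_\gamma$ you only get $\beta\in\R[\gamma]$, i.e. $\beta=q(\gamma)$ with $q\in\R[x]$ of degree $<d$; your claim that rationality of the values $q(\lambda_i)$ forces, by Vandermonde/Lagrange interpolation at the nodes $\lambda_i$, that $q$ has rational coefficients is false. The interpolating polynomial through $(\lambda_i,\mu_i)$ with $\mu_i\in\Q$ has coefficients in the splitting field $K$, and these coefficients are rational precisely when the assignment $\lambda_i\mapsto\mu_i$ is $\mathrm{Gal}(K/\Q)$-equivariant — which, by transitivity of the Galois action on the roots, already amounts to the conclusion (all $\mu_i$ equal) you are trying to reach. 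In fact no argument can close this gap at the stated level of generality, because the statement fails for a merely real $\beta$: take $\gamma=\bigl(\begin{smallmatrix}0&-1\\ 1&3\end{smallmatrix}\bigr)\in\mathrm{SL}_2(\Z)$, whose characteristic polynomial $x^2-3x+1$ is irreducible with distinct real roots, and let $\beta$ act on the real eigenbasis of $\gamma$ by $2$ and $1/2$; then $\beta\in G_\gamma\subset\mathrm{SL}_2(\R)$ has rational eigenvalues and is not $\pm\mathrm{Id}$. (A smaller point: you invoke loxodromy of $\gamma$, which is not a hypothesis of the lemma; distinctness of the $\lambda_i$ follows from irreducibility in characteristic zero, but realness does not, and neither is needed for the Galois argument.)

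The correct route is the one your first paragraph actually uses: the hypothesis that must be added (and which is available in every application in the paper, where $\beta$ is either an element of $\Gamma$ or a $\Q$-point of a torus) is that $\beta$ has \emph{rational} entries. Then the commutant of the cyclic matrix $\gamma$ inside $M_d(\Q)$ is $\Q[\gamma]\cong\Q[x]/(p_\gamma)$, so $\beta=q(\gamma)$ with $q\in\Q[x]$, the eigenvalues $q(\lambda_i)$ are the full set of Galois conjugates of $q(\lambda_1)$, rationality forces them all equal to some $a\in\Q$, diagonalizability gives $\beta=a\,\mathrm{Id}$, and $\det\beta=1$ gives $a=\pm1$. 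This is parallel to the paper's proof, which diagonalizes $\gamma$ over the splitting field and applies a Galois automorphism $\sigma$ to the relations $\beta v_j=\mu_j v_j$ — a step that likewise tacitly uses $\sigma(\beta)=\beta$, i.e. rationality of the entries of $\beta$. So: with the rationality of $\beta$ made explicit, your argument is correct and takes essentially the same approach as the paper (Galois transitivity coming from irreducibility); without it, your proposed interpolation fix is a step that genuinely fails.
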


\begin{proof}
The element $\gamma$ is diagonalizable in the splitting field of $p_\gamma$, a Galois extension $K$ of $\Q$. There exists a vector $v_1\in K^d$ such that $\gamma v_1=\lambda_1v_1$ with $\lambda_1\in K$. Since $p_\gamma$ is irreducible, the Galois group $Gal(K/\Q)$ acts transitively on the roots of $p_\gamma$. We can get eigenvalues $\lambda_2,\cdots,\lambda_d$ and eigenvectors $v_2,\cdots, v_d$ as Galois conjugates of $\lambda_1$ and $v_1$ with $\gamma v_j=\lambda_j v_j$. The numbers $\lambda_j$ are distinct, hence $v_1,\cdots, v_d$ form a basis.

Due to $\beta$ commutes with $\gamma$, we have 
\[\beta v_j=\mu_j v_j\]
for some $\mu_j$ rational. Take $\sigma$ in the Galois group $Gal(K/\Q)$, then $\beta\sigma(v_j)=\mu_j\sigma(v_j)$. The Galois group $Gal(K/\Q)$ acts on the set $\{v_1,\cdots ,v_d \}$ transitively ($p_\gamma$ irreducible), which implies that $\mu_j$'s are equal. Since we are in $\slr$, we obtain the lemma.
\end{proof}

\begin{proof}[Proof of Lemma \ref{lem_cpt}]
We first prove $(3)$ implies $(2)$: If $p_\gamma(x)$ is reducible then $p_\gamma(x)=p_1(x)p_2(x)$ with $p_1,p_2$ monic and constant terms of $p_1,p_2$ equaling $\pm 1$. Suppose the absolute values of roots of $p_1$ are $\exp(t_i)$ for $i\in I\subset\{1,\cdots, d\}$. Then we obtain $\sum_{i\in I}t_i=0$ with $I$ non-trivial.

Now we prove that $(1)$ is equivalent to $(1')$, a condition about the centralizer of $\gamma$. Let $T_\gamma$ be the centralizer of $\gamma$ in $\rm{SL}(d,\C)$. The $A$-orbit $F_{[\gamma]}$ can be written as $F_{[\gamma]}=\Gamma gAM$, and $\gamma\in gAMg^{-1}$ due to the definition of $F_{[\gamma]}$. Because $\gamma$ is loxodromic, the centralizer $G_\gamma$ of $\gamma$ in $G$ equals $gAMg^{-1}$, the real points of the maximal $\R$-split $\Q$-torus $T_\gamma$. Now $F_{[\gamma]}$ is compact in $\Gamma\backslash G/M$ is equivalent to $\Gamma\cap G_\gamma\backslash G_\gamma=\Gamma_\gamma\backslash G_\gamma$ compact, where $\Gamma_\gamma$ is the centralizer of $\gamma$ in $\Gamma$. Notice that $\Gamma_\gamma\backslash G_\gamma$ is a finite cover of $(\Gamma_0)_\gamma\backslash G_\gamma= (T_\gamma)_\Z\backslash (T_\gamma)_\R$ for the $\Q$-torus $T_\gamma$.
Then due to \cite[Thm 9.4]{borel_arithmetic_1962}, $(1)$ is equivalent to 
\begin{itemize}
\item [1'] $T_\gamma$ is a $\Q$-anisotropic $\Q$-torus.
\end{itemize}
 
Then we prove $(2)$ implies $(1')$. Take a $\gamma$ satisfying $(2)$. If $T_\gamma$ is not $\Q$-anisotropic, then there exists a $\Q$-split subtorus \cite[Lem 8.4]{borel_arithmetic_1962}. Take $\beta$ in the $\Q$-points of this $\Q$-split torus, then all the eigenvalues of $\beta$ is rational. 
Hence by Lemma \ref{lem-rational}, the element $\beta$ must be $\pm \rm{Id}_d$. There is no nontrivial $\Q$-split subtorus of $T_\gamma$. We obtain a contradiction. So $T_\gamma$ is $\Q$-anisotropic.

Finally, we prove $(1')$ implies $(2)$. If $p_\gamma$ is reducible, suppose $\lambda_1,\cdots, \lambda_\ell$ with $1\leq \ell<d$ is an orbit of the Galois group $Gal(K/\Q)$ on the roots of $p_\gamma$. Here $K$ is the splitting field of $p_\gamma$. Set $v_j\in K^d$ the corresponding eigenvectors of $\lambda_j$, which is also an orbit of the Galois group $Gal(K/\Q)$. For any $\beta$ in the $\Q$ points of $T_\gamma$, since $\lambda_j$'s are different, we have for $1\leq j\leq \ell$
\[\beta v_j=\mu_jv_j. \]
On the symmetric power $Sym^\ell\R^d$, we have
\[\beta v_1\cdots v_\ell=\mu_1\cdots \mu_\ell(v_1\cdots v_\ell). \]
Now the vector $v_1\cdots v_\ell$ is fixed under the Galois group, so it is rational, hence $\mu_1\cdots \mu_\ell$ is also rational. We can define a $\Q$ character by $\chi(\beta)=\mu_1\cdots \mu_\ell$. Due to $1\leq \ell<d$, this $\Q$ character is non-trivial. So $T_\gamma$ is not $\Q$-anisotropic.
\end{proof}

\paragraph{Sparse set of loxodromic elements}
Let $\Gamma^{lox}_c$ be the subset of $\Gamma^{lox}$ whose elements also satisfy the condition (1) or (2) in Lemma \ref{lem_cpt}.
\begin{lem}\label{lem_noncpc}
There exists $1>\kappa_1>0$ such that for $t>1$, 
$$ \vert ( \Gamma^{lox} \setminus \Gamma_c^{lox}) \cap D_t \vert \ll \vol(D_t)^{1-\kappa_1} .$$
\end{lem}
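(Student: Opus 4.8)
The statement to prove, Lemma~\ref{lem_noncpc}, asserts that loxodromic elements of $\Gamma < \mathrm{SL}(d,\Z)$ whose characteristic polynomial is \emph{reducible} over $\Q$ form a sparse subset of $\Gamma$: their count inside $D_t$ is $O(\vol(D_t)^{1-\kappa_1})$ for some $\kappa_1>0$. By Lemma~\ref{lem_cpt}, $\Gamma^{lox}\setminus\Gamma^{lox}_c$ consists precisely of loxodromic $\gamma\in\Gamma$ whose characteristic polynomial $p_\gamma$ factors over $\Q$ as $p_\gamma = p_1 p_2$ with $p_1,p_2\in\Z[x]$ monic of degrees $1\le k\le d-1$ and $d-k$ and constant terms $\pm 1$ (the constant terms are units because $\gamma\in\mathrm{SL}(d,\Z)$ and the product of the constant terms is $\pm 1$, while each $p_i$ has integer coefficients and leading coefficient $1$, so both constant terms divide $\pm 1$).

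**The reduction to counting.** The plan is to bound the number of such $\gamma$ by a counting argument that exploits the block-triangular structure forced by a rational invariant subspace. If $p_\gamma$ is reducible, $\gamma$ has a $\gamma$-invariant rational subspace $V\subset\Q^d$ of some dimension $k$ with $1\le k\le d-1$; passing to the primitive sublattice $L = V\cap\Z^d$ and choosing $\gamma_0\in\mathrm{GL}(d,\Z)$ carrying $\Z^k\oplus 0$ to $L$, the element $\gamma_0^{-1}\gamma\gamma_0$ is block upper-triangular with an upper-left $k\times k$ block in $\mathrm{SL}(k,\Z)$ (up to sign) and a lower-right $(d-k)\times(d-k)$ block. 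However, $\gamma_0$ is not bounded in general, so this naive conjugation does not control the Cartan projection. Instead I would count directly: the number of $\gamma\in\mathrm{SL}(d,\Z)$ with $\|\gamma\|\le e^t$ (equivalently $\underline a(\gamma)\in B_{\frak a}(0,t)$, up to a fixed multiplicative constant relating operator norm and $\exp\chi^{\alpha_1}$) and possessing a $\gamma$-invariant rational subspace of dimension $k$ can be organized by first summing over the choice of primitive sublattice $L$ of covolume $q$, then over the possible restrictions $\gamma|_L$ and $\gamma$ on the quotient. The key point is that the norm constraint $\|\gamma\|\le e^t$ forces both the covolume $q$ of $L$ and the norms of the induced block maps to be controlled, and the total count is dominated by a term of strictly smaller exponential rate than $\vol(D_t)\asymp e^{\delta_0 t} t^{(\dim A -1)/2}$, where $\delta_0 = 2\max_{Y\in B_{\frak a}(0,1)}\rho(Y)$.

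**Executing the count.** Concretely, I expect the cleanest route is this. A reducible loxodromic $\gamma$ with an invariant subspace of dimension $k$ determines a point of the Grassmannian variety $\mathrm{Gr}(k,d)$ defined over $\Q$, namely $[V]\in\mathrm{Gr}(k,d)(\Q)$; the stabilizer of $[V]$ in $\mathrm{SL}(d,\R)$ is a parabolic subgroup $Q_k = L_k N_k$, and $\gamma$ lies in the $\Q$-points of a conjugate $\gamma_0 Q_k \gamma_0^{-1}$. Using reduction theory for the parabolic $Q_k$ (Siegel sets for $\mathrm{SL}(d,\Z)\cap Q_k$, cf.\ the Siegel domain material of Borel--Harish-Chandra already invoked in this section), the count $|\{\gamma\in\mathrm{SL}(d,\Z)\cap \gamma_0 Q_k\gamma_0^{-1}: \|\gamma\|\le e^t\}|$ summed over the $\Q$-rational flags is asymptotically governed by the volume of the intersection $D_t \cap \gamma_0 Q_k\gamma_0^{-1}$, and because $Q_k$ is a proper parabolic, the relevant $\frak a^+$-region is a slice of $B_{\frak a}(0,t)$ contained in a neighbourhood of finitely many walls — equivalently the Cartan projection of an element normalizing a proper parabolic lies in a lower-dimensional face direction, so the volume estimate from Harish-Chandra's formula gives exponential rate at most $\delta_0 - \kappa'$ for some $\kappa'>0$. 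Indeed the maximal rate of $2\rho$ over $B_{\frak a}(0,1)$ restricted to the cone of Cartan projections compatible with a proper invariant subspace is strictly less than $\delta_0$, by the same strict-convexity argument as in Lemma~\ref{volume_reste}. Summing the resulting bound over the finitely many conjugacy types of proper parabolics $Q_1,\dots,Q_{d-1}$ (equivalently over $k\in\{1,\dots,d-1\}$) and noting each contributes $O(\vol(D_t)^{1-\kappa_1})$ for a uniform $\kappa_1>0$ depending only on $d$, yields the claim.

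**Main obstacle.** The delicate point is making the counting over $\Q$-rational flags uniform: there are infinitely many rational $k$-subspaces, and one must show that the sum over them of the ``local'' counts $|\{\gamma: \gamma V = V, \|\gamma\|\le e^t\}|$ converges and is dominated by the single largest-covolume contribution. The cleanest way I know to handle this is to bound $|(\Gamma^{lox}\setminus\Gamma^{lox}_c)\cap D_t|$ by $\sum_{k=1}^{d-1}|\{\gamma\in\mathrm{SL}(d,\Z): \|\gamma\|\le e^t,\ \exists\,\text{primitive } L\subset\Z^d,\ \mathrm{rk}\,L=k,\ \gamma L = L\}|$ and then, for each $\gamma$ in the count, to charge it to the \emph{smallest-covolume} such $L$ (which is unique when $p_\gamma = p_1 p_2$ with coprime factors, being the $p_1$-eigenlattice); the covolume $q$ of this $L$ is then bounded polynomially in $e^t$ (since $L = \ker p_1(\gamma)\cap\Z^d$ and $p_1(\gamma)$ has entries of size $\ll e^{(d-1)t}$). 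For each fixed $q$, the number of primitive sublattices of covolume $q$ is $O(q^{d-1})$, and for each such $L$ the number of $\gamma$ preserving it with $\|\gamma\|\le e^t$ is $O(e^{(\delta_0-\kappa')t})$ by the volume estimate above applied in the block-triangular group. Summing over $q\le e^{Ct}$ the product $q^{d-1}\cdot (\text{per-lattice count})$, where the per-lattice count itself decays in $q$, gives the desired $O(\vol(D_t)^{1-\kappa_1})$. I would present this last summation carefully but not grind through the exact exponents; the existence of \emph{some} $\kappa_1>0$ is robust, coming from the strict gap between $\delta_0$ and the exponential growth rate of lattice points in any proper parabolic subgroup of $\mathrm{SL}(d,\R)$.
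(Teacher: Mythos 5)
Your reduction is fine up to the point where the counting begins: by Lemma \ref{lem_cpt}, the elements of $\Gamma^{lox}\setminus\Gamma^{lox}_c$ are exactly the loxodromic $\gamma$ whose characteristic polynomial is reducible over $\Q$, and such a $\gamma$ does preserve a proper rational subspace. The gap lies in the two quantitative claims that carry the whole argument. First, your stated mechanism for the per-parabolic power saving is incorrect: an element stabilizing a proper rational subspace need not have Cartan projection near the walls of $\frak a^+$ (the diagonal group $A$, in particular $A^{++}$, lies in every standard parabolic and in all of its conjugates), so the strict-convexity argument of Lemma \ref{volume_reste} does not apply to $D_t\cap\gamma_0 Q_k\gamma_0^{-1}$; the true reason lattice points of a fixed proper parabolic are sparse is its smaller dimension/growth rate, and in any case the bound you need must be uniform over the conjugates $\gamma_0 Q_k\gamma_0^{-1}$, i.e.\ over all rational subspaces, which conjugation by the unbounded $\gamma_0$ does not provide. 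Second, and more seriously, the summation over rational subspaces does not close as written: the invariant primitive sublattice you charge $\gamma$ to can have covolume as large as $e^{Ct}$, and the number of primitive sublattices of covolume at most $Q$ is polynomial in $Q$, hence exponential in $t$; multiplying a uniform per-lattice bound $O(e^{(\delta_0-\kappa')t})$ by exponentially many lattices overshoots $\vol(D_t)$ badly. Everything therefore hinges on your parenthetical claim that ``the per-lattice count itself decays in $q$'' fast enough to beat the $q^{d-1}$ count of sublattices summed up to $q\le e^{Ct}$ --- but no argument is given, it is not an obvious fact (one would have to control the off-diagonal block $\mathrm{Hom}(\Z^d/L,L)$ uniformly in terms of the successive minima of $L$), and it is precisely the crux.

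For contrast, the paper sidesteps all of this by using condition (3) of Lemma \ref{lem_cpt} rather than reducibility: failure of compactness forces a non-trivial sub-sum of the Jordan projection to vanish, which for a loxodromic integer matrix means $h_i(\gamma):=\det(1-\wedge^i\gamma)\det(1+\wedge^i\gamma)=0$ for some $1\le i\le d-1$. These are finitely many fixed integer polynomials not vanishing identically on $\slr$, and Proposition \ref{prop_subvar} (power saving for integer points on a subvariety, proved via effective counting in congruence subgroups plus a Lang--Weil bound) gives the estimate directly, with no summation over rational subspaces. If you wanted to complete your route you would essentially be re-proving a sparsity theorem for integer matrices with reducible characteristic polynomial, which in the literature is also obtained by sieving modulo primes --- i.e.\ by the same mechanism as Proposition \ref{prop_subvar}.
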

Before proving this lemma, we need a result similar to Theorem 1.8 in \cite{gorodnikLiftingRestrictingSifting2012}. The proof is given in the appendix \ref{sec-subvar}.
\begin{prop}\label{prop_subvar}
Let $h$ be a polynomial on $\slr$ with $\Z$ coefficients and not vanishing identically on $\slr$. Then there exists $\kappa_h>0$ such that for $t>1$
\[ |\{\gamma\in \Gamma\cap D_t,\ h(\gamma)=0 \}|\ll \vol(D_t)^{1-\kappa_h} . \]
\end{prop}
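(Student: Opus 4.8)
The plan is to reduce the counting of lattice points in $\Gamma \cap D_t$ lying on the hypersurface $\{h = 0\}$ to the angular equidistribution machinery already available, namely Theorem \ref{theo-gorodnik-nevo} together with the volume estimate \eqref{volume_D_t} and Lemma \ref{lem-vollip}. The starting observation is that $D_t$ is a bi-$K$-invariant set, so an element $\gamma \in \Gamma \cap D_t$ is determined, up to the finite-dimensional data of its Cartan projection $\underline{a}(\gamma) \in B_{\mathfrak a}(0,t)$ and its $K$-components $k,l$ (with the usual ambiguity in $Z_K(\exp\underline{a}(\gamma))$), by the pair $(\gamma_o^+, \gamma_o^-) = (k\eta_0, lk_\iota\eta_0) \in \calF \times \calF$. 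A polynomial $h$ with integer coefficients, non-vanishing on $\slr$, restricts on the large loxodromic part $D_t^{reg}\setminus D_t^{\epsilon t}$ to a function that, after conjugating $\gamma$ to its diagonal form on the flat $(\gamma^+\gamma^-)_X$, becomes (up to bounded error coming from Proposition \ref{prop-lemroblin} and Lemma \ref{lem-cartan-lox}) a function of the form $P(\exp\lambda(\gamma))$ evaluated in a frame built from $(\gamma^+,\gamma^-)$; this is where a sifting/dimension argument forces the locus $h=0$ to be a proper subvariety in the boundary variables or in $\mathfrak a$.

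First I would dispose of the singular part: by Lemma \ref{count-reste}, $|\Gamma \cap D_t^{\epsilon t}| \ll \vol(D_t)^{1-\kappa(\epsilon)}$ for $\epsilon < \epsilon_G$, so it suffices to bound $|\{\gamma \in \Gamma \cap (D_t^{reg}\setminus D_t^{\epsilon t}),\ h(\gamma)=0\}|$. On this set Proposition \ref{prop-lemroblin} applies (for $t$ large, with the parameter $\varepsilon$ fixed small), so every such $\gamma$ is loxodromic with $\gamma^\pm$ within $\varepsilon$ of $\gamma_o^\pm$ and with $\|\lambda(\gamma) - \underline{a}(\gamma)\|$ bounded. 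Second, I would cover $B_{\mathfrak a}(0,t)\cap\mathfrak a^{++}$ away from the walls by $O(\mathrm{poly}(t))$ boxes $Q$ of bounded side length; on the preimage $K\exp(Q)K$ the Cartan projection varies in a bounded set, and the condition $h(\gamma)=0$ becomes a real-analytic condition $\Phi_Q(\gamma_o^+, \gamma_o^-)=0$ on the boundary pair, where $\Phi_Q$ is not identically zero because $h$ is not identically zero on $\slr$ and $\slr = K\exp(Q)K$ has non-empty interior for suitable $Q$ — this is the step where I would invoke the hypothesis carefully, arguing that if $\Phi_Q$ vanished identically then $h$ would vanish on an open subset of $\slr$, hence everywhere by irreducibility of $\slr$ as a variety, contradiction.

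Third, to count lattice points on each such proper analytic subvariety $Z_Q := \{(\xi,\eta) : \Phi_Q(\xi,\eta)=0\} \subsetneq \calF\times\calF$, I would apply Theorem \ref{theo-gorodnik-nevo} with $x=o$ and a Lipschitz test function $\psi_\varepsilon$ that is $1$ on the $\varepsilon$-neighbourhood of $Z_Q$ and supported in the $2\varepsilon$-neighbourhood, with $Lip(\psi_\varepsilon) \ll 1/\varepsilon$. Since $Z_Q$ is a proper subvariety of the compact manifold $\calF\times\calF$, and $\mu_o\otimes\mu_o$ is (by \eqref{equ.gmu0} and smoothness) absolutely continuous with bounded density with respect to the Riemannian volume, we get $\int \psi_\varepsilon\, \dd(\mu_o\otimes\mu_o) \ll \varepsilon$. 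Then Theorem \ref{theo-gorodnik-nevo} gives
\[
|\{\gamma \in \Gamma\cap D_t^{reg} : (\gamma_o^+,\gamma_o^-)\in (Z_Q)_\varepsilon\}| \ll \vol(D_t)\big(\varepsilon + Lip(\psi_\varepsilon)\vol(D_t)^{-\kappa}\big) \ll \vol(D_t)\big(\varepsilon + \varepsilon^{-1}\vol(D_t)^{-\kappa}\big),
\]
and the elements of $\Gamma\cap D_t$ with $h(\gamma)=0$ in the box all lie in $(Z_Q)_\varepsilon$ for $\varepsilon$ the fixed constant from Proposition \ref{prop-lemroblin}. Summing over the $O(\mathrm{poly}(t))$ boxes and absorbing the polynomial into a slightly smaller exponent via \eqref{volume_D_t}, optimizing $\varepsilon$ (or simply taking $\varepsilon = \vol(D_t)^{-\kappa/2}$, noting that the $\varepsilon$-neighbourhood of $Z_Q$ still captures the true zero locus once $t$ is large enough because the lattice points there satisfy the exact equation $h=0$, hence lie on $Z_Q$ itself up to the bounded conjugation error absorbed into a fattening), yields the bound $\ll \vol(D_t)^{1-\kappa_h}$ with $\kappa_h$ any constant below $\kappa/2$.

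The main obstacle I anticipate is the bookkeeping in the third step: the passage from the exact algebraic condition $h(\gamma)=0$ to membership in a \emph{fixed} $\varepsilon$-neighbourhood of a proper subvariety $Z_Q$ of $\calF\times\calF$ requires controlling how the frame reconstructing $\gamma$ from $(\gamma_o^+,\gamma_o^-,\underline{a}(\gamma))$ distorts under the bounded ambiguities (the $Z_K(\exp\underline{a}(\gamma))$ coset, the gap between $\lambda$ and $\underline a$, the box discretization of the Cartan projection), and verifying that $\Phi_Q$ is genuinely not identically zero for enough boxes $Q$ — equivalently, that the vanishing locus of $h$ cannot asymptotically fill up a positive-measure set of boundary directions for every scale. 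An alternative, cleaner route that avoids boxes altogether is to cite Proposition \ref{prop_subvar} as a special case of the Gorodnik--Nevo sifting results \cite{gorodnikLiftingRestrictingSifting2012} directly (this is how the paper frames it, deferring the proof to Appendix B), in which case the only work is to check that $D_t$ is an admissible family of sets in their sense, which follows from \eqref{volume_D_t} and Lemma \ref{lem-vollip}.
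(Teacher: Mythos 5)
Your main route has a genuine gap at the second step, and it is structural rather than bookkeeping. On a box $K\exp(Q)K$ with $Q$ of \emph{bounded} side length, the condition $h(\gamma)=0$ is a condition on the triple $(\gamma_o^+,\gamma_o^-,\underline{a}(\gamma))$, and the Cartan projection still ranges over the ($\dim\mathfrak a$)-dimensional box $Q$. The zero locus of $h$ in $K\exp(Q)K$ is (generically) of codimension one in the whole group, so its projection to the flag-pair coordinates typically has nonempty interior in $\calF\times\calF$, and can be all of it; there is then no proper subvariety $Z_Q$ to fatten. Your contradiction argument does not repair this: ``$\Phi_Q\equiv 0$'' would only mean that for every $(\xi,\eta)$ there \emph{exists some} $v\in Q$ with $h(k_\xi e^v l_\eta^{-1})=0$, which does not force $h$ to vanish on an open subset of $\slr$. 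Shrinking the boxes to side $\varepsilon$ does not rescue the argument cheaply either: to convert the exact equation $h(\gamma)=0$ into membership in an $\varepsilon$-neighbourhood of a slice variety at a fixed reference Cartan projection you pay the gradient of $h$ on $D_t$, which is exponentially large in $t$ (the matrix entries of $e^{\underline a(\gamma)}$ are of size $e^{O(t)}$); so the admissible $\varepsilon$ is exponentially small, the Lipschitz cost $\mathrm{Lip}(\psi_\varepsilon)\sim\varepsilon^{-1}$ in Theorem \ref{theo-gorodnik-nevo} then competes against $\vol(D_t)^{-\kappa}=e^{-\kappa\delta_0 t}$ with constants you cannot control, and you would additionally need $\mu_o\otimes\mu_o$-measure bounds for $\varepsilon$-neighbourhoods of the slice varieties that are uniform in the reference point, i.e.\ uniform transversality, which can degenerate. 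So the angular-equidistribution approach, as written, does not yield the power saving.

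The paper's actual proof is an arithmetic congruence sieve, with no archimedean geometry of the zero locus at all: one reduces modulo a large prime $p$, notes that $\{\gamma\in\Gamma\cap D_t:\ h(\gamma)=0\}$ lies in the union of the cosets $\gamma\Gamma(p)$ over solutions of $h=0$ in $\mathrm{SL}_d(\Z/p\Z)$, bounds the number of such solutions by $\ll p^{\dim G-1}$ (a Lang--Weil type bound, valid once $p$ exceeds the coefficients of $h$), and counts each coset intersected with $D_t$ by the uniform effective counting statement for congruence subgroups (Lemma \ref{lem:congruence}, itself from Gorodnik--Nevo plus Lipschitz well-roundedness of $D_t$ via Lemma \ref{lem-vollip}); choosing $p\asymp e^{\epsilon t/\dim G}$ by Bertrand's postulate gives the saving. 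Your fallback suggestion of citing the Gorodnik--Nevo sifting theorem directly is in the right circle of ideas, but the paper explicitly points out that no detailed proof is given there for the family $D_t$, which is why it supplies the sieve argument; the substance is not ``checking admissibility'' but the combination of the uniform-in-$p$ coset count and the $\mathbb{F}_p$-point bound, neither of which appears in your main argument.
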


\begin{proof}[Proof of Lemma \ref{lem_noncpc}]
By Lemma \ref{lem_cpt}, the number of elements $\gamma$'s not satisfying condition (1) is less than that not satisfying condition (3). The condition (3) in Lemma \ref{lem_cpt} can be translate to equations: $h_i(\gamma):=\det(1-\wedge^i\gamma)\det(1+\wedge^i\gamma)=0$. Then by power saving of integer points in subvarieties (Proposition \ref{prop_subvar}), we obtain the result.
\end{proof}

As a corollary, we can replace $o$ by another point $x$ in $X$, similar to Lemma \ref{lem-dtx}.
\begin{lem}\label{lem_axo}
 For $x\in X$ with $d_X(x,o)\leq \frac{\kappa_1 t}{4(1-\kappa_1)}$, we have
 \[ \vert ( \Gamma^{lox} \setminus \Gamma_c^{lox}) \cap D_t(x) \vert \ll \vol(D_t)^{1-\kappa_1/2}. \]
\end{lem}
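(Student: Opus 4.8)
The plan is to reduce Lemma~\ref{lem_axo} to Lemma~\ref{lem_noncpc} by the same Cartan-projection comparison used to derive Lemma~\ref{lem-dtx} from Lemma~\ref{count-reste}. First I would note that for any $\gamma$ the displacement seen from $x$ and from $o$ differ by at most $2\dd_X(x,o)$: precisely, by Lemma~\ref{lem-cartan-points}, $\|\underline{a}_o(\gamma)-\underline{a}_x(\gamma)\|\leq 2\dd_X(x,o)$. Hence if $\gamma\in D_t(x)$, i.e. $\underline{a}_x(\gamma)\in B_{\frak a}(0,t)$, then $\underline{a}_o(\gamma)\in B_{\frak a}(0,t+2\dd_X(x,o))$, so $\gamma\in D_{t+2\dd_X(x,o)}$. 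Also $\gamma$ being loxodromic, $\gamma\in\Gamma^{lox}$, and membership in $\Gamma^{lox}_c$ (equivalently condition (2) of Lemma~\ref{lem_cpt}, a property of the characteristic polynomial of $\gamma$ alone) does not depend on the base point. Therefore
\[
(\Gamma^{lox}\setminus\Gamma^{lox}_c)\cap D_t(x)\subset (\Gamma^{lox}\setminus\Gamma^{lox}_c)\cap D_{t+2\dd_X(x,o)},
\]
and Lemma~\ref{lem_noncpc} gives the bound $\ll \vol(D_{t+2\dd_X(x,o)})^{1-\kappa_1}$.

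The remaining step is to absorb the shift $t\mapsto t+2\dd_X(x,o)$ using the hypothesis $\dd_X(x,o)\leq \frac{\kappa_1 t}{4(1-\kappa_1)}$. From this hypothesis,
\[
(1-\kappa_1)\bigl(t+2\dd_X(x,o)\bigr)\leq (1-\kappa_1)t+\tfrac{\kappa_1}{2}t=\bigl(1-\tfrac{\kappa_1}{2}\bigr)t.
\]
Combining with the two-sided estimate $\vol(D_s)\asymp s^{\frac{\dim A-1}{2}}e^{\delta_0 s}$ from \eqref{volume_D_t} (the polynomial factor only changes the implied constant once $\dd_X(x,o)\ll t$), we get
\[
\vol(D_{t+2\dd_X(x,o)})^{1-\kappa_1}\ll e^{(1-\kappa_1)\delta_0 (t+2\dd_X(x,o))}\, t^{O(1)}\ll e^{(1-\kappa_1/2)\delta_0 t}\,t^{O(1)}\ll \vol(D_t)^{1-\kappa_1/2},
\]
after enlarging the implied constant to swallow the bounded polynomial discrepancy (this is exactly the computation in the proof of Lemma~\ref{lem-dtx}). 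Chaining these inequalities yields $\vert(\Gamma^{lox}\setminus\Gamma^{lox}_c)\cap D_t(x)\vert\ll\vol(D_t)^{1-\kappa_1/2}$, which is the claim.

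I do not anticipate a genuine obstacle here; the lemma is a routine ``move the base point'' corollary. The only point needing a little care is the bookkeeping with the polynomial prefactor $t^{(\dim A-1)/2}$ in $\vol(D_t)$: one must check that under $\dd_X(x,o)\ll t$ the ratio of prefactors at $t$ and at $t+2\dd_X(x,o)$ stays polynomially (indeed boundedly, after choosing constants) controlled, so that it can be absorbed into the exponential gain coming from the strict inequality $1-\kappa_1<1-\kappa_1/2$. This is handled exactly as in Lemma~\ref{lem-dtx}, and the constant $4(1-\kappa_1)$ in the denominator of the hypothesis is chosen precisely so that the exponent improves from $1-\kappa_1$ to $1-\kappa_1/2$.
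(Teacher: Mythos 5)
Your proof is correct and follows essentially the same route as the paper: Lemma \ref{lem-cartan-points} to compare $\underline{a}_x(\gamma)$ with $\underline{a}_o(\gamma)$, the inclusion into $D_{t+2\dd_X(x,o)}$, Lemma \ref{lem_noncpc}, and the arithmetic $(1-\kappa_1)(t+2\dd_X(o,x))\leq(1-\kappa_1/2)t$ together with the two-sided volume estimate to absorb the shift. The only cosmetic difference is that the paper substitutes the bound $2\dd_X(x,o)\leq\frac{\kappa_1 t}{2(1-\kappa_1)}$ immediately, but the computation is identical.
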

\begin{proof}
By Lemma \ref{lem-cartan-points}, we have
\[\| \underline{a}_o(\gamma)-\underline{a}_x(\gamma )\|\leq 2 d_X(x,o)\leq\frac{\kappa_1 t}{2(1-\kappa_1)}. \]
Therefore by Lemma \ref{lem_noncpc} and $\vol(D_t)\in [1/C,C]e^{\delta_0 t}t^{\frac{\dim A -1}{2}}$, we obtain
\begin{align*}
|\{\gamma\in \Gamma^{lox}-\Gamma^{lox}_c,\ \underline{a}_x(\gamma) \in B_{\frak a}(0,t) \}|&\leq |\{\gamma\in \Gamma^{lox}-\Gamma^{lox}_c,\ \underline{a}_o(\gamma) \in B_{\frak a}(0,t+\frac{\kappa_1 t}{2(1-\kappa_1)}) \}|\\
&\ll \vol(D_{t+\frac{\kappa_1 t}{2(1-\kappa_1)}})^{1-\kappa_1}\ll \vol(D_t)^{1-\kappa_1/2}.
\end{align*}
The proof is complete.
\end{proof}

\subsection{Equidistribution for compactly supported functions}
\label{sec:equicpt}

In order to make $\Gamma\backslash G/M$ a manifold, from now on we only consider $\Gamma=\Gamma_1$.
Similar to the cocompact case, we also need to change the formulation to conjugacy classes of loxodromic elements. Let $\calG_c^{lox}$ be the set of $\Gamma$ conjugacy classes of $\Gamma_c^{lox}$. 
\begin{lem}\label{lem-gloxc}
There is a bijection between $\calG_c^{lox}$ and $\calG(A)$.
\end{lem}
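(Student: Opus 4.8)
The plan is to mimic the proof of Lemma~\ref{lem-gaglox} from the cocompact case, with the key adjustment that the analogue of Selberg's Lemma~\ref{lem-selberg} is no longer automatic: in the $\mathrm{SL}(d,\mathbb{Z})$ setting only loxodromic elements satisfying the arithmetic condition of Lemma~\ref{lem_cpt} yield compact periodic $A$-orbits. This is precisely why we restrict to $\Gamma_c^{lox}$ and to $\calG_c^{lox}$ on one side, and to $\calG(A)$ (which by definition only records \emph{compact} periodic tori) on the other.

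First I would define the map $\Psi: \calG_c^{lox} \rightarrow \calG(A)$ by $[\gamma] \mapsto (\lambda(\gamma), F_{[\gamma]})$, exactly as before: recall from Section~4 that for $\gamma \in \Gamma^{lox}$ one picks $g_\gamma$ with $g_\gamma^{-1}\gamma g_\gamma \in \exp(\lambda(\gamma))M$, sets $F_{[\gamma]} = \Gamma g_\gamma A M$, and has $\lambda(\gamma) \in \Lambda(F_{[\gamma]}) \cap \frak{a}^{++}$ by \eqref{eq_gg+}. The only new point is that $F_{[\gamma]} \in C(A)$: this is exactly the equivalence $(1) \Leftrightarrow (2)$ of Lemma~\ref{lem_cpt}, since $\gamma \in \Gamma_c^{lox}$ means its characteristic polynomial is irreducible over $\mathbb{Q}$. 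Hence $\Psi$ is well-defined. Conversely I would define $\Phi: \calG(A) \rightarrow \calG_c^{lox}$ by $(Y, F) \mapsto [\gamma_Y]$: writing $F = \Gamma g A M$, the relation $\Gamma g M = \Gamma g \exp(Y) M$ produces $\gamma_Y \in \Gamma$ with $\gamma_Y g = g \exp(Y) m_Y$, and $\gamma_Y$ is unique because $\Gamma = \Gamma_1$ acts freely on $G/M$ (the argument is verbatim that of Lemma~\ref{lem-gaglox}). One must check $[\gamma_Y] \in \calG_c^{lox}$, not merely $\calG_{lox}$: since $F \in C(A)$ is compact periodic and $g_{\gamma_Y}$ can be taken equal to $g$, the orbit $F_{[\gamma_Y]} = F$ is compact, so the equivalence $(1) \Rightarrow (2)$ of Lemma~\ref{lem_cpt} gives irreducibility of $p_{\gamma_Y}$, i.e. $\gamma_Y \in \Gamma_c^{lox}$. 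Finally, independence of the choice of $g$ (up to $\Gamma$ on the left and $AM$ on the right) and of $m_Y$ shows $\Phi$ is well-defined on conjugacy classes.

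Then I would verify $\Psi \circ \Phi = \mathrm{Id}_{\calG(A)}$ and $\Phi \circ \Psi = \mathrm{Id}_{\calG_c^{lox}}$ exactly as in the proof of Lemma~\ref{lem-gaglox}: for $\Psi \circ \Phi(Y,F)$ one uses $g^{-1}\gamma_Y g \in \exp(Y)M$ to take $g_{\gamma_Y} = g$, giving back $(Y, F)$; for $\Phi \circ \Psi([\gamma])$ one uses $g_\gamma^{-1}\gamma g_\gamma \in \exp(\lambda(\gamma))M$ to recover $[\gamma]$. This establishes the bijection.

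The main obstacle is conceptual rather than computational: one must be careful that the arithmetic condition selecting $\Gamma_c^{lox}$ matches exactly the compactness condition defining $C(A)$, so that neither map leaves its stated target — this is where Lemma~\ref{lem_cpt} (and, underneath it, the torus-theoretic input from Borel--Harish-Chandra) does the real work, and it is the only place the argument genuinely departs from the cocompact proof. Everything else is a transcription of Lemma~\ref{lem-gaglox}, using that $\Gamma_1$ acts freely on $G/M$ for the uniqueness of $\gamma_Y$.
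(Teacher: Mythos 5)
Your proposal is correct and follows essentially the same route as the paper: transcribe the proof of Lemma \ref{lem-gaglox}, replacing Selberg's Lemma \ref{lem-selberg} by the equivalence in Lemma \ref{lem_cpt}, and use compactness of $F=F_{[\gamma_Y]}$ to upgrade $\gamma_Y$ from $\Gamma^{lox}$ to $\Gamma_c^{lox}$. This is exactly the paper's argument, so nothing further is needed.
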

\begin{proof}
The proof is almost the same as the proof of Lemma \ref{lem-gaglox}. We replace the use of Lemma \ref{lem-selberg} by Lemma \ref{lem_cpt}. The only difference is that we obtain $\gamma_Y$ from $(Y,F)$, but we only know that $\gamma_Y$ is in $\Gamma^{lox}$. Due to $F=F_{\gamma_Y}$ compact, from Lemma \ref{lem_cpt}, we know that indeed $\gamma_Y$ is in $\Gamma^{lox}_c$.
\end{proof}

By the previous lemma, we obtain
\[\calM_\Gamma^t:=\frac{\vol(\Gamma\backslash G)}{\vol(D_t)} \sum_{F\in C(A)}|\Lambda(F)\cap B_{\frak a}^{++}(0,t)|L_F=\frac{\vol(\Gamma\backslash G)}{\vol(D_t)} \underset{ \Vert \lambda (\gamma) \Vert \leq t}{ \sum_{ [\gamma] \in \calG^{lox}_c}}L_\gamma.  \]
We consider the lift of the measure $\calM^t_\Gamma$ to $G/M$,
\begin{equation}\label{eq_Mesc}
\cal{M}^t:= \frac{\vol(\Gamma\backslash G)}{\vol(D_t)} \underset{ \Vert \lambda (\gamma) \Vert \leq t}{ \sum_{ \gamma \in \Gamma^{lox}_c}}\calL_\gamma. 
\end{equation}

The main result of this part is the equidistribution on large compact sets. 

\begin{prop}\label{prop-omegat}
There exist $\zeta>0$ and $u>0$ such that for all $t>0$ and all $f \in Lip_c(\Omega(e^{\zeta t}))$, 
\begin{equation}
    \bigg\vert \calM^t_\Gamma(f)-\int f \dd m_{\Gamma\backslash G/M} \bigg\vert \ll    e^{-ut} \vert f\vert_{Lip} .
\end{equation}
\end{prop}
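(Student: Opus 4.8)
The plan is to reduce Proposition~\ref{prop-omegat} to the already-established equidistribution estimates on corridors (Lemmas~\ref{lem_M2-haar} and~\ref{lem-cocom-weyl}), following the architecture of Section~\ref{sec-global} but with the crucial modification that the ``constants'' $C_{x_i}$ are no longer uniformly bounded, since the base points $x_i$ of the partition of unity now range over a non-compact set $\Omega(e^{\zeta t})$. First I would fix a nonnegative $f=\widetilde\psi_\Gamma \in Lip_c(\Omega(e^{\zeta t}))$ and, exactly as in the cocompact proof, apply a Vitali covering of $\Omega(e^{\zeta t})$ by balls $B(y_i,r/10)$ (with $r=e^{-u_2t}$ as in~\eqref{equ-parameter}), keeping $\vert I\vert \ll e^{(\text{const})\zeta t} r^{-\dim(G/M)}$, and choose for each $y_i$ a lift $z_i\in G/M$. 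The key new input is Lemma~\ref{lem-injradius}: since $y_i\in\Omega(e^{\zeta t})$, there is a representative $h_i$ with $\dd_X(o,h_io)\leq C_7\zeta t$ and $inj(y_i)\geq e^{-C_7\zeta t}$, so that $r$ is indeed smaller than the injectivity radius provided $u_2>C_7\zeta$ (and $t$ large), and $C_{x_i}=8C_2C_1e^{C_0\dd_X(o,h_io)}\leq 8C_2C_1 e^{C_0 C_7 \zeta t}$. Thus $C_{x_i}$ grows only \emph{exponentially in $\zeta t$}, with a rate we control by choosing $\zeta$ small.

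Next I would lift each $\widetilde\psi_{\Gamma,i}$ to $\widetilde\psi_i$ supported on $B(z_i,r)\subset\widetilde{\calF}^{(2)}(x_i,r)$ (Lemma~\ref{lem.changemetric}), and record the norm bounds (p1)--(p3) from Section~\ref{sec-global}, noting that now $Lip_2(\widetilde\psi_i)\leq \frac{C_{x_i}}{r}\vert\widetilde\psi_\Gamma\vert_{Lip}$ with $C_{x_i}\leq e^{C_0C_7\zeta t}$. Then, for each $i$, I apply Lemma~\ref{lem-cocom-weyl} and Lemma~\ref{lem_M2-haar} to $\widetilde\psi_i$ at base point $x_i$ exactly as in the ``local domination'' paragraph, obtaining
\begin{align*}
\pm\Big(\int\widetilde\psi_i\,\dd\cal M^t-\int\widetilde\psi_i\,\dd m_{G/M}\Big)
&\leq r(C_3+C)\int\widetilde\psi_i\,\dd m_{G/M}\\
&\quad + (2r)^{\dim\frak a}\Big(E(t\pm2r,\widetilde\psi_i,x_i)+2\varepsilon\,Lip_2(\widetilde\psi_i)\tfrac{\vert\Gamma\cap D_{t\pm2r}(x_i)\vert\vol(\Gamma\backslash G)}{\vol(D_{t\pm2r})}\\
&\qquad\qquad + 4\|\widetilde\psi_i\|_\infty\tfrac{\vert\Gamma\cap D_{t\pm2r}^{t_1}(x_i)\vert\vol(\Gamma\backslash G)}{\vol(D_{t\pm2r})}\Big)
+\|\widetilde\psi_i\|_\infty\tfrac{\vert\Gamma\cap D_{t+2r}^{2r}(x_i)\vert\vol(\Gamma\backslash G)}{\vol(D_t)}.
\end{align*}
The hypothesis $\dd_X(o,x_i)\leq C_7\zeta t$ lets me invoke Lemma~\ref{lem-dtxt} (for the $\vert\Gamma\cap D_t(x_i)\vert$ factor), Lemma~\ref{lem-dtx} (for the $D_t^{t_1}(x_i)$ and $D_t^{2r}(x_i)$ factors, since $t_1=3u_1t$ and $2r$ are both $\leq\epsilon_G t$ and $\dd_X(o,x_i)$ is a small enough multiple of $t$ once $\zeta$ is small), and Theorem~\ref{theo-gorodnik-nevo} (for $E(t\pm2r,\widetilde\psi_i,x_i)=O(C_{x_i}Lip_2(\widetilde\psi_i)\vol(D_t)^{-\kappa})$). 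Each of these contributes an error of the form $e^{(\text{const})\zeta t}\cdot(\text{exponentially small in }t)$ times $\vert\widetilde\psi_\Gamma\vert_{Lip}$ or $\|\widetilde\psi_\Gamma\|_\infty$.

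Finally I sum over $i\in I$ using the partition of unity: $\int\widetilde\psi_\Gamma\,\dd\calM^t_\Gamma=\sum_i\int\widetilde\psi_i\,\dd\cal M^t$ and likewise for $m_{\Gamma\backslash G/M}$. The main-term contribution $r(C_3+C)\sum_i\int\widetilde\psi_i\,\dd m_{G/M}\leq r(C_3+C)\|m_{\Gamma\backslash G/M}\|\vert\widetilde\psi_\Gamma\vert_{Lip}$ is $O(e^{-u_2t}\vert f\vert_{Lip})$. The remaining error terms are bounded by $\vert I\vert$ (which is $\ll e^{(\text{const})\zeta t}r^{-\dim(G/M)}$) times a product of $e^{(\text{const})\zeta t}$ (from $C_{x_i}$, $C_{x_i}^2$) and a negative power of $\vol(D_t)$ (i.e.\ $e^{-\delta_0\kappa t}$, $e^{-\delta_0\kappa(6u_1)t}$, $e^{-u_2t}$ coming respectively from $\vol(D_t)^{-\kappa}$, $\vol(D_t)^{-\kappa(2\epsilon)/2}$ with $2\epsilon=6u_1$, and $\varepsilon=e^{-u_1t}$), all multiplied by $\vert\widetilde\psi_\Gamma\vert_{Lip}$ and a negative power of $r=e^{-u_2t}$. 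So the total error has the shape $e^{(C_0C_7+\text{const})\zeta t}\cdot e^{(N u_2 + M u_1)t}\cdot e^{-\min\{\delta_0\kappa,\delta_0\kappa(6u_1),u_1\}t}\vert f\vert_{Lip}$ for explicit integers $N,M$ depending only on $\dim(G/M)$ and $\dim(G/AM)$. The last step is a bookkeeping choice of parameters: first pick $u_1$ small (as in~\eqref{equ-parameter0}), then $u_2$ small relative to $u_1$ and to $\delta_0\kappa,\delta_0\kappa(6u_1)$ (as in~\eqref{equ-parameter}), and \emph{then} choose $\zeta>0$ small enough that $(C_0C_7+\text{const})\zeta$ is dominated by the surplus in the exponent; this yields a net exponent $-u<0$. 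The main obstacle is precisely this last parameter juggling: one must verify that the exponential blow-up $e^{(\text{const})\zeta t}$ produced by the non-compact base points (through $C_{x_i}$, through $\vert I\vert$, and through the enlarged balls $D_{t\pm2r}(x_i)$ in the lattice-point count) can be absorbed, which forces $\zeta$ to be chosen \emph{after} $u_1,u_2$ and to be quantitatively small; one also needs to double-check that the hypotheses ``$\dd_X(o,x_i)<(\text{small const})\cdot t$'' required by Lemmas~\ref{lem-dtx}, \ref{lem-dtxt} are met, which again just says $C_7\zeta$ is small enough.
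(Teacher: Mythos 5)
Your overall architecture (Vitali covering of $\Omega(e^{\zeta t})$, injectivity radius and lift bounds from Lemma \ref{lem-injradius}, local domination with base points $x_i$ satisfying $d_X(o,x_i)\le C_7\zeta t$, the blow-up $C_{x_i}\ll e^{C_0C_7\zeta t}$ absorbed by choosing $\zeta$ after $u_1,u_2$) is exactly the bookkeeping the paper carries out, and that part of your plan would go through. But there is a genuine gap at the very first step of the local domination: you propose to apply Lemma \ref{lem-cocom-weyl} and Lemma \ref{lem_M2-haar} ``exactly as in the cocompact case''. In the present setting the measure $\calM^t_\Gamma$ of Proposition \ref{prop-omegat} is \emph{not} the sum over all loxodromic classes: by Lemma \ref{lem_cpt}, a loxodromic $\gamma\in\Gamma$ contributes a compact periodic $A$-orbit only when its characteristic polynomial is $\Q$-irreducible, and the lift of $\calM^t_\Gamma$ is the sum \eqref{eq_Mesc} over $\Gamma^{lox}_c$ only (via the bijection of Lemma \ref{lem-gloxc}). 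The corridor estimate of Lemma \ref{lem_M2-haar}, however, controls $\calM^t_{x,2}$, which is built from \emph{all} loxodromic elements in $D_t^{reg}(x)$. So the sandwich of Lemma \ref{lem-cocom-weyl} does not directly connect $\calM^t$ to $\calM^t_{x,2}$; one needs the intermediate measure $\calM^t_{x,3}$ (sum over $\Gamma^{lox}_c\cap D_t^{reg}(x)$, Lemma \ref{lem-sldz-weyl}) and then a bound on the discrepancy
\[
\Big|\int\widetilde\psi_i\,\dd\calM^t_{x_i,3}-\int\widetilde\psi_i\,\dd\calM^t_{x_i,2}\Big|
\;\le\;\frac{\vert(\Gamma^{lox}\setminus\Gamma^{lox}_c)\cap D_t(x_i)\vert\,\vol(\Gamma\backslash G)}{\vol(D_t)}\,\|\widetilde\psi_i\|_\infty ,
\]
as in Lemma \ref{lem-M3M2}. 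Controlling this requires the sparseness of loxodromic elements with reducible characteristic polynomial: Lemma \ref{lem_noncpc} and its shifted-basepoint version Lemma \ref{lem_axo}, giving $\ll\vol(D_t)^{1-\kappa_1/2}$, which rests on the power-saving count of integer points on the subvarieties $h_i(\gamma)=\det(1-\wedge^i\gamma)\det(1+\wedge^i\gamma)=0$ (Proposition \ref{prop_subvar}). This is a genuinely new arithmetic input of the non-cocompact case, it produces the extra error term $\vol(D_t)^{-\kappa_1/2}$ in the local and global dominations, and it also forces $\kappa_1$ to enter the choices of $u_2$ and $\zeta$ in \eqref{equ-noncocomp} and \eqref{equ-zeta}; your proposal omits it entirely, so the comparison between your local sums and the Gorodnik--Nevo corridor measures does not close as written.

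A second, smaller remark: your claim $\vert I\vert\ll e^{(\mathrm{const})\zeta t}r^{-\dim(G/M)}$ is harmless (the paper gets $\vert I\vert\ll r^{-\dim(G/M)}$ from disjointness and finite volume), and your parameter ordering (choose $\zeta$ last, small against $u_1,u_2,\kappa,\kappa(6u_1)$) is the same as the paper's, except that once the missing term above is included you must also take $\zeta$ small against $\kappa_1$.
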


Before starting the argument, we fix the parameters which will be used later. 
Choose $u_1>0$ smaller than $\min\{\epsilon_G,1\}/10$, where $\epsilon_G$ is the constant from Lemma \ref{volume_reste}. 
Set
$$
\varepsilon:= e^{-u_1 t} \text{ and }t_1 := 3 u_1 t. $$
Consider the decay rate function $u \mapsto \kappa(u) >0$ satisfying \eqref{volume_reste}, the decay coefficient $\kappa>0$ given in Theorem \ref{theo-gorodnik-nevo} and $\kappa_1$ given in Lemma \ref{lem_noncpc}.
We set 
\begin{equation}\label{equ-noncocomp}
    u_2 :=  \frac{1}{4  \dim (G/M)}  \min \bigg\lbrace  \delta_0 \kappa(6 u_1), \frac{\delta_0 \kappa}{3}, u_1  ,  \delta_0 \kappa_1 \bigg\rbrace,\ r:= e^{-u_2 t}.
\end{equation}
Consider the constant $C_7$ coming from the injectivity radius Lemma \ref{lem-injradius}, the constant $C_5$ from the counting Lemma \ref{lem-dtxt} and $C_0$ coming from the growth rate of $C_x$ given in \eqref{equ-cx}.
Set the exponential decay rate of the systole
\begin{equation}\label{equ-zeta}
    \zeta:= \frac{1}{C_7} \min \bigg\lbrace u_2,  \frac{\kappa_1}{4(1-\kappa_1)}, \frac{1}{C_5},   \frac{3u_1}{2(1-6u_1)}, \frac{\kappa(6u_1)}{4(1-\kappa(6u_1))},\frac{\kappa \delta_0}{6 C_0},\frac{u_1}{4C_0} \bigg\rbrace.
\end{equation}

\paragraph{Equidistribution of Weyl chambers}
We define 
\begin{equation}\label{eq_M4}
\cal{M}_{x,3}^t:= \frac{\vol(\Gamma\backslash G)}{\vol(D_t)} \sum_{ \gamma \in \Gamma^{lox}_c\cap D^{reg}_t(x) } \calL_\gamma. 
\end{equation}
The following Lemma is a direct consequence of the definition of $\calM_{x,2}^t$ given in \eqref{eq_M2}.
\begin{lem}\label{lem-M3M2}
For all $t>1$, all $x\in X$ and 
for every test function $\widetilde{ \psi }  \in Lip^+_c( \widetilde{\cal{F}}^{(2)}(x,r))$,  

\begin{equation*}
\bigg|\int \widetilde{ \psi }\ d\cal{M}_{x,3}^t-\int \widetilde{ \psi }\ d\calM_{x,2}^t\bigg| \leq
\frac{ \vert (\Gamma^{lox} \setminus \Gamma^{lox}_c) \cap D_t(x) \vert \vol(\Gamma \backslash G)}{\vol(D_t)} 
\|\widetilde{\psi}\|_\infty .
\end{equation*}

\end{lem}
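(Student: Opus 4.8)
The plan is to exploit the inclusion $\Gamma^{lox}_c\subseteq\Gamma^{lox}$ and reduce the estimate to a count. Comparing the definitions \eqref{eq_M2} and \eqref{eq_M4}, the two measures differ only by the terms indexed by loxodromic elements whose $A$-orbit is \emph{not} compact periodic, namely
\[
\calM_{x,2}^t-\calM_{x,3}^t=\frac{\vol(\Gamma\backslash G)}{\vol(D_t)}\sum_{\gamma\in(\Gamma^{lox}\setminus\Gamma^{lox}_c)\cap D_t^{reg}(x)}\calL_\gamma .
\]
Hence it suffices to bound $\bigl|\int\widetilde{\psi}\,\dd\calL_\gamma\bigr|$ uniformly in $\gamma$ by $\|\widetilde{\psi}\|_\infty$, and then to sum over the finitely many relevant indices, using $D_t^{reg}(x)\subseteq D_t(x)$ together with the triangle inequality.

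For the uniform term-wise bound I would argue as follows. Every $\gamma\in\Gamma^{lox}$ has a transverse fixed pair $(\gamma^+,\gamma^-)\in\calF^{(2)}$, so by \eqref{defin_mes-tore} the measure $\calL_\gamma=D_{\gamma^+}\otimes D_{\gamma^-}\otimes Leb_{\frak a}$ is well defined and
\[
\int\widetilde{\psi}\,\dd\calL_\gamma=\int_{\frak a}\widetilde{\psi}(\gamma^+,\gamma^-,v)\,\dd v=\psi(\gamma^+,\gamma^-),
\]
where $\psi(\xi^+,\xi^-):=\int_{\frak a}\widetilde{\psi}(\xi^+,\xi^-,v)\,\dd v$ is precisely the function appearing in Lemma~\ref{lem-couloirs-lipschitz}. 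By part~(b) of that lemma $\|\psi\|_\infty\le(2r)^{\dim\frak a}\|\widetilde{\psi}\|_\infty\le\|\widetilde{\psi}\|_\infty$, and therefore $\bigl|\int\widetilde{\psi}\,\dd\calL_\gamma\bigr|\le\|\widetilde{\psi}\|_\infty$ for every $\gamma\in\Gamma^{lox}$. Summing this over $\gamma\in(\Gamma^{lox}\setminus\Gamma^{lox}_c)\cap D_t^{reg}(x)\subseteq(\Gamma^{lox}\setminus\Gamma^{lox}_c)\cap D_t(x)$ and multiplying by $\vol(\Gamma\backslash G)/\vol(D_t)$ would then give the asserted inequality.

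The only point that needs a word of justification — and it is not really an obstacle — is the uniform bound $\|\psi\|_\infty\le(2r)^{\dim\frak a}\|\widetilde{\psi}\|_\infty$: it rests on the fact that $\supp\widetilde{\psi}\subseteq\widetilde{\calF^{(2)}}(x,r)$ is the preimage of the metric ball $B_X(x,r)$ under the projection $G/M\to G/K$ (Fact~\ref{prop-corridor}), so that for fixed transverse $(\xi^+,\xi^-)$ the slice $\{v\in\frak a:(\xi^+,\xi^-,v)\in\widetilde{\calF^{(2)}}(x,r)\}$ is the image, under the Busemann coordinate, of the intersection of the flat $(\xi^+\xi^-)_X$ with $B_X(x,r)$, whose $\frak a$-diameter is controlled by $r$ via the Lipschitz estimate of Lemma~\ref{lem-actiong}(iv'). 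Since all of this is already packaged into Lemma~\ref{lem-couloirs-lipschitz}, the proof is a short bookkeeping argument; the genuine analytic content, that is controlling $|(\Gamma^{lox}\setminus\Gamma^{lox}_c)\cap D_t(x)|$, is the subject of Lemma~\ref{lem_noncpc} and Lemma~\ref{lem_axo}, which are invoked downstream rather than here.
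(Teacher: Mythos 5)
Your argument is correct and is essentially the paper's own: the authors dismiss this lemma as "a direct consequence of the definition of $\cal{M}_{x,2}^t$", and your explicit version — writing $\calM_{x,2}^t-\calM_{x,3}^t$ as the sum of $\calL_\gamma$ over $(\Gamma^{lox}\setminus\Gamma^{lox}_c)\cap D_t^{reg}(x)\subset D_t(x)$ and bounding each fibre integral $\int_{\frak a}\widetilde{\psi}(\gamma^+,\gamma^-,v)\,\dd v$ through Lemma \ref{lem-couloirs-lipschitz}(b) — is exactly the intended bookkeeping. The only pedantic caveat is that your last inequality $(2r)^{\dim\frak a}\Vert\widetilde{\psi}\Vert_\infty\leq\Vert\widetilde{\psi}\Vert_\infty$ tacitly uses $2r\leq 1$, i.e. $r=e^{-u_2t}\leq 1/2$; this is how the parameters are used in the sequel and is implicit in the paper's statement as well, so it is not a genuine gap.
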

The following statement and its proof is the same as Lemma \ref{lem-cocom-weyl} provided one replaces $\calM_{x,2}^t$ with $\calM_{x,3}^t$.
\begin{lem}\label{lem-sldz-weyl}
There exists $C>0$. Fix $x\in X$, for every test function $\widetilde{\psi}\in Lip^+_c( \widetilde{\cal{F}}^{(2)}(x,r))$, 
\begin{equation}\label{eq-lem-sldz-weyl}
(1-Cr) \int  \widetilde{\psi} \dd \cal{M}_{x,3}^{t-2r} \leq  \int  \widetilde{\psi} \dd \cal{M}^t   \leq (1+Cr) \int  \widetilde{\psi} \dd \cal{M}_{x,3}^{t+2r}+\|\widetilde{\psi}\|_\infty\frac{|\Gamma\cap D_{t+2r}^{2r}(x)| \vol(\Gamma \backslash G)}{\vol(D_t)}. 
\end{equation}
\end{lem}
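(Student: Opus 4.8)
The plan is to repeat the proof of Lemma \ref{lem-cocom-weyl} verbatim, with $\cal{M}_{x,2}^t$ replaced throughout by $\cal{M}_{x,3}^t$; the only facts used are the inclusions $\Gamma^{lox}_c\subset\Gamma^{lox}\subset\Gamma$ and Lemma \ref{lem-cartan-lox}, both of which remain available. First I recall the geometric input: for a loxodromic $\gamma$ whose transverse pair of fixed points $(\gamma^+,\gamma^-)$ lies in the corridor $\cal{F}^{(2)}(x,r)$, Lemma \ref{lem-cartan-lox} gives $\Vert\lambda(\gamma)-\underline a_x(\gamma)\Vert\le 2\,\dd_X(x,(\gamma^+\gamma^-)_X)<2r$, exactly as in the proof of Lemma \ref{lem-cocom-weyl}. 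Write $\cal{S}_t:=\{\gamma\in\Gamma^{lox}_c\ :\ \Vert\lambda(\gamma)\Vert\le t,\ (\gamma^+,\gamma^-)\in\cal{F}^{(2)}(x,r)\}$.

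From the bound above together with the triangle inequality one gets
\[
\Gamma^{lox}_c\cap D_{t-2r}^{reg}(x)\cap\{\gamma:(\gamma^+,\gamma^-)\in\cal{F}^{(2)}(x,r)\}\ \subset\ \cal{S}_t\ \subset\ \big(\Gamma^{lox}_c\cap D_{t+2r}^{reg}(x)\big)\cup\big(\Gamma\cap D_{t+2r}^{2r}(x)\big),
\]
where the set $\Gamma\cap D_{t+2r}^{2r}(x)$ collects those $\gamma\in\cal{S}_t$ whose Cartan projection $\underline a_x(\gamma)$ is singular (hence within distance $0\le 2r$ of $\partial\frak a^+$). Next I integrate $\widetilde\psi$ against $\calL_\gamma$ and sum: since $\widetilde\psi$ is supported on $\widetilde{\cal{F}}^{(2)}(x,r)$, which by Fact \ref{prop-corridor} is the preimage of $B_X(x,r)$ under $G/M\to G/K$, only $\gamma$ with $(\gamma^+,\gamma^-)\in\cal{F}^{(2)}(x,r)$ contribute to any of $\int\widetilde\psi\,\dd\cal{M}^t$, $\int\widetilde\psi\,\dd\cal{M}_{x,3}^{t\pm 2r}$, or the sums over the sets above, and $\int\widetilde\psi\,\dd\calL_\gamma\le\Vert\widetilde\psi\Vert_\infty$ for each of the at most $|\Gamma\cap D_{t+2r}^{2r}(x)|$ correction terms. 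Feeding the inclusions into the sums yields
\[
\frac{\vol(D_{t-2r})}{\vol(\Gamma\backslash G)}\int\widetilde\psi\,\dd\cal{M}_{x,3}^{t-2r}\ \le\ \frac{\vol(D_t)}{\vol(\Gamma\backslash G)}\int\widetilde\psi\,\dd\cal{M}^t\ \le\ \frac{\vol(D_{t+2r})}{\vol(\Gamma\backslash G)}\int\widetilde\psi\,\dd\cal{M}_{x,3}^{t+2r}+\Vert\widetilde\psi\Vert_\infty\,\big|\Gamma\cap D_{t+2r}^{2r}(x)\big|.
\]

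Finally I multiply through by $\vol(\Gamma\backslash G)/\vol(D_t)$ and apply the uniform local Lipschitz estimate $\vol(D_t)\le e^{2Cr}\vol(D_{t\mp 2r})$ of Lemma \ref{lem-vollip}, which gives $\vol(D_{t-2r})/\vol(D_t)\ge 1-C'r$ and $\vol(D_{t+2r})/\vol(D_t)\le 1+C'r$ for $r$ small (after renaming the constant); this is exactly \eqref{eq-lem-sldz-weyl}. I expect no genuine obstacle in this lemma beyond the bookkeeping; the one point worth emphasising — and the reason the correction term $\Vert\widetilde\psi\Vert_\infty\,|\Gamma\cap D_{t+2r}^{2r}(x)|\,\vol(\Gamma\backslash G)/\vol(D_t)$ is kept explicit rather than absorbed — is that in the non-cocompact setting the base point $x$ will later be allowed to grow with $t$, and this term is then controlled uniformly via Lemmas \ref{lem-dtx} and \ref{lem-dtxt}.
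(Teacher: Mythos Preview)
Your proof is correct and follows exactly the approach the paper indicates: the paper itself does not give a separate argument but simply says the proof is the same as that of Lemma~\ref{lem-cocom-weyl} with $\cal{M}_{x,2}^t$ replaced by $\cal{M}_{x,3}^t$, and you have carried this out faithfully. The inclusions, the use of Lemma~\ref{lem-cartan-lox} (with $w=\mathrm{id}$ since $(\gamma^+,\gamma^-)$ are the genuine attracting/repelling points of a loxodromic element), the integration against $\calL_\gamma$, and the final application of Lemma~\ref{lem-vollip} all match the paper's reasoning.
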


\paragraph{Partition of unity}
Then we only need to use a partition of unity to obtain the global version as in Section \ref{sec-global}. 
On the compact set $\Omega(e^{\zeta t})$, by Vitali's covering lemma, there exists a finite set $\{y_i\}_{i\in I}$ in $\Omega(e^{\zeta t})$ such that the $B(y_i,r/10)$ are pairwise disjoint and $\cup_{i\in I}B(y_i,r/2)$ covers $\Omega(e^{\zeta t})$. By disjointness, $|I|\ll r^{-dim(G/M)}$. By the injectivity radius Lemma \ref{lem-injradius} and choice of $\zeta$ in \eqref{equ-zeta} such that $C_7\zeta\leq u_2$, the balls $B(y_i,r)$ are diffeomorphic to balls of radius $r$ in $G/M$. We can take a partition of unity of $\frac{1}{r}$-Lipschitz functions associated to the open cover $B(y_i,r)$. For each $y_i$, by Lemma \ref{lem-injradius} we can find a lift $z_i\in G/M$ such that
\begin{equation}\label{equ_oxi}
    d_X(o, x_i)\leq C_7\zeta t.
\end{equation}
where $x_i=\pi_X(z_i)$.
We have the same Lipschitz bounds on $\widetilde\psi_i$. By Lemma \ref{lem.changemetric} and (p1)
\[ \supp\widetilde{\psi_i}\subset B(z_i,r)\subset \widetilde{\calF}^{(2)}(x,r)\text{ and }Lip_2(\widetilde{\psi}_i)\leq C_{x_i} Lip(\widetilde{\psi_i}) \leq \frac{C_{x_i}}{r} \vert \widetilde{\psi}_\Gamma \vert_{Lip} . \]
Hence, by the above equation \eqref{equ_oxi}, then $Lip_2 (\widetilde{\psi}_i) \ll \frac{e^{C_0 C_7 \zeta t}}{r} \vert \widetilde{\psi}_\Gamma \vert_{Lip} .$
\paragraph{Local domination}
By using Lemma \ref{lem_M2-haar}, \ref{lem-M3M2} and \ref{lem-sldz-weyl}, we obtain similar local domination:
\begin{align*}
\pm \bigg(  \int &\widetilde{\psi}_i \dd \cal{M}^t  - \int \widetilde{\psi}_i \dd m_{G/M}  \bigg)  
\leq r (C_3 + C)   \int \widetilde{\psi}_i  \dd m_{G/M} \quad + \quad
\frac{ \vert (\Gamma^{lox} \setminus \Gamma^{lox}_c) \cap D_t(x_i) \vert \vol(\Gamma \backslash G) }{\vol(D_t)} 
\|\widetilde{\psi}_i\|_\infty \\
&  (2r)^{\dim \frak{a}} \bigg( E(t\pm2r,\widetilde{\psi}_i,x_i) +  2\varepsilon Lip_2(\widetilde{\psi}_i)  \frac{\vert \Gamma \cap D_{t\pm 2r}(x_i) \vert \vol(\Gamma \backslash G)}{\vol(D_{t\pm 2r})} + 4 \Vert \widetilde{\psi}_i \Vert_\infty \frac{\vert \Gamma \cap D_{t \pm 2r}^{t_1}(x_i) \vert \vol(\Gamma \backslash G)}{\vol(D_{t \pm 2r})}  \bigg).
\end{align*}
For the right term in the first line, by choice of $\zeta$ in \eqref{equ-zeta} and Lemma \ref{lem_axo}, we deduce
\[ \frac{ \vert (\Gamma^{lox} \setminus \Gamma^{lox}_c) \cap D_t(x_i) \vert \vol(\Gamma \backslash G)}{\vol(D_t)} \ll (\vol(D_t))^{-\kappa_1/2}.
\]

The lower line contains terms similar to \eqref{equ-et} \eqref{equ-gammadt} \eqref{equ-gammadt0} that appear in the cocompact case. 
Now that $x_i$ can be far from $o$, the constant $C_{x_i}$ can be big.
However, by \eqref{equ_oxi}, this distance is bounded above by $C_7\zeta t$, which implies the following upper bound of $C_{x_i}^2$:
\begin{equation}\label{equ-cxi}
C_{x_i}^2= (8 C_2 C_1)^2 e^{2C_0d_X(x_i,o)} \ll e^{2C_0C_7\zeta t }.
\end{equation}
Hence for the left-lower term, we deduce that
$$E(t\pm2r,\widetilde{\psi}_i,x_i)=
    O\bigg( \frac{C_{x_i}^2}{r} \vol(D_t)^{-\kappa} | \widetilde{\psi}_{\Gamma} |_{Lip} \bigg) = 
    O \bigg( \frac{e^{2 C_0 C_7 \zeta t}}{r}  \vol(D_t)^{-\kappa} | \widetilde{\psi}_{\Gamma} |_{Lip}\bigg) .$$
    
For the mid-lower term similar to \eqref{equ-gammadt}, by Lemma \ref{lem-dtxt} and since $C_5 C_7 \zeta <1$ according to the choice of $\zeta$ in \eqref{equ-zeta}, we deduce that 
$ \frac{\vert \Gamma \cap D_{t \pm 2r}(x_i) \vert}{\vol(D_{t\pm 2r})} \leq C. $

For the right-lower term similar to \eqref{equ-gammadt0}, using that $C_7 \zeta \leq \min \lbrace \frac{3 u_1}{2(1-6 u_1)}, \frac{\kappa(6 u_1)}{4(1- \kappa(6 u_1))} \rbrace$ as given in \eqref{equ-zeta}, the hypothesis of Lemma \ref{lem-dtx} are satisfied. Hence 
$\frac{\vert \Gamma \cap D_{t \pm 2r}^{t_1}(x_i) \vert \vol(\Gamma \backslash G)}{\vol(D_{t \pm 2r})}\ll \vol(D_t)^{-\kappa(6u_1)}.$

\paragraph{Global domination}
Finally, by summing over the partition of unity and by $|I|\ll r^{-dim(G/M)}$, then collecting the above estimates together, we deduce that
\begin{align*}
\bigg|\int \widetilde{\psi}_\Gamma \dd \calM_\Gamma^t - \int \widetilde{\psi}_\Gamma \dd m_{\Gamma \backslash G /M}\bigg| \quad  \ll \quad r &\Vert \widetilde{\psi}_\Gamma \Vert_1 \quad + \quad \frac{\vert \widetilde{\psi}_\Gamma \vert_{Lip}}{r^{\dim(G/M)}} \vol(D_t)^{- \frac{\kappa_1}{2}} \\
&+ \frac{\vert \widetilde{\psi}_\Gamma \vert_{Lip}}{r^{\dim(G/AM)+1}} 
\Big(e^{2 C_0 C_7 \zeta t} \vol(D_t)^{-\kappa} + e^{ C_0 C_7 \zeta t} \varepsilon + \vol(D_t)^{-\kappa(6 u_1)} \Big).
\end{align*}
The proof of Proposition \ref{prop-omegat} is complete due to the choices of $\zeta$ and $r=e^{-u_2 t}$ in \eqref{equ-noncocomp} and \eqref{equ-zeta}, where $\varepsilon=e^{-u_1 t}$.

\subsection{Non-escape of mass}\label{sec-nonescape}
In order to prove the equidistribution for all bounded Lipschitz functions,
we only remains to prove that $\calM^t_\Gamma(\Omega(e^{\vartheta t})^c)$ tends to zero as $t$ tends to infinity, where we set
\[ \vartheta=\zeta/2.\] 
\begin{lem}[non-escape of mass]\label{lem_nonescape}
There exists $c_4>0$ such that
\[\calM^t_\Gamma(\Omega(e^{\vartheta t})^c)\ll e^{-c_4t}. \]
\end{lem}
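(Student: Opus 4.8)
The plan is to prove non-escape of mass for the averaged measure $\calM^t_\Gamma$ by comparing the mass of the complement of a large compact set with the mass inside a slightly larger (but still controlled) compact set, exploiting the self-improving structure of compact periodic $A$-orbits near cusps. Recall that $\calM^t_\Gamma$ is, up to normalization by $\vol(D_t)/\vol(\Gamma\backslash G)$, an average over loxodromic $[\gamma]\in\calG^{lox}_c$ with $\|\lambda(\gamma)\|\leq t$ of the measures $L_{[\gamma]}$ supported on the compact periodic tori $F_{[\gamma]}$. So it suffices to bound, uniformly, the $L_F$-mass of $F\cap\Omega(e^{\vartheta t})^c$ for each such torus $F$ with a regular period of length at most $t$, and then sum.

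First I would fix a compact periodic torus $F=F_{[\gamma]}=\Gamma g_\gamma AM$ and study the function $\Gamma g_\gamma a M \mapsto 1/s(g_\gamma a)$ on $F$, which by the Iwasawa/systole estimate of Lemma \ref{lem_syssiegel} is governed (in a Siegel domain) by $a_d$, i.e. by the $d$-th coordinate of $\log a(g_\gamma a)$. Using the lemma on $\log a(gb)$ for $b$ in a conjugate of the negative chamber, the escape into the cusp along $F$ happens precisely in the directions of $\frak a$ where the relevant linear functional becomes very negative; since $\Lambda(F)$ is a lattice in $\frak a$ and the period $\lambda(\gamma)$ has norm $\leq t$, the fundamental domain of $\Lambda(F)$ has bounded diameter in terms of $t$, and the portion of a fundamental domain of $\Lambda(F)$ on which $1/s > e^{\vartheta t}$ is pushed, under multiplication by a suitable Weyl element $w$ and a translation in $\frak a$ by an amount comparable to $\vartheta t$, into the region where $1/s$ lies between, say, $e^{\vartheta t}$ and $e^{\zeta t}$ — i.e. into $\Omega(e^{\zeta t})\setminus\Omega(e^{\vartheta t})$. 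This is the critical observation alluded to in the introduction: there are compact sets $\Omega_T,\Omega_T'$ with $\Omega_T'\subset\Omega_T$ such that $L_F(F\cap\Omega_T^c)\ll L_F(F\cap(\Omega_T\setminus\Omega_T'))$, with the implied constant uniform. Concretely, here $\Omega_T=\Omega(e^{\vartheta t})$ and $\Omega_T'=\Omega(e^{-\vartheta t})$ type sets, or rather $\Omega_T\setminus\Omega_T'\subset\Omega(e^{\zeta t})^{}$; the point is $\vartheta=\zeta/2$ leaves room for this translation.

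Next I would pass from this torus-by-torus bound to the averaged measure. Summing $L_F(F\cap\Omega(e^{\vartheta t})^c)\ll L_F(F\cap (\Omega(e^{\zeta t})\setminus\Omega(e^{-\text{const}\cdot t})))$ over $F$, the right-hand side is (again up to the normalization) exactly $\calM^t_\Gamma$ evaluated on the indicator of $\Omega(e^{\zeta t})\setminus(\text{a small core})$. Since that indicator can be approximated above by a Lipschitz function supported on $\Omega(e^{\zeta t})$ — with Lipschitz norm polynomial in the relevant scale, controlled using the injectivity-radius Lemma \ref{lem-injradius} — we may apply the equidistribution on large compact sets, Proposition \ref{prop-omegat}, to get that $\calM^t_\Gamma$ of this set is $\asymp m_{\Gamma\backslash G/M}$ of it, up to $O(e^{-ut})$. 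Finally, $m_{\Gamma\backslash G/M}\big(\Omega(e^{\zeta t})\setminus\Omega(e^{-c t})\big)$ — i.e. the Haar measure of the set where the systole is small, complementary to the bulk — decays exponentially in $t$: in $\Gamma\backslash\mathrm{SL}(d,\R)/M$, the Haar volume of $\{1/s(g)\geq R\}$ is $\ll R^{-\epsilon_0}$ for a suitable $\epsilon_0>0$ by the reduction theory / Siegel-domain volume estimates, so taking $R=e^{\text{const}\cdot t}$ gives the claimed $\ll e^{-c_4 t}$ for an appropriate $c_4>0$.

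The main obstacle will be making the geometric ``folding'' argument on a single torus $F$ precise and uniform in $F$: one must show that whenever the systole on $F$ drops below $e^{-\vartheta t}$ along some direction of $\frak a$, translating back inside the period lattice $\Lambda(F)$ (of covolume $\vol_{\frak a}(F)$, with a regular period of norm $\leq t$) by a controlled amount lands one in a region where the systole is still small but above $e^{-\zeta t}$ — and crucially that the Lebesgue measure does not shrink in this process, so the ``bad'' mass is genuinely bounded by the ``intermediate'' mass. This requires combining the eigenvalue description of $F$ (the Jordan projection of $\gamma$, condition (3) type data from Lemma \ref{lem_cpt}), the lower bound $\|\lambda(\gamma)\|\geq$ (some positive constant, from discreteness of $\Gamma$) on the shortest regular period, and the Siegel-domain description of where $\Gamma\backslash G/M$ is thin. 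The bookkeeping of which Weyl chamber of $\frak a$ one must translate along, and ensuring $\vartheta=\zeta/2$ is small enough that the translated copy stays inside $\Omega(e^{\zeta t})$, is where the constants in \eqref{equ-zeta} get used.
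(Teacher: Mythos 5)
Your overall reduction is the same as the paper's: bound the escaping mass torus-by-torus by the mass in an intermediate ``systole annulus'', sum to get $\calM^t_\Gamma$ of that annulus, majorize its indicator by a Lipschitz function supported in $\Omega(e^{\zeta t})$, apply Proposition \ref{prop-omegat}, and finish with the exponential decay of the Haar measure of the thin part. That outer skeleton is correct (modulo a bookkeeping wrinkle: you place your annulus \emph{above} the threshold, $1/s\in(e^{\vartheta t},e^{\zeta t}]$, so a unit neighbourhood of it leaves $\Omega(e^{\zeta t})$ and Proposition \ref{prop-omegat} no longer applies to the cutoff as stated; the paper avoids this by taking the annulus $\Omega(e^{\vartheta t/8},e^{\vartheta t})$ \emph{below} the threshold, which is why $\vartheta=\zeta/2$ leaves the needed room).

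The genuine gap is exactly the step you flag as ``the main obstacle'': the uniform torus-level inequality $Leb\big(F_{[\gamma]}\cap\Omega(e^{\vartheta t})^c\big)\ll Leb\big(F_{[\gamma]}\cap(\text{annulus})\big)$, which is the content of the paper's Theorem \ref{lem_key}, and your sketch of it does not work as written. First, the assertion that ``the fundamental domain of $\Lambda(F)$ has bounded diameter in terms of $t$'' is false: only one regular period has norm $\leq t$, and in rank $\geq 2$ the lattice $\Lambda(F)$ can be arbitrarily stretched in other directions, so no diameter bound is available (and none is used in the paper). Second, the vector $b\in A$ that brings a cusp excursion back into the window depends on the point (both the Weyl chamber direction and the amount, via $\log a(g)$ and Lemma \ref{lem_weyl}), so you cannot push the whole bad portion of a fundamental domain into the annulus by a single translation and conclude that Lebesgue measure is preserved. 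The missing idea is the covering/packing argument that makes the bound uniform in $F$ with no control on $\Lambda(F)$: using the two point-wise facts $F_{[\gamma]}\subset\Omega(e^{C_d t})$ (Lemma \ref{lem_syslength}, via the Vandermonde/eigenvalue argument you allude to) and the existence, for each bad point, of $b$ with $\|\log b\|\leq C_8 t$ landing in the fixed window $\Omega(e^{\vartheta t/4},e^{\vartheta t/2})$ (Lemma \ref{lem_growth}), one takes a \emph{maximal} $C_8 t$-separated family $\{x_j\}$ in the window; maximality forces the balls $B(x_j,2C_8 t)$ to cover $F_{[\gamma]}\cap\Omega(e^{\vartheta t})^c$, while the disjoint balls $B(x_j,\vartheta t/8)$ stay inside the annulus, and the ratio $Leb(B(x_j,2C_8 t))/Leb(B(x_j,\vartheta t/8))$ is bounded independently of $t$ and of the torus by flat-torus doubling. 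Without this (or an equivalent device), your measure comparison on a single torus is not established, and the rest of the argument has nothing to sum.
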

\begin{proof}[Proof the main theorem for $\Gamma<\sld$]
Take a Lipschitz cutoff function $\phi$ supported on $\Omega(e^{\zeta t})$ and equals $1$ on $\Omega(e^{\vartheta t})$. Let $f_1=\phi f$ and $f_2=(1-\phi)f$. Then $f_1$ is supported on $\Omega(e^{\zeta t})$ and $f_2$ is supported on $\Omega(e^{\vartheta t})^c$ and with the same Lipschitz bound as $f$.
By applying Proposition \ref{prop-omegat} to $f_1$ and Lemma \ref{lem_nonescape}, we obtain 
\begin{align*}
    \bigg|\int f\dd\calM_\Gamma^t-\int f\dd m_{\Gamma\backslash G/M}\bigg| &\leq \bigg|\int f_1\dd\calM_\Gamma^t-\int f_1\dd m_{\Gamma\backslash G/M}\bigg|+\bigg|\int f_2\dd\calM_\Gamma^t-\int f_2\dd m_{\Gamma\backslash G/M}\bigg|\\
    &\ll e^{-ut}|f_1|_{Lip}+m_{\Gamma\backslash G/M}((\Omega(e^{\vartheta t})^c)|f_2|_\infty+\calM_\Gamma^t((\Omega(e^{\vartheta t})^c)|f_2|_\infty\ll e^{-u't}|f|_{Lip},
\end{align*}
here we need a volume estimate (see for example Proposition 7.1 in \cite{kleinbock_margulis}), that is
\[m_{\Gamma\backslash G/M}(\Omega(e^{\vartheta t})^c)\ll e^{-ct}. \]
The proof is complete.
\end{proof}

For $0<t_1<t_2$ We define 
\[\Omega(t_1,t_2):=\{\Gamma g\in\Gamma\backslash G/M,\ t_1< 1/s(g)\leq t_2 \}=\Omega(t_2) \setminus \Omega(t_1). \]
Let's state our key observation.

\begin{theorem}
\label{lem_key}
There exists $C>0$. For $t>C$ and $\gamma\in \Gamma^{lox}_c$ with $\|\lambda(\gamma)\|\leq t$, then
\[Leb(F_{[\gamma]}\cap \Omega(e^{\vartheta t})^c)\ll Leb(F_{[\gamma]}\cap\Omega(e^{\vartheta t/8},e^{\vartheta t})). \]
\end{theorem}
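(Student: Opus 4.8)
The plan is to transfer the statement to the flat. Fix a representative $g\in G$ with $g^{-1}\gamma g\in\exp(\lambda(\gamma))M$; then $F:=F_{[\gamma]}=\Gamma gAM$, and $Y\mapsto\Gamma ge^YM$ identifies the torus $\frak a/\Lambda$, with $\Lambda:=\Lambda(F)\ni\lambda(\gamma)$, with $F$, carrying the Lebesgue measure to $L_F$. The systole $s$ is invariant under the right $M$- and right $K$-actions and under left translation by $\Gamma$, so $\beta(Y):=-\log s(ge^Y)$ descends to $\frak a/\Lambda$ and $\Omega(R)^c$ pulls back to $\{\beta>\log R\}$; the statement becomes
\[
\leb\big(\{\beta>\vartheta t\}\big)\ \ll\ \leb\big(\{\vartheta t/8<\beta\le\vartheta t\}\big),
\]
for subsets of a fundamental domain of $\Lambda$ in $\frak a$. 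Two elementary inputs will be used. First, $\beta$ is $L$-Lipschitz on $(\frak a,\Vert\cdot\Vert)$ for some $L=L(d)$: since $s(ge^{Y'})=\min_{v\in\Z^dge^Y\setminus\{0\}}\Vert ve^{Y'-Y}\Vert$ and $e^{-\Vert Y'-Y\Vert_{\mathrm{op}}}\le\Vert ve^{Y'-Y}\Vert/\Vert v\Vert\le e^{\Vert Y'-Y\Vert_{\mathrm{op}}}$ with $\Vert\cdot\Vert_{\mathrm{op}}\ll\Vert\cdot\Vert$, one gets $|\beta(Y')-\beta(Y)|\le L\Vert Y'-Y\Vert$. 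Second, the excursion depth is bounded: $\sup_F\beta\le Ct$. Indeed, a Lagrange-multiplier computation gives $\sup_{Y\in\frak a}\big(-\log\Vert wge^Y\Vert\big)=-\tfrac12\log d-\tfrac1d\log\prod_i|(wg)_i|$ for fixed $w$, hence $\sup_F\beta=\max_{w\in\Z^d\setminus\{0\}}\big(-\tfrac12\log d-\tfrac1d\log\prod_i|(wg)_i|\big)$; as $g$ is a rescaling of the eigenbasis of $\gamma$ and $p_\gamma$ is irreducible over $\Q$ (Lemma~\ref{lem_cpt}), $\prod_i|(wg)_i|$ is a nonzero algebraic norm divided by $|\det(\text{eigenbasis})|$, and $\log|\det(\text{eigenbasis})|\ll\Vert\lambda(\gamma)\Vert\le t$. (Alternatively this is immediate from reduction theory: a compact periodic $A$-orbit with a regular period of norm $\le t$ cannot reach depth $\gg t$ in the cusp.)

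The heart of the argument is a geometric bound: every connected component $W$ of $\{\beta>c\}$, for $c\in[\vartheta t/8,\vartheta t]$ (so $c\asymp t$ and $\{\beta>c\}$ lies deep in the cusp), satisfies $\mathrm{diam}(W)\ll t$, hence $\leb(W)\ll t^{\dim\frak a}$. On the cusp, by the Siegel domain description (Definition~\ref{defin-siegel}) and Lemma~\ref{lem_syssiegel}, $\beta(Y)$ agrees up to $O(1)$ with $-\log a_d$ of a suitable $\Gamma_0$-translate of $ge^Y$; the Weyl-group estimate \eqref{equ_gb} together with Lemma~\ref{lem_weyl} shows that, as $Y$ moves, this quantity changes essentially affinely, with rate of norm bounded above and below by constants --- the higher-rank analogue of the fact that a geodesic's height grows like $\pm(\text{arclength})$ in a cusp. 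Thus $\{\beta>c\}$ is, up to bounded error, a locally finite union of convex pieces on each of which $\beta$ is affine with $\Vert\nabla\beta\Vert\asymp1$, so each piece meets $\{\beta>c\}$ in a set of diameter $\ll\sup_F\beta-c\ll t$. To bound the number of pieces met by a single component $W$ one uses $\gamma\in\Gamma^{lox}_c$: by Lemma~\ref{lem_cpt}(2) the characteristic polynomial $p_\gamma$ is irreducible over $\Q$, so the flat $gAo$ carries no rational asymptotic data and cannot ``track a cusp wall''; after entering a given $\Gamma_0$-translate of the depth-$c$ cusp it must cross it transversally and leave the depth-$c$ region before it can reach another translate. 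Hence $W$ meets only $O(1)$ pieces and $\mathrm{diam}(W)\ll t$.

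With these in hand the proof concludes by a collar estimate. Put $n:=\dim\frak a$ and $\rho:=\vartheta t/(2L)\asymp t$. By the Lipschitz bound $N_\rho(\{\beta>\vartheta t\})\subset\{\beta>\vartheta t-L\rho\}=\{\beta>\vartheta t/2\}\subset\{\beta>\vartheta t/8\}$, where $N_\rho$ denotes the open $\rho$-neighbourhood; and each connected component of this neighbourhood, being connected, contained in $\{\beta>\vartheta t/8\}$ and meeting $\{\beta>\vartheta t\}$, lies inside a single connected component $W_k$ of $\{\beta>\vartheta t/8\}$. Writing $A_k:=\{\beta>\vartheta t\}\cap W_k$ we get $A_k+B(0,\rho)=N_\rho(A_k)\subset W_k$, so by the Brunn--Minkowski inequality in $\frak a\cong\R^n$,
\[
\leb(W_k\setminus A_k)\ \ge\ \leb\big(A_k+B(0,\rho)\big)-\leb(A_k)\ \ge\ \big(\leb(A_k)^{1/n}+c_n\rho\big)^n-\leb(A_k).
\]
Since $\leb(A_k)\le\leb(W_k)\ll t^n\asymp\rho^n$ by the geometric bound, the right-hand side is $\gg\leb(A_k)$, uniformly in $k$. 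Summing over the pairwise disjoint components $W_k$ of $\{\beta>\vartheta t/8\}$, which cover both $\{\beta>\vartheta t\}=\bigsqcup_kA_k$ and $\{\vartheta t/8<\beta\le\vartheta t\}=\bigsqcup_k(W_k\setminus A_k)$, yields $\leb(\{\vartheta t/8<\beta\le\vartheta t\})\gg\leb(\{\beta>\vartheta t\})$.

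The main obstacle is the geometric bound of the second paragraph: making the piecewise-affine description of $\beta$ on the cusp precise and global --- tracking which $\Gamma_0$-translate of the Siegel domain is active as $Y$ varies, and handling the transitions between the finitely many combinatorial types of cusp --- and, above all, converting the $\Q$-irreducibility of $\gamma$ into the quantitative statement that a connected cuspidal piece of the flat meets only $O(1)$ such translates, so that its diameter is $O(t)$. Once that is established, the Lipschitz estimate and the Brunn--Minkowski collar argument are routine.
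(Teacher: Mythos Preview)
Your setup---identifying $F$ with $\frak a/\Lambda$, pulling back the systole to $\beta$, the Lipschitz bound on $\beta$, and the depth bound $\sup_F\beta\ll t$---is correct and matches the paper's Lemma~\ref{lem_syslength}. But the argument has a genuine gap, which you yourself flag at the end: the claim that every connected component $W$ of $\{\beta>c\}$ has diameter $\ll t$. Your justification is only heuristic. On a single Siegel piece $\beta$ is, up to $O(1)$, an affine function of $Y$ with gradient of norm $\asymp 1$; the superlevel set of such a function is a half-space, bounded in the gradient direction (by $\sup\beta\le C_dt$) but \emph{unbounded} in the $(\dim\frak a-1)$ transverse directions. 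So boundedness of a component must come entirely from transitions between pieces, and nothing in your argument shows the number of transitions along a component is $O(1)$ independent of $t$ and of $\Lambda$. The diameter of the torus $\frak a/\Lambda$ is not controlled by $\|\lambda(\gamma)\|\le t$ alone (that bounds only one period), so a component could a priori be long. The appeal to $\Q$-irreducibility of $p_\gamma$ to say the flat ``cannot track a cusp wall'' is suggestive but is not a proof.

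A secondary issue: your Brunn--Minkowski step is written for $\frak a\cong\R^n$, but $A_k$ lives on the torus $\frak a/\Lambda$. If $\rho\asymp t$ is comparable to some period of $\Lambda$, the sum $A_k+B(0,\rho)$ wraps and the Euclidean inequality does not apply as stated.

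The paper sidesteps the diameter question entirely. In addition to the depth bound (Lemma~\ref{lem_syslength}) it proves an \emph{escape lemma} (Lemma~\ref{lem_growth}): from any $y\in F$ with $\beta(y)\ge\vartheta t$ there exists $b\in A$ with $\|\log b\|\le C_8t$ and $\beta(yb)\in(\vartheta t/4,\vartheta t/2)$. Using this, one builds a maximal $C_8t$-separated set $\{x_j\}\subset F\cap\{\vartheta t/4<\beta<\vartheta t/2\}$ whose $2C_8t$-balls cover $F\cap\{\beta>\vartheta t\}$. On a flat torus $\leb\big(B(x_j,2C_8t)\big)\ll\leb\big(B(x_j,\vartheta t/8)\big)$ with implied constant depending only on $d$ (translation invariance plus a covering count), and the small balls $B(x_j,\vartheta t/8)$ are pairwise disjoint and, by the Lipschitz bound on $\beta$, lie in $\{\vartheta t/8<\beta\le\vartheta t\}$. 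Summing gives the result. This is a local packing argument requiring no information about the global topology of $\{\beta>c\}$.
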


\begin{proof}[From Theorem \ref{lem_key} to Lemma \ref{lem_nonescape} ]
Take a Lipschitz function $f$ such that $f$ takes value in $[0,1]$, $f$ equals $1$ on $\Omega(e^{\vartheta t/8},e^{\vartheta t})$, the support of $f$ is contained in the $1$ neighbourhood of $\Omega(e^{\vartheta t/8},e^{\vartheta t})$ and $|Lip(f)|\leq 2$. Then we obtain
\begin{align*}
\calM^t_\Gamma(\Omega(e^{\vartheta t})^c)&\ll \calM^t_\Gamma(\Omega(e^{\vartheta t/8},e^{\vartheta t}))\leq  \calM^t_\Gamma(f).
\end{align*}
For an element in the support of $f$, we can write it as $\Gamma gh$ with $g\in \Omega(e^{\vartheta t/8},e^{\vartheta t})$ and $h\in B(e,1)$. Then due to the region of $h$ we have for $v\in \Z^d$
\[\|vgh\|=\| (vg)h\|\in\|vg\|[1/C',C']  \]
for some $C'>1$. Therefore the 1 neighbourhood of $\Omega(e^{\vartheta t/8},e^{\vartheta t})$ is contained in $\Omega(e^{\zeta t})$ if $t$ large. Applying Proposition \ref{prop-omegat}, we have
\[ \bigg|\calM^t_\Gamma(f)-\int f\ dm_{\Gamma\backslash G/M}\bigg|\ll e^{-ut}|f|_{Lip}.\]
Then by the choice of $f$, we obtain
\begin{align*}
\calM^t_\Gamma(\Omega(e^{\vartheta t})^c)&\ll m_{\Gamma\backslash G/M}(\Omega(e^{\vartheta t/8}/C',C'e^{\vartheta t}))+e^{-ut}\ll e^{-ct},
\end{align*}
here we need a volume estimate $m_{\Gamma\backslash G/M}(\Omega(e^{\vartheta t/8}/C')^c)\ll e^{-ct}$.
\end{proof}

In order to prove Theorem \ref{lem_key},
we start with a lemma between the systole of $\Z^d g$ for $\Gamma g\in F$ and the length of an element in $\Lambda(F)$. Since $F$ itself is a torus, we can also interpret it as the relation between the systole of the torus $F$ with the cusp excursion of $F$ on $\Gamma\backslash G$.

\begin{lem}\label{lem_syslength}
There exists $C_d>0$. For $\gamma\in \Gamma^{lox}_c$ and $F_{[\gamma]}$ a compact periodic $A$-orbit in $\Gamma\backslash G/M$, we have 
\[F_{[\gamma]}\subset \Omega(\exp(C_d\|\lambda(\gamma)\|)). \]
\end{lem}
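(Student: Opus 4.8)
The goal is to bound, uniformly over all $\Gamma g M \in F_{[\gamma]}$, the inverse systole $1/s(g)$ by $\exp(C_d\|\lambda(\gamma)\|)$. The plan is to use the structure of $F_{[\gamma]}$ as a single $A$-orbit: writing $F_{[\gamma]} = \Gamma g_\gamma A M$ with $g_\gamma^{-1}\gamma g_\gamma \in \exp(\lambda(\gamma))M$, every point of the orbit is of the form $\Gamma g_\gamma a M$ for $a \in A$, and the relation $\gamma g_\gamma = g_\gamma \exp(\lambda(\gamma)) m$ means that translating the $\frak a$-coordinate by $\lambda(\gamma)$ is the same as acting by $\gamma \in \Gamma$ on the left, which is invisible in the quotient. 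Hence it suffices to control the systole of $\Z^d g_\gamma a$ for $a$ ranging over a fundamental domain of $\Lambda(F_{[\gamma]}) \supset \Z\lambda(\gamma)$ inside $\frak a$; once we know the periods contain $\lambda(\gamma)$, we may reduce any $a = \exp(Y)$ to $Y$ lying in a bounded region, e.g. a slab whose width in the $\lambda(\gamma)$-direction is $\|\lambda(\gamma)\|$ and whose transverse extent is also controlled because $\Lambda(F_{[\gamma]})$ is a full-rank lattice containing $\Z\lambda(\gamma)$. Actually, a cleaner route: since $F_{[\gamma]}$ is compact, $\frak a / \Lambda(F_{[\gamma]})$ is a torus, and $\Lambda(F_{[\gamma]})$ is a lattice each of whose nonzero vectors has norm bounded below by a constant $c_0 = c_0(G)>0$ (the set of compact periods is discrete with a uniform gap, coming from the injectivity radius of $\Gamma\backslash G$); combined with the fact that $\lambda(\gamma) \in \Lambda(F_{[\gamma]})$ and $\dim\frak a$ is fixed, one gets that a fundamental domain of $\Lambda(F_{[\gamma]})$ is contained in a ball of radius $\ll \|\lambda(\gamma)\|^{\dim\frak a - 1}/c_0^{\dim\frak a -1}$, hence $\ll \|\lambda(\gamma)\|^{\dim\frak a}$ after crude estimates; this is still polynomial and hence absorbed into the exponential.

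So the concrete steps are: (i) fix a representative $g_\gamma$ with $d_X(o, g_\gamma o) \ll \|\lambda(\gamma)\|$, which follows because $g_\gamma^{-1}\gamma g_\gamma \in A^{++}M$ puts $g_\gamma$ on the maximal flat $(\gamma^+\gamma^-)_X$ through a point near $o$ — more precisely one chooses the representative so that $o \in (\gamma^+\gamma^-)_X$ and the translation length realizing $\lambda(\gamma)$ sits over a bounded piece, using Lemma \ref{lem-xietao} and the discussion after \eqref{defin_mes-tore}; alternatively, one observes that $\gamma$ has a fixed point in $X$ within bounded distance is false in general, so instead use that $\underline a(g_\gamma^{-1}\gamma^n g_\gamma) = n\lambda(\gamma)$ and pick the representative minimizing $d_X(o,g_\gamma o)$ along the flat. (ii) For a general point $\Gamma g_\gamma a M$ of $F_{[\gamma]}$, reduce $a$ modulo $\Lambda(F_{[\gamma]})$ so that $\log a$ lies in a fundamental domain $D$ with $\mathrm{diam}(D) \ll \mathrm{poly}(\|\lambda(\gamma)\|)$ by the lattice-gap argument above. (iii) Estimate $1/s(g_\gamma a)$: since $s$ is left $\Gamma$-invariant and the norm is $K$-invariant, $1/s(g_\gamma a) = 1/s(\Z^d g_\gamma a)$, and using $\|v g_\gamma a\| \geq \|v g_\gamma\| \cdot \|a^{-1}\|^{-1} \geq s(g_\gamma) \|a^{-1}\|^{-1}$ together with $\|a^{-1}\| = e^{\|\log a\|_{\mathrm{op}}} \leq e^{c\,\mathrm{diam}(D)}$ and $1/s(g_\gamma) \leq e^{c\, d_X(o,g_\gamma o)} \ll e^{c\|\lambda(\gamma)\|}$ (the last from $\|h\| \ll e^{c\, d_X(o,ho)}$ applied to $h = g_\gamma$, as used in the proof of Lemma \ref{lem-injradius}), one gets $1/s(g_\gamma a) \leq e^{c\|\lambda(\gamma)\|} \cdot e^{c\,\mathrm{poly}(\|\lambda(\gamma)\|)}$.

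The main obstacle is step (ii): showing the fundamental domain of $\Lambda(F_{[\gamma]})$ has diameter controlled by $\|\lambda(\gamma)\|$ (polynomially suffices, since the target bound is exponential). This needs the uniform lower bound on the shortest nonzero period of any compact periodic torus — equivalently, that the set of regular Jordan projections $\lambda(\gamma')$ of loxodromic $\gamma' \in \Gamma$ avoids a fixed neighborhood of $0$, which holds because $\Gamma$ is discrete and torsion-free (or acts freely), so $\log\|\gamma'\| \geq d_X(o,\gamma' o)/C$ is bounded below and hence $\|\lambda(\gamma')\| = \lim_n \|\underline a((\gamma')^n)\|/n$ is bounded below by a constant depending only on $G,\Gamma$. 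Once this gap $c_0$ is in hand, pack a fundamental domain of $\Lambda(F_{[\gamma]})$ inside the convex hull of $O(1)$ lattice vectors all of norm $\ll \|\lambda(\gamma)\|/c_0^{\dim\frak a}$ (Minkowski-type reduction, using that $\lambda(\gamma)$ itself is one period of norm $\|\lambda(\gamma)\|$ and the successive minima are squeezed between $c_0$ and $\|\lambda(\gamma)\|$), giving $\mathrm{diam}(D) \ll \|\lambda(\gamma)\|^{\dim\frak a}$. Plugging into step (iii) and renaming constants yields $C_d$ with $F_{[\gamma]} \subset \Omega(\exp(C_d\|\lambda(\gamma)\|))$.
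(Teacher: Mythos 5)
Your plan has two genuine gaps, and they sit exactly where the difficulty of the lemma lies. First, step (i) is unjustified: there is no reason why the flat $(\gamma^+\gamma^-)_X$, or equivalently some point of $F_{[\gamma]}$, should come within distance $O(\|\lambda(\gamma)\|)$ of the base point in the quotient. Choosing "the representative minimizing $d_X(o,g_\gamma o)$ along the flat" only produces $d_X(o,(\gamma^+\gamma^-)_X)$ modulo $\Gamma$, and bounding that quantity in terms of $\lambda(\gamma)$ is essentially the statement you are trying to prove (restricted to one point); you give no argument for it. Second, step (ii) is false as stated: knowing $\lambda(\gamma)\in\Lambda(F_{[\gamma]})$ controls only the \emph{first} successive minimum of the period lattice. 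The remaining successive minima, hence the covolume (a regulator-type quantity) and the diameter of the torus $\frak a/\Lambda(F_{[\gamma]})$, are not bounded polynomially in $\|\lambda(\gamma)\|$ — they can be exponentially large in $\|\lambda(\gamma)\|$. Moreover, even a polynomial diameter bound would not suffice: in step (iii) you exponentiate the diameter via $\|a^{-1}\|\leq e^{c\,\mathrm{diam}(D)}$, while the target bound $1/s\leq e^{C_d\|\lambda(\gamma)\|}$ has an exponent \emph{linear} in $\|\lambda(\gamma)\|$, so a diameter of size $\|\lambda(\gamma)\|^2$ already destroys the conclusion. (A smaller issue: the uniform gap $c_0$ on periods cannot come from an injectivity radius, since $\Gamma\backslash G$ is noncompact here; it requires an arithmetic input such as a fixed-degree Mahler-measure bound. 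That part is fixable, but it does not rescue the diameter argument.)

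The paper avoids both problems by arguing pointwise at \emph{every} $\Gamma gM\in F_{[\gamma]}$, with no reduction to a well-placed point and no control of the torus diameter. Writing $a=g^{-1}\gamma g\in AM$, one observes that for any nonzero $x\in\Z^d g$ the vectors $x,xa,\dots,xa^{d-1}$ span a rank-$d$ sublattice of the unimodular lattice $\Z^d g$, whose covolume factors as $|\prod_j x_j|\cdot|\det(1,a,\dots,a^{d-1})|$ (a Vandermonde determinant, nonzero and at most $e^{C\|\lambda(\gamma)\|}$); since this covolume is at least $1$, one gets $|\prod_j x_j|\geq e^{-C\|\lambda(\gamma)\|}$. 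Because $|\prod_j (xb)_j|=|\prod_j x_j|$ for all $b\in A$, the arithmetic–geometric mean inequality then bounds $\|xb\|$ from below uniformly over the whole $A$-orbit, which is exactly the systole bound $F_{[\gamma]}\subset\Omega(e^{C_d\|\lambda(\gamma)\|})$. If you want to repair your approach you would need an independent bound on the cusp excursion at one point \emph{and} on the diameter of the period lattice, neither of which follows from the data $\lambda(\gamma)$ alone; the algebraic argument above is the efficient route.
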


\begin{rem}
This lemma is inspired by the discriminant of compact $A$-orbit defined in \cite{einsiedler_distribution_2011}. Here we give a direct relation without using the discriminant.
\end{rem}

\begin{proof}
Take a point $\Gamma g M\in F_{[\gamma]}$, then there exists $a\in AM$ such that $a=g^{-1}\gamma g$. The Jordan projection of $a$ is the same as $\gamma$, that is $\lambda(a)=\lambda(\gamma)$.

Take a nonzero vector $x\in \Z^dg$. Then by $\Z^dga=\Z^d\gamma g=\Z^dg$, we obtain
\[xa\in \Z^dg,\cdots, xa^{d-1}\in \Z^dg. \]
Now $x,xa,\cdots, xa^{d-1}$ generates a sublattice in $\Z^dg$. There is no $j$ such that $x_j=0$, otherwise the length of $xb$ for $b\in A$ can be arbitrarily small, which contradicts the fact that $F_{[\gamma]}$ is compact. Hence its covolume satisfies
\[  \vol(\R^d/\left<x,xa,\cdots, xa^{d-1}\right>)=|\prod_{1\leq j\leq d}x_j\det(1,a,\cdots, a^{d-1})|,  \]
where in $\det(1,a,\cdots, a^{d-1})$, the element $a_j$ is seen as a column vector. 
Now different coordinates of $a$ are different, so the determinant of the Vandermonde matrix in the above formula is nonzero. Hence the lattice generated by $x,xa,\cdots, xa^{d-1}$ has rank $d$ and its covolume is greater than 1.
Hence
\[\frac{1}{|\prod_{1\leq j\leq d}x_j |}\leq |\det(1,a,\cdots, a^{d-1})|\leq \exp(C\|\lambda(a)\|)=\exp(C\|\lambda(\gamma)\|). \]
Therefore by the inequality of arithmetic and geometric means
\[\max_{b\in A}\frac{1}{\|xb\|}\ll \max_{b\in A}\frac{1}{|\prod_{1\leq j\leq d}(xb)_j |^{1/d}}= \frac{1}{|\prod_{1\leq j\leq d}x_j |^{1/d}}\leq \exp(C\|\lambda(\gamma)\|/d). \]
Finally, we obtain
\[\min_{b\in A}s(\Z^dgb)\geq \min_{x\in \Z^dg-\{0\},b\in A}\|xb\|\gg \exp(-C\|\lambda(\gamma)\|/d). \]
\end{proof}

This lemma tells us that the compact periodic $A$-orbit appearing in $\calM^t_\Gamma$ is always contained in $\Omega(\exp(C_dt))$. 
In order to prove Theorem \ref{lem_key}, we need another lemma describing the growth of systole under the $A$ action.
\begin{lem}\label{lem_growth}
There exist $C, C_8>0$. Let $y$ be an element in $\Gamma\backslash G/M$ with $1/s(y)\in[e^{\vartheta t},\exp(C_dt)]$ and $t>C$, there exists $b\in A$ such that $1/s(yb)\in (e^{\vartheta t/4},e^{\vartheta t/2})$ and $\|\log b\|\leq C_8 t$.
\end{lem}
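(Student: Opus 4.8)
Lemma \ref{lem_growth} says: starting from a point $y$ whose lattice $\mathbb Z^d g$ is "cuspidal" — with $1/s(y)$ somewhere in the window $[e^{\vartheta t},\exp(C_d t)]$ — we can move by a bounded amount $\|\log b\|\le C_8 t$ along the diagonal group $A$ to land at a point $yb$ whose systole is in the fixed intermediate window $(e^{\vartheta t/4},e^{\vartheta t/2})$. The plan is to produce $b$ explicitly by diagonally rescaling along the shortest vector of the lattice.

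First I would pick a representative $g\in G$ with $y=\Gamma gM$ and let $v\in\mathbb Z^d g$ realize the systole, $\|v\|=s(g)=s(y)$, so $\|v\|=1/(1/s(y))\le e^{-\vartheta t}$, which is very small; also $\|v\|\ge e^{-C_d t}$. Write $v=(v_1,\dots,v_d)$. Because $y\in F_{[\gamma]}$ lies on a \emph{compact} periodic $A$-orbit, no coordinate of $v$ vanishes (the same argument as in the proof of Lemma \ref{lem_syslength}: a zero coordinate would let $\|vb\|\to 0$ along $A$, contradicting compactness), and moreover $\prod_j|v_j|$ is bounded below by $\exp(-C\|\lambda(\gamma)\|)\ge\exp(-C' t)$ by the Vandermonde covolume estimate already used there. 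I would then choose $b=\mathrm{diag}(b_1,\dots,b_d)\in A$, $\prod b_j=1$, with the $b_j$ chosen so that after scaling, $v b=(v_1 b_1,\dots,v_d b_d)$ has all coordinates of comparable size — roughly the "balancing" diagonal matrix for $v$, i.e.\ $b_j\asymp \big(\prod_i|v_i|\big)^{1/d}/|v_j|$ up to a uniformly bounded factor, rescaled to determinant one. For this choice $\|vb\|\asymp (\prod_i|v_i|)^{1/d}$, hence $1/s(yb)\le 1/\|vb\|\asymp (\prod_i|v_i|)^{-1/d}$; combined with the two-sided bounds on $\|v\|$ and on $\prod|v_i|$ one gets $\|\log b\|=O(\max_j|\log b_j|)=O(t)$, which supplies the constant $C_8$.

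The delicate point is controlling $1/s(yb)$ from \emph{both} sides and hitting the prescribed window $(e^{\vartheta t/4},e^{\vartheta t/2})$, not just making the systole moderately large. One balancing $b$ may overshoot or undershoot; so I would instead run a continuous-path argument. Consider the one-parameter family $b(\lambda)$, $\lambda\in[0,1]$, interpolating (inside $A$, determinant one) between $b(0)=\mathrm{Id}$ and $b(1)=b$ the balancing element above. The function $\lambda\mapsto 1/s(yb(\lambda))$ is continuous; at $\lambda=0$ it equals $1/s(y)\ge e^{\vartheta t}>e^{\vartheta t/2}$, and at $\lambda=1$ it is $\le \exp(-C\|\lambda(\gamma)\|/d)^{-1}$-type quantity which, using $\prod|v_i|\ge e^{-C' t}$ and choosing the normalization carefully, is $<e^{\vartheta t/4}$ for $t$ large (here is where the constant $C_8$ and the smallness of $\vartheta=\zeta/2$ relative to the other constants is used). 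By the intermediate value theorem there is $\lambda_0$ with $1/s(yb(\lambda_0))\in(e^{\vartheta t/4},e^{\vartheta t/2})$, and $\|\log b(\lambda_0)\|\le\|\log b\|=O(t)$.

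The main obstacle I anticipate is the lower bound on the systole after rescaling: a priori, rescaling along the shortest vector $v$ might create a \emph{new} short vector in $\mathbb Z^d g$ (some other lattice vector that was long but gets compressed), so $s(yb)$ need not be governed by $\|vb\|$ alone. To handle this I would use that the other lattice vectors are bounded below in a controlled way — either via Minkowski's second theorem (the successive minima of $\mathbb Z^d g$ multiply to $1$, and the compactness of $F_{[\gamma]}$ forces all of them, not just the first, to stay away from $0$ along the whole $A$-orbit, as in Lemma \ref{lem_syslength}) or directly via the Vandermonde/covolume bound applied to the sublattice $\langle x, xa,\dots, xa^{d-1}\rangle$ for a \emph{second} short vector $x$. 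This bounds the excursion of every successive minimum under $A$ by $\exp(O(t))$, which is exactly what is needed so that the continuity argument above stays inside the claimed window with the stated bound $\|\log b\|\le C_8 t$.
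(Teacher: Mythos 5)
There is a genuine gap, and it sits exactly where your intermediate-value argument needs to bite. First, the inequality $1/s(yb)\le 1/\|vb\|$ is reversed: $vb$ is a nonzero vector of the lattice $\Z^d gb$, so $s(yb)\le\|vb\|$ and hence $1/s(yb)\ge 1/\|vb\|$. Second, and more seriously, the balancing element moves the point the \emph{wrong way}. With $b_j\asymp(\prod_i|v_i|)^{1/d}/|v_j|$ one has $\|vb\|\asymp(\prod_i|v_i|)^{1/d}\le\max_j|v_j|\le\|v\|=s(y)\le e^{-\vartheta t}$: balancing replaces the sup-norm of $v$ by the geometric mean of its coordinates, which can only decrease it. So at the endpoint $\lambda=1$ you get $1/s(yb)\ge 1/\|vb\|\ge c_d\,e^{\vartheta t}$ for a dimensional constant $c_d>0$; the endpoint is at least as deep in the cusp as the starting point, not below $e^{\vartheta t/4}$, and the path $\lambda\mapsto 1/s(yb(\lambda))$ starts and ends above the window, so the intermediate value theorem yields nothing. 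The repair you propose (Minkowski's second theorem, or the Vandermonde/covolume bound applied to all lattice vectors) only gives $\prod_j|w_j|\ge e^{-Ct}$ for every nonzero $w\in\Z^dg$, hence $s(yb)\ge e^{-Ct/d}$ for \emph{every} $b\in A$ — this is exactly Lemma \ref{lem_syslength}, with a constant of size $C_d$, and since $\vartheta$ is chosen much smaller than $C_d$ it is far too coarse to produce a point in the window $(e^{\vartheta t/4},e^{\vartheta t/2})$. What is missing is a mechanism that provably \emph{raises} the systole at a controlled rate without creating new short vectors; the balancing construction does not provide one. (A secondary point: you also assume $y$ lies on a compact periodic $A$-orbit with $\|\lambda(\gamma)\|\le t$, which is not a hypothesis of the lemma, though it does hold in the application to Theorem \ref{lem_key}.)

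For comparison, the paper never manipulates individual lattice vectors: it projects to $\Gamma_0=\mathrm{SL}(d,\Z)$, places a representative $g$ of $y$ in a standard Siegel domain, and uses Lemma \ref{lem_syssiegel} to identify $-\log s(g)$, up to an additive constant, with the weight coordinate $\chi(\log a(g))=-\log a_d(g)$. The cocycle identity \eqref{equ_gb} (which rests on Lemma \ref{lem_weyl}) gives $\log a(gb)=\log b'+\log a(g)+O(1)$ for $b=wb'w^{-1}$ with $b'$ in the negative chamber, so one slides $\chi$ from a value $\ge\vartheta t$ down to exactly $\vartheta t/3$ with $b'=\exp(-sY)$, $\|\log b'\|=O(t)$ (the upper bound $1/s(y)\le e^{C_dt}$ controls $\|Y\|$), and concludes with Lemma \ref{lem_syssiegel} again; note this argument needs no compactness of the $A$-orbit.
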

\begin{rem}
This lemma is similar to Proposition 4.1 in \cite{tomanov_weiss}, where they also study the growth of systole to prove that there exists a compact set which intersects each orbit of some maximal $\R$-split torus.

The idea of the proof is that in the Siegel domain, the systole and a cocycle is comparable. Since this cocycle is additive, we can estimate its value after $A$ action, which in turn gives the estimate of systole. 
\end{rem}

\begin{proof}
We only give the proof for $\Gamma_0$, since our definition of Siegel domain only works for $\Gamma_0$. If we have an element $y$ in $\Gamma\backslash G/M$, we can project it to $y'$ in $\Gamma_0\backslash G/M$ and apply the lemma there to find a $b$. Then due to the invariance of the systole under covering $s(yb)=s(y'b)$, this $b$ also works for $y$ in the lemma.  

For $y$ in $\Gamma_0\backslash G/M $ with $1/s(y)\in[e^{\vartheta t},\exp(C_d t) ]$, with out loss of generality, we can find a $g$ in a standard Siegle domain $N_{s_0}A_{u_0}K$ such that $y=\Gamma_0 gM$. Then by Lemma \ref{lem_syssiegel}
\begin{equation}\label{equ_sg}
    a_d(g)u_0^{d-1}\leq s(g)\leq a_d(g).
\end{equation}
For $\log a\in\frak a$, we define a character 
$$\chi(\log a)=\log(a_1\cdots a_{d-1})=-\log a_d. $$
By \eqref{equ_sg}, we obtain that $a(g)$ is in
$$\{a\in A|\  \log a_j-\log a_{j+1}\geq \log u_0,\ 1\leq j\leq d-1,\ \chi(\log a)\in [\vartheta t,C_dt-(d-1)\log u_0] \}.$$ 
Using the affine coordinate, this is equivalent to say that $\log a(g)=v^0+Y$, 
with $v^0\in\frak a$, $v^0_j-v^0_{j+1}=\log u_0$, $Y\in\frak a^{++}$ and 
$$\chi(Y)\in[\vartheta t,C_dt-(d-1)\log u_0]-(d\log u_0)/2.$$
Applying \eqref{equ_gb} to this $g$, there exists $w$ in $\calW$ such that for all $b'$ in the negative Weyl chamber, the equation \eqref{equ_gb} holds.
\begin{figure}
    \centering
    \includegraphics[width=5cm]{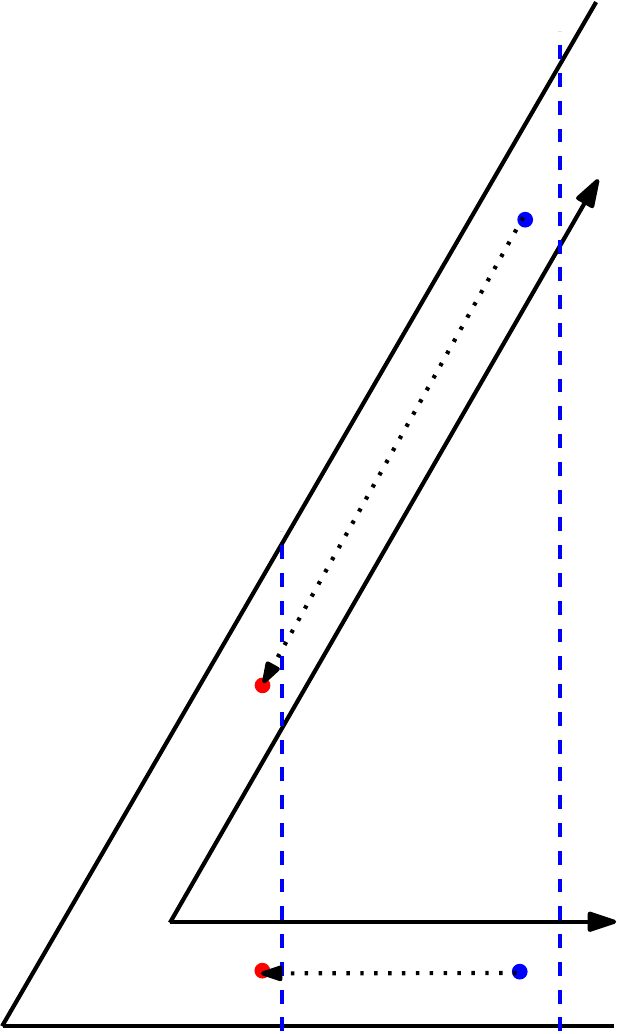}
    \caption{This is a neighbourhood of the positive Weyl chamber. Two blue dashed lines are lines with the fundamental weight $\chi(a)=\log a_1+\log a_2$ equal to $t$ and $\vartheta t$. Two blue points are $\log a(g)$ and two red points are $\log a(g)+\log b'$. The point $\log a(gb)$ has bounded distance to the red point.}
    \label{fig:weyl}
\end{figure}
We can take a $b'=\exp(-sY)$ with $0<s<1$ such that
\[\chi(Y-sY)=\vartheta t/3. \]
Then $\|\log b'\|=\|sY\|\leq C_8t$ if $t$ is large with respect to $\log u_0$. Therefore, We have 
\[\log a(gb)=v^0+(1-s)Y+v=(v^0+v)+(1-s)Y \]
is in 
\[\{a\in A|\  \log a_j-\log a_{j+1}\geq \log u,\ 1\leq j\leq d-1,\ \chi(\log a)\in [\vartheta t/3-C,\vartheta t/3+C] \},\]
with some $1>u>0$ and $C>0$ only depending on $c$ in Lemma \ref{lem_weyl} and $u_0$.
By Lemma \ref{lem_syssiegel}, we obtain that
\[ a_d(gb)u^{d-1}\leq s(gb)\leq a_d(gb).\]
When $t$ is large, we obtain $gb$ with $1/s(gb)\in(e^{\vartheta t/4},e^{\vartheta t/2})$.
\end{proof}

\begin{proof}[Proof of Theorem \ref{lem_key}]
The compact periodic $A$-orbit $F_{[\gamma]}$ is isometric to the flat torus $\frak a/\Lambda(F_{[\gamma]})$. We use the quotient Euclidean distance on $F_{[\gamma]}$ and a ball $B(x,r)$ for $x\in F_{[\gamma]}$ and $r>0$ will be a ball in $F_{[\gamma]}$ with respect to this distance. We are working on the flat torus with a height function given by the inverse of the systole, $1/s(x)$, which tells us how this flat torus is embedded in the large non compact space $\Gamma\backslash G/M$.


\textbf{We can find a maximal family of points $\{x_j\}_{j\in J}\subset F_{[\gamma]}$ in $\Omega(e^{\vartheta t/4},e^{\vartheta t/2})$ such that $d(x_j,x_{j'})\geq C_8t$ for $j\neq j'$ and the union of balls $\cup_{j\in J}B(x_j,2C_8t)$ covers $F_{[\gamma]}\cap \Omega(e^{\vartheta t})^c$.} 
This is always possible, because if the union of balls doesn't cover $F_{[\gamma]}\cap \Omega(e^{\vartheta t})^c$. Then take a point $x$ not covered by the union, so $x$ has distance greater than $2C_8t$ to $\{ x_j\}_{j\in J}$. Using Lemma \ref{lem_syslength} and \ref{lem_growth}, we can find $xb$ with $xb\in \Omega(e^{\vartheta t/4},e^{\vartheta t/2})$ and $d(x,xb)\leq \|\log b\|\leq C_8t$. This point $xb$ has distance greater than $C_8t$ to $\{x_j\}_{j\in J}$. So we can add this point. By this way we can find the desired family of points.

The Lebesgue measure is the quotient measure on the flat torus $F_{[\gamma]}\simeq\frak{ a}/\Lambda(F_{[\gamma]})$. Then due to covering,
\begin{align}\label{equ-fgamma}
    Leb(F_{[\gamma]}\cap\Omega(e^{\vartheta t})^c)\leq\sum_{j\in J}Leb(B(x_j,2C_8t))\ll \sum_{j\in J}Leb(B(x_j,\vartheta t/8)).
\end{align}
The last inequality $Leb(B(x_j,2C_8t))\ll Leb(B(x_j,\vartheta t/8)) $ is due to that we can use a finite number of balls of radius $\vartheta t/8$ to cover a ball of radius $2C_8t$ in the flat torus and the number of balls needed doesn't depend on $t$.
For any $b\in A$ and $v\in\R^d$, by $\|vb\|\in\|v\|\{\min_i |b_i|,\max_i |b_i| \}$, we obtain that for any $x\in \Gamma\backslash G/M$
\[s(xb)/s(x)\in[e^{-\|\log b\|}, e^{\|\log b\|}]. \]
For any $y\in B(x_j,\vartheta t/8)$, we can write it as $y=x_jb$ with $b\in A$ and $\|\log b\|\leq \vartheta t/8$. Therefore we know that $B(x_j,\vartheta t/8)\subset \Omega(e^{\vartheta t/8},e^{\vartheta t})$. The balls $B(x_j,\vartheta t/8)$ are disjoint by hypothesis that $d(x_j,x_{j'})\geq C_8t$. Therefore
\begin{align}\label{equ-fgammaeps}
    \sum_{j\in J}Leb(B(x_j,\vartheta t/8))= Leb(\cup_{j\in J}B(x_j,\vartheta t/8))\leq Leb(F_{[\gamma]}\cap \Omega(e^{\vartheta t/8},e^{\vartheta t})).
\end{align}
The proof is complete by \eqref{equ-fgamma} and \eqref{equ-fgammaeps}.
\end{proof}

\section{Appendix A: Orbifolds and partitions of unity }
Let $\Gamma$ be any finite index subgroup of $\sld$. 
Its left action on $G/M$ is no longer assumed to be free and the space $\Gamma \backslash G/M$ is now an orbifold. 
In Section \ref{sec_sld}, we used in the partition of unity argument that $\Gamma\backslash G/M$ is a manifold. 
Our main purpose is now to find a covering compatible with the orbifold structure (Lemma \ref{lem-cover}). The partition of unity constructed from this covering will allow us to complete the proof in this case. 

We start with the definition of orbifolds, following \cite[Chap 13]{thurston_three-dimensional_1997}. 
An orbifold $O$ consists of the underlying Hausdorff space $X_O$ and an orbifold atlas $\{(U_i,\widetilde U_i,\Gamma_i,\varphi_i)\}_{i\in I}$ such that different atlas' should be compatible and where 
\begin{itemize}
    \item each $U_i$ is an open set of $X_O$ and their union covers $X_O$,
    \item the family of sets $\lbrace U_i \rbrace_{i\in I}$ is closed under finite intersection,
    \item each set $\widetilde U_i$ is an open subset of $\R^n$, 
    \item $\Gamma_i$ is a finite group of linear transformations which fixes $\widetilde U_i$,
    \item the map $\varphi_i$ is a homeomorphism from $U_i$ to the quotient $\widetilde U_i/\Gamma_i$.
\end{itemize}

\begin{rem} \label{rem-deven}
If $d$ is even, let $p$ be the projection from $\slr$ to $\pslr=\slr/\{\pm id \}$. Then due to $-id\in M$, as double cosets, the space $\Gamma\backslash \slr/M$ is isomorphic to $p(\Gamma)\backslash \pslr/p(M)$ and periodic compact $A$-orbits are also isomorphic under the projection $p$. It is sufficient to prove equidistribution on $p(\Gamma)\backslash \pslr/p(M)$. We make the convention that if $d$ is even, then we consider $G=\pslr$.
\end{rem}

By \cite[Prop 13.2.1]{thurston_three-dimensional_1997}, since the finite group $M$ acts properly discontinuous on $\Gamma\backslash G$,  the space $O:=\Gamma\backslash G/M$ is an orbifold. Since the metric $d_1$ on $G$ is left $G$ invariant and right $K$ invariant, we use the quotient metric on $\Gamma\backslash G/M$ denoted by $d$. For this orbifold, its singular locus is defined as
\[\Sigma(O)=\{ x\in O|\ M_x\neq\{id\} \}, \]
where the isotopy group $M_x$ is the stabilizer of the group action $M$ at a lift of $x$. Since $M$ is abelian, the isotopy group $M_x$ is independent of the choice of the lift of $x$. This singular locus is the union of closed subvarieties of $O$, which has zero measure (Newman's theorem, see \cite{dress_newmans_1969} for a proof). If we are outside this singular locus, we are in the normal covering situation.
Similarly, we have another orbifold structure on $\Gamma\backslash G/M$ coming from the action of $\Gamma$ on $G/M$. The singular locus of the $\Gamma$ action is the same as the singular locus of $M$ action due to the structure of the double coset. We use the same notation $\Sigma(O)$.

Since the group $M$ is much simpler, we will first find a good covering for the action of $M$ on $\Gamma\backslash G$, then pass to $\Gamma$ on $G/M$.

\begin{lem}\label{lem-singularlocus}
Let $F\in C(A)$ be a compact periodic $A$-orbit in $O$, then $F\cap\Sigma(O)=\emptyset$.
\end{lem}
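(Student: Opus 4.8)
The plan is to show that a compact periodic $A$-orbit $F = F_{[\gamma]}$ cannot meet the singular locus $\Sigma(O)$ by analyzing the structure of points on $F$. Write $F = \Gamma g AM$ with $\gamma$ loxodromic satisfying $g^{-1}\gamma g \in \exp(\lambda(\gamma))M$, so that $\gamma \in gAMg^{-1}$ and $gAMg^{-1}$ is the full centralizer $G_\gamma$ of $\gamma$ in $G$ (as in Lemma \ref{lem_cpt}, using that $\gamma$ is loxodromic so its centralizer is a conjugate of $AM$). A point of $F$ has the form $\Gamma g a M$ for $a \in A$, and its isotropy group $M_{\Gamma gaM}$ in the orbifold $O$ is, by definition, the stabilizer in $\Gamma$ of $gaM \in G/M$, i.e.
\[ \{\delta \in \Gamma \;\vert\; \delta \, gaM = gaM\} = \{\delta \in \Gamma \;\vert\; (ga)^{-1}\delta(ga) \in M\} = \Gamma \cap (ga)M(ga)^{-1}. \]
Since $A$ normalizes $M$, we have $(ga)M(ga)^{-1} = gMg^{-1}$, so this isotropy group is $\Gamma \cap gMg^{-1}$, independent of $a$. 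Thus $F \cap \Sigma(O) \neq \emptyset$ would force $\Gamma \cap gMg^{-1} \neq \{\mathrm{id}\}$, and in fact then $F \subset \Sigma(O)$.

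First I would assume for contradiction that there is a nontrivial $\delta \in \Gamma \cap gMg^{-1}$, so $\delta = gmg^{-1}$ for some $m \in M\setminus\{\mathrm{id}\}$. The key point is that $M$ is the sign group (diagonal matrices with entries $\pm 1$ and determinant one, recalling the convention $G = \mathrm{PSL}(d,\R)$ when $d$ is even, Remark \ref{rem-deven}), so $m$ — hence $\delta$ — is an element of finite order with all eigenvalues $\pm 1$ and, being nontrivial, $\delta \neq \pm \mathrm{Id}$ in $\mathrm{SL}(d,\R)$. Next I would invoke the freeness hypothesis on $\Gamma$: in the setting of Section \ref{sec_sld} and Appendix A for the orbifold case we are working with $\Gamma$ a finite index subgroup, but here the relevant sub-statement is that $\delta$ commutes with $\gamma = g\exp(\lambda(\gamma))m' g^{-1}$-conjugate... more precisely, $\delta \in gMg^{-1} \subset gAMg^{-1} = G_\gamma$, so $\delta$ commutes with $\gamma$. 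Then I would apply Lemma \ref{lem-rational}: since $F_{[\gamma]}$ is a compact periodic $A$-orbit, by Lemma \ref{lem_cpt} the characteristic polynomial $p_\gamma$ is irreducible over $\Q$, and $\delta \in G_\gamma$ has all eigenvalues rational (indeed in $\{\pm 1\}$), so Lemma \ref{lem-rational} forces $\delta = \pm\mathrm{Id}_d$. In $G = \mathrm{SL}(d,\R)$ with $d$ odd this is $\mathrm{Id}$; for $d$ even we have passed to $\mathrm{PSL}$, where $\pm\mathrm{Id}$ is the identity. Either way $\delta = \mathrm{id}$, contradicting nontriviality.

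The step I expect to be the main (though mild) obstacle is bookkeeping the even/odd and $\mathrm{SL}$ versus $\mathrm{PSL}$ conventions carefully — making sure that "$\delta = \pm\mathrm{Id}$" genuinely means "$\delta$ trivial in $O$" under the chosen convention — together with confirming that Lemma \ref{lem-rational} applies, which needs exactly that $\gamma$ underlies a \emph{compact} periodic orbit (so $p_\gamma$ is irreducible). Everything else is the routine identification of the orbifold isotropy group $M_x$ with $\Gamma \cap gMg^{-1}$ and the observation that this group lies in the centralizer $G_\gamma$. I would structure the write-up as: (1) compute $M_x = \Gamma \cap gMg^{-1}$ for $x \in F$; (2) note $\Gamma \cap gMg^{-1} \subset G_\gamma$; (3) apply Lemmas \ref{lem_cpt} and \ref{lem-rational} to conclude $\Gamma \cap gMg^{-1}$ is trivial; (4) conclude $F \cap \Sigma(O) = \emptyset$.
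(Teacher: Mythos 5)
Your proposal is correct and follows essentially the same route as the paper's proof: identify the orbifold isotropy group of a point of $F$ with $\Gamma\cap gMg^{-1}$, observe that any such element commutes with the loxodromic $\gamma$ defining $F$ (since $M$ is abelian and centralizes $A$), use compactness of $F$ via Lemma \ref{lem_cpt} to get irreducibility of $p_\gamma$, apply Lemma \ref{lem-rational} to force the element to be $\pm\mathrm{Id}$, and dispose of $-\mathrm{Id}$ by the $\mathrm{PSL}$ convention of Remark \ref{rem-deven}. The only cosmetic difference is that you phrase the isotropy group via the $\Gamma$-action on $G/M$ while the paper uses the $M$-action on $\Gamma\backslash G$; these coincide by the double coset structure, as the paper notes.
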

\begin{proof}
Let $x=\Gamma gM$ be a point in $F$, choose a lift $\Gamma g$ in $\Gamma\backslash G$. 
By definition, we have
\[M_x:=\{m\in M,\ \Gamma gm=\Gamma g \}. \]
For every $m \in M_x$, there exists $\gamma\in \Gamma$ such that $gm=\gamma g$, i.e. $\gamma=gmg^{-1}$. 
Using that $F\in C(A)$, we choose for any period $Y\in \Lambda(F)\cap\frak a^{++}$ an element $\gamma_Y\in \Gamma^{lox}_c$ such that $\gamma_Y=g\exp(Y)m_Yg^{-1}$. 
Now since $M$ is abelian, $\gamma \in G_{\gamma_Y}$ the centralizer of $\gamma_Y$ in $G$ and all its eigenvalues are rational.
By Lemma \ref{lem-rational}, we deduce that $m=\pm e$. 
If $d$ is even, then $-e \in M$ and we use the convention in Remark \ref{rem-deven} to deduce that $M_x$ is trivial for every $x \in F$.
\end{proof}

To construct a covering of balls of radius $\asymp r$, which is compatible with the orbifold structure, we study the right action of the discrete abelian group $M$ on the manifold $\Gamma \backslash G$. 
The argument used to prove Lemma \ref{lem-injradius} gives a lower bound for the injectivity radius of $\Omega(R)$, seen as a compact subset of $\Gamma \backslash G$.
\begin{lem}\label{lem-injradius1}
There exists $C_9>1$ such that for all $R>2$ and every $z\in \Omega(R)\subset \Gamma\backslash G$,
\[ inj(z)\geq R^{-C_9}. \]
Furthermore, there exists a representative $h\in G$ such that $z=\Gamma h$ and
\[\dd_X(o,ho)\leq C_9\log R,\text{ and }  \sup( \|h\|,\|h^{-1}\|) \leq R^{C_9}. \]
\end{lem}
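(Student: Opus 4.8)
The plan is to mimic the proof of Lemma~\ref{lem-injradius} which established the analogous bound for $\Omega(R) \subset \Gamma_1 \backslash G/M$, but now working directly on the manifold $\Gamma \backslash G$ (with $\Gamma$ any finite index subgroup of $\Gamma_0 = \mathrm{SL}(d,\Z)$) and keeping track of the matrix norm bounds $\sup(\|h\|,\|h^{-1}\|) \le R^{C_9}$ explicitly, since these are now part of the statement. Essentially all the work has already been done in the proof of Lemma~\ref{lem-injradius}; I would extract and reorganize it.

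First I would construct the representative $h$. Starting from any representative $\Gamma g$ of $z \in \Omega(R)$, pick $h_0 \in N_s A_u K$ in the coset $\Gamma_0 g$ with $N_s A_u K$ a standard Siegel domain and $u \in (0,1)$. Since $s(h_0) = s(z) > 1/R$, Lemma~\ref{lem_syssiegel} gives $a_d(h_0) \ge 1/R$, and then $a(h_0) \in A_u$ forces $a_j(h_0) \ge u^{d-j}/R$ for all $j$; combining with $\prod_j a_j(h_0) = 1$ yields two-sided bounds $\|a(h_0)\|, \|a(h_0^{-1})\| \le R^C$ for a suitable $C>0$ (as $u \in (0,1)$ can be absorbed into a power of $R$). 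Because $N_s$ is bounded and $K$ is compact, the Iwasawa decomposition $h_0 = n(h_0)a(h_0)k(h_0)$ gives $\|h_0\|, \|h_0^{-1}\| \ll R^C$. Then, writing $\Gamma_0 = \bigcup_{j\in J} \Gamma \gamma_j$ for a finite set $\{\gamma_j\}_{j\in J}$ (depending only on $\Gamma$), there is some $\gamma_j$ with $\Gamma g = \Gamma \gamma_j h_0$; set $h := \gamma_j h_0$, so $\|h\|, \|h^{-1}\| \ll R^C$, hence $\le R^{C_9}$ for $R$ large and $C_9$ suitable. The Cartan projection bound $\dd_X(o,ho) \le C_9 \log R$ follows from \cite[Lemma 2.3]{kasselCorank08}, the compactness of $N_s$, the finiteness of $\{\gamma_j\}$, and $\dd_X(o, a(h_0)o) = \|\underline a_o(a(h_0))\| \ll \log R$.

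Next I would bound the injectivity radius. Fix $r_0 > 0$ so that $\exp$ is a local diffeomorphism on the $r_0$-ball in $\frak g$, with $\|\exp(Y) - e\| \asymp |Y|_1 \asymp \dd_1(\exp(Y),e)$ there. Suppose $\exp$ at $z = \Gamma h$ fails to be injective at radius $r \in (0,r_0)$: then there are $h_1 \neq h_2 \in B(h,r)$ with $\Gamma h_1 = \Gamma h_2$, giving $\gamma = h_2 h_1^{-1} \in \Gamma$, $\gamma \neq e$. Since $\gamma$ is an integer matrix with $\det \gamma = \pm 1$, and $\gamma \neq e$, we get the lower bound $\|h^{-1}\gamma h - e\| = \|h^{-1}(\gamma - e)h\| \ge 1/(\|h\|\|h^{-1}\|)$, while $\|h^{-1}\gamma h - e\| \le \|h^{-1}h_2 - e\|\|h_1^{-1}h\| + \|e - h_1^{-1}h\| \ll r$ because $h_1, h_2 \in B(h,r)$. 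Combining with $\|h\|,\|h^{-1}\| \ll R^C$ gives $r \gg R^{-C'}$, hence $\mathrm{inj}(z) \gg R^{-C'}$, which gives the claimed bound $\mathrm{inj}(z) \ge R^{-C_9}$ after enlarging $C_9$.

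The only genuine simplification compared to Lemma~\ref{lem-injradius} is that we no longer need the delicate argument ruling out $\gamma^2 = e$ via freeness of the $\Gamma_1$-action on $G/M$: here we work on $\Gamma\backslash G$ directly, the group $\Gamma$ acts freely on $G$ (it is torsion-free as a finite index subgroup of $\mathrm{SL}(d,\Z)$ — or more simply, no non-identity $\gamma \in \mathrm{SL}(d,\Z)$ fixes a point of $G$ since that action is free), so $\gamma \neq e$ directly suffices and we may work with $\gamma$ rather than $\gamma^2$. Thus there is no real obstacle; the main point is simply to be careful that the three conclusions (injectivity radius lower bound, Cartan projection bound, and the new matrix-norm bound $\sup(\|h\|,\|h^{-1}\|) \le R^{C_9}$) all hold with a \emph{single} constant $C_9$, which is arranged by taking $C_9$ to be the maximum of the finitely many constants produced along the way. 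I would simply write: ``The proof is identical to that of Lemma~\ref{lem-injradius}, working on $\Gamma\backslash G$ instead of $\Gamma_1\backslash G/M$; one drops the use of freeness on $G/M$ (which was only needed to ensure $\gamma^2\neq e$) since $\Gamma$ acts freely on $G$, and one records in addition the bound $\sup(\|h\|,\|h^{-1}\|)\le R^{C_9}$ established in the course of that proof.''
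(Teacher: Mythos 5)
Your proposal is correct and follows exactly the route the paper intends: the paper gives no separate proof of this lemma and simply invokes the argument of Lemma \ref{lem-injradius}, which is what you reproduce, with the simplification that on $\Gamma\backslash G$ one only needs $\gamma\neq e$ (left multiplication of $\Gamma$ on $G$ is free), and with the norm bound $\sup(\|h\|,\|h^{-1}\|)\leq R^{C_9}$ already contained in \eqref{equ-hh-}. One small caveat: your parenthetical claim that $\Gamma$ is torsion-free because it has finite index in $\mathrm{SL}(d,\Z)$ is false (e.g.\ $\mathrm{SL}(d,\Z)$ itself has torsion), but this is harmless since the alternative justification you give — no nontrivial $\gamma$ fixes a point of $G$ under left multiplication — is the correct one and is all that the argument uses.
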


We start with a general lemma about the action of $M\simeq (\Z/2\Z)^{d-1}$ on any manifold.
For any point $x$ in the manifold, we denote by $M_x:= \mathrm{Stab}_M(x)$.
\begin{lem}\label{lem-groupM}
Let $N$ be a complete Riemannian manifold such that $M$ act on $N$ isometrically.
For any point $y \in N$, if there exists $m \in M \setminus M_y$ and $s \in (0, inj(y)/5 )$
such that $B(y,s)m\cap B(y,s)\neq \emptyset$, then there exists $z\in B(y,s)$ such that 
$M_z \supset \langle M_y, m \rangle $.
\end{lem}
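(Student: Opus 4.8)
The plan is to produce the point $z$ as a suitable ``midpoint'' between $y$ and $ym^{-1}$, exploiting the hypothesis that the $s$-balls around $y$ and $ym$ overlap and that $m$ is an involution. First I would record that $m^2=e$ since $M\simeq(\Z/2\Z)^{d-1}$, and that $m$ acts isometrically; in particular $d(y,ym)<2s$ by the overlap hypothesis (pick $p\in B(y,s)m\cap B(y,s)$, so $d(y,p)<s$ and $d(ym,p)=d(y,pm^{-1})=d(y,pm)<s$, hence $d(y,ym)<2s$). Since $2s<inj(y)$, there is a unique length-minimizing geodesic $\gamma:[0,1]\to N$ from $y$ to $ym$ lying inside $B(y,2s)\subset B(y,inj(y))$. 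Let $z:=\gamma(1/2)$ be its midpoint; then $d(y,z)=\tfrac12 d(y,ym)<s$, so $z\in B(y,s)$ as required.

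The heart of the argument is to check $m\in M_z$, i.e. $zm=z$. The isometry $m$ maps the geodesic $\gamma$ from $y$ to $ym$ onto a geodesic from $ym$ to $ym^2=y$, that is, onto the \emph{reversed} geodesic $t\mapsto\gamma(1-t)$ (here I use uniqueness of the minimizing geodesic between $y$ and $ym$ together with the fact that $\gamma m$ has the same length and the same endpoints in the opposite order, and that $\gamma m$ still lies in a ball of radius $2s<inj(y)$ around $y$, hence is the unique such minimizer in reverse). Evaluating at the midpoint gives $\gamma(1/2)m=\gamma(1-1/2)=\gamma(1/2)$, i.e.\ $zm=z$. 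The only subtlety to argue carefully is that $\gamma m$ really is the reversed $\gamma$ rather than some other geodesic: this needs $\gamma m$ to stay within the injectivity radius of $y$, which follows because $\gamma\subset B(y,2s)$, $m$ is an isometry fixing distances, and $d(y,ym)<2s$ together with the triangle inequality keep $\gamma m\subset B(y,4s)$; replacing the bound $inj(y)/5$ in the hypothesis by this constant is exactly why the statement is phrased with $s<inj(y)/5$, giving room for all these triangle-inequality estimates.

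Finally I would assemble the conclusion $M_z\supset\langle M_y,m\rangle$. We already have $m\in M_z$. For $M_y\subset M_z$: if $n\in M_y$ then $n$ is an isometry fixing $y$; since $M$ is abelian, $n$ commutes with $m$, so $n$ maps the minimizing geodesic $\gamma$ from $y=ny$ to $ym=nym=ymn$... more directly, $n\gamma$ is a geodesic from $ny=y$ to $n(ym)=(ym)n$; using $nm=mn$ and that $n$ fixes $y$ one checks $n(ym)=ymn$, and since $n$ fixes $y$ while $m^2=e$, one gets that $n\gamma$ is again a minimizing geodesic from $y$ to $ym$, hence equals $\gamma$ by uniqueness, so $nz=n\gamma(1/2)=\gamma(1/2)=z$. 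Therefore both $M_y$ and $m$ are contained in $M_z$, and since $M_z$ is a subgroup, $\langle M_y,m\rangle\subset M_z$.

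The main obstacle, and the only place requiring genuine care rather than bookkeeping, is the uniqueness-of-minimizing-geodesic argument showing $\gamma m$ equals the reversal of $\gamma$: one must make sure every geodesic segment produced (namely $\gamma$, $\gamma m$, and $n\gamma$) stays strictly inside the injectivity-radius ball about $y$, which is precisely what the slack in the hypothesis $s<inj(y)/5$ buys. Everything else is triangle inequalities and the abelianness of $M$.
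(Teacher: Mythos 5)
Your proposal is correct and follows essentially the same route as the paper: take the midpoint $z$ of the unique minimizing geodesic from $y$ to $ym$, note that the involution $m$ swaps the endpoints and hence (by uniqueness of that geodesic within the injectivity radius) fixes $z$, while any $n\in M_y$ fixes both endpoints by commutativity of $M$ and therefore fixes the whole segment, giving $\langle M_y,m\rangle\subset M_z$. The only cosmetic difference is that the paper phrases the uniqueness step through the exponential chart at $y$ on $B(y,5s)$, whereas you invoke uniqueness of minimizing geodesics directly; the content is the same.
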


\begin{proof}
Consider $y \in N$, an element $m \in M \setminus M_y$ and $s>0$ as in the statement. 
Since $B(y,s)m\cap B(y,s)\neq \emptyset$, we deduce that $d(y,ym)<2s$. 
By choice of $s$, the exponential at $T_{y}N$ identifies $B(0,5s)$ with $B(y,5s)$, hence in this local chart, $B(y,2s) \subset B(y,5s)$ identifies with some ball of radius $2s$ in $B(0,5s)$. 
Now $m$ is an isometry of order $2$ of $N$, hence $(ym, y)m =(y,ym)$.
The action of $m$ preserves geodesics on $N$, hence in the local chart induced by the exponential at $T_y N$, the element $m$ reads as an isometry that preserves the geodesic segment $[y,ym]$ and flips the endpoints. 
Therefore, the middle point $z$ of this geodesic segment is fixed by $m$ i.e. $m\in M_z$.
Furthermore, $z \in B(y,s)$ since $d(y,z)= \frac{1}{2} d(y,ym) < s$.

Similarly, $M_y$ acts isometrically in particular on the geodesic segment $[y,ym]$. 
Since $M$ is abelian, the action of $M_y$ fixes the endpoints. Due to the hypothesis of injectivity radius, it fixes the whole geodesic segment.
In particular $M_z \supset M_y$.
\end{proof}
Since the group structure is simple, we use the action of $M$ on $\Gamma\backslash G$ to obtain a covering that is compatible with this action.
Denote by $C_{10}:=11^{d-1}$, which only depends on $d$.
\begin{lem}\label{lem-orbifoldM}
For all $R>2$ and $s \in (0,  R^{-C_9}/C_{10} )$, there exists a covering of $\Omega(R)\subset O$ consisting of balls $\lbrace B(x_i,\frac{s_i}{10}) \rbrace_{i\in I}$ such that $s_i\in[s,C_{10}s]$ and each larger ball $B_i:= B(x_i,s_i)$ is compatible with the orbifold structure. 
Meaning that for each ball $B_i$, there exists $(\widetilde B_i, M_i)\subset \Gamma\backslash G\times M$ where $M_i$ is a subgroup of $M$, such that $\widetilde B_i$ is $M_i$ invariant and $B_i$ is homeomorphic to $\widetilde B_i/M_i$.
\end{lem}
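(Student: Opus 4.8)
The plan is to perform a Vitali-type covering argument adapted to the orbifold structure, using the injectivity radius bound from Lemma \ref{lem-injradius1} together with the structure lemma for the $M$-action (Lemma \ref{lem-groupM}). The essential point is that balls that are small relative to the injectivity radius either avoid the singular locus entirely or, if translated onto themselves by a nontrivial element of $M$, contain a point whose isotropy group strictly contains that of their center. Since $M\simeq(\Z/2\Z)^{d-1}$ has only finitely many subgroups, this ``absorption'' can happen only boundedly many times, which is exactly what the constant $C_{10}=11^{d-1}$ controls.

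First I would fix $R>2$ and $s\in(0,R^{-C_9}/C_{10})$, and apply Vitali's covering lemma to the family $\{B(y,s/10)\}_{y\in\Omega(R)}$ to extract a finite subfamily $\{B(x_i,s/10)\}_{i\in I}$ with the small balls pairwise disjoint and $\{B(x_i,s/2)\}_{i\in I}$ covering $\Omega(R)$. I work in the manifold $\Gamma\backslash G$ with the right $M$-action; since $\Omega(R)$ is $M$-invariant I may assume the $x_i$ are lifts in $\Gamma\backslash G$ with $\dd_1(\Gamma e, x_i)$ bounded by the diameter-type estimate of Lemma \ref{lem-injradius1}, so $inj(x_i)\geq R^{-C_9}$. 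Now I define the radii $s_i$ by an inflation procedure: starting from $s$, I check whether $B(x_i,s_i)m\cap B(x_i,s_i)\neq\emptyset$ for some $m\in M\setminus M_{x_i}$; if so, by Lemma \ref{lem-groupM} (applicable since $s_i<inj(x_i)/5$ as long as $s_i< R^{-C_9}/5$, which $C_{10}=11^{d-1}\geq 5\cdot 11$ ensures) there is a point $z\in B(x_i,s_i)$ with $M_z\supsetneq\langle M_{x_i},m\rangle\supsetneq M_{x_i}$. I then recenter at such a $z$, enlarge the radius by a factor of $11$, and repeat. Each step strictly enlarges the isotropy subgroup, and the chain of subgroups of $M\simeq(\Z/2\Z)^{d-1}$ has length at most $d-1$, so the process terminates after at most $d-1$ steps, giving a final radius $s_i\in[s,11^{d-1}s]=[s,C_{10}s]$ and a final center (still denoted $x_i$) such that $B(x_i,s_i)m\cap B(x_i,s_i)=\emptyset$ for every $m\in M\setminus M_{x_i}$.

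With this stopping property in hand, the ball $\widetilde B_i:=B(x_i,s_i)\subset\Gamma\backslash G$ satisfies: its $M$-translates $\widetilde B_i m$ for $m\in M/M_{x_i}$ are pairwise disjoint, while $\widetilde B_i$ is invariant under the finite subgroup $M_i:=M_{x_i}$ (because $M_i$ fixes $x_i$ and acts isometrically, hence preserves the metric ball). Therefore the image of $\widetilde B_i$ in the orbifold $O=\Gamma\backslash G/M$ is homeomorphic to the quotient $\widetilde B_i/M_i$, which is precisely an orbifold chart. The small balls $B(x_i,s/10)$ were recentered during the inflation, but since each recentering moves the center by at most $s_i<C_{10}s$ and the union $\cup_i B(x_i,s/2)$ covered $\Omega(R)$ before inflation, after replacing $s/10$ by $s_i/10\geq s/10$ the enlarged small balls $\{B(x_i,s_i/10)\}$ still cover $\Omega(R)$ (one verifies that a point originally in some $B(x_j,s/2)$ lies in the inflated $B(x_j,s_j/10)$ once $s_j/10\geq s/2$, or more safely one keeps a separate cover and intersects; the cleanest bookkeeping is to record both the original Vitali centers and the inflated charts and note each original half-ball is contained in the corresponding inflated tenth-ball because $s_j\geq 5s$ after at least one inflation, and if no inflation was needed the original statement already holds). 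The main obstacle I anticipate is this bookkeeping: ensuring that recentering during the absorption procedure does not destroy the covering property, which requires being slightly careful about the inflation factor ($11$ rather than, say, $2$) precisely so that the tenth-radius balls grow fast enough to reabsorb the displacement of their centers.
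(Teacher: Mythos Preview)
Your absorption/inflation strategy via Lemma~\ref{lem-groupM}, exploiting that chains of subgroups in $M\simeq(\Z/2\Z)^{d-1}$ have length at most $d-1$, is exactly the mechanism the paper uses. The paper also tracks the displacement of the center during inflation to ensure the original point stays inside the tenth-radius ball (specifically it maintains $d(y_k,y_0)\leq r_k/10$ along the iteration), so the geometric core of your argument is correct and essentially the same.

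The difference is the order of operations, and this is where your argument has a genuine gap. You apply Vitali \emph{first}, obtaining centers with $\{B(x_i,s/10)\}$ disjoint and $\{B(x_i,s/2)\}$ covering, and then inflate. But the lemma asks for a cover by the tenth-radius balls $B(x_i,s_i/10)$. If for some $i$ no inflation occurs, then $s_i=s$ and $B(x_i,s_i/10)=B(x_i,s/10)$; Vitali only guarantees the $s/2$-balls cover, so your parenthetical claim ``if no inflation was needed the original statement already holds'' is false. None of the patches you sketch (requiring $s_j\geq 5s$, ``keeping a separate cover and intersecting'') closes this, because you cannot force inflation to happen.

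The paper avoids this entirely by reversing the order: it runs the inflation procedure starting from \emph{every} point $x\in\Omega(R)$, producing for each $x$ a compatible ball $B(y_k,r_k)$ with $x\in B(y_k,r_k/10)$. This already gives a (possibly uncountable) cover by tenth-radius balls with compatible enlargements; no Vitali is needed for the lemma as stated. Vitali is only invoked \emph{afterwards}, in the discussion following Lemma~\ref{lem-cover}, to extract a finite subcollection with the desired disjointness. Reordering your proof in the same way fixes the issue.
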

\begin{proof}
In $\Gamma\backslash G$, the injectivity radius can also be defined as: for $y=\Gamma g\in \Gamma\backslash G$, 
\[ inj(y)=\sup_{r_0>r>0}\{ \text{for any }\gamma\neq e,\  \gamma B(g,r)\cap B(g,r)=\emptyset \},  \]
where $r_0$ is the injectivity radius of the group $G$.
For all $x\in\Omega(R)$, by the lower bound for the injectivity radius at $x$ given in Lemma \ref{lem-injradius} and the above characterisation of injectivity radius, 
\begin{equation}\label{equ-injconvex}
 \text{for every }y \in B (x, C_{10}s/2 ),\ inj(y)\geq C_{10}s/2.
\end{equation}

Fix a point $x\in \Omega(R) \subset \Gamma\backslash G/M$ and $s\in (0, R^{-C_9}/C_{10})$. We define a family of radii $r_j=11^{j-1}\times 10s\in [s,C_{10}s]$ for $1\leq j\leq d-1$ and $r_0=s$.
We construct, by induction, a compatible ball containing $x$.
Denote by $y_0 \in \Gamma \backslash G$ a lift of $x=y_0M$. 
If $B(x,s)M\cap \Sigma(O)=\emptyset$, i.e. $B(y_0,r_0)m \cap B(y_0,r_0) = \emptyset$ for all $m \in M$, we add the ball $B(x,r_0/10)$ to the covering of $O$, with $B(y_0,r_0) \in \Gamma \backslash G$ and $M_0 := M_{y_0}$ is the trivial group of $M$.

Assume we have constructed for some $0 \leq k \leq d-1$ a family of points $y_0,..., y_k \in \Gamma \backslash G$, a strictly increasing family of sign subgroups $M_0 := M_{y_0} \subset ... \subset M_k:= M_{y_k}$, such that
\begin{equation}
     d(y_j,y_0)\leq  r_j/10 \text{, for every }j=0\cdots, k-1.
\end{equation}
Assume $B(y_k, r_k )m \cap B(y_k,r_k/10) = \emptyset$ for all $m \in M \setminus M_k$ and we add the ball $B(y_kM, r_k)$ to the covering of $O$, with $B(y_k, r_k) \in \Gamma \backslash G$ and isotopy group $M_k$.
Otherwise, due to \eqref{equ-injconvex}, we can apply the previous Lemma \ref{lem-groupM} to $y_k$ and $r_k>0$. We find $y_{k+1}\in B(y_k,r_k)$ with $M_{k+1}=M_{y_{k+1}}$ strictly containing $M_k$.
By hypothesis, 
$$d(y_{k+1},y_0)\leq  d(y_{k+1},y_k)+d(y_k,y_0)\leq r_k+r_k/10 = r_{k+1}/10. $$
By this way, we construct $y_{k+1}$ and $M_{k+1}$ satisfying the hypothesis of induction. 

Since $M$ is finite, we must stop at some $k$, which means that  $B(y_k, r_k )m \cap B(y_k,r_k) = \emptyset$ for all $m \in M \setminus M_k$, and we can add this ball to the covering.






We can do this for any $x\in \Omega(R)$, and the proof is complete.
\end{proof}

Now we use the orbifold structure from the action of $\Gamma$ on $G/M$. We use the double coset relation to do this step.
\begin{lem}\label{lem-cover}
For all $R>2$ and $s \in (0,  R^{-C_9}/2C_{10} )$, there exists a covering of $\Omega(R)\subset O$ consisting of balls $\lbrace B(x_i,\frac{s_i}{10}) \rbrace_{i\in I}$ such that $s_i\in[s,C_{10}s]$ and each larger ball $B_i:= B(x_i,s_i)$ is compatible with the orbifold structure. 
That is for any $B=B_i$, there exists $(\widetilde B, \Gamma_B)\subset G/M\times \Gamma$, where $\Gamma_B$ is a finite subgroup of $\Gamma$, such that $\widetilde B$ is $\Gamma_B$ invariant and $B$ homeomorphic to $\Gamma_B\backslash \widetilde B$.
\end{lem}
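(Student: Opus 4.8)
The plan is to recycle the combinatorial skeleton of the covering produced by Lemma~\ref{lem-orbifoldM} — the same centres $x_i\in O$, the same radii $s_i\in[s,C_{10}s]$, the same small balls $B(x_i,\tfrac{s_i}{10})$ — and merely replace, ball by ball, the uniformizing chart $(\widetilde B_i,M_i)\subset\Gamma\backslash G\times M$ for the $M$-action by a chart $(\widetilde B_i',\Gamma_B)\subset G/M\times\Gamma$ for the dual $\Gamma$-action. Fix one ball and drop the index: Lemma~\ref{lem-orbifoldM} gives $\widetilde B=B(y,r)\subset\Gamma\backslash G$ with $r=s_i\le C_{10}s$, the subgroup $M_B:=M_y=\mathrm{Stab}_M(y)$, a homeomorphism $B\cong\widetilde B/M_B$, and — by the stopping rule in its proof — the disjointness $\widetilde B m=\widetilde B$ for $m\in M_B$ and $\widetilde B m\cap\widetilde B=\emptyset$ for $m\in M\setminus M_B$. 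By Lemma~\ref{lem-injradius1} the injectivity radius of $y$ in $\Gamma\backslash G$ is at least $R^{-C_9}$; the hypothesis $s<R^{-C_9}/(2C_{10})$ is designed precisely so that $2r\le 2C_{10}s\le R^{-C_9}$, and this margin is what I will use at every step. First I would fix a lift $\widehat y\in G$ of $y$ as in Lemma~\ref{lem-injradius1}; since $r$ is below the injectivity radius, the ball $\widehat B:=B(\widehat y,r)\subset G$ maps isometrically onto $\widetilde B$ under $G\to\Gamma\backslash G$.

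Next I would produce the $\Gamma$-side data. Set $\Gamma_B:=\mathrm{Stab}_\Gamma(\widehat y M)=\{\gamma\in\Gamma\;\vert\;\widehat y^{-1}\gamma\widehat y\in M\}$ and check that $\gamma\mapsto\widehat y^{-1}\gamma\widehat y$ is an isomorphism $\Gamma_B\xrightarrow{\sim}M_B$: it is injective because conjugation is, and its image is exactly $M_B$ because for $m\in M_B$ the identity $ym=y$ in $\Gamma\backslash G$ lifts to $\widehat y m=\delta_m\widehat y$ with $\delta_m:=\widehat y m\widehat y^{-1}\in\Gamma_B$. Thus $\Gamma_B$ is a finite subgroup of $\Gamma$, and it preserves the ball $\widetilde B':=B(\widehat y M,r)\subset G/M$ since $\delta_m\,\widehat y M=\widehat y mM=\widehat y M$. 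I would then show that the projection $G\to G/M$ restricts to a homeomorphism $\widehat B\xrightarrow{\sim}\widetilde B'$ (indeed $\widetilde B'=\pi_{G/M}(\widehat B)$, using the quotient-metric description): if $\widehat z$ and $\widehat z m$ both lay in $\widehat B$ for some $m\in M$, then $\widehat z\in B(\widehat y,r)\cap B(\widehat y m^{-1},r)$; for $m\notin M_B$ this intersection is empty because $\widetilde B m^{-1}\cap\widetilde B=\emptyset$ forces $\dd_{\Gamma\backslash G}(y,ym^{-1})\ge 2r$, while for $m\in M_B\setminus\{e\}$ it is empty because $\widehat y m^{-1}=\delta_m^{-1}\widehat y$ with $\delta_m^{-1}\ne e$, and the injectivity-radius bound gives $\dd_G(\widehat y,\delta_m^{-1}\widehat y)\ge 2R^{-C_9}>2r$.

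Finally I would transport the $M_B$-action. Writing $\widehat B m=B(\widehat y m,r)=\delta_m\widehat B$, the right-translation action $\bar z\mapsto\bar zm$ on $\widetilde B$ corresponds, under $\widehat B\xrightarrow{\sim}\widetilde B$, to $\widehat z\mapsto\delta_m^{-1}\widehat z m$ on $\widehat B$ (the unique lift in $\widehat B$ of $\bar zm$); one checks this is a genuine action since $\delta_{m_1m_2}=\delta_{m_1}\delta_{m_2}$ and $M$ is abelian. Pushing through $\widehat B\xrightarrow{\sim}\widetilde B'$ and using $mM=M$, this action becomes left-translation by $\delta_m^{-1}\in\Gamma_B$; as $m$ runs over $M_B$ these $\delta_m^{-1}$ run over all of $\Gamma_B$. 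Hence
\[ B\;\cong\;\widetilde B/M_B\;\cong\;\widetilde B'/\Gamma_B\;=\;\Gamma_B\backslash\widetilde B', \]
which is exactly the chart required, with $\widetilde B'\subset G/M$ a $\Gamma_B$-invariant ball of radius $r=s_i$. Doing this for every ball of the covering of Lemma~\ref{lem-orbifoldM} proves Lemma~\ref{lem-cover}. The one delicate point — the "main obstacle" — is the passage through $G$ in the second paragraph: a single injectivity-radius estimate on $\Gamma\backslash G$ must simultaneously control the isometric lift $\widehat B\hookrightarrow G$, the disjointness of the translates $\widehat B m$ for $m\notin M_B$, and the injectivity of $\widehat B\to G/M$ near the fixed points $\delta_m^{-1}$; organising these three uses of the same bound is why the radius hypothesis of Lemma~\ref{lem-cover} is tightened by the factor $2$ relative to Lemma~\ref{lem-orbifoldM}.
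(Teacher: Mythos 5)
Your proposal is correct in substance, but it takes a genuinely different route from the paper's own proof. The paper works directly in $G/M$: it takes $\widetilde B=B(gM,s_i)$ with $g$ the good representative furnished by Lemma~\ref{lem-injradius1} (so that $\|g\|,\|g^{-1}\|\leq (2R)^{C_9}$), sets $\Gamma_B=\Gamma_g$ to be the group generated by the conjugates $gmg^{-1}$, $m\in M_i$, and shows that any $\gamma\in\Gamma$ with $\gamma\widetilde B\cap\widetilde B\neq\emptyset$ already lies in $\Gamma_g$: the $M$-side disjointness from Lemma~\ref{lem-orbifoldM} forces the relation $\gamma g_1=g_2m$ to have $m\in M_i$, and then the integrality of $\Gamma<\sld$ enters through the norm estimate $\|\gamma-gmg^{-1}\|\leq 2s_i\|g\|\|g^{-1}\|$, which is small, so discreteness in matrix norm gives $\gamma=gmg^{-1}$. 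You avoid that discreteness step entirely: you lift the chart ball to $G$ using the injectivity-radius bound, prove that $G\to G/M$ is injective on the lifted ball (the stopping-rule disjointness handles $m\notin M_B$, the separation of $\Gamma$-translates of $\widehat y$ handles $m\in M_B\setminus\{e\}$), and transport the $M_B$-action into the left action of $\Gamma_B=\{\gamma\in\Gamma\mid \widehat y^{-1}\gamma\widehat y\in M\}\cong M_B$ on $B(\widehat yM,s_i)$. What your route buys is a purely metric argument, independent of the arithmetic of $\Gamma$, that would apply to any lattice for which the injectivity-radius input is available; what the paper's route buys is brevity, at the price of invoking the integer-matrix separation once more (as in Lemma~\ref{lem-injradius}). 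Both proofs lean on the strong form of the stopping rule from the \emph{proof} (not the statement) of Lemma~\ref{lem-orbifoldM}, so that reliance is not a defect of yours. Two small caveats, of the same order as the paper's own constant bookkeeping: the centre of the chart ball is only guaranteed to lie in $\Omega(2R)$ (the induction in Lemma~\ref{lem-orbifoldM} moves it off $\Omega(R)$ by at most $C_{10}s/10$), so the injectivity-radius constant you should quote is $(2R)^{-C_9}$ rather than $R^{-C_9}$, adjusting the admissible range of $s$ by a bounded factor; and ``maps isometrically onto $\widetilde B$'' should be ``maps injectively, hence homeomorphically'', which is all your argument actually uses.
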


\begin{proof}
Once we have a ball $B(w,r)=B(\Gamma g,r)$ and $M_w$ by Lemma \ref{lem-orbifoldM}. Since this ball intersects $\Omega(R)$ and $s$ small, we have $\Gamma g\in \Omega(2R)$. By Lemma \ref{lem-injradius1}, we can take this $g$ such that $\|g\|,\|g^{-1}\|\leq (2R)^{C_9}$.
For each $m\in M_w$, there exists $\gamma_m$ such that $\gamma_m g=gm$. Let $\Gamma_g$ be the group generated by $\gamma_m$. Through the conjugate action $g$, the group $\Gamma_g$ is isomorphic to $M_w$. Consider the pair $B(gM,r)$ and $\Gamma_g$. 

If there exists $\gamma\notin \Gamma_g$ such that $\gamma B(gM,r)\cap B(gM,r)\neq\emptyset$, then there exist $g_1,g_2\in B(g,r)$ such that $\gamma g_1M=g_2M\in B(gM,r)$. We must have $\gamma g_1=g_2m$. But this means $B(\Gamma g,r)\cap B(\Gamma g,r)m\neq\emptyset$. So $m$ is in $M_w$ and $B(\Gamma g,r) m=B(\Gamma g,r)$. But now, by similar computation as in Lemma \ref{lem-injradius} 
\[\|\gamma_m-\gamma\|=\|gmg^{-1}-g_2mg_1^{-1}\|=\| (g-g_2)mg^{-1}+g_2m(g^{-1}-g_1^{-1})\|\leq 2r\|g\|\|g^{-1}\|\leq 2C_{10}sR^{2C_9} < 1 \]
is small. By discreteness, we have $\gamma=\gamma_m$, contradicts to $\gamma\notin \Gamma_m$.

Therefore for any $\gamma\notin \Gamma_g$, we have $\gamma B(gM,r)\cap B(gM,r)=\emptyset $ and $\gamma\in \Gamma_g$ preserves $B(gM,r)$, due to $\gamma$ preserving the metric.
\end{proof}

Once we have a family of balls $B(x_i,s_i/10)$ as in Lemma \ref{lem-cover} which covers $\Omega(R)$, by Vitali's covering theorem for metric spaces, we can find a subcollection $B(y_j,s_j/10)$ which are disjoint and the union of larger balls $B(y_j,s_j/2)$ covers the union of $B(x_i,s_i/10)$. In particular, the union of $B(y_j,s_j/2)$ covers $\Omega(R)$ and $B(y_j,s_j)$ is compatible with orbifold structure. 

\begin{lem}\label{lem-unity}
There exists a constant $C>0$. With the same assumption of $s,R$ as in Lemma \ref{lem-cover}
There exists a partition of unity $\{\rho_j\}$ subordinated to the open cover $\{B(y_j,s_j)\}$ of $\Omega(R)$ with $s_j\in [s,C_{10}s]$ such that for any $x,y\in B(y_j,s_j)\cap\Omega(R)$, we have
\[|\rho_j(x)-\rho_j(y)|/d(x,y)\leq\frac{C}{s}.   \]
The number of balls $B(y_j,s_j)$ is less than $Cs^{-\dim G}$.
\end{lem}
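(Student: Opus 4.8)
The plan is to combine the covering produced by Lemma \ref{lem-cover} with a standard smooth (or Lipschitz) partition-of-unity construction on a metric space, taking care that every gluing is performed upstairs on the local orbifold charts where the space is an honest manifold and the group acting is finite. First I would fix $R>2$ and $s\in(0,R^{-C_9}/2C_{10})$, and apply Vitali's covering lemma as indicated just before the statement: from the family $\{B(x_i,s_i/10)\}_{i\in I}$ covering $\Omega(R)$ with $s_i\in[s,C_{10}s]$ and each $B(x_i,s_i)$ orbifold-compatible, extract a disjoint subfamily $\{B(y_j,s_j/10)\}_{j\in J}$ such that $\{B(y_j,s_j/2)\}_{j\in J}$ still covers $\Omega(R)$ and each $B(y_j,s_j)$ is compatible, i.e.\ homeomorphic to $\Gamma_{B_j}\backslash\widetilde B_j$ for a finite group $\Gamma_{B_j}<\Gamma$ acting on $\widetilde B_j\subset G/M$.

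Next I would build, for each $j$, a Lipschitz bump function $\widetilde\theta_j$ on $\widetilde B_j$ that is $\Gamma_{B_j}$-invariant, equals $1$ on the preimage of $B(y_j,s_j/2)$, vanishes outside the preimage of $B(y_j,s_j)$, takes values in $[0,1]$, and has Lipschitz constant $O(1/s)$ with respect to $d_1$; such a function is obtained by first taking $\max(0,\min(1,2-d_1(\cdot,\widetilde y_j)/(s_j/2)))$ and then averaging over the finite group $\Gamma_{B_j}$, which preserves $d_1$ and hence the Lipschitz bound and the value $1$ on the (saturated) inner ball. Because $\widetilde\theta_j$ is $\Gamma_{B_j}$-invariant it descends to a Lipschitz function $\theta_j$ on $B(y_j,s_j)\subset O$, which I extend by zero to all of $\Omega(R)$; the Lipschitz estimate $|\theta_j(x)-\theta_j(y)|\le (C/s)\,d(x,y)$ for $x,y\in B(y_j,s_j)\cap\Omega(R)$ follows from the definition of the quotient metric $d$ and the fact that locally $d$ is computed by lifting paths to the chart. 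Then I set $\rho_j:=\theta_j/\big(\sum_{k\in J}\theta_k\big)$ on $\Omega(R)$; the denominator is $\ge 1$ everywhere on $\Omega(R)$ since the $B(y_k,s_k/2)$ already cover it, so no positivity issue arises, $\sum_j\rho_j\equiv 1$ on $\Omega(R)$, $\rho_j$ is supported in $B(y_j,s_j)$, and a quotient-rule estimate using that at any point only boundedly many $\theta_k$ are nonzero (bounded overlap, from $s_k\asymp s$ and the Vitali disjointness together with the local volume comparison from Lemma \ref{lem-vollip} / the bi-Lipschitz charts) gives the claimed bound $|\rho_j(x)-\rho_j(y)|\le (C/s)\,d(x,y)$, possibly enlarging $C$.

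Finally the cardinality bound $|J|\le Cs^{-\dim G}$ is a volume count: the balls $B(y_j,s_j/10)$ are pairwise disjoint and, lifted to the manifold charts, each has volume $\gg (s/10)^{\dim(G/M)}$ by the local bi-Lipschitz equivalence of $d_1$ with the Riemannian metric and the lower injectivity-radius bound from Lemma \ref{lem-injradius1}; since they all sit inside the fixed-volume set $\Omega(R)$ (or rather a bounded neighbourhood of it), summing gives $|J|\,s^{\dim(G/M)}\ll 1$, and $\dim(G/M)\le\dim G$ lets me absorb the discrepancy into the constant (the statement is stated with $\dim G$, which is a weaker, hence safe, bound). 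I expect the main obstacle to be purely bookkeeping rather than conceptual: one must be careful that the bounded-overlap property holds \emph{on the orbifold} (the naive lift of a ball to a chart can pick up the whole finite group orbit, multiplying counts by $|\Gamma_{B_j}|\le|M|=2^{d-1}$, a constant depending only on $d$, which is harmless), and that the Lipschitz constants are measured with respect to $d$ on $\Omega(R)$ and not with respect to the possibly larger extrinsic distance — this is exactly why the construction is done in the compatible charts supplied by Lemma \ref{lem-cover} where $d$ is literally the quotient of $d_1$ by a finite isometry group.
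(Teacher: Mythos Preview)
Your argument is correct and follows essentially the same scheme as the paper: build $1/s$-Lipschitz bump functions that are $1$ on $B(y_j,s_j/2)$ and vanish outside $B(y_j,s_j)$, normalize by their sum (which is $\ge 1$ on $\Omega(R)$ because the half-radius balls already cover), and control the quotient via a bounded-overlap estimate.

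The only difference is that you work upstairs in the orbifold charts and average over $\Gamma_{B_j}$, whereas the paper works directly on the quotient with the metric $d$, setting $\widetilde\rho_j(x)=\max\{0,\,1-3\,d(x,B(y_j,s_j/2))/s_j\}$. Since $d$ is an honest metric on $O=\Gamma\backslash G/M$, this distance function is automatically $1$-Lipschitz and no lift is needed. Your detour is harmless (in fact the center $\widetilde y_j$ is fixed by $\Gamma_{B_j}$ by the construction in Lemma~\ref{lem-orbifoldM}, so the averaging does nothing), but it is not required. Your volume argument for the cardinality bound $|J|\ll s^{-\dim G}$ is fine and actually fills in a point the paper leaves implicit.
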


The construction of a partition of unity subordinated to a covering is classic. We add the proof for completeness.

\begin{proof}

On each ball $B(y_j,s_j)$ we take the function
\[\widetilde\rho_j(x)=\max\{0,1-3d(x,B(y_j,s_j/2))/s_j \}, \]
which takes value 1 on the ball $B(y_j,s_j/2)$ and vanish outside of $B(y_j,s_j)$ with Lipschitz norm bounded by $3/s$.
Let $\rho_j=\widetilde \rho_j/\sum_j \widetilde\rho_j$. This is a partition of unity with respect the covering $B(y_j,s_j)$. Let us compute their Lipschitz norms, for $x,y$ in $B(y_j,s_j)\cap \Omega(R)$
\begin{align*}
    |\rho_j(x)-\rho_j(y)|/d(x,y)&=|\frac{\widetilde\rho_j(x)-\widetilde\rho_j(y)}{\sum\widetilde\rho_l(x)}-\widetilde\rho_j(y)\frac{\sum_l\widetilde\rho_l(x)-\widetilde\rho_l(y)}{(\sum \widetilde\rho_l(x))(\sum \widetilde\rho_l(y))}|/d(x,y)\\
    &\leq \frac{3}{s}\frac{1}{\sum \widetilde\rho_l(x)}+\frac{3\#\{y_l,\ x\text{ or }y\in B(y_l,s_l) \}}{s}\frac{1}{(\sum \widetilde\rho_l(x))(\sum \widetilde\rho_l(y))}.
\end{align*}
Due to $x,y\in \Omega(R)\subset \cup_l B(y_l,s_j/2)$, we obtain that $\sum_l\widetilde\rho_l(x), \sum_l\widetilde\rho_l(y)\geq 1$. Since different $y_l$'s have distance at least $s/10$ to each other, by homegenity of the space, the number of $y_l$ such $x\in B(y_l,s_l)$ is uniformly bounded. Therefore we obtain the lemma.
\end{proof}

For any $\psi_\Gamma$ Lipschtiz function supported on $\Omega(R)$, let $\psi_j=\psi_\Gamma\rho_j$. Then $\psi_\Gamma=\sum \psi_j$. For each $\psi_j$, by Lemma \ref{lem-unity}, we obtain $|\psi_j|_{Lip}\ll 1/s|\psi_\Gamma|_{Lip}$.
Take its lift $\widetilde\psi_j$ on $\widetilde B_j$. Then since singular locus has zero measure, and outside the singular locus $\Sigma(O)$ it is a regular covering and $B_j\simeq \widetilde B_j/\Gamma_{B_j}$, we obtain
\begin{equation}\label{equ-gm}
    \int \psi_j \dd m_{\Gamma\backslash G/M}=\frac{1}{|\Gamma_B|}\int \widetilde\psi_j\dd m_{G/M}.
\end{equation}
Due to $F\in C(A)$ not intersecting singular locus (Lemma \ref{lem-singularlocus}), $\calM^t$ being $\Gamma$ invariant and $B_j\simeq \widetilde B_j/\Gamma_{B_j}$, we obtain
\begin{equation}\label{equ-mt}
    \int \psi_j \dd \calM_\Gamma^t=\frac{1}{|\Gamma_B|}\int \widetilde\psi_j\dd \calM^t.
\end{equation}
\begin{proof}[Proof of Theorem \ref{thm-introequid} for general $\Gamma<\sld$]
With \eqref{equ-gm}, \eqref{equ-mt} and Lemma \ref{lem-injradius1}, we can redo the argument as in Section \ref{sec:equicpt} to obtain the equidistribution for compactly supported functions on the orbifold $\Gamma\backslash G/M$. Then we use this equidistribution result to do the same argument as in Section \ref{sec-nonescape} to finish the proof.
\end{proof}

We also need a version of Lemma \ref{lem-gloxc} in this case
\begin{lem}\label{lem-gloxcorbifold}
There is a bijection between $\calG_c^{lox}$ and $\calG(A)$. 
\end{lem}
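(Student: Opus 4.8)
The statement to prove, Lemma~\ref{lem-gloxcorbifold}, asserts that $\Psi$ and $\Phi$ of Lemma~\ref{lem-gaglox} still define mutually inverse bijections $\calG_c^{lox} \leftrightarrow \calG(A)$ when $\Gamma < \sld$ is a finite index subgroup that no longer acts freely on $G/M$. The plan is to mimic the proof of Lemma~\ref{lem-gloxc} (hence of Lemma~\ref{lem-gaglox}), but now being careful about the isotropy that can occur. The key point is that, thanks to Lemma~\ref{lem-singularlocus}, a compact periodic $A$-orbit $F\in C(A)$ never meets the singular locus $\Sigma(O)$, so along $F$ the $\Gamma$-action on $G/M$ behaves exactly as in the free case.

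First I would define $\Phi$. Given $(Y,F)\in\calG(A)$, write $F=\Gamma gAM$; since $Y\in\Lambda(F)\cap\frak a^{++}$, there is $\gamma_Y\in\Gamma$ and $m_Y\in M$ with $\gamma_Y g = g\exp(Y)m_Y$, so $\gamma_Y$ is loxodromic with $\lambda(\gamma_Y)=Y$. As in Lemma~\ref{lem-gloxc}, from $F=F_{[\gamma_Y]}$ being compact and Lemma~\ref{lem_cpt} we get $\gamma_Y\in\Gamma_c^{lox}$, so $[\gamma_Y]\in\calG_c^{lox}$. For well-definedness and injectivity of the correspondence at this point one needs: if $\gamma_Y'g=g\exp(Y)m_Y'$ is another choice, then $\gamma_Y^{-1}\gamma_Y'=g\,m_Y^{-1}m_Y'\,g^{-1}$ fixes $gM\in G/M$. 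But $gM$ lies on $F\in C(A)$, which by Lemma~\ref{lem-singularlocus} misses $\Sigma(O)$; hence the isotropy of the $\Gamma$-action at $gM$ is trivial, forcing $\gamma_Y^{-1}\gamma_Y'=e$. (Concretely one can also invoke Lemma~\ref{lem-rational}: $\gamma_Y^{-1}\gamma_Y'$ centralizes the loxodromic $\gamma_Y$ and has rational — in fact $\pm1$ — eigenvalues, so equals $\pm\mathrm{Id}_d$; if $d$ is even the convention of Remark~\ref{rem-deven} removes the $-\mathrm{Id}_d$ ambiguity, as in the proof of Lemma~\ref{lem-singularlocus}.) Replacing $g$ by another representative of $F$ only multiplies $\gamma_Y$ by an element of $\Gamma$ by conjugation, so $[\gamma_Y]$ is well defined, and $\Phi(Y,F):=[\gamma_Y]$.

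Next, $\Psi$ is defined exactly as before: $\Psi([\gamma]):=(\lambda(\gamma),F_{[\gamma]})$. For $\gamma\in\Gamma_c^{lox}$ we have $g_\gamma^{-1}\gamma g_\gamma\in\exp(\lambda(\gamma))M$, so $\lambda(\gamma)\in\Lambda(F_{[\gamma]})$ by \eqref{eq_gg+}, and $F_{[\gamma]}$ is compact periodic by Lemma~\ref{lem_cpt} (this is where the condition defining $\Gamma_c^{lox}$ is used, replacing Selberg's Lemma~\ref{lem-selberg}). Hence $\Psi([\gamma])\in\calG(A)$. The two composites are computed verbatim as in Lemma~\ref{lem-gaglox}: from $g^{-1}\gamma_Y g\in\exp(Y)M$ one may take $g_{\gamma_Y}=g$, giving $\Psi\circ\Phi(Y,F)=(Y,F)$; and from $g_\gamma^{-1}\gamma g_\gamma\in\exp(\lambda(\gamma))M$ one gets $\Phi\circ\Psi([\gamma])=[\gamma]$.

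The only genuine obstacle, compared with Lemma~\ref{lem-gloxc}, is the uniqueness of $\gamma_Y$, which previously followed from freeness of the $\Gamma$-action on all of $G/M$; here that fails globally, but it is restored \emph{along compact periodic $A$-orbits} by Lemma~\ref{lem-singularlocus}. Once that observation is in place, every other step is identical to the free-quotient case, so the proof is short.

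\begin{proof}[Proof of Lemma~\ref{lem-gloxcorbifold}]
We argue as for Lemma~\ref{lem-gloxc}, i.e. as for Lemma~\ref{lem-gaglox}, the only new ingredient being that a compact periodic $A$-orbit misses the singular locus.

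\emph{The map $\Phi$.} Let $(Y,F)\in\calG(A)$ and write $F=\Gamma gAM$ for some $g\in G$. As $Y\in\Lambda(F)\cap\frak a^{++}$, there exist $\gamma_Y\in\Gamma$ and $m_Y\in M$ with $\gamma_Y g=g\exp(Y)m_Y$; then $g^{-1}\gamma_Y g\in\exp(Y)M$, so $\gamma_Y$ is loxodromic and $\lambda(\gamma_Y)=Y$. If $\gamma_Y'g=g\exp(Y)m_Y'$ is another such element, then $\beta:=\gamma_Y^{-1}\gamma_Y'=g\,m_Y^{-1}m_Y'\,g^{-1}$ fixes $gM\in G/M$. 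Since $gM$ lies on $F\in C(A)$, Lemma~\ref{lem-singularlocus} gives $gM\notin\Sigma(O)$, so the $\Gamma$-isotropy at $gM$ is trivial and $\beta=e$; equivalently, $\beta$ centralizes the loxodromic $\gamma_Y$ and, being of the form $g m g^{-1}$ with $m\in M$, has all eigenvalues in $\{\pm1\}\subset\Q$, so $\beta=\pm\mathrm{Id}_d$ by Lemma~\ref{lem-rational}, and the convention of Remark~\ref{rem-deven} rules out $-\mathrm{Id}_d$ when $d$ is even, whence $\beta=e$. Thus $\gamma_Y$ is uniquely determined by $(Y,g)$. Changing $g$ within its class (left $\Gamma$-, right $AM$-multiplication) replaces $\gamma_Y$ by a $\Gamma$-conjugate, so $[\gamma_Y]\in[\Gamma^{lox}]$ is well defined. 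Moreover $F_{[\gamma_Y]}=\Gamma gAM=F$ is compact periodic, so Lemma~\ref{lem_cpt} gives $\gamma_Y\in\Gamma_c^{lox}$ and $[\gamma_Y]\in\calG_c^{lox}$. Set $\Phi(Y,F):=[\gamma_Y]$, characterised by $g^{-1}\gamma_Y g\in\exp(Y)M$.

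\emph{The map $\Psi$.} For $[\gamma]\in\calG_c^{lox}$ choose $g_\gamma\in G$ with $g_\gamma^{-1}\gamma g_\gamma\in\exp(\lambda(\gamma))M$; then $F_{[\gamma]}=\Gamma g_\gamma AM$ and, by \eqref{eq_gg+}, $\lambda(\gamma)\in\Lambda(F_{[\gamma]})$. Since $\gamma\in\Gamma_c^{lox}$, Lemma~\ref{lem_cpt} shows $F_{[\gamma]}$ is a compact periodic $A$-orbit, so $(\lambda(\gamma),F_{[\gamma]})\in\calG(A)$. Put $\Psi([\gamma]):=(\lambda(\gamma),F_{[\gamma]})$.

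\emph{Mutual inverses.} For $(Y,F)\in\calG(A)$, since $g^{-1}\gamma_Y g\in\exp(Y)M$ we may take $g_{\gamma_Y}=g$, hence $\Psi\circ\Phi(Y,F)=\Psi([\gamma_Y])=(\lambda(\gamma_Y),F_{[\gamma_Y]})=(Y,F)$. For $[\gamma]\in\calG_c^{lox}$, from $g_\gamma^{-1}\gamma g_\gamma\in\exp(\lambda(\gamma))M$ the characterisation of $\Phi$ gives $\Phi\circ\Psi([\gamma])=\Phi(\lambda(\gamma),F_{[\gamma]})=[\gamma]$. Therefore $\Psi$ and $\Phi$ are mutually inverse bijections between $\calG_c^{lox}$ and $\calG(A)$.
\end{proof}
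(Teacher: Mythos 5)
Your proof is correct and follows essentially the same route as the paper: the only new issue compared with Lemma \ref{lem-gaglox}/\ref{lem-gloxc} is the uniqueness of $\gamma_Y$, which you resolve—exactly as the paper does—by noting that $\gamma_Y^{-1}\gamma_Y'$ centralizes $\gamma_Y$ and has eigenvalues $\pm1$, so Lemma \ref{lem-rational} together with the convention of Remark \ref{rem-deven} forces it to be trivial (your alternative phrasing via Lemma \ref{lem-singularlocus} is just this same argument packaged through the singular-locus lemma). The only thing to make explicit is that Lemma \ref{lem-rational} requires $p_{\gamma_Y}$ irreducible, which you do have since $F=F_{[\gamma_Y]}$ is compact and Lemma \ref{lem_cpt} then puts $\gamma_Y$ in $\Gamma_c^{lox}$.
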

\begin{proof}
The proof is almost the same as the proof of Lemma \ref{lem-gaglox} and \ref{lem-gloxc}. The  main difference is that we may have two different $\gamma_Y,\gamma_Y'$ satisfying
\[\gamma_Yg=ge^Ym_Y,\ \gamma_Y'g=ge^Ym_Y'. \]
Take $\gamma=\gamma_Y^{-1}\gamma_Y'$, then $\gamma$ commutes with $\gamma_Y$ and all its eigenvalues are $\pm 1$. By Lemma \ref{lem-rational}, we know that $\gamma=\pm id$. By convention in Remark \ref{rem-deven}, we obtain that $\gamma$ is trivial.
\end{proof}

\begin{rem}
The same construction of the covering and the partition of unity for orbifolds $\Gamma\backslash G/M$ also works if $G$ is $\R$-split, in which case the group $M$ is isomorphic to $\{\pm 1\}^r$ for some $r\in \N$. But we need also prove Lemma \ref{lem-singularlocus} and Lemma \ref{lem-gloxcorbifold}, which relies on Lemma \ref{lem-rational}. If we have s similar version of Lemma \ref{lem-rational} for a cocompact irreducible lattice $\Gamma$ in a $\R$-split group $G$, then Theorem \ref{thm-introequid} also works without the torsion-free assumption.
\end{rem}

\section{Appendix B}

\subsection{Proof of Theorem \ref{theo-gorodnik-nevo}}\label{appendix}
We give a proof of Theorem \ref{theo-gorodnik-nevo} by redoing the proof of Theorem 7.1 in \cite{gorodnikCountingLatticePoints2012} for Lipschitz functions. Here we have one notation issue, the quotient $\Gamma$ is on the right $G/\Gamma$ to be consistent with \cite{gorodnikCountingLatticePoints2012}.  Fix notation $m_G$ and $dm_{G/\Gamma}=dm_G/V(\Gamma)$, which is a probablity measure.

Recall the quantitative mean ergodic theorem on $L^2(G/\Gamma)$, which is the main engine to obtain equidistribution. For an absolutely continuous probability measure $\beta$ on $G$, let $\pi(\beta)f=\int \pi(g)f d\beta(g)$. By Theorem 4.5 in \cite{gorodnikCountingLatticePoints2012}, we have
\begin{equation}\label{equ.meanergodic}
\bigg\|\pi(\beta)f-\int f\bigg\|_{2}\leq C_q\|\beta\|_{q}^{1/n(G,\Gamma)} \|f\|_{2}, 
\end{equation}
where $n(G,\Gamma)$ is an integer depending on $G$, $\Gamma$ and $q$ is any constant in $[1,2)$ such that $\|\beta\|_q<\infty$.

Let $$\epsilon_\inj>0$$ be a constant such that if $\epsilon<\epsilon_\inj$, then the map $\calO_\epsilon$ to $\calO_\epsilon\Gamma$ is injective from $G$ to $G/\Gamma$.

We will prove this version
\begin{theorem}\label{theo-gorodnik-nevo1}
\hypG
Let $\Gamma <G$ be an irreducible lattice. There exists $C_6>0$ only depending on $n(G,\Gamma)$ and $G$. 
Let $x \in X$ and $(B_t)_{t >0}$ be $D_t^{++}$.
Then for all Lipschitz test functions $\psi \in Lip(\cal{F}^{(2)})$, there exists $E(t,\psi)=O(Lip(\psi) \vol(D_t)^{-\kappa})$ when $t > C_6 |\log\epsilon_\inj|$ such that

$$ \frac{1}{\vol(B_t)} \sum_{\gamma \in B_t\cap \Gamma} \psi(\gamma_o^+,\gamma_o^-) = \frac{1}{\vol(\Gamma \backslash G)}   \int_{ \calF\times\calF  } \psi \dd \mu_o\otimes\mu_o + E(t,\psi) ,$$
where all the implied constants $\ll$ only depending on $G$ and $n(G,\Gamma)$.
\end{theorem}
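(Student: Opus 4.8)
The plan is to follow the structure of the proof of Theorem 7.1 in \cite{gorodnikCountingLatticePoints2012}, which converts the quantitative mean ergodic theorem \eqref{equ.meanergodic} into a lattice point count via a smoothing (thickening) argument. First I would reduce the statement on $\calF^{(2)}$ to a statement about functions on $G/\Gamma$: given $\psi \in Lip(\calF\times\calF)$, I would precompose with the Cartan-decomposition maps $\gamma \mapsto (\gamma_o^+,\gamma_o^-)$ and with the Iwasawa-type projections, so that counting $\sum_{\gamma \in D_t^{reg}\cap\Gamma}\psi(\gamma_o^+,\gamma_o^-)$ becomes, up to boundary terms, integrating a function $F_\psi$ against the counting measure $\sum_{\gamma\in\Gamma}\delta_\gamma$ restricted to $D_t^{reg}$. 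The point is that $F_\psi$ inherits a Lipschitz bound $Lip(F_\psi)\ll Lip(\psi)$ because the Cartan angular coordinates $g \mapsto (g_o^+, g_o^-)$ are Lipschitz on the region where $\underline{a}(g)$ stays away from the walls — here I would use Lemma~\ref{lem-actiong} and the comparison of distances on $\calF$, and absorb the wall contribution into the error using Lemma~\ref{count-reste} and the volume decay Lemma~\ref{volume_reste}.

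Next I would run the standard thickening: fix $\epsilon<\epsilon_\inj$, let $\calO_\epsilon$ be an $\epsilon$-ball at $e$ in $G$, and let $\beta_\epsilon$ be the normalized indicator of $\calO_\epsilon$. Writing $\chi_t$ for (a smoothed version of) the indicator of $D_t$ on $G/\Gamma$, one has the sandwich $\chi_{t-\epsilon}\ast\beta_\epsilon \leq \sum_{\gamma\in\Gamma}\mathbbm{1}_{D_t}(\,\cdot\,\gamma) \leq \chi_{t+\epsilon}\ast\beta_\epsilon$ after unfolding on $G/\Gamma$; pairing with the lifted test function and applying \eqref{equ.meanergodic} to $\pi(\beta_\epsilon)$ of the relevant $L^2$ function gives
\[
\left| \frac{1}{\vol(D_t)}\sum_{\gamma\in D_t\cap\Gamma}\psi(\gamma_o^+,\gamma_o^-) - \frac{1}{\vol(\Gamma\backslash G)}\int \psi\,\dd\mu_o\otimes\mu_o \right| \ll Lip(\psi)\left( \|\beta_\epsilon\|_q^{1/n(G,\Gamma)} \frac{\vol(\Gamma\backslash G)^{1/2}}{\vol(D_t)^{1/2}} + \frac{\vol(D_{t+\epsilon})-\vol(D_{t-\epsilon})}{\vol(D_t)} \right).
\]
Here $\|\beta_\epsilon\|_q \asymp \vol(\calO_\epsilon)^{-(1-1/q)} \asymp \epsilon^{-\dim G (1-1/q)}$, so the first error term is of the form $\epsilon^{-c_1}\vol(D_t)^{-1/2}$, while the second is $O(\epsilon)$ by the local Lipschitz property of $t\mapsto \log\vol(D_t)$ (Lemma~\ref{lem-vollip}). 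The identification of the main term with $\int\psi\,\dd\mu_o\otimes\mu_o$ comes from the fact that in Cartan coordinates $K\exp(\mathfrak{a}^+)K$ the Haar measure disintegrates with the $K$-parts equidistributing according to $\mu_o$ on each factor of $\calF$, and the limit of the normalized volume restricted to an angular sector is governed by Harish-Chandra's formula, exactly as in \cite{gorodnikCountingLatticePoints2012}.

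Finally I would optimize: choosing $\epsilon = \vol(D_t)^{-\kappa'}$ for a small $\kappa'>0$ balances the two error terms and produces a single bound $E(t,\psi)=O(Lip(\psi)\vol(D_t)^{-\kappa})$ for some $\kappa>0$ depending only on $n(G,\Gamma)$, $\dim G$ and the volume-growth exponent $\delta_0$; this forces the constraint $\epsilon<\epsilon_\inj$, i.e. $\vol(D_t)^{\kappa'}>\epsilon_\inj^{-1}$, which by \eqref{volume_D_t} is exactly the condition $t>C_6|\log\epsilon_\inj|$ in the statement. To pass from Theorem~\ref{theo-gorodnik-nevo1} (based at $o$) to the version with a general base point $x\in X$ in Theorem~\ref{theo-gorodnik-nevo}, I would conjugate by $h_x$ with $h_x o = x$: this replaces $\Gamma$ by $h_x^{-1}\Gamma h_x$, replaces $\mu_o$ by $\mu_x = (h_x)_*\mu_o$ via \eqref{equ.gmux}, and the injectivity-radius constant $\epsilon_\inj$ for the conjugated lattice degrades by a factor controlled by $e^{-\dd_X(o,x)}$, which is why the hypothesis becomes $t>C_4\,\dd_X(o,x)$ and the error acquires the factor $C_x = C_1 e^{C_0\dd_X(o,x)}$; all the distortion of the angular coordinates under $h_x$ is quantified by Lemma~\ref{lem-actiong}(i)--(ii) and the ball inclusions (i$'$)--(ii$'$). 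The main obstacle I anticipate is not the ergodic input — that is a black box — but bookkeeping the dependence of every constant on the base point $x$ (equivalently on the injectivity radius), so that the final error is genuinely of the claimed form $O(Lip(\psi)\,C_x\,\vol(D_t)^{-\kappa})$ uniformly, and checking that the smoothing operator $\pi(\beta_\epsilon)$ interacts correctly with the fact that we are integrating a function pulled back from $\calF\times\calF$ rather than a genuine $L^2(G/\Gamma)$ function of compact support — this requires first multiplying by a fixed compactly supported bump on $G/\Gamma$, harmless in the cocompact case and handled by the $\Omega(R)$-truncation in the $\mathrm{SL}(d,\Z)$ case.
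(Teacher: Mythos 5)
Your overall scaffolding (effective Cartan decomposition to make the angular map Lipschitz-stable, an $\epsilon$-thickening sandwich, discarding the wall region by Lemma \ref{volume_reste}/\ref{count-reste}, and optimizing $\epsilon$ against a power of $\vol(D_t)$) matches the paper's Appendix B in spirit, but the central analytic step is misapplied and this is a genuine gap. You propose to apply the quantitative mean ergodic theorem \eqref{equ.meanergodic} to $\pi(\beta_\epsilon)$, where $\beta_\epsilon$ is the normalized indicator of the small ball $\calO_\epsilon$. That bound reads $\|\pi(\beta_\epsilon)f-\int f\|_2\ll \|\beta_\epsilon\|_q^{1/n(G,\Gamma)}\|f\|_2$, and $\|\beta_\epsilon\|_q\asymp \vol(\calO_\epsilon)^{-(q-1)/q}$ blows up as $\epsilon\to0$; there is no decay in $t$ anywhere in that estimate, so your displayed inequality with the factor $\|\beta_\epsilon\|_q^{1/n(G,\Gamma)}\vol(D_t)^{-1/2}$ does not follow from \eqref{equ.meanergodic}. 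Moreover, even granting some bound of this shape, it is an $L^2(G/\Gamma)$ estimate, while the counting problem requires evaluating an automorphized function at (essentially) the identity coset; you give no mechanism to pass from $L^2$ to a pointwise statement.

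The paper runs the duality the other way around, and this is exactly where the decay comes from. The probability measure fed into \eqref{equ.meanergodic} is the \emph{normalized weighted measure on the large set}, $\widetilde\rho^{\pm}_{t,\epsilon}=\rho^{\pm}_{t,\epsilon}/\int\rho^{\pm}_{t,\epsilon}$, built from the effective Cartan decomposition (Lemma \ref{lem_effective_cartan}) so that it sandwiches $\rho_t(\gamma)=\mathbbm{1}_{\widetilde D^\delta_t}(a)\psi(\gamma_o^+,\gamma_o^-)$ under $\calO_\epsilon$-perturbations; its $L^q$ norm satisfies $\|\widetilde\rho^{\pm}_{t,\epsilon}\|_q^q\ll \vol(D_t)^{1-q}(Lip\,\phi/\int\phi)^q$, which is what produces the $\vol(D_t)^{-\kappa}$ decay in $E(t)$. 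The $L^2$ test function is the automorphized small bump $\varphi_\epsilon$ (with $\|\varphi_\epsilon\|_2^2=m_G(\calO_\epsilon)/V(\Gamma)$), and Chebyshev's inequality is then used to find a good point $h\in\calO_\epsilon$ at which $\pi(\widetilde\rho^{\pm}_{t,\epsilon})(\varphi_\epsilon)(h\Gamma)$ is close to its mean; Lemma \ref{lem_rhot} converts this, via the two-sided sandwich valid for all $h\in\calO_\epsilon$, into the bound on $\sum_{\gamma}\rho_t(\gamma)$, at the cost of a factor $m_G(\calO_\epsilon)^{-1}$ that is finally balanced by choosing $\epsilon\asymp (E(t)\int\phi/Lip\,\phi)^{1/(1+\dim G)}$ (the constraint $\epsilon\le\epsilon_{\mathrm{inj}}$ is what yields $t>C_6|\log\epsilon_{\mathrm{inj}}|$). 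Your proposal omits both the role reversal and the Chebyshev/good-point step, so as written the error term neither decays nor controls the pointwise count; there is also a secondary case you do not address, namely when $\int\phi$ is small compared with $Lip\,\phi$ (so that $\epsilon<\int\phi/2l_0 Lip\,\phi$ fails), which the paper handles separately using nonnegativity of $\phi$ in Step 3. The base-point conjugation you describe for deducing Theorem \ref{theo-gorodnik-nevo} from the base-point-$o$ statement is, by contrast, exactly what the paper does.
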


\begin{proof}[Proof of Theorem \ref{theo-gorodnik-nevo}]
Due to \eqref{equ.gammax} $\gamma_x^+=h_x(h_x^{-1}\gamma h_x)^+_o$, we apply Theorem \ref{theo-gorodnik-nevo1} to the lattice $h_x^{-1}\Gamma h_x$ and the Lipschitz function $\psi'(\cdot,\cdot):=\psi(h_x\cdot,h_x\cdot)$. This is the reason that we need a uniformed version for lattices $h_x^{-1}\Gamma h_x$ and we made dependence of constants in Theorem \ref{theo-gorodnik-nevo1} more transparent. The constant $n(G,h_x^{-1}\Gamma h_x)$ is the same as $n(G,\Gamma)$ due to invariance of the Haar measure.
For $\epsilon_\inj$ of $h_x^{-1}\Gamma h_x$, we have
\[\inf_{\gamma\in\Gamma-\{e\}}d_G(o,h_x^{-1}\gamma h_x)\geq e^{-Cd_X(o,x)}\inf_{\gamma\in\Gamma-\{e\}}d_G(o,\gamma). \]
By Lemma \ref{lem-actiong}, the action of $h_x$ on $\calF$ is $C_x$ Lipschitz. From these, we obtain Theorem \ref{theo-gorodnik-nevo}.
\end{proof}

\textbf{Step 1}: 
The first step is to transfer the counting problem to integrals, which can be treated by the mean ergodic theorem.

Let $\calO_\epsilon$ be a neighborhood of identitity in $G$ with radius $\epsilon$. Let $\widetilde{A}^\delta=\{\exp(a),\ a\in\frak a^{++},\ d(a,\partial \frak a^{++})\geq\delta \}$.
\begin{lem}[Effective Cartan decomposition, Proposition 7.3 in \cite{gorodnikCountingLatticePoints2012}, first appeared in \cite{gorodnikStrongWavefrontLemma2010}]\label{lem_effective_cartan}
There exist $\delta>0$ and $l_0,\epsilon_1>0$. If $\epsilon<\epsilon_1$, then for $g=k_1 ak_2\in K \widetilde{A}^\delta K$, we have
\[\calO_\epsilon g\calO_\epsilon\subset (\calO_{l_0\epsilon}\cap K)k_1M(\calO_{l_0\epsilon}\cap A)ak_2(\calO_{l_0\epsilon}K). \]
\end{lem}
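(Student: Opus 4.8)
The plan is to absorb the left copy of $\calO_\epsilon$ into the outer $K$-factor (together with a tiny $MA$-correction), exploiting that conjugation by $a^{-1}$ does not expand $N$ when $\log a$ lies in the positive chamber, while the right copy of $\calO_\epsilon$ is simply carried along as an $\calO_{l_0\epsilon}K$-factor. First I fix once and for all a small $\epsilon_1>0$ so that on $\calO_{\epsilon_1}$ both the Iwasawa multiplication map $K\times A\times N\to G$ and the exponential $\frak g\supset B(0,\cdot)\to G$ are diffeomorphisms onto their images with distortion bounded by a constant depending only on $G$; in particular each of the three Iwasawa components of an element of $\calO_\epsilon$ lies within distance $\ll\epsilon$ of $e$, and the linear size of a logarithm is comparable to the distance of the group element to $e$. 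I also record that the Riemannian distance on $G$ is left $G$-invariant and right $K$-invariant, so conjugation by any $k\in K$ is an isometry fixing $e$ and hence preserves every ball $\calO_\epsilon$; this makes moving perturbations across $k_1$ and $k_2$ cost nothing.

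Given $g=k_1ak_2$ with $a\in\widetilde A^\delta$ and $o_1,o_2\in\calO_\epsilon$, $\epsilon<\epsilon_1$, I first write $o_1k_1=k_1\tilde o_1$ with $\tilde o_1:=k_1^{-1}o_1k_1\in\calO_\epsilon$, then Iwasawa-decompose $\tilde o_1=k'a'n'$ with $k'\in K$, $a'\in A$, $n'\in N$, each at distance $\ll\epsilon$ from $e$. Using $n'a=a(a^{-1}n'a)$ this yields
\[ o_1go_2 = k_1k'\,a'\,n'\,a\,k_2\,o_2 = k_1k'\,a'\,a\,n''\,k_2\,o_2, \qquad n'':=a^{-1}n'a\in N. \]
The core point is the contraction estimate: for $X\in\frak g_\alpha$ with $\alpha\in\Sigma^+$ one has $\mathrm{Ad}(a^{-1})X=e^{-\alpha(\log a)}X$, and since $\log a\in\frak a^{++}$ every $\alpha(\log a)>0$ — in fact $\alpha(\log a)\geq\min_{\beta\in\Pi}\beta(\log a)\asymp\dd(\log a,\partial\frak a^{+})\geq\delta$ as in the proof of Lemma~\ref{lem-action-aplus} — so $\mathrm{Ad}(a^{-1})$ does not enlarge $\frak n$ and $n''$ stays at distance $\ll\epsilon$ from $e$, with the implied constant depending only on $G$. (Here $\delta>0$ only needs to be large enough for a uniform bound; it also keeps $a$ regular, so the $K$-components of $g$ are well defined modulo $M$.)

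It remains to reassemble. Writing $k_1k'=(k_1k'k_1^{-1})k_1$ with $k_1k'k_1^{-1}\in\calO_{l_0\epsilon}\cap K$, inserting the identity of $M$, and writing $n''k_2o_2=k_2(k_2^{-1}n''k_2\,o_2)$ with $k_2^{-1}n''k_2\,o_2\in\calO_{l_0\epsilon}$ (triangle inequality plus the $K$-conjugation isometry), one obtains
\[ o_1go_2 = (k_1k'k_1^{-1})\,k_1\,e\,a'\,a\,k_2\,(k_2^{-1}n''k_2\,o_2)\in(\calO_{l_0\epsilon}\cap K)k_1M(\calO_{l_0\epsilon}\cap A)ak_2(\calO_{l_0\epsilon}K), \]
where $l_0$ is chosen to absorb the finitely many distortion constants collected along the way (from the Iwasawa map, from $\exp$, and from the triangle inequality), all depending only on $G$; the $M$-factor is realised here by the identity and is written in the statement only to leave room for the usual ambiguity of the Cartan $K$-components. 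I expect the one genuinely delicate point to be the uniformity of these constants over all $a\in\widetilde A^\delta$: this works precisely because $\widetilde A^\delta\subset\exp(\frak a^{+})$ forces every positive root to be nonnegative on $\log a$, so conjugation by $a^{-1}$ never enlarges $N$ no matter how deep $a$ sits, and because restricting at the outset to the fixed neighbourhood $\calO_{\epsilon_1}$ keeps all the local-diffeomorphism estimates $G$-dependent and $\epsilon$-independent.
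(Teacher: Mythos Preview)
Your argument is correct and follows the standard route to the strong wavefront lemma: move the left perturbation across $k_1$ by $K$-conjugation, Iwasawa-decompose it as $k'a'n'$, pass $n'$ through $a$ using that $\mathrm{Ad}(a^{-1})$ is nonexpanding on $\frak n$ for $\log a\in\frak a^+$, and absorb the remaining small factors on the right. The bookkeeping you give is accurate; in particular your observation that the contraction estimate $\|\mathrm{Ad}(a^{-1})X\|\le\|X\|$ for $X\in\frak n$ needs only $\log a\in\frak a^+$ (not the stronger $\dd(\log a,\partial\frak a^+)\ge\delta$) is right, and the role of $\delta$ is exactly to keep $k_1M,k_2M$ well defined.

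Note that the paper does not give its own proof of this lemma: it is quoted verbatim as Proposition~7.3 of Gorodnik--Nevo (originating in the strong wavefront lemma of Gorodnik--Oh--Shah), and is used as a black box in the appendix. So there is no paper proof to compare against; your sketch is essentially the argument one finds in those references.
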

For ease of notation, when there is no confusion, we will use $k_1,a,k_2$ to denote elements come from the Cartan decomposition  $g=k_1ak_2$. 
Let $ \widetilde{D}^\delta_t=\{g\in G, a\in \widetilde{A}^\delta,d(gK,K)\leq t\}$. Notice that by identifying $\calF$ with $K/M$, we have $k_1M=\gamma_o^+$ and $k_2^{-1}M=\gamma_o^-$.
Let $$\rho_t(g)=\mathbbm{1}_{\widetilde{D}^\delta_t}(a)\phi(k_1,k_2),$$
where $\phi(k_1,k_2)=\psi(k_1M,k_2^{-1}M)=\psi(g_o^+,g_o^-)$.

We introduce two auxiliary functions, which is the replacement of Lipschitz well-roundness of sets in \cite{gorodnikCountingLatticePoints2012}. Recall 
\[Lip \;\phi=\max\bigg\{|\phi|_\infty, \sup_{x\neq y}\frac{|\phi(x)-\phi(y)|}{d(x,y)} \bigg\}.\] 
Let
\begin{align*}
    \rho_{t,\epsilon}^+(g)&=\mathbbm{1}_{\widetilde{D}^{\delta-l_0\epsilon}_{t+2\epsilon}}(g)(\phi(k_1,k_2)+(Lip\phi)l_0\epsilon)\\
    \rho_{t,\epsilon}^-(g)&=\mathbbm{1}_{\widetilde{D}^{\delta+l_0\epsilon}_{t-2\epsilon}}(g)\max\{\phi(k_1,k_2)-(Lip\phi)l_0\epsilon,0\}.
\end{align*}
From the definition, we know $\rho^-_{t,\epsilon}\leq \rho_t\leq \rho^+_{t,\epsilon}$. 
\begin{lem}
For $g\in\calO_\epsilon\gamma\calO_\epsilon$ with $\epsilon\leq \epsilon_1$ we obtain
\begin{equation}\label{equ_rho-+}
\rho^-_{t,\epsilon}(g)\leq \rho_t(\gamma)\leq \rho^+_{t,\epsilon}(g).
\end{equation}
\end{lem}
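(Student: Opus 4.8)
The plan is to compare the Cartan decomposition of $\gamma$ with that of an arbitrary $g\in\calO_\epsilon\gamma\calO_\epsilon$, treating separately the ``indicator'' part of $\rho_t$ (membership in $\widetilde{D}^\delta_t$, controlled by a soft estimate) and the ``value'' part $\phi(k_1,k_2)$ (controlled by the effective Cartan decomposition). First I would reduce to $\psi\geq 0$, hence $\phi\geq 0$; this is harmless since a Lipschitz function is the difference of its (Lipschitz) positive and negative parts. Writing $g=p\gamma q$ with $p,q\in\calO_\epsilon$, and fixing Cartan decompositions $\gamma=k_1ak_2$ and $g=k_1^ga^gk_2^g$, I would record the soft estimate coming from \cite[Lemma 2.3]{kasselCorank08} applied twice,
\[
\|\underline{a}(g)-\underline{a}(\gamma)\|\ \leq\ \|\underline{a}(p)\|+\|\underline{a}(q)\|\ \leq\ 2\epsilon ,
\]
using $d_X(o,\calO_\epsilon o)\leq\epsilon$. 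In particular $\bigl|\,\|\underline{a}(g)\|-\|\underline{a}(\gamma)\|\,\bigr|\leq 2\epsilon$, and $d(\underline{a}(g),\partial\frak{a}^{++})$ differs from $d(\underline{a}(\gamma),\partial\frak{a}^{++})$ by at most $2\epsilon$; after enlarging $l_0$ so that $l_0\geq 2$, both gaps are $\leq l_0\epsilon$.

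The heart of the matter is the case $a\in\widetilde{A}^\delta$, i.e.\ $\gamma\in K\widetilde{A}^\delta K$, where Lemma~\ref{lem_effective_cartan} gives $g\in(\calO_{l_0\epsilon}\cap K)\,k_1M\,(\calO_{l_0\epsilon}\cap A)\,a\,k_2\,(\calO_{l_0\epsilon}K)$. I would unwind this inclusion --- moving the central $M$-factor past $A$ (since $M=Z_K(A)$ commutes with $A$), using that conjugation by an element of $K$ is an isometry of $G$ (so the small $\calO_{l_0\epsilon}$-factors stay small after being pushed through $K$-elements), and using the strong wavefront lemma underlying Lemma~\ref{lem_effective_cartan} (see Proposition 7.3 of \cite{gorodnikCountingLatticePoints2012}, going back to \cite{gorodnikStrongWavefrontLemma2010}) to pass from the resulting near-$A$ factor to its genuine Cartan decomposition --- to conclude that, after one further harmless enlargement of $l_0$, the Cartan data of $g$ satisfy $d_\calF(g_o^+,\gamma_o^+)\leq l_0\epsilon$ and $d_\calF(g_o^-,\gamma_o^-)\leq l_0\epsilon$. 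Since $\phi(k_1,k_2)=\psi(\gamma_o^+,\gamma_o^-)$, $\phi(k_1^g,k_2^g)=\psi(g_o^+,g_o^-)$, and $Lip\,\phi$ dominates the Lipschitz constant of $\psi$, this yields $|\phi(k_1^g,k_2^g)-\phi(k_1,k_2)|\leq(Lip\,\phi)\,l_0\epsilon$.

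It then remains to assemble the conclusion by a short case split. If $a\in\widetilde{A}^\delta$ and $\|\underline{a}(\gamma)\|\leq t$, then $\rho_t(\gamma)=\phi(k_1,k_2)$; the soft estimates put $g\in\widetilde{D}^{\delta-l_0\epsilon}_{t+2\epsilon}$, so $\rho^+_{t,\epsilon}(g)=\phi(k_1^g,k_2^g)+(Lip\,\phi)l_0\epsilon\geq\phi(k_1,k_2)=\rho_t(\gamma)$, while $\rho^-_{t,\epsilon}(g)\leq\max\{\phi(k_1^g,k_2^g)-(Lip\,\phi)l_0\epsilon,\,0\}\leq\max\{\phi(k_1,k_2),0\}=\rho_t(\gamma)$, using $\phi\geq 0$. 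In the remaining cases $\rho_t(\gamma)=0$: if $a\in\widetilde{A}^\delta$ but $\|\underline{a}(\gamma)\|>t$ then $\|\underline{a}(g)\|>t-2\epsilon$, and if $a\notin\widetilde{A}^\delta$ then $d(\underline{a}(g),\partial\frak{a}^{++})<\delta+l_0\epsilon$; either way $g\notin\widetilde{D}^{\delta+l_0\epsilon}_{t-2\epsilon}$, so $\rho^-_{t,\epsilon}(g)=0=\rho_t(\gamma)$, while $\rho^+_{t,\epsilon}(g)\geq 0=\rho_t(\gamma)$ since $\phi\geq 0$. This gives \eqref{equ_rho-+}.

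I expect the main obstacle to be the unwinding of Lemma~\ref{lem_effective_cartan} in the second step: turning its set-theoretic inclusion into the clean pointwise bound $d_\calF(g_o^\pm,\gamma_o^\pm)\leq l_0\epsilon$ for the genuine Cartan parts of $g$. This is exactly where the wavefront-lemma content of \cite{gorodnikStrongWavefrontLemma2010} (and of the proof of Proposition 7.3 in \cite{gorodnikCountingLatticePoints2012}) enters, together with the bookkeeping of the $M$-commutation and the comparison --- up to a fixed constant, which we absorb into $l_0$ --- between the Riemannian distance on $K/M$ and the distance $d$ on $\calF$. Everything else is soft.
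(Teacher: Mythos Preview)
Your argument is correct and follows the same two ingredients as the paper: the soft Cartan comparison $\|\underline{a}(g)-\underline{a}(\gamma)\|\leq 2\epsilon$ for the indicator part, and Lemma~\ref{lem_effective_cartan} for the $K$-parts. The paper organizes the case split differently: for the lower bound it assumes $\rho^-_{t,\epsilon}(g)\neq 0$, hence $g\in K\widetilde{A}^{\delta+l_0\epsilon}K\subset K\widetilde{A}^\delta K$, and applies the effective Cartan decomposition to $g$ (using $\gamma\in\calO_\epsilon g\calO_\epsilon$) rather than to $\gamma$. This avoids your final case split, but the two routes are equivalent.

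One genuine simplification you are missing: the ``unwinding'' you flag as the main obstacle is not an obstacle at all, and no second appeal to the wavefront lemma is needed. Once you write $g=k'\,k_1(\gamma)\,m\,a'\,a(\gamma)\,k_2(\gamma)\,k''$ with $k',k''\in\calO_{l_0\epsilon}\cap K$, $m\in M$, $a'\in\calO_{l_0\epsilon}\cap A$, commuting $m$ past $a'a(\gamma)$ gives
\[
g=\bigl(k'k_1(\gamma)\bigr)\cdot\bigl(a'a(\gamma)\bigr)\cdot\bigl(m\,k_2(\gamma)\,k''\bigr)\in K\cdot A\cdot K.
\]
Since $a(\gamma)\in\widetilde{A}^\delta$ and $l_0\epsilon<\delta$ (implicit in the choice of $\epsilon_1$), the middle factor $a'a(\gamma)$ lies in $A^{++}$, so this \emph{is} the Cartan decomposition of $g$. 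You read off $k_1(g)M=k'k_1(\gamma)M$ and $k_2(g)^{-1}M=(k'')^{-1}k_2(\gamma)^{-1}M$ directly, and the Lipschitz bound $|\phi(k_1^g,k_2^g)-\phi(k_1,k_2)|\leq(Lip\,\phi)l_0\epsilon$ follows immediately from $k',k''\in\calO_{l_0\epsilon}\cap K$. This is exactly how the paper proceeds, in one line.
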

\begin{proof}
If $\rho^-_{t,\epsilon}(g)\neq 0$, then $g\in \widetilde D^{\delta+l_0\epsilon}_{t-2\epsilon}$. By $\gamma\in \calO_\epsilon g\calO_\epsilon$ and Lemma \ref{lem_effective_cartan}, we obtain 
\[a(\gamma)\in (\calO_{l_0\epsilon}\cap A)a(g)\cap D_t\subset \widetilde D^\delta_t. \]
So $\mathbbm{1}_{\widetilde D^\delta_t}(a(\gamma))=1$. By Lemma \ref{lem_effective_cartan} and Lipschitz property of $\phi$, we obtain  
\[\phi(k_1(\gamma),k_2(\gamma))\geq \phi(k_1(g),k_2(g))-(Lip\phi)l_0\epsilon. \]
This proves the left hand side. For the other side, the proof is similar.
\end{proof}
Take $\mathbbm{1}_\epsilon=\frac{1}{m_G(\calO_\epsilon)}\mathbbm{1}_{\calO_\epsilon}$ be the normalized characteristic function of $\calO_\epsilon$.
Let $\varphi_\epsilon(g\Gamma)=\sum_{\gamma\in\Gamma}\mathbbm{1}_\epsilon(g\gamma)$. The counting is connected to integral by the following.
\begin{lem}\label{lem_rhot}
For $h$ in $\calO_\epsilon$ with $\epsilon\leq \epsilon_1$, we have
\begin{equation}
    \int \varphi_\epsilon(g^{-1}h\Gamma)\rho^-_{t,\epsilon}(g)dm_G(g)\leq \sum_{\gamma\in \Gamma}\rho_t(\gamma)\leq \int \varphi_\epsilon(g^{-1}h\Gamma)\rho^+_{t,\epsilon}(g)dm_G(g).
\end{equation}
\end{lem}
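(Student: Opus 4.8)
The plan is to reduce the claimed two-sided bound to the pointwise inequality \eqref{equ_rho-+} together with a Fubini-type unfolding identity on $G/\Gamma$. First I would unfold the integral defining $\varphi_\epsilon$: by definition $\varphi_\epsilon(g^{-1}h\Gamma)=\sum_{\gamma\in\Gamma}\mathbbm{1}_\epsilon(g^{-1}h\gamma)$, so for any nonnegative measurable weight $\rho$ on $G$ we have
\begin{equation}
\int_G \varphi_\epsilon(g^{-1}h\Gamma)\,\rho(g)\,dm_G(g)=\sum_{\gamma\in\Gamma}\int_G \mathbbm{1}_\epsilon(g^{-1}h\gamma)\,\rho(g)\,dm_G(g)=\frac{1}{m_G(\calO_\epsilon)}\sum_{\gamma\in\Gamma}\int_{h\gamma\calO_\epsilon^{-1}}\rho(g)\,dm_G(g),
\end{equation}
using left-invariance of $m_G$ and the substitution $g\mapsto g$ with $g^{-1}h\gamma\in\calO_\epsilon$, i.e. $g\in h\gamma\calO_\epsilon^{-1}$. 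Since $\calO_\epsilon$ is a symmetric $\epsilon$-neighbourhood of $e$ (or, if it is not symmetric, one simply carries $\calO_\epsilon^{-1}$ through; the radius is still $\epsilon$), the set $h\gamma\calO_\epsilon^{-1}$ is contained in $\calO_\epsilon\gamma\calO_\epsilon$ because $h\in\calO_\epsilon$: indeed $h\gamma\calO_\epsilon^{-1}\subset \calO_\epsilon\gamma\calO_\epsilon$. Hence every $g$ in the domain of integration satisfies $g\in\calO_\epsilon\gamma\calO_\epsilon$ with $\epsilon\le\epsilon_1$.

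Now I would apply the pointwise inequality \eqref{equ_rho-+}, which says exactly that for such $g$ one has $\rho^-_{t,\epsilon}(g)\le \rho_t(\gamma)\le\rho^+_{t,\epsilon}(g)$. Plugging $\rho=\rho^+_{t,\epsilon}$ into the unfolded identity and using the upper half of \eqref{equ_rho-+},
\begin{equation}
\int_G \varphi_\epsilon(g^{-1}h\Gamma)\,\rho^+_{t,\epsilon}(g)\,dm_G(g)=\frac{1}{m_G(\calO_\epsilon)}\sum_{\gamma\in\Gamma}\int_{h\gamma\calO_\epsilon^{-1}}\rho^+_{t,\epsilon}(g)\,dm_G(g)\ \ge\ \frac{1}{m_G(\calO_\epsilon)}\sum_{\gamma\in\Gamma}\rho_t(\gamma)\,m_G(h\gamma\calO_\epsilon^{-1})=\sum_{\gamma\in\Gamma}\rho_t(\gamma),
\end{equation}
since $m_G(h\gamma\calO_\epsilon^{-1})=m_G(\calO_\epsilon)$ by invariance. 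The lower bound is symmetric: plug $\rho=\rho^-_{t,\epsilon}$ and use $\rho^-_{t,\epsilon}(g)\le\rho_t(\gamma)$ on the support, so that $\int_G\varphi_\epsilon(g^{-1}h\Gamma)\rho^-_{t,\epsilon}(g)\,dm_G(g)\le\sum_{\gamma\in\Gamma}\rho_t(\gamma)$. Combining the two inequalities gives the statement. One should also note the sums are finite (locally): $\rho_t$ is supported on $D_t^{reg}$ and $\Gamma$ is discrete, so $\Gamma\cap D_t$ is finite and all interchanges of sum and integral are justified by nonnegativity (Tonelli).

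The only genuinely delicate point — and the one I would be most careful about — is the inclusion $h\gamma\calO_\epsilon^{-1}\subset\calO_\epsilon\gamma\calO_\epsilon$ and the bookkeeping of left versus right $\epsilon$-neighbourhoods, since the effective Cartan decomposition Lemma \ref{lem_effective_cartan} is stated for two-sided neighbourhoods $\calO_\epsilon g\calO_\epsilon$; one must make sure the $g$ arising in the unfolding really lies in such a set with the correct (unchanged) $\epsilon$, so that \eqref{equ_rho-+} applies verbatim. Everything else is the standard unfolding trick for lattice-point counting, so there is no serious obstacle beyond this notational vigilance.
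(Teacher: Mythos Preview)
Your proposal is correct and is precisely the standard unfolding argument the paper has in mind when it writes ``by using \eqref{equ_rho-+}, the proof is almost the same as Lemma 2.1 in \cite{gorodnikCountingLatticePoints2012}.'' The only point you flag --- the inclusion $h\gamma\calO_\epsilon^{-1}\subset\calO_\epsilon\gamma\calO_\epsilon$ --- is harmless here since $\calO_\epsilon$ is a metric ball for the bi-$K$-invariant distance and hence symmetric (and $G$ is unimodular, so $m_G(\calO_\epsilon^{-1})=m_G(\calO_\epsilon)$).
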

\begin{proof}
By using \eqref{equ_rho-+}, the proof is almost the same as Lemma 2.1 in \cite{gorodnikCountingLatticePoints2012}.
\end{proof}
\textbf{Step 2:}This step will estimate the error terms in the mean ergodic theorem.

We want to apply the mean ergodic theorem to probability measures $\frac{\rho_{t,\epsilon}^{\pm}}{\int\rho_{t,\epsilon}^{\pm}}$. Before doing so, we need to compute some integrals. The computation is a bit tedious. \textbf{This step is to verify similar stable mean ergodic theorems, the main consequence is \eqref{equ_rho-} and \eqref{equ_rho+}}.

Let's first compute the difference.

\begin{lem}We have
\begin{equation}\label{equ_diff}
\int \rho^+_{t,\epsilon} dm_G-\int \rho^-_{t,\epsilon} dm_G\ll \bigg(\epsilon \int\phi+l_0\epsilon(Lip\phi) \bigg)m_G(D_t).
\end{equation}
\end{lem}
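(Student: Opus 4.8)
The plan is to write the difference as a telescoping sum over the three ways in which $\rho^+_{t,\epsilon}$ and $\rho^-_{t,\epsilon}$ differ: the shift in the Lipschitz constant term $(Lip\,\phi)\,l_0\epsilon$, the thickening/thinning of the radial parameter from $t-2\epsilon$ to $t+2\epsilon$, and the shift of the wall-distance parameter $\delta$ by $\pm l_0\epsilon$. Concretely I would bound
\[
\int \rho^+_{t,\epsilon} \,\dd m_G - \int \rho^-_{t,\epsilon}\,\dd m_G
\leq 2(Lip\,\phi)\,l_0\epsilon\; m_G(\widetilde D^{\delta-l_0\epsilon}_{t+2\epsilon})
+ \int_{\widetilde D^{\delta-l_0\epsilon}_{t+2\epsilon}\setminus \widetilde D^{\delta+l_0\epsilon}_{t-2\epsilon}} \bigl(\phi(k_1,k_2)+(Lip\,\phi)l_0\epsilon\bigr)\,\dd m_G,
\]
where in the first term I used $\rho^+-\rho^-\leq 2(Lip\,\phi)l_0\epsilon$ on the common support $\widetilde D^{\delta+l_0\epsilon}_{t-2\epsilon}$ and bounded $\max\{\phi - (Lip\,\phi)l_0\epsilon,0\}\geq \phi - (Lip\,\phi)l_0\epsilon$, and the second term accounts for the points lying in the larger domain but not the smaller one, on which $\rho^-_{t,\epsilon}$ vanishes.

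For the radial part, Harish-Chandra's integration formula in Cartan coordinates (as already used in the proof of Lemma \ref{volume_reste}) gives $m_G(\widetilde D^{\delta'}_{t+2\epsilon})\ll m_G(D_t)$ for fixed $\delta'$ and small $\epsilon$, using the uniform local Lipschitz property of $t\mapsto \log\vol(D_t)$ from Lemma \ref{lem-vollip}; so the first term is $O(l_0\epsilon\,(Lip\,\phi)\,m_G(D_t))$. For the symmetric-difference term, I would split $\widetilde D^{\delta-l_0\epsilon}_{t+2\epsilon}\setminus\widetilde D^{\delta+l_0\epsilon}_{t-2\epsilon}$ into the ``radial shell'' $\{t-2\epsilon < d(gK,K)\leq t+2\epsilon\}$ and the ``wall collar'' $\{\delta-l_0\epsilon \leq d(a,\partial\frak a^{++}) < \delta+l_0\epsilon\}$. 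On the radial shell, Lemma \ref{lem-vollip} bounds its measure by $O(\epsilon\, m_G(D_t))$; integrating $\phi$ against $\mu_o\otimes\mu_o$ (the $K$-component of Haar measure, up to the density factor $e^{2\rho(a)}$ which is what produces $\int\phi$ after normalizing) gives the $\epsilon\int\phi$ contribution. On the wall collar, the measure is $O(\epsilon\, m_G(D_t))$ by the same Harish-Chandra estimate, since moving $\delta$ by $l_0\epsilon$ changes the region of integration by a slab of width $\asymp \epsilon$, and there $\phi$ is bounded by $Lip\,\phi$. Combining, both contributions are $O\bigl((\epsilon\int\phi + l_0\epsilon\,(Lip\,\phi))\,m_G(D_t)\bigr)$, which is the claimed bound.

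The main obstacle I anticipate is keeping the two bookkeeping constants straight: one must be careful that the density $e^{2\rho(a)}$ appearing in Harish-Chandra's formula is essentially constant on the collars and shells (because their width is $O(\epsilon)$, and $\rho$ is Lipschitz), so that the $K$-integral of $\phi$ genuinely factors out as $\int\phi\,\dd\mu_o\otimes\mu_o$ rather than picking up an extra $e^{O(t)}$ factor — this is exactly where Lemma \ref{lem-vollip} is essential, since it says the total radial mass near $t$ in a window of width $O(\epsilon)$ is $O(\epsilon)$ times $m_G(D_t)$ rather than merely $O(1)$ times it. Everything else is a routine application of the volume estimates already assembled in this section.
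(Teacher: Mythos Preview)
Your proposal is correct and arrives at the same bound, but the paper's argument is organized more cleanly. Rather than telescoping into a common-support piece plus a symmetric-difference piece and then analyzing radial shell and wall collar separately, the paper uses the product structure of Haar measure in Cartan coordinates at the outset: since the indicator $\mathbbm{1}_{\widetilde D^{\delta\mp l_0\epsilon}_{t\pm 2\epsilon}}$ depends only on $a$ while $\phi$ depends only on $(k_1,k_2)$, one has directly
\[
\int \rho^+_{t,\epsilon} - \int \rho^-_{t,\epsilon}
\le \Big(m_G(\widetilde D^{\delta-l_0\epsilon}_{t+2\epsilon})-m_G(\widetilde D^{\delta+l_0\epsilon}_{t-2\epsilon})\Big)\!\int\!\phi \;+\; l_0\epsilon(Lip\,\phi)\Big(m_G(\widetilde D^{\delta-l_0\epsilon}_{t+2\epsilon})+m_G(\widetilde D^{\delta+l_0\epsilon}_{t-2\epsilon})\Big),
\]
and then cites the volume estimates (Proposition~7.4 of Gorodnik--Nevo together with Lemma~\ref{lem-vollip}) for $m_G(\widetilde D^{\delta-l_0\epsilon}_{t+2\epsilon})-m_G(\widetilde D^{\delta+l_0\epsilon}_{t-2\epsilon})\ll \epsilon\, m_G(D_t)$ and $m_G(\widetilde D^{\delta\mp l_0\epsilon}_{t\pm 2\epsilon})\ll m_G(D_t)$. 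This avoids having to re-derive the factorization on each piece and sidesteps the slightly delicate wall-collar discussion you flagged (which is correct, but already subsumed in the cited volume estimate). Your anticipated ``obstacle'' about the density $e^{2\rho(a)}$ not blowing up is precisely what the factorization handles for free.
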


\begin{proof}
\begin{equation*}
\begin{split}
    &\int \rho^+_{t,\epsilon} dm_G-\int \rho^-_{t,\epsilon} dm_G\\
    &\leq m_G\big(\widetilde{D}^{\delta-l_0\epsilon}_{t+2\epsilon}\big)\bigg(\int\phi+l_0\epsilon(Lip\phi)\bigg)-m_G\big(\widetilde{D}^{\delta+l_0\epsilon}_{t-2\epsilon}\big)\bigg(\int\phi-l_0\epsilon(Lip\phi)\bigg)\\
    &=\bigg( m_G\big(\widetilde{D}^{\delta-l_0\epsilon}_{t+2\epsilon}\big)-m_G\big(\widetilde{D}^{\delta+l_0\epsilon}_{t-2\epsilon}\big)\bigg) \int\phi+l_0\epsilon(Lip\phi)\Big(m_G\big(\widetilde{D}^{\delta-l_0\epsilon}_{t+2\epsilon}\big)+m_G\big(\widetilde{D}^{\delta+l_0\epsilon}_{t-2\epsilon}\big)\Big)\\
    &\ll \bigg(\epsilon \int\phi+l_0\epsilon(Lip\phi)\bigg)m_G(D_t),
\end{split}
\end{equation*}
where the last inequality is from the proof of Proposition 7.4 in \cite{gorodnikCountingLatticePoints2012} about volume estimate and Lemma \ref{lem-vollip}. 
\end{proof}

Let $\widetilde\rho_{t,\epsilon}^{\pm}=\rho_{t,\epsilon}^{\pm}/\int \rho_{t,\epsilon}^{\pm}$.

\begin{lem} There exists $t_1>0$ which only depends on $G,\delta,\epsilon_1$ such that the following holds.
For $\epsilon<\min\{\int\phi/2l_0Lip\phi, \epsilon_1\}$, $t>t_1$ and $f\in L^2(G/\Gamma)$
\begin{equation}\label{equ_rho-}
    \|\pi(\widetilde\rho^{-}_{t,\epsilon})f-\int f\|_{2}\leq E(t) \|f\|_{2},
\end{equation}
with 
\begin{equation}\label{equ_et}
E(t)=(\frac{C}{m_G(D_t)^{q-1}}\frac{(Lip \phi)^q}{(\int\phi)^q})^{\kappa},
\end{equation}
 $\kappa=1/qn(G,\Gamma)$ and $C>0$ only depending on $G$.

For $\epsilon\leq \epsilon_1$, $t>t_1$ and $f\in L^2(G/\Gamma)$
\begin{equation}\label{equ_rho+}
    \|\pi(\widetilde\rho^{+}_{t,\epsilon})f-\int f\|_{2}\leq  E(t) \|f\|_{2}.
\end{equation}
\end{lem}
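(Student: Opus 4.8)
The plan is to obtain both \eqref{equ_rho-} and \eqref{equ_rho+} directly from the quantitative mean ergodic theorem \eqref{equ.meanergodic} applied to the probability measures $\beta=\widetilde\rho^{\pm}_{t,\epsilon}$, so that the whole task reduces to bounding the normalized norm $\|\widetilde\rho^{\pm}_{t,\epsilon}\|_q=\|\rho^{\pm}_{t,\epsilon}\|_q\big/\int\rho^{\pm}_{t,\epsilon}\,dm_G$ for the fixed $q\in(1,2)$ appearing in \eqref{equ_et}. The structural observation is that, in the Cartan decomposition $g=k_1\exp(a)k_2$, the function $\rho^{+}_{t,\epsilon}$ is a product of an indicator of the radial part and a function of the angular parts, $\rho^{+}_{t,\epsilon}(g)=\mathbbm{1}_{\widetilde{D}^{\delta-l_0\epsilon}_{t+2\epsilon}}(a)\big(\phi(k_1,k_2)+(Lip\,\phi)\,l_0\epsilon\big)$, and similarly for $\rho^{-}_{t,\epsilon}$. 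By the Harish-Chandra polar integration formula, $\int_G\mathbbm{1}_{D}(a)\,\Phi(k_1,k_2)\,dm_G(g)=\vol(D)\cdot\int_{K\times K}\Phi(k_1,k_2)\,dk_1\,dk_2$, and since here $\Phi$ depends only on $(k_1M,k_2^{-1}M)$ the inner integral is an integral over $\calF\times\calF$ against $\mu_o\otimes\mu_o$; in particular $\int_{K\times K}\phi=\int_{\calF\times\calF}\psi\,d\mu_o\otimes\mu_o=\int\phi$.

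First I would record the volume comparisons. For $t$ larger than some $t_1=t_1(G,\delta,\epsilon_1)$ and $\epsilon<\epsilon_1$, all the sets above satisfy $\vol\big(\widetilde{D}^{\delta\mp l_0\epsilon}_{t\pm2\epsilon}\big)\asymp\vol(D_t)$: deleting from $D_t$ the slab of elements whose Cartan projection lies within $O(\delta)$ of $\partial\frak a^+$ costs only a factor $1-o(1)$, because by the Harish-Chandra density bound $\xi(v)\ll e^{2\rho(v)}$ together with the fact that $\rho$ attains its maximum over $B_{\frak a}(0,1)\cap\frak a^+$ at an interior direction of $\frak a^{++}$, that slab has volume $O\big(\mathrm{poly}(t)\,e^{(\delta_0-c(\delta))t}\big)=o(\vol(D_t))$; the shifts $t\mapsto t\pm2\epsilon$ and the change $\delta\mapsto\delta\mp l_0\epsilon$ (still a fixed small number) are absorbed via Lemma~\ref{lem-vollip}. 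This is the same volume input already used in the proof of \eqref{equ_diff}, taken from the proof of Proposition~7.4 in \cite{gorodnikCountingLatticePoints2012}.

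With this in hand the two estimates are immediate. For the denominators, using the pointwise bound $\max\{\phi-c,0\}\ge\phi-c$ and the factorization, $\int\rho^{+}_{t,\epsilon}\,dm_G=\vol\big(\widetilde{D}^{\delta-l_0\epsilon}_{t+2\epsilon}\big)\big(\int\phi+(Lip\,\phi)l_0\epsilon\big)\gg\vol(D_t)\int\phi$, while $\int\rho^{-}_{t,\epsilon}\,dm_G\ge\vol\big(\widetilde{D}^{\delta+l_0\epsilon}_{t-2\epsilon}\big)\big(\int\phi-(Lip\,\phi)l_0\epsilon\big)\ge\tfrac12\vol\big(\widetilde{D}^{\delta+l_0\epsilon}_{t-2\epsilon}\big)\int\phi\gg\vol(D_t)\int\phi$ whenever $\epsilon<\int\phi/(2l_0\,Lip\,\phi)$, which is exactly the hypothesis. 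For the numerators, $\|\phi\|_\infty\le Lip\,\phi$ and $l_0\epsilon\le1$ (shrink $\epsilon_1$) give $\|\phi+(Lip\,\phi)l_0\epsilon\|_\infty,\ \|\max\{\phi-(Lip\,\phi)l_0\epsilon,0\}\|_\infty\le 2\,Lip\,\phi$, so $\|\rho^{\pm}_{t,\epsilon}\|_q^q=\vol\big(\widetilde{D}^{\cdots}_{\cdots}\big)\int_{K\times K}(\cdots)^q\ll\vol(D_t)(Lip\,\phi)^q$. Dividing yields $\|\widetilde\rho^{\pm}_{t,\epsilon}\|_q\ll\vol(D_t)^{1/q-1}\,Lip\,\phi\big/\!\int\phi$, and feeding $\beta=\widetilde\rho^{\pm}_{t,\epsilon}$ into \eqref{equ.meanergodic} and raising to the power $1/n(G,\Gamma)$ gives \eqref{equ_rho-} and \eqref{equ_rho+} with $E(t)$ as in \eqref{equ_et} and $\kappa=1/(q\,n(G,\Gamma))$, once $C_q$ and the volume comparison constants are absorbed into $C$ (note $1/q-1=-(q-1)/q$). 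The only step that is not purely formal is the uniform volume comparison $\vol\big(\widetilde{D}^{\delta'}_{t'}\big)\asymp\vol(D_t)$, i.e. the assertion that the bulk of the Haar mass of $D_t$ sits at bounded angular distance from the walls of the chamber; everything else is the polar integration formula and one application of \eqref{equ.meanergodic}.
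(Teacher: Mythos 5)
Your proposal is correct and follows essentially the same route as the paper: factor the integrals via the $KA^{+}K$ polar decomposition, lower-bound $\int\rho^{\pm}_{t,\epsilon}$ by $\gg m_G(D_t)\int\phi$ (using the near-wall volume loss, i.e.\ the content of Lemma \ref{volume_reste} together with Lemma \ref{lem-vollip}, and the hypothesis $\epsilon<\int\phi/2l_0\,Lip\,\phi$ only for the minus case), upper-bound $\|\rho^{\pm}_{t,\epsilon}\|_q^q\ll m_G(D_t)(Lip\,\phi)^q$, and feed $\widetilde\rho^{\pm}_{t,\epsilon}$ into the quantitative mean ergodic estimate \eqref{equ.meanergodic}. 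The only cosmetic difference is that you bound $\int\rho^{+}_{t,\epsilon}$ directly from the volume of $\widetilde{D}^{\delta-l_0\epsilon}_{t+2\epsilon}$ while the paper passes through $\int\rho^{+}_{t,\epsilon}\geq\int\rho_t$, which changes nothing.
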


The main difference is that for $\rho^+_{t,\epsilon}$, we don't need an extra condition of $\epsilon$ depending on $\phi$.

\begin{proof}
We compute the integral of $\rho_{t,\epsilon}^-$. We have
\begin{align*}
    \int \rho_{t,\epsilon}^-dm_G\geq m_G(\widetilde{D}^{\delta+l_0\epsilon}_{t-2\epsilon})(\int\phi-(Lip\phi)l_0\epsilon).
\end{align*}
Due to Lemma \ref{volume_reste}, we have $m_G(D^{\delta+l_0\epsilon}_{t-2\epsilon})=O(m_G(D_{t-2\epsilon})^{1-\zeta_1})$ for some $\zeta_1>0$, and Lemma \ref{lem-vollip}, we obtain
\[  m_G(\widetilde{D}^{\delta+l_0\epsilon}_{t-2\epsilon})\geq e^{-C\epsilon}(1-Cm_G(D_t)^{-\zeta_1})m_G(D_t).\]
Hence if $t>t_1$ depending on $C,\delta,\epsilon_1$, then
\[ \int \rho_{t,\epsilon}^-dm_G\gg m_G(D_t)(\int\phi-(Lip\phi)l_0\epsilon). \]
Therefore if $\epsilon\leq \int\phi/2l_0 (Lip\phi),\epsilon_1$, we obtain
\begin{equation}\label{equ_rhot-}
    \int \rho_{t,\epsilon}^-dm_G\gg m_G(D_t)\int\phi.
\end{equation}

Similarly for the integral of $\rho_{t,\epsilon}^+$, we obtain if $t>t_1$ and $\epsilon\leq \epsilon_1$, then by Lemma \ref{volume_reste}
\begin{equation}\label{equ_dt+}
m_G(\widetilde D^{\delta-l_0\epsilon}_{t+2\epsilon})\geq m_G(\widetilde D^\delta_t)\geq m_G(D_t)-C m_G(D_t)^{1-\zeta_1}.
\end{equation}
Therefore
\begin{equation}\label{equ_rhot+}
    \int \rho_{t,\epsilon}^+dm_G\geq \int \rho_tdm_G\geq (m_G(D_t)-C m_G(D_t)^{1-\zeta_1})\int\phi\gg m_G(D_t)\int\phi.
\end{equation}

After these preparation, we can start to compute the integral appears in error term of mean ergodic theorem. By \eqref{equ_rhot-}, we obtain when $t>t_1$ and $\epsilon\leq\int\phi/2l_0(Lip\phi)$
\begin{align*}
    \|\rho^{-}_{t,\epsilon}\|_q^q/(\int\rho_{t,\epsilon}^-)^q \ll \int |\rho_{t}|^q/(m_G(D_t)\int\phi)^q\leq  \frac{1}{m_G(D_t)^{q-1}}(Lip\phi)^q/(\int\phi)^q.
\end{align*}
For $\rho_{t,\epsilon}^+$, by \eqref{equ_rhot+} and \eqref{equ_dt+} we have
\begin{align*}
    \|\rho^{+}_{t,\epsilon}\|_q^q/(\int\rho_{t,\epsilon}^+)^q \ll \frac{1}{m_G(D_t)^{q-1}} \int(\phi+(Lip\phi)l_0\epsilon)^q/(\int\phi)^q,
\end{align*}
We obtain if $t>t_1$,
\begin{equation}
     \|\rho^{+}_{t,\epsilon}\|_q^q/(\int\rho_{t,\epsilon}^+)^q\ll \frac{1}{m_G(D_t)^{q-1}}(Lip\phi)^q/(\int\phi)^q.
\end{equation}

Applying the above formulas for $\widetilde\rho^{\pm}_{t,\epsilon}$, combined with mean ergodic estimate \eqref{equ.meanergodic}, we obtain the lemma.
\end{proof}

\textbf{Step 3:} 
The mean ergodic theorem only gives an estimate of $L^2$ norm, but what we need is an estimate at some points. So we need to use the Chebyshev inequality. The remaining work is to collect the error terms. This part is similar to the proof of Theorem 1.9 in \cite{gorodnikCountingLatticePoints2012}.
\begin{proof}[Proof of Theorem \ref{theo-gorodnik-nevo}]
Applying \eqref{equ_rho+} to $f=\varphi_\epsilon$, by Chebyshev's inequality, we obtain for any $\eta>0$
\begin{equation}
    m_{G/\Gamma}\{h\,| \pi(\widetilde\rho_{t,\epsilon}^{+})(\varphi_\epsilon)(h\Gamma)-\int\varphi_\epsilon|>\eta \}\leq (\frac{E(t)\|\varphi_\epsilon\|_{L^2}}{\eta})^2.
\end{equation}
If $(E(t)\|\varphi_\epsilon\|_{L^2}/\eta)^2<m_{G/\Gamma}(\calO_\epsilon)/2=m_G(\calO_\epsilon)/2V(\Gamma)$,
(here we need $\|\varphi_\epsilon\|^2_{L^2(G/\Gamma)}=m_G(\calO_\epsilon)/V(\Gamma)$.)
we will take $\eta=\frac{2E(t)}{m_G(\calO_\epsilon)}$, there exists $h\in\calO_\epsilon$, such that \[\pi(\widetilde\rho_{t,\epsilon}^{+})(\varphi_\epsilon)(h\Gamma)<\eta+\int\varphi_\epsilon. \]
Then by Lemma \ref{lem_rhot},
\begin{align*}
    \sum_{\gamma\cap \widetilde D_t^\delta}\phi(k_1(\gamma),k_2(\gamma))&\leq\pi(\widetilde\rho_{t,\epsilon}^+)(\varphi_\epsilon)(h\Gamma)\int\rho_{t,\epsilon}^+\leq(\eta+\frac{1}{V(\Gamma)})\int\rho_{t,\epsilon}^+\\
    &=\frac{\int\rho_t}{V(\Gamma)}(1+\eta V(\Gamma))+O(\epsilon(Lip\phi)m_G(D_t)),
\end{align*}
where the last inequality is due to \eqref{equ_diff}.
Therefore
\[ \frac{\sum_{\gamma\cap \widetilde D^\delta_t}\phi(k_1(\gamma),k_2(\gamma))}{\int\rho_t}-\frac{1}{V(\Gamma)} \leq\frac{E(t)}{2m_G(\calO_\epsilon)}+\epsilon\frac{Lip\phi}{\int\phi}\frac{1}{V(\Gamma)}\ll \frac{E(t)}{\epsilon^\rho}+\epsilon\frac{Lip\phi}{\int\phi},\]
where $\rho$ is the dimension of group $G$. We also have
\[|\int \rho_t-m_G(D_t)\int\phi|=\int\phi|m_G( \widetilde{D}_t^\delta)-m_G(D_t)|\leq \delta m_G(D_t)^{1-\zeta_1}\int\phi,\] 
and the trivial bound
\[ |\Gamma\cap D_t^\delta|\leq \frac{m_G(\calO_{\epsilon_\inj}D_t^\delta)}{m_G(\calO_{\epsilon_\inj})}\ll m_G(D_{t+\epsilon_\inj}^{\delta+l_0\epsilon_\inj})\epsilon_\inj^{-dim G}\ll m_G(D_t)^{1-\zeta_1}\epsilon_\inj^{-dim G}, \]
therefore
\begin{equation}\label{equ_phileq}
\begin{split}
    &\sum_{\gamma\cap D_t}\phi(k_1(\gamma),k_2(\gamma))-\frac{m_G(D_t)}{V(\Gamma)}\int\phi\\
    &\ll m_G(D_t)(Lip\phi m_G(D_t)^{-\zeta_1}\epsilon_\inj^{-dim G}
    +\int\phi\left(m_G(D_t)^{-\zeta_1}+ \frac{E(t)}{\epsilon^\rho}+\epsilon\frac{Lip\phi}{\int\phi}\right)).
\end{split}
\end{equation}
We can take $C_6$ large enough such that the term $m_G(D_t)^{-\zeta_1}\epsilon_\inj^{-dim G}$ is exponentially small on $t$.

In order to optimize the error term, we take $$\epsilon=(E(t)\int\phi/Lip\phi)^{1/(1+\rho)},$$
then the error term in the above formula is
\[E(t)^{1/(1+\rho)}(\frac{Lip\phi}{\int\phi})^{\rho/(1+\rho)}\ll m_G(D_t)^{-\zeta}(\frac{Lip\phi}{\int\phi})^{(\rho+q\kappa)/(1+\rho)}\leq m_G(D_t)^{-\zeta}(\frac{Lip\phi}{\int\phi}),\]
where the last equality is due to \eqref{equ_et} and $q\kappa=1/n(G,\Gamma)\leq 1$, and where $\zeta=(q-1)\kappa/(1+\rho)$. Here $\epsilon$ should be less than $\epsilon_1,\epsilon_\inj$, but 
\begin{equation}\label{equ_eps}
    \epsilon\leq \left(\frac{C}{m_G(D_t)^{(q-1)\kappa}}\frac{\int\phi}{Lip\phi} \right)^{1/(1+\rho)}\leq \left(\frac{C}{m_G(D_t)^{(q-1)\kappa}} \right)^{1/(1+\rho)}.
\end{equation}
The condition on $\epsilon$ is satisfied if $t$ is greater than some $t_2>0$ and $C_6|\log\epsilon_\inj|$.
Therefore by \eqref{equ_phileq}, we obtain one part of Theorem \ref{theo-gorodnik-nevo} for $t>t_0=\max\{t_1,t_2\}$, with $t_0$ not depending on $\phi$.

For $\rho_{t,\epsilon}^-$, we can obtain the same bound with extra condition that $\epsilon<\int\phi/2l_0Lip\phi$, that is if $t$ is large. Otherwise, we have $\epsilon\geq \int\phi/2l_0Lip\phi$, by \eqref{equ_eps}, which implies
\begin{equation}
Lip\phi\gg m_G(D_t)^{\zeta_2}\int\phi,    
\end{equation}
with $\zeta_2=(q-1)\kappa/\rho$. Therefore by non-negativeness of $\phi$
\[\frac{m_G(D_t)}{V(\Gamma)}\left(\int\phi-C m_G(D_t)^{-\zeta_2}Lip\phi\right)\leq 0\leq \sum_{\gamma\cap D_t}\phi(k_1(\gamma),k_2(\gamma)). \]
By taking $\min\{\zeta,\zeta_2 \}$, the proof is complete.
\end{proof}

\begin{rem}
We need to check the condition in Theorem 4.5 in \cite{gorodnikCountingLatticePoints2012}. For real linear algebraic semisimple Lie groups, we don't need that the group is simply connected. This condition is only needed for $p$-adic case if we look into the proof of Theorem 4.5. Then the crucial condition is that the representation of $G$ on $L^2_0(G/\Gamma)$ is $L^{p+}$. In \cite{ohUniformPointwiseBounds2002}, an explicit estimate on $p$ is given for some cases. If $G$ is a connected real linear algebraic semisimple Lie group and $\Gamma$ is an irreducible lattice, then the condition should be true.

In Kelmer-Sarnak \cite{kelmerStrongSpectralGaps2009}, they explained this for $G=G_1\times\cdots G_r$ with each $G_j$ simple Lie groups and centre free. There are two steps for proving this. If $r=1$ and the real rank of $G$ is 1, then the spectral gap is true. Otherwise, due to Margulis superrigidity theorem, $\Gamma$ is commensurate to a congruence lattice, for which we strong spectral gap (deep result from number theory), that is, each simple factor $G_j$ has a spectral gap on $L^2_0(G/\Gamma)$. Once we have the spectral gap, we can compute the matrix coefficients for each irreducible subrepresentation, which will be in $L^p$ for some bounded $p$, due to the works of many people. From congruence lattice to commersurable lattice, need the Lemma 3.1 of Kleinbock-Margulis \cite{kleinbock_margulis}. We still need to prove $L^2_0(G/\Gamma)$ is $L^{p+}$ from the property of its subrepresentations. 

In \cite{kleinbock_margulis}, Theorem 3.4, they use a strong spectral gap to obtain an estimate of matrix coefficients for smooth vectors, which implies that $L^2_0(G/\Gamma)$ is $L^p$ for some $p$. The idea is that if we know an irreducible representation is $L^p$, then Howe's work tells us the matrix coefficients decay exponentially, and the constants only depend on the group and $p$ for $K$-finite vectors. Then since we know a strong spectral gap, we can obtain each irreducible subrepresentation of $L^2_0(G/\Gamma)$ is strong $L^p$ for some finite $p$, the Howe's work gives a uniform estimate of matrix coefficients of all the irreducible representations. We can obtain a matrix coefficients estimate of $L^2_0(G/\Gamma)$ from its irreducible subrepresentations.

For specialists, they know well. But for us, the step from subrepresentations in $L^p$ to $L^2_0(G/\Gamma)$ in $L^p$ is highly nontrivial.

Now the condition in Corollary 3.5 in \cite{kleinbock_margulis} is that $G$ is a connected algebraic semisimple Lie group centre free without compact factor and $\Gamma$ is an irreducible lattice. If we can remove centre-free, then we are satisfied.

If we have another group $G_1$ with non trivial center. Then we consider $G:=G_1/Z_1=\pi(G_1)$, which is centre-free, where $Z_1$ the center of $G_1$. Let $\Gamma_1$ be an irreducible lattice of $G_1$. Let $\Gamma_2=\Gamma_1 Z_1$ and $\Gamma=\pi(\Gamma_2)$. Then $G_1/\Gamma_2\simeq G/\Gamma$ and $L^2_0(G_1/\Gamma_2)\simeq L^2_0(G/\Gamma)$. Now for each simple factor of $G$, by Theorem 1.12 in \cite{kleinbock_margulis}, it has no almost invariant vector on $L^2_0(G/\Gamma)$. So for simple factors of $G_1$, it will also have no almost invariant vector on $L^2_0(G_1/\Gamma_2)$. Since $\Gamma_1$ is a finite index subgroup of $\Gamma_2$, by Lemma 3.1 in \cite{kleinbock_margulis}, for each simple factor of $G_1$, it has also no almost invariant vector in $L^2_0(G_1/\Gamma_1)$. Therefore, we can use Theorem 3.4 in \cite{kleinbock_margulis} to deduce the desired version. 


\end{rem}

\begin{rem}
Theorem \ref{theo-gorodnik-nevo} is exactly Theorem 7.2 in \cite{gorodnikCountingLatticePoints2012} with an explicit error term, where no proof of Theorem 7.2 is given. But we cannot obtain this Theorem directly from Theorem 7.1 for Lipschitz well-rounded sets in \cite{gorodnikCountingLatticePoints2012} by approximating Lipschitz functions by level sets because the level sets of a Lipschitz function may not be uniformly Lipschitz well rounded. For one-dimensional cases, (i.e. $SL_2(R)$, Lipschitz function on $SO(2)$), we can take a Lipschitz function $\phi$ as the distance to a Cantor set. Then the level sets $\{\phi<1/n\}$ approximate the Cantor set. Each set is Lipschitz well-rounded, but the constant in Lipschitz well-rounded blow up as $n$ tends to infinity because the number of intervals in $\{\phi<1/n \}$ goes to infinite.
\end{rem}

\subsection{Integer points on subvarieties}\label{sec-subvar}

For the proof of Proposition \ref{prop_subvar}, we need Corollary 1.11 from \cite{gorodnikCountingLatticePoints2012}.

\begin{lem}\label{lem:congruence}
Let $\Gamma(p)=\{\gamma\in\Gamma,\ \gamma\equiv Id\ (\rm{mod} p)  \}$ for prime $p$. There exists $\epsilon>0$ such that for all primes $p$, $\gamma\in\Gamma$ and $t>1$, we have
\[|\{\gamma\Gamma(p)\cap D_t \}|=\frac{\vol(D_t)}{[\Gamma:\Gamma(p)]\vol(\Gamma\backslash G)}+O(\vol(D_t)e^{-\epsilon t}).  \]
\end{lem}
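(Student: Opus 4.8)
The plan is to follow the effective counting method of Gorodnik--Nevo, exactly as in the proof of Theorem~\ref{theo-gorodnik-nevo1} carried out in Section~\ref{appendix}, but with the test function now the bi-$K$-invariant indicator $\mathbbm 1_{D_t}$ and with the lattice $\Gamma$ replaced by the congruence subgroup $\Gamma(p)$. Since $\Gamma(p)$ is normal in $\Gamma$, the counting function
\[
F_t(\Gamma(p)g):=\sum_{\delta\in\Gamma(p)}\mathbbm 1_{D_t}(\delta g)
\]
is well defined on $\Gamma(p)\backslash G$ (replacing $g$ by $\delta_0 g$ merely permutes the sum), it satisfies $F_t(\Gamma(p)\gamma)=|\gamma\Gamma(p)\cap D_t|$ for every $\gamma\in\Gamma$ by normality, and unfolding gives $\int_{\Gamma(p)\backslash G}F_t=\vol(D_t)$, while $\vol(\Gamma(p)\backslash G)=[\Gamma:\Gamma(p)]\vol(\Gamma\backslash G)$. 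Thus the assertion is exactly that $F_t$ equidistributes towards its average with a power-saving error, uniformly over the finitely many points $\{\Gamma(p)\gamma\}_{\gamma}$ and uniformly in $p$.

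The one input that makes the argument uniform in $p$ is a \emph{uniform spectral gap} for the congruence family $\{\Gamma(p)\}_p$: the unitary representation of $G$ on $L^2_0(G/\Gamma(p))$ is $L^{q_0+}$ for a fixed $q_0$, equivalently the integer $n(G,\Gamma(p))$ in the quantitative mean ergodic theorem \eqref{equ.meanergodic} is bounded independently of $p$. This is Property~$(\tau)$ for $\Gamma$ with respect to its congruence family (Clozel, via Margulis superrigidity together with the known bounds towards Ramanujan for matrix coefficients; see the discussion following Theorem~\ref{theo-gorodnik-nevo1}). Granting it, \eqref{equ.meanergodic} holds on each $L^2_0(G/\Gamma(p))$ with a constant $C_q$ and exponent $1/n(G,\Gamma)$ not depending on $p$. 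Moreover the smoothing radii $\epsilon_1,\epsilon_{\inj}$ and the injectivity constants can be chosen uniformly in $p$, since $\Gamma(p)\backslash G$ is a covering of $\Gamma\backslash G$ and hence has injectivity radius at least that of $\Gamma\backslash G$; and the boundary-layer estimate $\vol(D_{t+\epsilon}\setminus D_{t-\epsilon})\ll\epsilon\,\vol(D_t)$, which is what replaces Lipschitz well-roundedness here, is intrinsic to $D_t\subset G$ and follows from Lemma~\ref{lem-vollip} and Lemma~\ref{volume_reste}.

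With this in hand I would run Steps~1--3 of the proof in Section~\ref{appendix} verbatim: sandwich $F_t$ between smoothed functions $F^{\pm}_{t,\epsilon}$ obtained by enlarging/shrinking $D_t$ to $D_{t\pm2\epsilon}$, so that $F^-_{t,\epsilon}(g)\le F_t(\Gamma(p)\gamma)\le F^+_{t,\epsilon}(g)$ for $g\in\calO_\epsilon\gamma$ as in \eqref{equ_rho-+}, with the difference of integrals $O(\epsilon\,\vol(D_t))$; apply \eqref{equ.meanergodic} to the normalised $\widetilde F^{\pm}_{t,\epsilon}$ acting on $\varphi_\epsilon$, producing an $L^2$-error of size $\asymp\vol(D_t)^{-\kappa_0}$ with $\kappa_0$ depending only on $G,\Gamma$; and then, since $m_{\Gamma(p)\backslash G}(\calO_\epsilon\gamma)=m_G(\calO_\epsilon)/([\Gamma:\Gamma(p)]\vol(\Gamma\backslash G))$, use Chebyshev to produce a good point in $\calO_\epsilon\gamma$ and transfer the bound to $\Gamma(p)\gamma$ itself via the sandwich. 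Optimising $\epsilon$ as a small power of $\vol(D_t)$ yields $|\gamma\Gamma(p)\cap D_t|=\vol(D_t)/([\Gamma:\Gamma(p)]\vol(\Gamma\backslash G))+O(\vol(D_t)^{1-\kappa})$, and finally $\vol(D_t)\sim C_0t^{(\dim A-1)/2}e^{\delta_0 t}$ from \eqref{volume_D_t} converts the power saving into the stated exponential error $O(\vol(D_t)e^{-\epsilon t})$ for a suitable $\epsilon>0$.

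The main obstacle is the uniform spectral gap (Property~$(\tau)$) for the congruence subgroups: this is the only genuinely deep ingredient, and it is where arithmetic enters. Everything else is the same effective-counting bookkeeping as in Section~\ref{appendix}, with only the cosmetic changes that the test function is now an indicator of a bi-$K$-invariant set rather than a function pulled back from $\calF\times\calF$, and that every estimate must be tracked to be independent of the prime $p$.
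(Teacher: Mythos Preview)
Your proposal is correct and follows the same approach as the paper: verify Lipschitz well-roundedness of $D_t$ via Lemma~\ref{lem-vollip}, then feed this together with the uniform spectral gap for the congruence family into the Gorodnik--Nevo effective counting machine. The only difference is packaging: the paper invokes \cite[Corollary~1.11]{gorodnikCountingLatticePoints2012} as a black box (that corollary already encapsulates the uniform mean ergodic theorem, the sandwich, and the Chebyshev step you sketch), whereas you unpack its proof along the lines of Section~\ref{appendix}.
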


\begin{proof}
Recall Lipschitz well-roundness in \cite[Definiton 1.1]{gorodnikCountingLatticePoints2012}: there exist $C>0,\epsilon_1>0$ and $t_1>0$ such that for $0<\epsilon<\epsilon_1$ and $t>t_1$, we have
\[\vol(\calO_\epsilon D_t\calO_\epsilon)\leq (1+C\epsilon)\vol(\cap_{u,v\in\calO_\epsilon}uD_tv), \]
where $\calO_\epsilon$ is the ball $B(e,\epsilon)$ in $G$. This is true for $D_t$. Because of Lemma \ref{lem-vollip} we have
\begin{align*}
    \vol(\calO_\epsilon D_t\calO_\epsilon)\leq \vol(D_{t+\ell\epsilon})\leq (1+C\epsilon)\vol(D_{t-\ell\epsilon})\leq (1+C\epsilon)\vol(\cap_{u,v\in\calO_\epsilon}uD_tv).
\end{align*}

We can use Theorem 4.5 \cite{gorodnikCountingLatticePoints2012} and Lipschitz well-roundness of $D_t$ to verify conditions in Corollary 1.11 \cite{gorodnikCountingLatticePoints2012}. Then Corollary 1.11 \cite{gorodnikCountingLatticePoints2012} implies the result.
\end{proof}

\begin{proof}[Proof of Proposition \ref{prop_subvar}]
If we replace $D_t$ by the ball $B_t=\{\gamma\in\sld,\ |tr(\gamma^t\gamma)|<e^t \}$, this proposition is Theorem 1.8 in \cite{gorodnikLiftingRestrictingSifting2012}. Since there is no detailed proof of Theorem 1.8 in \cite{gorodnikLiftingRestrictingSifting2012}, we give a proof for $D_t$ for completeness. The idea of proof is similar, the main difference is in the estimate of the number of fibres.

Let $p$ be a prime number to be chosen later depending on $t$. Let $\pi_p$ be the map from $\sld$ to $\rm{SL}_d(\Z/p\Z)$. Then $\{ \gamma\in\sld\cap D_t,\ h(\gamma)=0\}$ is a subset in the preimage $\pi_p^{-1}\{\gamma\in \rm{SL}_d(\Z/p\Z),\ h(\gamma)=0 \}$. This can be seen as a fibre space, each fibre is given by $\gamma \Gamma(p)$ for some $\gamma\in\Gamma$ with $h(\pi_p(\gamma))=0$. We only need to estimate the number of fibres and the size of each fibre.

For the size of each fibre, Lemma \ref{lem:congruence} gives us an asymptotic.

For the number of fibre, we have
\begin{equation}\label{equ-sldp}
|\{\gamma\in \rm{SL}_d(\Z/p\Z),\ h(\gamma)=0 \}|\ll p^{dim-1}, \end{equation}
if $h$ does not vanish on $\rm{SL}_d(\Z/p\Z)$, which is true if $p$ is greater than the coefficients of $h$. Here $dim$ is the dimension of $\rm{SL}_d$.
This bound can be obtained from \cite[Lemma 1]{lang_number_1954} or Lemma 1 \cite{tao}. The constant in the upper bound only depends on the degree of $h$ and $d$.

Therefore, by Lemma \ref{lem:congruence} and \eqref{equ-sldp}, we obtain

\begin{align*}
|\{ \gamma\in\sld\cap D_t,\ h(\gamma)=0\}|&\ll p^{dim-1}\left(\frac{\vol(D_t)}{[\Gamma:\Gamma(p)]\vol(\Gamma\backslash G)}+O(\vol(D_t)e^{-\epsilon t})\right)\\
&\leq \vol(D_t)\left( \frac{1}{p\ \vol(\Gamma\backslash G)}+O(p^{dim-1}e^{-\epsilon t})\right).
\end{align*}
 By the Bertrand–Chebyshev Theorem, there is always a prime $p$ in the interval $(n,2n)$ with $n>1$. So we can take a prime $p$ of size $e^{\epsilon t/dim}$. The proof is complete.
\end{proof}

\bibliographystyle{alpha}
\bibliography{references}

\bigskip
 \noindent 
	\it{Institut f\"ur Mathematik, Universit\"at Z\"urich, 8057 Z\"urich}  \\
	email: {\tt jialun.li@math.uzh.ch} 
		
		\bigskip   
		
\noindent 
		\it{Fakult\"at f\"ur Mathematik und Informatik
Universit\"at Heidelberg,
69120 Heidelberg}  \\
			email: {\tt ndang@mathi.uni-heidelberg.de}

\end{document}